\newtheorem{thm}{Theorem}[section]
\newtheorem{lem}[thm]{Lemma}%
\newtheorem{prop}[thm]{Proposition}%
\theoremstyle{definition}
\newtheorem{defn}{Definition}[section]
\theoremstyle{remark}
\newtheorem{remark}{Remark}[section] %
\theoremstyle{plain}
\numberwithin{equation}{section}
\def\QQ{{\mathbb Q}}
\def\PP{{\mathbb P}}
\def\RR{{\mathbb R}}
\def\ZZ{{\mathbb Z}}
\def\veca{{\text{\boldmath$a$}}}
\def\vece{{\text{\boldmath$e$}}}
\def\vech{{\text{\boldmath$h$}}}
\def\vecm{{\text{\boldmath$m$}}}
\def\vecn{{\text{\boldmath$n$}}}
\def\vecq{{\text{\boldmath$q$}}}
\def\vecQ{{\text{\boldmath$Q$}}}
\def\vecp{{\text{\boldmath$p$}}}
\def\vecr{{\text{\boldmath$r$}}}
\def\vecR{{\text{\boldmath$R$}}}
\def\vecs{{\text{\boldmath$s$}}}
\def\uvecs{\widehat{\text{\boldmath$s$}}}
\def\vecS{{\text{\boldmath$S$}}}
\def\uvecS{\widehat{\text{\boldmath$S$}}}
\def\vecu{{\text{\boldmath$u$}}}
\def\vecv{{\text{\boldmath$v$}}}
\def\vecV{{\text{\boldmath$V$}}}
\def\vecw{{\text{\boldmath$w$}}}
\def\uvecw{\widehat{\text{\boldmath$w$}}}
\def\vecW{{\text{\boldmath$W$}}}
\def\vecx{{\text{\boldmath$x$}}}
\def\vecy{{\text{\boldmath$y$}}}
\def\vecz{{\text{\boldmath$z$}}}
\def\vecalf{{\text{\boldmath$\alpha$}}}
\def\vecbeta{{\text{\boldmath$\beta$}}}
\def\vecomega{{\text{\boldmath$\omega$}}}
\def\vecxi{{\text{\boldmath$\xi$}}}
\def\vecnull{{\text{\boldmath$0$}}}
\def\scrA{{\mathcal A}}
\def\scrB{{\mathcal B}}
\def\scrD{{\mathcal D}}
\def\scrE{{\mathcal E}}
\def\scrF{{\mathcal F}}
\def\scrK{{\mathcal K}}
\def\scrL{{\mathcal L}}
\def\scrM{{\mathcal M}}
\def\scrN{{\mathcal N}}
\def\scrR{{\mathcal R}}
\def\scrS{{\mathcal S}}
\def\scrT{{\mathcal T}}
\def\scrU{{\mathcal U}}
\def\scrV{{\mathcal V}}
\def\scrW{{\mathcal W}}
\def\scrX{{\mathcal X}}
\def\fU{{\mathfrak U}}
\def\fZ{{\mathfrak Z}}
\def\adm{\operatorname{adm}}
\def\id{\operatorname{id}}
\def\C{\operatorname{C{}}}
\def\L{\operatorname{L{}}}
\def\S{\operatorname{S{}}}
\def\SL{\operatorname{SL}}
\def\ASL{\operatorname{ASL}}
\def\SO{\operatorname{SO}}
\def\O{\operatorname{O{}}}
\def\T{\operatorname{T{}}}
\def\supp{\operatorname{supp}}
\def\vol{\operatorname{vol}}
\def\ASLASL{\ASL(d,\ZZ)\backslash\ASL(d,\RR)}
\def\ASLR{\ASL(d,\RR)}
\def\SLR{\SL(d,\RR)}
\def\trans{\,^\mathrm{t}\!}
\def\Onder#1#2#3#4#5{#1 \setbox0=\hbox{$#1$}\setbox1=\hbox{$#2$}
       \dimen0=.5\wd0 \dimen1=\dimen0 \dimen2=\dp0 \dimen3=\dimen2
       \advance\dimen0 by .5\wd1 \advance\dimen0 by -#4
       \advance\dimen1 by -.5\wd1 \advance\dimen1 by -#4
       \advance\dimen2 by -#3 \advance\dimen2 by \ht1
       \advance\dimen2 by 0.3ex \advance\dimen3 by #5
        \kern-\dimen0\raisebox{-\dimen2}[0ex][\dimen3]{\box1}
       \kern\dimen1}
\newcommand{\XX}{{\mathcal X}}
\newcommand{\Ll}{\L^1_{\operatorname{loc{}}}}
\newcommand{\dto}{\,\widetilde{\to}\,}
\newcommand{\Q}{\mathbb{Q}}
\newcommand{\R}{\mathbb{R}}
\newcommand{\Z}{\mathbb{Z}}
\newcommand{\HS}{{{\S'_1}^{d-1}}}
\newcommand{\sfrac}[2]{{\textstyle \frac {#1}{#2}}}
\newcommand{\col}{\: : \:}
\newcommand{\bn}{\mathbf{0}}
\newcommand{\ttau}{{\tilde{\tau}}}
\newcommand{\trho}{{\tilde{\rho}}}
\newcommand{\tbe}{\widetilde{\vecbeta}}
\newcommand{\ttbe}{\widehat{\vecbeta}}
\newcommand{\tg}{\tilde{g}}
\newcommand{\tw}{{\tilde{\vecw}}}
\newcommand{\tm}{{\tilde{\vecm}}}
\newcommand{\ve}{\varepsilon}
\newcommand{\matr}[4]{\left( \begin{matrix} #1 & #2 \\ #3 & #4 \end{matrix} \right) }
\newcommand{\smatr}[4]{\left( \begin{smallmatrix} #1 & #2 \\ #3 & #4 \end{smallmatrix} \right) }
\title{The Boltzmann-Grad limit of the periodic Lorentz gas}
\author{Jens Marklof}
\author{Andreas Str\"ombergsson}
\address{School of Mathematics, University of Bristol,
Bristol BS8 1TW, U.K.\newline
\rule[0ex]{0ex}{0ex} \hspace{8pt}{\tt j.marklof@bristol.ac.uk}}
\address{Department of Mathematics, Box 480, Uppsala University,
SE-75106 Uppsala, Sweden\newline
\rule[0ex]{0ex}{0ex} \hspace{8pt}{\tt astrombe@math.uu.se}}
\date{3 January 2008}
\thanks{J.M.\ has been supported by EPSRC Research Grants GR/T28058/01 and GR/S87461/01, and a Philip Leverhulme Prize. A.S.\ is a Royal Swedish Academy of Sciences Research Fellow supported by
a grant from the Knut and Alice Wallenberg Foundation.}
\begin{document}

\begin{abstract}
We study the dynamics of a point particle in a periodic array of spherical scatterers, and construct a stochastic process that governs the time evolution for random initial data in the limit of low scatterer density (Boltzmann-Grad limit). A generic path of the limiting process is a piecewise linear curve whose consecutive segments are generated by a Markov process with memory two.
\end{abstract}

\maketitle
\tableofcontents

\section{Introduction}\label{secIntro}

The Lorentz gas describes an ensemble of non-interacting point particles in an infinite array of spherical scatterers. It was originally developed by Lorentz \cite{Lorentz05} in 1905 to model, in the limit of low scatterer density (Boltzmann-Grad limit), the stochastic properties of the motion of electrons in a metal. In the present paper we consider the case of a periodic array of scatterers, and construct a stochastic process that indeed governs the macroscopic dynamics of a particle cloud in the Boltzmann-Grad limit. The corresponding result has been known for some time in the case of a Poisson-distributed (rather than periodic) configuration of scatterers. Here the limiting process corresponds to a solution of the linear Boltzmann equation, see Gallavotti \cite{Gallavotti69}, Spohn \cite{Spohn78}, and Boldrighini, Bunimovich and Sinai \cite{Boldrighini83}. It already follows from the estimates in \cite{Bourgain98,Golse00} that the linear Boltzmann equation does not hold in the periodic set-up; this was pointed out recently by Golse \cite{Golse06,Golse07}. 

Our results complement classical studies in ergodic theory that characterize the stochastic properties of the periodic Lorentz gas in the limit of long times, see \cite{Bunimovich80,Bleher92,Chernov94,Melbourne05,Melbourne07,Balint07,Szasz07,dolgopyat} for details. 

To state our main results, consider an ensemble of non-interacting point particles moving in an array of spherical scatterers which are placed at the vertices of a euclidean lattice $\scrL\subset\RR^d$ of covolume one (Figure \ref{figLorentz}). The dynamics of each particle is governed by the billiard flow
\begin{equation}
	\varphi_t : \T^1(\scrK_\rho) \to \T^1(\scrK_\rho), \qquad (\vecq_0,\vecv_0) \mapsto (\vecq(t),\vecv(t))
\end{equation}
where $\scrK_\rho\subset\RR^d$ is the complement of the set $\scrB^d_\rho + \scrL$ (the ``billiard domain''), and $\T^1(\scrK_\rho)=\scrK_\rho\times\S_1^{d-1}$ is its unit tangent bundle (the ``phase space''). $\scrB^d_\rho$ denotes the open ball of radius $\rho$, centered at the origin. A point in $\T^1(\scrK_\rho)$ is parametrized by $(\vecq,\vecv)$, with $\vecq\in\scrK_\rho$ denoting the position and $\vecv\in\S_1^{d-1}$ the velocity of the particle. The Liouville measure on $\T^1(\scrK_\rho)$ is 
\begin{equation} \label{LIOUVILLEDEF}
	d\nu(\vecq,\vecv)=d\!\vol_{\RR^d}(\vecq)\, d\!\vol_{\S_1^{d-1}}(\vecv)
\end{equation}
where $\vol_{\RR^d}$ and $\vol_{\S_1^{d-1}}$ refer to the Lebesgue measures on $\RR^d$ %
and $\S_1^{d-1}$, respectively. For the purpose of this introduction we will restrict our attention to Lorentz' classical set-up, where the scatterers are assumed to be hard spheres. Our results in fact also hold for scattering processes described by smooth potentials, see Sec.~\ref{secScatt} for details.

If the initial condition $(\vecq_0,\vecv_0)$ is random, the billiard flow gives rise to the stochastic process
\begin{equation}\label{LP}
	\{ (\vecq(t),\vecv(t)) : t\in\RR_{>0} \}
\end{equation}
which we will refer to as the Lorentz process. The central result of this paper is the existence of a limiting stochastic process $\{\Xi(t): t\in\RR_{>0}\}$ of the Lorentz process in the Boltzmann-Grad limit $\rho\to 0$.
We begin with a study of the distribution of path segments of the billiard flow between collisions.

\begin{figure}
\begin{center}
\framebox{
\begin{minipage}{0.4\textwidth}
\unitlength0.1\textwidth
\begin{picture}(10,10)(0,0)
\put(0.5,1){\includegraphics[width=0.9\textwidth]{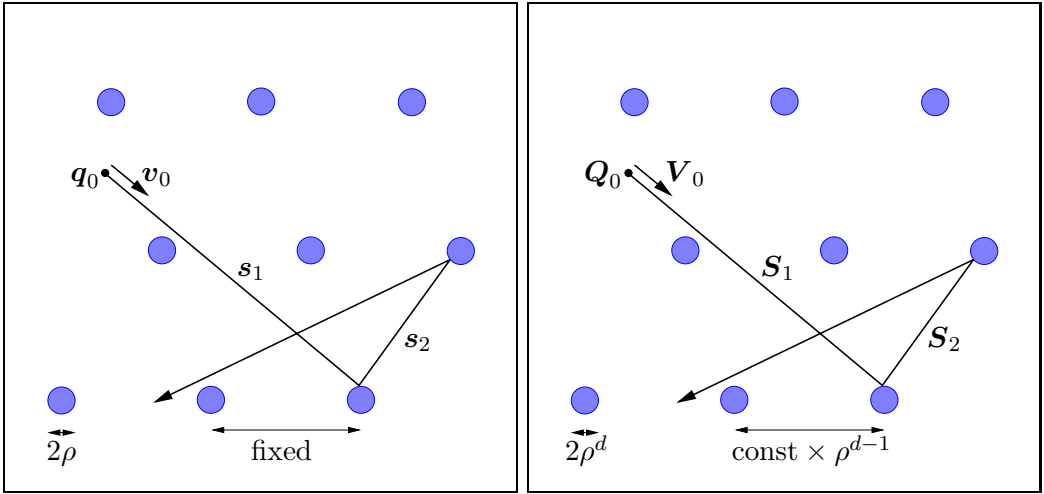}}
\put(1,6.4){$\vecq_0$} \put(2.5,6.4){$\vecv_0$}
\put(4.5,4.4){$\vecs_1$} \put(8,2.9){$\vecs_2$}
\put(0.5,0.5){$2\rho$} \put(4.8,0.5){fixed}
\end{picture}
\end{minipage}
}
\framebox{
\begin{minipage}{0.4\textwidth}
\unitlength0.1\textwidth
\begin{picture}(10,10)(0,0)
\put(0.5,1){\includegraphics[width=0.9\textwidth]{lorentzgas.eps}}
\put(0.8,6.4){$\vecQ_0$} \put(2.5,6.4){$\vecV_0$}
\put(4.5,4.4){$\vecS_1$} \put(8,2.9){$\vecS_2$}
\put(0.4,0.5){$2\rho^d$} \put(3.9,0.5){$\text{const}\times\rho^{d-1}$}
\end{picture}
\end{minipage}
}
\end{center}
\caption{Left: The periodic Lorentz gas in ``microscopic'' coordinates---the lattice $\scrL$ remains fixed as the radius $\rho$ of the scatterer tends to zero. Right: The periodic Lorentz gas in ``macroscopic'' coordinates ---both the lattice constant and the radius of each scatter tend to zero, in such a way that the mean free path length remains finite. The vectors $\vecs_1,\vecs_2,\ldots$ (resp. $\vecS_1,\vecS_2,\ldots$) represent the segments of the billiard path between collisions.} \label{figLorentz}
\end{figure}

\subsection{The joint distribution of path segments}\label{intrjointdistrsec}

The billiard flow $\varphi_t$ induces a billiard map on the boundary $\partial\T^1(\scrK_\rho)$, 
\begin{equation} \label{MNWNDEF}
(\vecq_{n-1},\vecv_{n-1}) \mapsto (\vecq_{n},\vecv_{n}) ,
\end{equation}
where $\vecq_n,\vecv_n$ denote position and velocity at the $n$th collision in the {\em outgoing} configuration, i.e.,
\begin{equation} \label{OUTGOING}
	(\vecq_n,\vecv_{n})=\lim_{\epsilon\to 0+}\varphi_{\tau_1(\vecq_{n-1},\vecv_{n-1};\rho)+\epsilon}(\vecq_{n-1},\vecv_{n-1}).
\end{equation}
Here $\tau_1$ denotes the free path length, 
defined by
\begin{equation} \label{TAU1DEF}
\tau_1(\vecq,\vecv;\rho)=\inf\{t>0\col\vecq+t\vecv\notin\scrK_\rho\}.
\end{equation}
We will later also use the parametrization $\partial\T^1(\scrK_\rho)=(\S_\rho^{d-1}+\scrL)\times \S_1^{d-1}$, so that $\vecq_n=\vecm_{n}+\rho\vecw_{n}$, where $\vecw_n\in\S_1^{d-1}$ and $\vecm_n\in\scrL$ are the position on the ball and ball label at the $n$th collision.

The time elapsed between the $(n-1)$th and $n$th hit is defined as the $n$th collision time
\begin{equation} \label{TAUNDEF}
	\tau_n(\vecq_0,\vecv_0;\rho)= \tau_1(\vecq_{n-1},\vecv_{n-1};\rho) .
\end{equation}
We express the $n$th path segment by the vector
\begin{equation}\label{pathseg}
	\vecs_n(\vecq_0,\vecv_0;\rho):=\tau_n(\vecq_0,\vecv_0;\rho) \vecv_{n-1}(\vecq_0,\vecv_0;\rho) .
\end{equation}
The central result of \cite{partI} is the proof of a limiting distribution for the first collision time $\tau_1$, and further refined versions that also take into account the particle's direction after the reflection. We will here extend these results to find a joint limiting distribution for the first $n$ segments of the billiard path with initial coordinates $(\vecq_0,\vecv_0)$, where the position $\vecq_0$ is fixed and the velocity $\vecv_0$ random. The precise statement is the following.

Here and in the remainder of this paper we will use the standard representation $\scrL=\ZZ^d M_0$, where $M_0\in\SL(d,\RR)$.
We will also use the notation $\uvecS:=\|\vecS\|^{-1}\vecS$. We set
\begin{equation}
\scrB_n:=\bigl\{(\vecS_1,\ldots,\vecS_n)\in(\RR^{d}\setminus\{\vecnull\})^n:\; \uvecS_{j+1}\neq\uvecS_{j}\; (j=1,\ldots,n-1) \bigr\} .
\end{equation}
\begin{thm}\label{secThmMicro}
Fix a lattice $\scrL=\Z^dM_0$ and a point $\vecq_0\in\RR^d\setminus\scrL$, and write $\vecalf=-\vecq_0 M_0^{-1}$. Then for each $n\in\ZZ_{>0}$ there exists a %
function $P_{\vecalf}^{(n)}:\scrB_n\to\R_{\geq 0}$ 
such that, 
for any Borel probability measure $\lambda$ on $\S_1^{d-1}$ which is 
absolutely continuous with respect to $\vol_{\S_1^{d-1}}$,
and for any set $\scrA\subset\RR^{nd}$ with boundary of Lebesgue measure zero,
\begin{multline} \label{secThm-eq}
	\lim_{\rho\to 0}\lambda\big( \big\{ \vecv_0\in \S_1^{d-1} : (\vecs_1(\vecq_0,\vecv_0;\rho),\ldots,\vecs_n(\vecq_0,\vecv_0;\rho)) \in \rho^{-(d-1)}\scrA \big\} \big) \\
= \int_{\scrA} P_{\vecalf}^{(n)}(\vecS_1,\ldots,\vecS_n)\,  \lambda'(\uvecS_1)\, d\!\vol_{\RR^d}(\vecS_1)\cdots d\!\vol_{\RR^d}(\vecS_n),
\end{multline}
where $\lambda'\in \L^1(\S_1^{d-1})$ is the Radon-Nikodym derivative of
$\lambda$ with respect to $\vol_{\S_1^{d-1}}$.
Furthermore, there is a function $\Psi:\scrB_3\to\R_{\geq 0}$ 
such that
\begin{equation} \label{jointlimdens}
	P_{\vecalf}^{(n)}(\vecS_1,\ldots,\vecS_n) = P_{\vecalf}^{(2)}(\vecS_1,\vecS_2)
	\prod_{j=3}^n \Psi(\vecS_{j-2},\vecS_{j-1},\vecS_j) 
\end{equation}
for all $n\geq 3$ and all $(\vecS_1,\ldots,\vecS_n)\in\scrB_n$.
\end{thm}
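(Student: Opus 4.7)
The plan is an inductive extension of the equidistribution machinery from \cite{partI}. Part I gives the limiting joint distribution of the first free path length and the outgoing velocity $\vecv_1$, which together with $\vecs_1 = \tau_1\vecv_0$ yields $P_\vecalf^{(1)}(\vecS_1)$. For $P_\vecalf^{(2)}$, the same machinery applies after encoding the event $\{\vecv_0 \in \S_1^{d-1} : (\vecs_1(\vecq_0,\vecv_0;\rho),\vecs_2(\vecq_0,\vecv_0;\rho)) \in \rho^{-(d-1)}\scrA\}$ as a condition on the rescaled affine lattice $\rho^{d-1}(\scrL - \vecq_0)R(\vecv_0) \in \ASLASL$, where $R(\vecv_0) \in \SO(d)$ is a rotation sending $\vecv_0$ to a fixed reference direction. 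The condition requires the rescaled lattice to avoid two consecutive tubes and to contain specified points near their endpoints, which encode the scatterer positions and the impact parameters. The Part I equidistribution, strengthened to this joint setting, shows that this rescaled lattice becomes Haar-equidistributed on $\ASLASL$ as $\rho\to 0$ (jointly with $\uvecS_1$ distributed according to $\lambda'$), yielding $P_\vecalf^{(2)}$ as an integral against Haar measure.

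For general $n$, the analogous encoding produces a composite geometric event on the rescaled lattice, and the equidistribution argument extends to give $P_\vecalf^{(n)}$ as an integral of the indicator of this event against Haar measure. The Markov-two factorization is then derived by analyzing the structure of the integrand. The geometric event decomposes into a conjunction of local conditions, one near each scatterer: the condition at the $(j-1)$-th scatterer involves the incoming velocity $\vecv_{j-2}$ (determined by $\vecS_{j-1}$), the outgoing velocity $\vecv_{j-1}$ (determined by $\vecS_j$), and the length of the next free path $\tau_j=\|\vecS_j\|$. After integrating out the global Haar variable using a mixing-type argument that decouples lattice slices at well-separated scatterers, the integral factors into a product over triples $(\vecS_{j-2},\vecS_{j-1},\vecS_j)$. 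The memory-\emph{two} structure in segment coordinates arises because a single segment $\vecS_j$ records only the incoming velocity at the $j$-th scatterer; reconstructing the genuine hidden Markov state $(\vecv_{j-2},\vecv_{j-1})$, which carries the impact parameter at the $(j-1)$-th scatterer via the reflection law, requires both $\vecS_{j-1}$ and $\vecS_j$. The universality of $\Psi$ (its independence from $\vecalf$) reflects that after the first collision the particle sits on a lattice point, so the initial offset $\vecalf$ is forgotten.

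The main obstacle is the decoupling step: showing that the Haar-random rescaled lattice, tested against the composite event, genuinely factors into pieces depending on at most three consecutive segments. A promising route is to parametrize $\ASLASL$ via a decomposition along a parabolic subgroup adapted to the trajectory, so that the global lattice is built up inductively from local data at each successive scattering event, and then exploit quantitative mixing of the $\SL(d,\RR)$-action to show that far-apart local pieces become asymptotically independent. Verifying that exactly a three-segment factor suffices — rather than a longer dependence on $\vecS_{j-3},\vecS_{j-4},\ldots$ — requires careful tracking of which past data genuinely enter the conditional distribution of the next scattering event, and in particular a proof that the "empty-tube" constraints coming from all but the most recent segment do not contribute to the asymptotic conditional density. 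This is the delicate technical core of the argument.
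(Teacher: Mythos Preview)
Your plan has a genuine gap at its first step: the ``composite geometric event on the rescaled lattice'' does not survive the limit $\rho\to 0$. The rescaling $D_\rho=\diag(\rho^{d-1},\rho^{-1},\ldots,\rho^{-1})$ adapted to $\vecv_0$ sends the first scatterer to an $O(1)$ point, but the second scatterer sits at $\vecm_1+\tau_2\vecv_1$ with $\tau_2\sim\rho^{-(d-1)}$ and $\vecv_1$ at an $O(1)$ angle from $\vecv_0$; its perpendicular component in the rotated frame is $\sim\rho^{-(d-1)}$, which after multiplication by $\rho^{-1}$ is $\sim\rho^{-d}\to\infty$. So already for $n=2$ the second tube escapes to infinity in the rescaled picture, and there is no event on the limiting Haar-random lattice that encodes it. Consequently your proposed mixing/decoupling step --- which you yourself flag as the delicate core --- never gets off the ground: there is no well-posed joint condition on a single point of $\ASLASL$ to decouple.

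The paper takes a completely different route and avoids this obstruction entirely. Rather than a single global encoding, it \emph{iterates} the one-collision result of \cite{partI}: after the $(k-1)$th collision the particle sits on a lattice point $\vecm_{k-1}$, and one applies the one-collision theorem afresh with $\vecq=\vecm_{k-1}$ (hence $\vecalf=\bn$), the initial direction $\vecv_{k-1}$ random according to its conditional law, and $\vecbeta$ given by the exit point $\vecbeta^+_{\vecv_{k-2}}$ on the $(k-1)$th ball. The technical work is not mixing but \emph{uniformity}: one needs the one-collision limit (Theorem~\ref{exactpos2-1hit}) to hold uniformly over the equismooth family of conditional measures $\lambda$ and exit-point functions $\vecbeta$ that arise as one varies the scatterer sequence $\vecm_1,\ldots,\vecm_{k-1}$ (Theorem~\ref{exactpos2-1hit-unif}, together with Proposition~\ref{REFLPROP} controlling the iterated scattering map). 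The outcome is the explicit product formula \eqref{PNLQ0EXPL},
from which the memory-two factorization \eqref{jointlimdensGS} is read off directly: each factor $p_{\bn,\vecbeta^+_{\uvecS_{j}}}(\uvecS_{j+1},\|\vecS_{j+1}\|,\uvecS_{j+2})$ involves only three consecutive segments by construction. No quantitative mixing, no decoupling of far-apart lattice slices, and no analysis of how past empty-tube constraints disappear asymptotically is needed --- the loss of memory is built into the iteration because each new application of the one-collision theorem starts from a \emph{lattice} point with $\vecalf=\bn$, and the only data carried forward is the exit point $\vecbeta^+_{\vecv_{k-2}}(\vecv_{k-1})$, which depends only on the last two velocities.
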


The above condition $\vecq_0\in\RR^d\setminus\scrL$ ensures that $\tau_1$ is defined for $\rho$ sufficiently small. In Sec.~\ref{ThmMicroproofsec} we also consider variants of Theorem \ref{secThmMicro} where the initial position is near $\scrL$, e.g., $\vecq_0\in\partial\scrK_\rho$.

We define the probability measure corresponding to \eqref{secThm-eq} by 
\begin{equation} \label{MUPROBMEAS}
\mu_{\vecalf,\lambda}^{(n)}(\scrA)
:= \int_{\scrA} P_{\vecalf}^{(n)}(\vecS_1,\ldots,\vecS_n)\,  \lambda'(\uvecS_1)\, d\!\vol_{\RR^d}(\vecS_1)\cdots d\!\vol_{\RR^d}(\vecS_n).	
\end{equation}
Note in particular that $\mu_{\vecalf,\lambda}^{(n+1)}(\scrA\times\RR^d)=\mu_{\vecalf,\lambda}^{(n)}(\scrA)$.

\begin{remark}
In probabilistic terms, Theorem \ref{secThmMicro} states that the discrete-time stochastic process $\{\rho^{d-1} \vecs_n(\vecq_0,\vecv_0;\rho) : n\in\ZZ_{>0} \}$
converges in the limit $\rho\to 0$ to 
\begin{equation}\label{Markovprocess}
\{ \vecS_n : n\in\ZZ_{>0} \}	,
\end{equation}
a Markov process with memory two. As we shall see, $\Psi(\vecS_1,\vecS_2,\vecS_3)$ is in fact independent of $\|\vecS_1\|$. %
\end{remark}

\begin{remark}
If $d\geq 3$ then %
$P_\vecalf^{(n)}$ is %
continuous on all of $\scrB_n$. If $d=2$ then $P_\vecalf^{(n)}$ is continuous
except possibly at points $(\vecS_1,\ldots,\vecS_n)\in\scrB_n$ with
$\uvecS_2=-\uvecS_1$ or 
$\uvecS_{j+2}=\uvecS_jR_{\vecS_{j+1}}$ for some $1\leq j\leq n-2$,
where $R_\vecS\in\O(2)$ denotes reflection in the line $\R\vecS$.
Cf.\ Remark \ref{secThmMicrocontrem} below.
\end{remark}

\begin{remark}\label{indi-remark}
Note that $\Psi$ is independent of $\scrL$ and $\vecq_0$, and $P_{\vecalf}^{(n)}$
depends only on the choice of $\vecalf$. This means in particular that $\Psi$ and
$P_{\vecalf}^{(n)}$ are rotation-invariant, i.e., for any $K\in\O(d)$ we have
\begin{equation}
        \Psi(\vecS_{1} K,\vecS_{2} K,\vecS_3 K) = \Psi(\vecS_{1},\vecS_{2},\vecS_3)
\end{equation}
and
\begin{equation}
        P_{\vecalf}^{(n)}(\vecS_1 K,\ldots,\vecS_n K) =
P_{\vecalf}^{(n)}(\vecS_1,\ldots,\vecS_n) .
\end{equation}
For $\vecalf\in\RR^d\setminus\QQ^d$ also $P_{\vecalf}^{(n)}=:P^{(n)}$ is in fact
independent of $\vecalf$, cf.\ Remark \ref{indi-remarkGS} below.
Explicit formulas and asymptotic properties of the limiting distributions will be
presented in \cite{partIII}.
\end{remark}

\begin{remark}
The case $n=1$ of course leads to the distribution of the free path length discussed in \cite{partI}; cf. also \cite{Dahlqvist97,Bourgain98,Golse00,Caglioti03,Boca07} for earlier results.
\end{remark}

\subsection{A limiting stochastic process for the billiard flow}\label{secStochastic}

In Theorem \ref{secThmMicro} we have identified a Markov process with memory two that describes the limiting distribution of billiard paths with random initial data $(\vecq_0,\vecv_0)$. Let us denote by
\begin{equation}\label{SP}
	\{ \Xi(t) : t\in\RR_{>0} \},
\end{equation}
the continuous-time stochastic process that is obtained by moving with unit speed along the random paths of the Markov process \eqref{Markovprocess}. The process is fully specified by the probability 
\begin{equation}\label{PProb}
	\PP_{\vecalf,\lambda}\big(\Xi(t_1)\in\scrD_1,\ldots, \Xi(t_M)\in\scrD_M\big)
\end{equation}
that $\Xi(t)$ visits the sets $\scrD_1,\ldots,\scrD_M\subset\T^1(\R^d)$ at times $t=t_1,\ldots,t_M$, with $M$ arbitrarily large. To give a precise definition of \eqref{PProb} set
$T_0:=0$, 
$T_n:=\sum_{j=1}^n \|\vecS_j\|$, and define the probability that $\Xi(t)$ is in the set $\scrD_1$ at time $t_1$ after exactly $n_1$ hits, in the set $\scrD_2$ at time $t_2$ after exactly $n_2$ hits, etc., by
\begin{multline} \label{PNLQ0LDEF}
	\PP_{\vecalf,\lambda}^{(\vecn)}\big(\Xi(t_1)\in\scrD_1,\ldots, \Xi(t_M)\in\scrD_M \text{ and } T_{n_1}\leq t_1< T_{n_1+1},\ldots, T_{n_M}\leq t_M< T_{n_M+1}  \big) \\
	:=
\mu_{\vecalf,\lambda}^{(n+1)}\big(\big\{(\vecS_1,\ldots,\vecS_{n+1}): \Xi_{n_j}(t_j) \in\scrD_j,\; T_{n_j}\leq t_j< T_{n_j+1}\;(j=1,\ldots,M) \big\}\big) 
\end{multline}
with $\vecn:=(n_1,\ldots,n_M)$, $n:=\max(n_1,\ldots,n_M)$, and
\begin{equation}\label{Xn}
	\Xi_n(t):= \bigg( \sum_{j=1}^n \vecS_j + (t-T_n) \uvecS_{n+1}, \uvecS_{n+1}\bigg) .	
\end{equation}
Note that the choice $T_{n}\leq t< T_{n+1}$ of semi-open intervals is determined by the use of the outgoing configuration, recall \eqref{OUTGOING}.
The formal definition of \eqref{PProb} is thus
\begin{multline} \label{PPLQ0LDEF}
		\PP_{\vecalf,\lambda}\big(\Xi(t_1)\in\scrD_1,\ldots, \Xi(t_M)\in\scrD_M\big) \\ := \sum_{\vecn\in\ZZ_{\geq 0}^M} \PP_{\vecalf,\lambda}^{(\vecn)}\big(\Xi(t_1)\in\scrD_1,\ldots, \Xi(t_M)\in\scrD_M \text{ and } T_{n_1}\leq t_1< T_{n_1+1},\ldots, T_{n_M}\leq t_M< T_{n_M+1}  \big).
\end{multline}

The following theorem shows that the Lorentz process \eqref{LP}, suitably rescaled, converges to the stochastic process \eqref{SP} as $\rho\to 0$. 
Given any set $\scrD\subset\T^1(\R^d)$ we say that $t\geq 0$ is
\textit{$\scrD$-admissible} if 
\begin{align}
\vol_{\S_1^{d-1}}\bigl(\bigl\{\uvecS_1\in\S_1^{d-1}\col
(t\uvecS_1,\uvecS_1)\in\partial\scrD\bigr\}\bigr)=0.
\end{align}
We write $\adm(\scrD)$ for the set of all 
$\scrD$-admissible numbers $t\geq 0$.
\begin{thm} \label{secThmMicro2}
Fix a lattice $\scrL=\ZZ^d M_0$ and a point $\vecq_0\in\RR^d\setminus\scrL$, 
set $\vecalf=-\vecq_0 M_0^{-1}$,
and let $\lambda$ be a Borel probability measure on $\S_1^{d-1}$ which is absolutely continuous with respect to $\vol_{\S_1^{d-1}}$. 
Then, for any subsets $\scrD_1,\ldots,\scrD_M\subset \T^1(\RR^{d})$ 
with boundary of Lebesgue measure zero, and any numbers
$t_j\in\adm(\scrD_j)$ ($j=1,\ldots,M$),
\begin{multline} \label{secThm-eq2}
	\lim_{\rho\to 0}\lambda\big( \big\{ \vecv_0\in \S_1^{d-1} : (\rho^{d-1}\vecq(\rho^{-(d-1)}t_j),\vecv(\rho^{-(d-1)} t_j)) \in\scrD_j, \; j=1,\ldots,M \big\} \big) \\
= \PP_{\vecalf,\lambda}\big(\Xi(t_1)\in\scrD_1,\ldots, \Xi(t_M)\in\scrD_M\big) .
\end{multline}
The convergence is uniform for $(t_1,\ldots,t_M)$ in compact subsets of 
$\adm(\scrD_1)\times\cdots\times\adm(\scrD_M)$. 
\end{thm}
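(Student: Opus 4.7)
The plan is to reduce Theorem \ref{secThmMicro2} to Theorem \ref{secThmMicro} by decomposing both sides according to the number of collisions that have occurred before each time $t_j$, and then to control the resulting infinite sum via a uniform tail bound. Define the macroscopic cumulative collision times $T_n^\rho:=\rho^{d-1}\sum_{k=1}^{n}\tau_k$ and the collision counter $N_\rho(t)$ by $T_{N_\rho(t)}^\rho\leq t<T_{N_\rho(t)+1}^\rho$. The left-hand side of \eqref{secThm-eq2} then equals
\begin{equation*}
\sum_{\vecn\in\ZZ_{\geq 0}^M}\lambda\bigl(\bigl\{\vecv_0: N_\rho(t_j)=n_j\text{ and }(\rho^{d-1}\vecq(\rho^{-(d-1)}t_j),\vecv(\rho^{-(d-1)}t_j))\in\scrD_j\;(j=1,\ldots,M)\bigr\}\bigr),
\end{equation*}
which mirrors the decomposition \eqref{PPLQ0LDEF} of the right-hand side.

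For a fixed $\vecn$, let $n:=\max_j n_j$. Using the straight-line formula
\begin{equation*}
\rho^{d-1}\vecq(\rho^{-(d-1)}t_j)=\rho^{d-1}\vecq_0+\sum_{k=1}^{n_j}\rho^{d-1}\vecs_k+(t_j-T_{n_j}^\rho)\,\frac{\vecs_{n_j+1}}{\|\vecs_{n_j+1}\|},
\end{equation*}
valid when $N_\rho(t_j)=n_j$, the $\vecn$-th summand is the $\lambda$-measure of an event of the form $\{(\rho^{d-1}\vecs_1,\ldots,\rho^{d-1}\vecs_{n+1})\in\scrA_\vecn^\rho\}$, where $\scrA_\vecn^\rho$ differs from the limit set $\scrA_\vecn\subset\RR^{(n+1)d}$ only by the vanishing translation $\rho^{d-1}\vecq_0\to\vecnull$. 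The set $\scrA_\vecn$ is cut out by the conditions $T_{n_j}\leq t_j<T_{n_j+1}$ and $\Xi_{n_j}(t_j)\in\scrD_j$; by a short Fubini argument using the Lebesgue-null boundary of $\scrD_j$, together with the $\scrD_j$-admissibility of $t_j$ to handle the case $n_j=0$, the boundary $\partial\scrA_\vecn$ has Lebesgue measure zero. Theorem \ref{secThmMicro} then yields termwise convergence to $\PP_{\vecalf,\lambda}^{(\vecn)}(\cdots)$; the $\rho$-dependence via $\rho^{d-1}\vecq_0$ is absorbed by sandwiching between small thickenings of $\scrA_\vecn$.

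The main obstacle is exchanging the $\rho\to 0$ limit with the sum over $\vecn$, which requires a uniform tail bound
\begin{equation*}
\sup_{0<\rho<\rho_0}\lambda\bigl(\bigl\{\vecv_0:\max_j N_\rho(t_j)\geq N\bigr\}\bigr)\;\leq\; f(N),\qquad f(N)\xrightarrow[N\to\infty]{}0.
\end{equation*}
Setting $T:=\max_j t_j$, this event is contained in $\{T_N^\rho\leq T\}$, once again a set of the form $\{(\rho^{d-1}\vecs_1,\ldots,\rho^{d-1}\vecs_N)\in\scrA\}$ with $\partial\scrA$ Lebesgue-null. For each fixed $N$, Theorem \ref{secThmMicro} then gives $\lim_{\rho\to 0}\lambda(T_N^\rho\leq T)=\mu_{\vecalf,\lambda}^{(N)}(\|\vecS_1\|+\cdots+\|\vecS_N\|\leq T)$, and it remains to show this vanishes as $N\to\infty$. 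This follows from the finiteness of the mean of $\|\vecS_1\|$ (established in \cite{partI}) together with the Markov factorization \eqref{jointlimdens}, which prevent the partial sums $T_N$ from accumulating near $0$.

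Finally, uniform convergence in $(t_1,\ldots,t_M)$ on compact subsets of $\adm(\scrD_1)\times\cdots\times\adm(\scrD_M)$ follows by a standard three-$\ve$ argument: the termwise limits are continuous in $(t_1,\ldots,t_M)$ on the admissible set, the tail bound $f(N)$ depends only on an upper bound on $\max_j t_j$ and is therefore uniform on compact subsets, and the pointwise convergence above thereby upgrades to uniform convergence.
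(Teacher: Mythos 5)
Your overall architecture coincides with the paper's: decompose both sides according to the vector of collision counts, prove termwise convergence by applying Theorem \ref{secThmMicro} to the sets $\scrA_\vecn$ (using admissibility to handle $n_j=0$ and a sandwich to absorb the vanishing offset $\rho^{d-1}\vecq_0$), and interchange the limit with the sum over $\vecn$ via a tail bound on the number of collisions before time $\max_j t_j$. Two of your steps, however, are not justified as written. First, the tail bound: you derive $\mu_{\vecalf,\lambda}^{(N)}\bigl(\|\vecS_1\|+\cdots+\|\vecS_N\|\leq T\bigr)\to 0$ from ``the finiteness of the mean of $\|\vecS_1\|$ (established in \cite{partI})'' plus the factorization \eqref{jointlimdens}. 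No such moment bound is established in \cite{partI}, and it is in fact false: for a fixed generic $\vecq_0$ the limiting density of the first segment decays only like $\|\vecS_1\|^{-2}$, so $\|\vecS_1\|$ has infinite mean; moreover no law-of-large-numbers argument for the memory-two chain is supplied. The statement you need is true, but the correct (and elementary) route is the one in Lemma \ref{diffLem}: since $0\leq\Phi_\vecalf,\Phi_\bn\leq 1$, the conditional density of each flight time, after integrating out the directions, is bounded by $\vol(\scrB_1^{d-1})$, giving the bound $\vol(\scrB_1^{d-1})^N T^N/N!$, which together with the $N$-fixed limit yields the uniform-in-$\rho$ tail estimate. (A related smaller point: the sandwich should be performed on the sets $\scrD_j$ themselves --- erode and dilate by $\delta$, arranging that the resulting sets $\scrA_\vecn$ still have null boundary, as in Lemma \ref{KEYLEMMAFORTHMMICRO2} --- rather than on $\scrA_\vecn$, since translating $\vecS_1$ perturbs $T_{n_j}$ and $\uvecS_1$, so a thickening of $\scrA_\vecn$ does not obviously contain the offset event.)

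Second, and more seriously, the uniformity on compacta. Pointwise convergence to a limit that is continuous on $\adm(\scrD_1)\times\cdots\times\adm(\scrD_M)$, together with a tail bound depending only on $\max_j t_j$, does not imply uniform convergence (a travelling bump of fixed height converges pointwise to $0$ but not uniformly). Since uniformity is part of the assertion, a genuine additional argument is required: one must show the prelimit probabilities are asymptotically equicontinuous in the $t_j$, equivalently that \eqref{secThm-eq2} persists for moving times $t_j(\rho)\to t_j\in\adm(\scrD_j)$. The paper does this in Sec.~\ref{ThmMicro2GSpfsecsubsec2} by bounding the $\lambda$-measure of the exceptional set of $\vecv_0$ for which some rescaled collision time $T_k$ falls between $t_j(\rho)$ and $t_j$; this is itself an application of Theorem \ref{secThmMicro} to the sets $\bigl\{(\vecS_1,\ldots,\vecS_{n+1})\col T_k\in(t_j-\delta,t_j+\delta)\bigr\}$, whose limit measure tends to $0$ with $\delta$ by absolute continuity. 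Your ``standard three-$\ve$ argument'' omits exactly this ingredient, so the uniformity claim is unproved in your proposal.
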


\begin{remark} \label{ADMREMARK}
The condition $t_j\in\adm(\scrD_j)$ %
cannot be disposed with.
For example, \eqref{secThm-eq2} is in general \textit{false} in the
case $M=1$, $\scrD_1=\scrB_{t_1}^d\times\S_1^{d-1}$.
We prove this in  Section~\ref{ADMREMARKSEC}.
Note however that no admissibility condition is required in the macroscopic
analogue of Theorem~\ref{secThmMicro2}, see Theorem \ref{secThmMacro2} below.
\end{remark}

\subsection{Macroscopic initial conditions\label{secMacroscopic}}

In view of the rescaling applied in the previous section it is natural to consider the ``macroscopic'' billiard flow
\begin{align}
	F_t : & \T^1(\rho^{d-1}\scrK_\rho) \to \T^1(\rho^{d-1}\scrK_\rho) \\ 
	 & (\vecQ_0,\vecV_0) \mapsto  (\vecQ(t),\vecV(t))=(\rho^{d-1}\vecq(\rho^{-(d-1)}t),\vecv(\rho^{-(d-1)}t)) ,\notag
\end{align}
and take random initial conditions $(\vecQ_0,\vecV_0)$ with respect to some fixed probability measure $\Lambda$. We will establish the analogous limit laws as in the previous sections. Although the macroscopic versions are less general (they are obtained by averaging over $\vecq_0$), they appear more natural from a physical viewpoint, where one is interested in the time evolution of a macroscopic particle cloud; cf. the discussion at end of this section.

The $n$th path segment in these macroscopic coordinates is 
\begin{equation}
	\vecS_n(\vecQ_0,\vecV_0;\rho):=\rho^{d-1}\vecs_n(\rho^{-(d-1)}\vecQ_0,\vecV_0;\rho).
\end{equation}

\begin{thm}\label{secThmMacro}
Fix a lattice $\scrL$ and let $\Lambda$ be a Borel probability measure on $\T^1(\RR^d)$ which is absolutely continuous with respect to Lebesgue measure. Then, for each $n\in\ZZ_{>0}$, and for any set $\scrA\subset \RR^d\times\RR^{nd}$ with boundary of Lebesgue measure zero,
\begin{multline} \label{secThm-eq-macro}
	\lim_{\rho\to 0}\Lambda\big( \big\{ (\vecQ_0,\vecV_0)\in \T^1(\rho^{d-1}\scrK_\rho) : (\vecQ_0,\vecS_1(\vecQ_0,\vecV_0;\rho),\ldots,\vecS_n(\vecQ_0,\vecV_0;\rho)) \in \scrA \big\} \big) \\
= \int_{\scrA} P^{(n)}(\vecS_1,\ldots,\vecS_n)\, \Lambda'\big(\vecQ_0,\uvecS_1\big)\, d\!\vol_{\RR^d}(\vecQ_0)\, d\!\vol_{\RR^d}(\vecS_1)\cdots d\!\vol_{\RR^d}(\vecS_n) ,
\end{multline}
with $P^{(n)}$ as in Remark \ref{indi-remark},
and where $\Lambda'$ is the Radon-Nikodym derivative of
$\Lambda$ with respect to $\vol_{\R^d}\times\vol_{\S_1^{d-1}}$.
\end{thm}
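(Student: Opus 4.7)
The strategy is to disintegrate $\Lambda$ in the initial position $\vecQ_0$ and reduce pointwise to Theorem~\ref{secThmMicro}, exploiting the key fact from Remark~\ref{indi-remark} that $P_{\vecalf}^{(n)}=P^{(n)}$ is independent of $\vecalf$ for every $\vecalf\in\RR^d\setminus\QQ^d$. Writing $d\Lambda(\vecQ_0,\vecV_0)=\Lambda'(\vecQ_0,\vecV_0)\,d\!\vol_{\RR^d}(\vecQ_0)\,d\!\vol_{\S_1^{d-1}}(\vecV_0)$ with marginal $\Lambda_1'(\vecQ_0)=\int\Lambda'(\vecQ_0,\vecV_0)\,d\!\vol(\vecV_0)$, I let $\lambda_{\vecQ_0}$ denote, for $\Lambda_1'(\vecQ_0)>0$, the conditional probability measure on $\S_1^{d-1}$ with density $\Lambda'(\vecQ_0,\cdot)/\Lambda_1'(\vecQ_0)$. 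The left-hand side of \eqref{secThm-eq-macro} then equals
\[
\int_{\RR^d}\lambda_{\vecQ_0}\bigl(\bigl\{\vecV_0\in\S_1^{d-1}\col(\vecQ_0,\vecV_0)\in\T^1(\rho^{d-1}\scrK_\rho),\;(\vecQ_0,\vecS_1,\ldots,\vecS_n)\in\scrA\bigr\}\bigr)\,\Lambda_1'(\vecQ_0)\,d\!\vol_{\RR^d}(\vecQ_0),
\]
where $\vecS_j=\vecS_j(\vecQ_0,\vecV_0;\rho)$. The constraint $(\vecQ_0,\vecV_0)\in\T^1(\rho^{d-1}\scrK_\rho)$ only removes a $\rho^d$-neighborhood of $\rho^{d-1}\scrL$ whose $\Lambda$-measure vanishes with $\rho$, so may be dropped.

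Since $\vecS_n(\vecQ_0,\vecV_0;\rho)=\rho^{d-1}\vecs_n(\vecq_0,\vecV_0;\rho)$ with $\vecq_0:=\rho^{-(d-1)}\vecQ_0$, a formal pointwise-in-$\vecQ_0$ application of Theorem~\ref{secThmMicro}, with shift parameter $\vecalf(\rho):=-\rho^{-(d-1)}\vecQ_0 M_0^{-1}$ and measure $\lambda_{\vecQ_0}$, would send the inner probability to the integral of $P_{\vecalf(\rho)}^{(n)}(\vecS_1,\ldots,\vecS_n)\,\lambda_{\vecQ_0}'(\uvecS_1)$ over the slice $\{(\vecS_1,\ldots,\vecS_n)\col(\vecQ_0,\vecS_1,\ldots,\vecS_n)\in\scrA\}$. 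For Lebesgue-almost every $\vecQ_0$, the vector $\vecQ_0 M_0^{-1}$ is not a real multiple of any nonzero rational vector, hence $\vecalf(\rho)\in\RR^d\setminus\QQ^d$ for every $\rho>0$; by Remark~\ref{indi-remark} the limit density is then the $\vecalf$-independent $P^{(n)}$. Dominated convergence, with integrand bounded by $1$ and majorant $\Lambda_1'\in\L^1(\RR^d)$, then produces the right-hand side of \eqref{secThm-eq-macro}.

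The main obstacle is that Theorem~\ref{secThmMicro} is proved for \emph{fixed} $\vecq_0$, whereas here $\vecq_0=\rho^{-(d-1)}\vecQ_0$ drifts with $\rho$ and $\vecalf(\rho)$ wanders over $\RR^d$ as $\rho\to 0$. To salvage the pointwise argument one needs a uniform version of Theorem~\ref{secThmMicro}: the proof in \cite{partI} passes through an equidistribution statement for certain horospherical translates in the homogeneous space $\ASLR/\ASLZ$ and its affine analogue, and this dynamical input has to be upgraded to uniform equidistribution for $\vecalf$ ranging over compact subsets of $\RR^d\setminus\QQ^d$. Alternatively one can bypass pointwise convergence altogether by first averaging in $\vecQ_0$ over a small box $B\subset\RR^d$; the substitution $\vecq_0=\rho^{-(d-1)}\vecQ_0$ turns this into an average over the enormous region $\rho^{-(d-1)}B$, and Ratner-type equidistribution on $\ASLR/\ASLZ$ (whose Haar measure is indifferent to $\vecalf$) automatically selects the generic irrational limit $P^{(n)}$. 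Either route reduces the macroscopic theorem to a quantitative strengthening of the dynamical input already underlying Theorem~\ref{secThmMicro}.
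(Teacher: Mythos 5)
Your reduction to a $\vecQ_0$-disintegration is fine as far as it goes, but the proof stops exactly at the point you yourself flag as ``the main obstacle'': for fixed $\vecQ_0$ the initial point $\vecq_0=\rho^{-(d-1)}\vecQ_0$ and hence $\vecalf(\rho)=-\rho^{-(d-1)}\vecQ_0M_0^{-1}$ move with $\rho$, so Theorem \ref{secThmMicro} (proved for a fixed $\vecq_0$) does not apply, and neither of your two proposed repairs is carried out. The first one is moreover not a well-posed fix: uniformity of the limit ``for $\vecalf$ ranging over compact subsets of $\RR^d\setminus\QQ^d$'' is of little use because $\QQ^d$ is dense, the drifting point $\vecalf(\rho)$ taken mod $\ZZ^d$ wanders over the torus and can enter arbitrarily small neighbourhoods of rational points, so one would need uniformity under quantitative Diophantine conditions together with control of the excursions of $\vecalf(\rho)$ near rationals --- a genuinely new and delicate strengthening of the input from \cite{partI} that is nowhere established (and, for exceptional $\vecQ_0$, pointwise convergence to the $\vecalf$-independent density may simply fail). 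The second route (averaging over a small box and invoking ``Ratner-type equidistribution'' over the huge region $\rho^{-(d-1)}B$) is only a heuristic; no statement is formulated or proved. As written, the argument therefore has a genuine gap.

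The paper closes this gap without any new dynamical input, by a device you did not use: the exact $\scrL$-periodicity of $\vecs_n(\,\cdot\,,\vecv_0;\rho)$ in its first argument. After reducing to continuous compactly supported test functions $g$ and continuous $\Lambda'$, one writes $\vecQ_0=\rho^{d-1}\vecq$ with $\vecq\in\vecq_0+\scrL$, $\vecq_0\in F$ a fundamental domain; since the path segments depend only on the residue $\vecq_0$ (not on the representative $\vecq$), the $\vecQ_0$-integral unfolds into an integral over the \emph{fixed} microscopic offset $\vecq_0\in F$ of a Riemann sum $\rho^{d(d-1)}\sum_{\vecq\in\vecq_0+\scrL}g(\rho^{d-1}\vecq,\ldots)\Lambda'(\rho^{d-1}\vecq,\vecv_0)$, which converges uniformly to $\int_{\R^d}g(\vecq,\ldots)\Lambda'(\vecq,\vecv_0)\,d\!\vol_{\R^d}(\vecq)$. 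For each fixed $(\vecq_0,\vecq)$ the inner $\vecv_0$-integral is then exactly the object of Theorem \ref{secThmMicro} (in test-function form) with the $\rho$-independent point $\vecq_0$, and for almost every $\vecq_0\in F$ one has $\vecalf\notin\QQ^d$, giving the $\vecalf$-independent density $P^{(n)}$; Lebesgue's bounded convergence theorem in $(\vecq_0,\vecq)$ finishes the proof. So the correct mechanism is to trade your drifting macroscopic initial point for a fixed point in $F$ plus a slowly varying macroscopic parameter inside the test function, rather than to strengthen the equidistribution underlying the microscopic theorem.
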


The probability measure corresponding to the above limiting distribution is defined by
\begin{equation} \label{muLambdandef}
	\mu_{\Lambda}^{(n)}(\scrA):= \int_{\scrA} P^{(n)}(\vecS_1,\ldots,\vecS_n)\, \Lambda'\big(\vecQ_0,\uvecS_1\big)\, d\!\vol_{\RR^d}(\vecQ_0)\, d\!\vol_{\RR^d}(\vecS_1)\cdots d\!\vol_{\RR^d}(\vecS_n) .
\end{equation}

We redefine the stochastic process \eqref{SP} by specifying the probability
\begin{equation}\label{PProb-macro}
	\PP_{\Lambda}\big(\Xi(t_1)\in\scrD_1,\ldots, \Xi(t_M)\in\scrD_M\big)
\end{equation}
via the measure $\mu_\Lambda^{(n)}$ by the same construction as in Section \ref{secStochastic}. The only essential difference is that we need to replace \eqref{Xn} by
\begin{equation} \label{XNDEFMACRO}
	\Xi_n(t):= \bigg( \vecQ_0+ \sum_{j=1}^n \vecS_j + (t-T_n) \uvecS_{n+1}, \uvecS_{n+1}\bigg) .
\end{equation}
Note that formally $\PP_\Lambda=\PP_{\vecalf,\lambda}$ if $\Lambda'(\vecQ,\vecV)=\delta(\vecQ) \lambda'(\vecV)$, and $\vecalf\in\RR^d\setminus\QQ^d$.

\begin{thm}\label{secThmMacro2}
Fix a lattice $\scrL$ and let $\Lambda$ be a Borel probability measure on $\T^1(\RR^d)$ which is absolutely continuous with respect to Lebesgue measure. Then, for any $t_1,\ldots,t_M\in\RR_{\geq 0}$, and any subsets $\scrD_1,\ldots,\scrD_M\subset \T^1(\RR^{d})$ with boundary of Lebesgue measure zero,
\begin{multline} \label{secThm-eq2-macro}
	\lim_{\rho\to 0}\Lambda\big(\big\{(\vecQ_0,\vecV_0)\in \T^1(\rho^{d-1}\scrK_\rho) : (\vecQ(t_1),\vecV(t_1))\in \scrD_1,\ldots, (\vecQ(t_M),\vecV(t_M))\in \scrD_M \big\}\big) \\
= \PP_{\Lambda}\big(\Xi(t_1)\in\scrD_1,\ldots, \Xi(t_M)\in\scrD_M\big) .
\end{multline}
The convergence is uniform for $t_1,\ldots,t_M$ in compact subsets of $\RR_{\geq 0}$. 
\end{thm}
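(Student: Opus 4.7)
The plan is to deduce Theorem~\ref{secThmMacro2} from Theorem~\ref{secThmMacro}, the segment-level limit in macroscopic coordinates, by decomposing the event on the left of \eqref{secThm-eq2-macro} as a disjoint countable union over $\vecn=(n_1,\ldots,n_M)\in\ZZ_{\geq 0}^M$. On the piece $E_\rho^{\vecn}$, where the trajectory has undergone exactly $n_j$ collisions before macroscopic time $t_j$ for every $j$, the phase point $(\vecQ(t_j),\vecV(t_j))$ is the explicit piecewise-linear function of $(\vecQ_0,\vecS_1,\ldots,\vecS_{n+1})$, with $n:=\max_j n_j$, given by $\bigl(\vecQ_0+\sum_{i=1}^{n_j}\vecS_i+(t_j-T_{n_j})\uvecS_{n_j+1},\,\uvecS_{n_j+1}\bigr)$, exactly as in \eqref{XNDEFMACRO}. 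The event $E_\rho^{\vecn}\cap\{(\vecQ(t_j),\vecV(t_j))\in\scrD_j\;\forall j\}$ therefore becomes the condition that $(\vecQ_0,\rho^{d-1}\vecs_1,\ldots,\rho^{d-1}\vecs_{n+1})$ lies in an explicit Borel set $\scrA_{\vecn}(\vect)\subset\RR^{(n+2)d}$.

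For each fixed $\vecn$, Theorem~\ref{secThmMacro} applied to $\scrA_{\vecn}(\vect)$ yields convergence of $\Lambda(E_\rho^{\vecn}\cap\{(\vecQ(t_j),\vecV(t_j))\in\scrD_j\;\forall j\})$ to the $\vecn$-th term in the sum defining $\PP_\Lambda$, provided $\partial\scrA_{\vecn}(\vect)$ has Lebesgue measure zero. The nontrivial part of $\partial\scrA_{\vecn}(\vect)$ comes from the preimage of $\partial\scrD_j$ under the piecewise-linear map above; but by Fubini, for each fixed $(\vecS_1,\ldots,\vecS_{n+1})$ the $\vecQ_0$-slice $\{\vecQ_0\col\vecQ_0+\sum_{i\leq n_j}\vecS_i+(t_j-T_{n_j})\uvecS_{n_j+1}\in\partial\scrD_j\}$ is a translate of $\partial\scrD_j$ and has Lebesgue measure zero, so the full preimage has Lebesgue measure zero in $\RR^{(n+2)d}$. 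This Fubini step is precisely the mechanism by which averaging over $\vecQ_0$ removes the admissibility condition $t_j\in\adm(\scrD_j)$ required in Theorem~\ref{secThmMicro2}: any concentration of mass on a boundary hypersurface of $\scrD_j$ is smeared out by the $\vecQ_0$-integration. Summing over $\vecn$ and matching against \eqref{PPLQ0LDEF} (with $\mu_\Lambda^{(n)}$ in place of $\mu_{\vecalf,\lambda}^{(n)}$) then produces the right-hand side of \eqref{secThm-eq2-macro}.

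The main technical obstacle is interchanging $\lim_{\rho\to 0}$ with the countable sum over $\vecn$. For this I would establish a uniform tail bound on the collision count: for every $\varepsilon>0$ and every compact $K\subset\RR_{\geq 0}$, there exist $N_0$ and $\rho_0>0$ such that
\begin{equation*}
\Lambda\bigl(\bigl\{(\vecQ_0,\vecV_0)\col\text{the trajectory undergoes more than }N_0\text{ collisions by macroscopic time }\sup K\bigr\}\bigr)<\varepsilon
\end{equation*}
for all $\rho\in(0,\rho_0]$. This is equivalent to a uniform lower-tail bound on the $N$-th macroscopic collision time $T_N^{(\rho)}=\sum_{i=1}^N\|\vecS_i\|$, which follows from the free-path-length estimates of \cite{partI}: the first macroscopic free path length has a limiting distribution with strictly positive mean, and its law is tight away from $0$ uniformly in $\rho$, so a first-moment argument applied to the segment lengths gives $T_N^{(\rho)}\geq\sup K$ with probability $\geq 1-\varepsilon$ for large $N$, uniformly in $\rho$. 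The uniformity of the convergence in $(t_1,\ldots,t_M)$ over compacta of $\RR_{\geq 0}^M$ is then inherited from two facts: the tail bound above depends only on $\sup K$, and for each fixed $\vecn$ the set $\scrA_{\vecn}(\vect)$ depends continuously on $\vect$, so a standard sandwiching of $\mathbf{1}_{\scrA_{\vecn}(\vect)}$ between continuous test functions of the phase point converts the pointwise $\vect$-dependent convergence into uniform convergence on compact sets.
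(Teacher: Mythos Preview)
Your proposal follows essentially the same route as the paper: decompose the event over $\vecn\in\ZZ_{\geq 0}^M$, apply Theorem~\ref{secThmMacro} to the resulting set $\scrA_\vecn$, and control the tail via a uniform bound on the number of collisions by macroscopic time $t$ (this is the macroscopic analogue of Lemma~\ref{diffLem}, whose proof uses the pointwise bound $\Phi_\bn\leq 1$ rather than a first-moment argument, but your statement of what is needed is correct).

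There is one technical difference worth noting. To apply Theorem~\ref{secThmMacro} one needs $\partial\scrA_\vecn$ to have Lebesgue measure zero. You verify this directly by Fubini over $\vecQ_0$ for the piece coming from $\partial\scrD_j$; this is correct, and you should also note that the remaining boundary pieces $\{T_{n_j}=t_j\}$ and $\{T_{n_j+1}=t_j\}$ are hypersurfaces in the $\vecS$-variables and hence null. The paper instead transports the sandwich construction of Lemma~\ref{KEYLEMMAFORTHMMICRO2} (replacing Lemmas~\ref{NZEROKEYLEMMA}--\ref{NPOSKEYLEMMA} by the trivial Lemma~\ref{NPOSKEYLEMMAMACRO}), approximating each $\scrD_j$ from inside and outside by finite unions of boxes. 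Your direct Fubini argument is more economical for the convergence statement itself; the paper's sandwich, on the other hand, makes the uniformity in $(t_1,\ldots,t_M)$ more transparent, since the $\delta$-fattening absorbs small perturbations of the $t_j$'s (cf.\ Section~\ref{ThmMicro2GSpfsecsubsec2}). Your one-line treatment of uniformity would need to be expanded along these lines.
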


The time evolution of an initial particle cloud $f\in\L^1(\T^1(\rho^{d-1}\scrK_\rho))$ in the periodic Lorentz gas is described by the operator $L_{t,\rho}$ defined by
\begin{equation}
	[L_{t,\rho}f](\vecQ,\vecV)=f(F_t^{-1}(\vecQ,\vecV)) .
\end{equation}
To allow a $\rho$-independent choice of the initial density $f$, it is convenient to extend the action of $F_t$ from $\T^1(\rho^{d-1}\scrK_\rho)$ to $\T^1(\RR^d)$ by setting $F_t=\id$ on $\T^1(\RR^d)\setminus\T^1(\rho^{d-1}\scrK_\rho)$.
We fix the Liouville measure on $\T^1(\RR^d)$ to be the standard 
Lebesgue measure
\begin{align}
d\nu(\vecQ,\vecV)=d\!\vol_{\RR^d}(\vecQ)\, d\!\vol_{\S_1^{d-1}}(\vecV).
\end{align}
Theorem \ref{secThmMacro2} now implies the existence of a limiting operator $L_t$ that describes the evolution of the particle cloud in the Boltzmann-Grad limit. More precisely, for every set $\scrD$ with boundary of Lebesgue measure zero, we have
\begin{equation}\label{Boltzlimit}
	\lim_{\rho\to 0} \int_\scrD [L_{t,\rho}f](\vecQ,\vecV)\, d\nu(\vecQ,\vecV)= \int_\scrD [L_t f](\vecQ,\vecV)\,d\nu(\vecQ,\vecV) ,
\end{equation}
uniformly for $t$ on compacta in $\RR_{\geq 0}$, and $L_t$ is defined by the relation
\begin{equation}
	\int_\scrD [L_t f](\vecQ,\vecV)\,d\nu(\vecQ,\vecV)
	= \PP_{\Lambda}\big(\Xi(t)\in\scrD \big),
\end{equation}
for any absolutely continuous $\Lambda$, any Borel subset $\scrD\subset\T^1(\RR^d)$,
and $f=\Lambda'$. We note that $L_t$ commutes with the translation operators $\{
T_\vecR : \vecR\in\RR^d\}$,
\begin{equation}
        [T_\vecR f](\vecQ,\vecV) := f(\vecQ-\vecR,\vecV),
\end{equation}
and, in view of Remark \ref{indi-remark}, with the rotation operators $\{ R_K : K\in
\O(d)\}$,
\begin{equation}
        [R_K f](\vecQ,\vecV) := f(\vecQ K,\vecV K) .
\end{equation}

It was already pointed out by Golse \cite{Golse07}, that the weak-$*$ limit of any converging subsequence $L_{t,\rho_i}f$ ($\rho_i\to 0$) does not satisfy the linear Boltzmann equation. His arguments use the a priori estimates in \cite{Bourgain98,Golse00}, and do not require knowledge of the existence of the limit \eqref{Boltzlimit}. The fundamental reason behind the failure of the linear Boltzmann equation is that, perhaps surprisingly,  $\{L_t: t\geq 0\}$ is not a semigroup. We will show in Section \ref{secExtended} how to overcome this problem by considering an extended stochastic process that keeps track not only of position $\vecQ$ and current velocity $\vecV$, but also the free path length $\scrT$ until the next collision, and the velocity $\vecV_+$ thereafter. We will establish that the extended process is Markovian, and derive the corresponding Fokker-Planck-Kolmogorov equation describing the evolution of the particle density in the extended phase space. A similar approach has recently been explored by Caglioti and Golse in the two-dimensional case \cite{Caglioti07}. Their result is however conditional on an independence hypothesis, which is equivalent to the Markov property established by our Theorem \ref{secThmMacro} above.

\subsection{Outline of the paper} 
The key ingredient in the present work is Theorem 4.8 of \cite{partI} (restated as Theorem \ref{exactpos2-1hit} below for general scattering maps), which yields the joint limiting distribution for the free path length and velocity after the next collision, given that the initial position and velocity are taken at random with respect to a fixed probability measure. The proofs of Theorems \ref{secThmMicro} and \ref{secThmMacro} are based on a uniform version of Theorem \ref{exactpos2-1hit}, where the fixed probability measures are replaced by certain equismooth families, see Section \ref{secFirst} for details. Section \ref{reflmapssec} provides technical information on the $n$th iterate of the scattering maps, which in conjunction with the uniform version of Theorem \ref{exactpos2-1hit} yields the proof of Theorems \ref{secThmMicro} and \ref{secThmMacro} (Section \ref{secLoss}). In Section \ref{secConvergence} we prove that the dynamics in the periodic Lorentz gas converges in the Boltzmann-Grad limit to a stochastic process $\Xi(t)$, and thus establish Theorems \ref{secThmMicro2} and \ref{secThmMacro2}. We finally derive the substitute for the linear Boltzmann equation in Section \ref{secExtended}, by extending $\Xi(t)$ to a Markov process and calculating its Fokker-Planck-Kolmogorov equation.

\section{First collision}\label{secFirst}

We begin by reviewing the central result of \cite{partI}.

\subsection{Location of the first collision}\label{firstcoll}

We fix a lattice $\scrL=\Z^d M_0$ with $M_0\in\SL(d,\R)$,
once and for all. Recall that $\scrK_\rho\subset\R^d$ is the complement of
the set $\scrB_\rho^d+\scrL$ and that the free path length for the initial 
condition $(\vecq,\vecv)\in\T^1(\scrK_\rho)$ is defined as
\begin{align}
\tau_1(\vecq,\vecv;\rho)=\inf\{t>0\col\vecq+t\vecv\notin\scrK_\rho\}.
\end{align}
Note that $\tau_1(\vecq,\vecv;\rho)=\infty$ can only happen for a set of
$\vecv$'s of measure zero with respect to $\vol_{\S_1^{d-1}}$. 
In fact we have $\tau_1(\vecq,\vecv;\rho)<\infty$ whenever the $d$
coordinates of $\vecv M_0^{-1}\in\R^d$ are linearly independent over $\Q$
(note that this condition is independent of $\vecq$),
since then each orbit of the linear flow $\vecx\mapsto\vecx+t\vecv$ is 
dense on $\R^d/\scrL$.

The position of the particle when hitting the first scatterer is
\begin{equation}
	\vecq_1(\vecq,\vecv;\rho) := \vecq+\tau_1(\vecq,\vecv;\rho) \vecv .
\end{equation}
As in Sec.\ \ref{intrjointdistrsec} we write 
$\vecq_1(\vecq,\vecv;\rho)=\vecm_1+\rho\vecw_1$ with
$\vecm_1\in\scrL$ and $\vecw_1=\vecw_1(\vecq,\vecv;\rho)\in \S_1^{d-1}$.

Let us fix a map $K:\S_1^{d-1}\to\SO(d)$ such that
$\vecv K(\vecv)=\vece_1$ for all $\vecv\in\S_1^{d-1}$;
we assume that $K$ is smooth when restricted to $\S_1^{d-1}$ minus
one point. 
For example, we may choose $K$ as 
$K(\vece_1)=I$, $K(-\vece_{1})=-I$ and
\begin{equation}
	K(\vecv)=
E\Bigl(-\frac{2\arcsin\bigl(\|\vecv-\vece_1\|/2\bigr)}
{\|\vecv_\perp\|} \vecv_\perp\Bigr)
\end{equation}
for
$\vecv\in\S_1^{d-1}\setminus\{\vece_1,-\vece_1\}$,
where 
\begin{equation}
	\vecv_\perp:=(v_2,\ldots,v_d)\in\R^{d-1}, \qquad E(\vecw)=\exp\matr 0\vecw{-\trans\vecw}\bn\in\SO(d).
\end{equation} 
Then $K$ is smooth when restricted to $\S_1^{d-1}\setminus\{-\vece_1\}$.

It is evident that $-\vecw_1 K(\vecv)\in \HS$, with the hemisphere
\begin{equation}
 \HS=\{\vecv=(v_1,\ldots,v_d)\in\S_1^{d-1} \col v_1>0\}. 	
\end{equation}

Let $\vecbeta$ be a continuous function $\S_1^{d-1}\to\RR^d$.
If $\vecq\in\scrL$ we assume that
$(\vecbeta(\vecv)+\R_{>0}\vecv)\cap\scrB_1^d=\emptyset$
for all $\vecv\in\S_1^{d-1}$.
We will consider initial conditions of the form
$(\vecq_{\rho,\vecbeta}(\vecv),\vecv)\in\T^1(\scrK_\rho)$, where
$\vecq_{\rho,\vecbeta}(\vecv)=\vecq+\rho\vecbeta(\vecv)$
and where $\vecv$ is picked at random in $\S_1^{d-1}$.
Note that for fixed $\vecq$ and $\vecbeta$ we indeed have
$\vecq_{\rho,\vecbeta}(\vecv)\in\scrK_\rho$ for all $\vecv\in\S_1^{d-1}$,
so long as $\rho$ is sufficiently small.

For the statement of the theorem below, we recall 
the definition of the manifolds $X_q(\vecy)$ and $X(\vecy)$
from \cite[Sec.\ 7]{partI}:
If $q\in\Z_{>0}$ and $\vecalf\in q^{-1}\Z^d$, then we set
$X_q=\Gamma(q)\backslash\SLR$, and define,
for each $\vecy\in\R^d\setminus\{\bn\}$,
\begin{align}
& X_q(\vecy):=\bigl\{M\in X_q \col \vecy\in (\Z^d+\vecalf)M\bigr\}.
\end{align}
We also set $X=\ASLASL$ where $\ASLR=\SLR\ltimes \RR^d$ is the
semidirect product group with multiplication law
$(M,\vecxi)(M',\vecxi')=(MM',\vecxi M' +\vecxi')$;
we let $\ASLR$ act on $\RR^d$ through
$\vecy \mapsto \vecy(M,\vecxi):=\vecy M+\vecxi$. 
Now for each $\vecy\in\R^d$ we define
\begin{align}
& X(\vecy):=\bigl\{g\in X \col \vecy\in \Z^d g\bigr\}.
\end{align}
The spaces $X_q(\vecy)$ and $X(\vecy)$ carry natural probability 
measures $\nu_\vecy$ whose properties are discussed in \cite[Sec.\ 7]{partI}.

We will also use the notation 
\begin{align}
\vecx_\perp=\vecx-(\vecx\cdot\vece_1)\vece_1,
\qquad\text{for }\:\vecx\in\R^d.
\end{align}

The following %
is a restatement of \cite[Theorem 4.4]{partI}.

\begin{thm}\label{exactpos1}
Fix a lattice $\scrL=\Z^d M_0$.
Let $\vecq\in\R^d$ and $\vecalf=-\vecq M_0^{-1}$.
There exists a function 
$\Phi_{\vecalf}:\R_{>0}\times(\{0\}\times\scrB_1^{d-1})\times(\{0\}\times\R^{d-1})\to\R_{\geq 0}$ such that for any Borel probability measure 
$\lambda$ on $\S_1^{d-1}$ absolutely 
continuous with respect to $\vol_{\S_1^{d-1}}$, any
subset $\fU\subset\HS$ with $\vol_{\S_1^{d-1}}(\partial\fU)=0$, 
and $0\leq \xi_1<\xi_2$, we have
\begin{multline} \label{exactpos1eq}
\lim_{\rho\to 0}  \lambda\bigl(\bigl\{ \vecv\in\S_1^{d-1} \col 
\rho^{d-1} \tau_1(\vecq_{\rho,\vecbeta}(\vecv),\vecv;\rho)\in [\xi_1,\xi_2), \:  
-\vecw_1(\vecq_{\rho,\vecbeta}(\vecv),\vecv;\rho)K(\vecv)\in\fU\bigr\}\bigr) \\
=\int_{\xi_1}^{\xi_2} \int_{\fU_\perp} \int_{\S_1^{d-1}} 
\Phi_{\vecalf}\bigl(\xi,\vecw,(\vecbeta(\vecv)K(\vecv))_\perp\bigr) 
\, d\lambda(\vecv) d\vecw \, d\xi,
\end{multline}
where $d\vecw$ denotes the $(d-1)$-dimensional
Lebesgue volume measure on $\{0\}\times\R^{d-1}$.
The function $\Phi_\vecalf$ is explicitly given by
\begin{align} \label{exactpos1limitrat}
\Phi_\vecalf(\xi,\vecw,\vecz)
=\begin{cases}
\nu_\vecy\bigl(\bigl\{M\in X_q(\vecy) \col 
(\Z^d+\vecalf)M \cap (\fZ(0,\xi,1)+\vecz)=\emptyset\bigr\}\bigr)
& \text{if } \: \vecalf\in q^{-1}\Z^d
\\
\nu_\vecy\bigl(\bigl\{g\in X(\vecy) \col \Z^d g \cap (\fZ(0,\xi,1)+\vecz)=\emptyset
\bigr\}\bigr)
& \text{if } \: \vecalf\notin \Q^d,
\end{cases}
\end{align}
where $\vecy=\xi\vece_1+\vecw+\vecz$, and 
\begin{align} \label{FZC1C2DEF}
	\fZ(c_1,c_2,\sigma) =\big\{(x_1,\ldots,x_d)\in\RR^d : c_1 < x_1 < c_2, \|(x_2,\ldots,x_d)\|< \sigma \big\} .
\end{align}
\end{thm}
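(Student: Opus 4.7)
The plan is to express the event in \eqref{exactpos1eq} as a lattice-avoidance statement on a rescaled lattice and then invoke equidistribution on the moduli space of (affine) unimodular lattices.

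\textbf{Step 1 (Geometric reduction).} The event depends only on the first point of the shifted lattice $(\Z^d+\vecalf)M_0-\rho\vecbeta(\vecv)$ that enters the $\rho$-tube around the ray $\RR_{>0}\vecv$. Rotating by $K(\vecv)$ (so that $\vecv\mapsto\vece_1$) and rescaling by the volume-preserving diagonal $D_\rho:=\diag(\rho^{d-1},\rho^{-1},\ldots,\rho^{-1})\in\SLR$ turns this tube into a unit-radius cylinder along $\vece_1$, and, modulo a longitudinal correction of order $\rho^d$, turns the shift $-\rho\vecbeta(\vecv)K(\vecv)$ into a fixed transverse offset $-\vecz$, with $\vecz:=(\vecbeta(\vecv)K(\vecv))_\perp$. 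Tracking $\vecq_1=\vecm_1+\rho\vecw_1$ under the same transformation identifies, in the limit $\rho\to 0$, the first point of the rescaled lattice $\Lambda_\rho(\vecv):=(\Z^d+\vecalf)M_0 K(\vecv) D_\rho$ inside $\fZ(0,\xi_2,1)+\vecz$ with a vector $\vecy=\xi\vece_1+\vecw+\vecz$, where $\xi=\rho^{d-1}\tau_1$ and $\vecw=(-\vecw_1 K(\vecv))_\perp$; moreover $-\vecw_1 K(\vecv)\in\fU$ iff $\vecw\in\fU_\perp$ (the $\perp$-projection is a diffeomorphism $\HS\to$ unit disk). Thus the event on the left of \eqref{exactpos1eq} is equivalent to: $\Lambda_\rho(\vecv)$ contains a point $\vecy=\xi\vece_1+\vecw+\vecz$ with $(\xi,\vecw)\in[\xi_1,\xi_2)\times\fU_\perp$, and no other point of $\Lambda_\rho(\vecv)$ lies in $\fZ(0,\xi,1)+\vecz$.

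\textbf{Step 2 (Equidistribution of $\Lambda_\rho(\vecv)$).} The key ingredient I would invoke is the equidistribution theorem of \cite[Section 5]{partI}: as $\rho\to 0$ with $\vecv$ drawn from any absolutely continuous $\lambda$, the random lattice $\Lambda_\rho(\vecv)$ becomes equidistributed in $X_q=\Gamma(q)\backslash\SLR$ in the rational case $\vecalf\in q^{-1}\Z^d$, and the associated affine lattice becomes equidistributed in $X=\ASLASL$ in the irrational case $\vecalf\notin\Q^d$, both with respect to the Haar probability measure. This rests on the smoothness of $K$, the absolute continuity of $\lambda$, the mixing of the $D_\rho$-action on the moduli space, and the fact that $\vecv\mapsto K(\vecv)$ parameterizes a submanifold transverse to the $D_\rho$-stable foliation; ultimately it reduces to a Ratner-type classification of limit measures invariant under the expanding horosphere compatible with $D_\rho$.

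\textbf{Step 3 (Conditioning on the first hit).} Conditioning the equidistributed limit on the event that the lattice contains a prescribed point $\vecy$ restricts attention to the fiber $X_q(\vecy)$ (resp.\ $X(\vecy)$), with induced conditional probability measure $\nu_\vecy$ constructed in \cite[Section 7]{partI}. By the definition of $\Phi_\vecalf$ in \eqref{exactpos1limitrat}, the $\nu_\vecy$-probability that no further lattice point lies in $\fZ(0,\xi,1)+\vecz$ equals $\Phi_\vecalf(\xi,\vecw,\vecz)$. Disintegrating the uniform limit and integrating over $\xi\in[\xi_1,\xi_2)$ and $\vecw\in\fU_\perp$ (with $\vecz=(\vecbeta(\vecv)K(\vecv))_\perp$ and $d\lambda(\vecv)$ for the angular variable) then produces the right-hand side of \eqref{exactpos1eq}. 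The main obstacle is Step 2: the equidistribution in the irrational case, on $\ASLR$, requires a delicate application of Ratner-type theorems, uniform in $\vecbeta$ and in the starting data; a secondary subtlety is upgrading weak convergence to the claimed identity on indicator functions of $[\xi_1,\xi_2)\times\fU$ via a Portmanteau-type argument, which is why the boundary hypothesis $\vol_{\S_1^{d-1}}(\partial\fU)=0$ is imposed.
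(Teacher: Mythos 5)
The paper gives no proof of Theorem~\ref{exactpos1} at all: it is stated verbatim as a restatement of Theorem 4.4 of \cite{partI}, so the only meaningful comparison is with the proof in that companion paper. Your outline does follow the same route as that proof: the rescaling of the $\rho$-tube by $K(\vecv)$ and $\diag(\rho^{d-1},\rho^{-1},\ldots,\rho^{-1})$ into the unit cylinder $\fZ(0,\xi,1)+\vecz$, the identification of $(\xi,\vecw)$ with the marked lattice point $\vecy=\xi\vece_1+\vecw+\vecz$, and equidistribution of the rescaled (affine) lattices in $X_q$ resp.\ $X$ via mixing/Ratner-type input are exactly the ingredients of \cite{partI}; your Step 1 bookkeeping (including $\vecw\in\fU_\perp\Leftrightarrow -\vecw_1K(\vecv)\in\fU$ and the $O(\rho^d)$ longitudinal correction) is correct.

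The genuine gap is Step 3. Equidistribution of $\Lambda_\rho(\vecv)$ with respect to Haar measure says nothing, by itself, about conditioning on the event that the lattice contains a prescribed point $\vecy$: this event has measure zero, and ``disintegrating the uniform limit'' is not an available formal operation. In \cite{partI} this is precisely the content of the refined results of Sections 7--9: one needs an equidistribution theorem on the fibers $X_q(\vecy)$, $X(\vecy)$ with respect to the measures $\nu_\vecy$, proved by a limiting argument over thin neighbourhoods of the fiber, together with the Jacobian computation that produces the Lebesgue density $d\vecw\,d\xi$ in the $(\xi,\vecw)$-variables and the asymptotic independence of the marked point from the rest of the lattice. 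Your proposal treats this as routine conditioning, but it is the decisive technical step beyond plain equidistribution (and it is also where uniformity in the offset $\vecz=(\vecbeta(\vecv)K(\vecv))_\perp$, which varies with $\vecv$, must be carried through). As a reconstruction of the strategy your sketch is faithful; as a proof it delegates the heart of the argument to a citation-level gesture at $\nu_\vecy$.
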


\begin{remark} \label{PALFBETSPECREM}
For $\vecalf\in \Q^d$
the function $\Phi_\vecalf(\xi,\vecw,\vecz)$ is Borel measurable,
and in fact only depends on $\vecalf$ and
the four real numbers $\xi,\|\vecw\|,\|\vecz\|,\vecz\cdot\vecw$.
Also for $\vecalf\in \Q^d$,
if we restrict to $\|\vecz\|\leq 1$ [and if $d=2$: $\vecz+\vecw\neq \bn$],
then $\Phi_\vecalf(\xi,\vecw,\vecz)$ is jointly continuous
in the three variables $\xi,\vecw,\vecz$.
If $\vecalf\notin\Q^d$ then $\Phi_\vecalf(\xi,\vecw,\vecz)$
is everywhere continuous in the three variables,
and it is independent of both $\vecalf$ and $\vecz$;
in fact it only depends on $\xi$ and $\|\vecw\|$.
We have, for all $\vecalf\in\R^d$ and all $\vecz\in\{0\}\times\R^{d-1}$,
\begin{align} \label{PALFINTEQ1}
\int_0^\infty \int_{\{0\}\times\scrB_1^{d-1}} 
\Phi_\vecalf(\xi,\vecw,\vecz)\,d\vecw\,d\xi=1.
\end{align}
The convergence in this integral is uniform, i.e.\ 
\begin{equation} \label{PALFBETSPECREMFORMULA}
\int_T^\infty \int_{\{0\}\times\scrB_1^{d-1}} 
\Phi_\vecalf(\xi,\vecw,\vecz)\,d\vecw\,d\xi \to 0
\end{equation}
uniformly with respect to $\vecalf$ and $\vecz$ as $T\to\infty$.
Cf.\ \cite[Remark 4.5, (8.37) and Lemma 8.15]{partI}.
\end{remark}

\begin{remark} \label{XI0REM}
We may extend the function $\Phi_\vecalf(\xi,\vecw,\vecz)$ 
to the larger set
$\R_{\geq 0}\times(\{0\}\times\scrB_1^{d-1})\times(\{0\}\times\R^{d-1})$
by letting $\Phi_\vecalf(0,\vecw,\vecz):=1$ for all $\vecw,\vecz$.
This definition is natural, since it makes $\Phi_\vecalf(\xi,\vecw,\vecz)$ 
continuous (jointly in the three variables) at each point with $\xi=0$.
The proof of this is an immediate extension of the discussion in 
\cite[Sec.\ 8.1, 8.2]{partI}.
\end{remark}

\subsection{Scattering maps}\label{secScatt}

As indicated above, the results of this paper extend to the case of a Lorentz gas, where the scattering process at a hard sphere is replaced by a smooth radial potential. To obtain the correct scaling in the Boltzmann-Grad limit, we assume the scattering potential is of the form $V(\vecq/\rho)$, where $V(\vecq)$ has compact support in the unit ball $\scrB_1^d$. We will refer to the rescaled ball $\scrB_\rho^d=\rho\scrB_1^d$ as the {\em interaction region}.

It is most convenient to phrase the required assumptions in terms of a scattering map that describes the dynamics at each scatterer. Let 
\begin{align}
\scrS_-:=\{(\vecv,\vecw)\in\S_1^{d-1}\times\S_1^{d-1}\col \vecv\cdot\vecw<0\}
\end{align}
be the set of {\em incoming data} $(\vecw_-,\vecv_-)$, i.e., the relative position and velocity with which the particle enters the interaction region. The corresponding {\em outgoing data} is parametrized by the set 
\begin{align}
\scrS_+:=\{(\vecv,\vecw)\in\S_1^{d-1}\times\S_1^{d-1}\col \vecv\cdot\vecw>0\}.
\end{align}
We define the scattering map by
\begin{align}
\Theta:\scrS_-\to\scrS_+,\qquad
(\vecv_-,\vecw_-)\mapsto(\vecv_+,\vecw_+).
\end{align}
In the case of the original Lorentz gas the scattering map is 
given by specular reflection,
\begin{align} \label{LORENTZSCATTERING}
\Theta(\vecv,\vecw)=(\vecv-2(\vecv\cdot\vecw)\vecw,\vecw) ;
\end{align}
scattering maps corresponding to smooth potentials can be readily obtained from classical results, see e.g. \cite[Chapter 5]{Newton82}.

In the following we will treat the scattering process as instantaneous. In the case of potentials the particle will of course spend a non-zero amount of time in the interaction region, but---under standard assumptions on the potential---this time will tend to zero when $\rho\to 0$.
 
Let $\Theta_1(\vecv,\vecw)\in\S_1^{d-1}$ and 
$\Theta_2(\vecv,\vecw)\in\S_1^{d-1}$ be the projection of 
$\Theta(\vecv,\vecw)\in\S_1^{d-1}$ onto the first and second component,
respectively.
We assume throughout this paper that 
\begin{enumerate}
	\item[(i)] the scattering map $\Theta$
is \textit{spherically symmetric}, i.e., if 
$(\vecv_+,\vecw_+)=\Theta(\vecv,\vecw)$ then
$(\vecv_+K,\vecw_+K)=\Theta(\vecv K,\vecw K)$ for all
$K\in \O(d)$;
	\item[(ii)] $\vecv_+$ and $\vecw_+$ are contained in the
subspace spanned by $\vecv$ and $\vecw$;
	\item[(iii)] if $\vecw=-\vecv$ then $\vecv_+=-\vecv$;
	\item[(iv)] $\Theta:\scrS_-\to\scrS_+$ is $\C^1$ and 
for each fixed $\vecv\in\S_1^{d-1}$ the map $\vecw\mapsto\Theta_1(\vecv,\vecw)$
is a $\C^1$ diffeomorphism from $\{\vecw\in\S_1^{d-1}\col\vecv\cdot\vecw<0\}$
onto some open subset of $\S_1^{d-1}$.
\end{enumerate}
The above conditions are for example satisfied for the scattering map of a ``muffin-tin'' Coulomb potential, $V(\vecq)=\alpha \max(\|\vecq\|^{-1}-1,0)$ with $\alpha\notin \{-2E,0\}$, where $E$ denotes the
total energy. Conditions (iii) and (iv) help to simplify the presentation of the proofs, but are not essential. It is for instance not necessary that in (iv) the map $\vecw\mapsto\Theta_1(\vecv,\vecw)$ is invertible as long as it has finitely many pre-images, which allows a larger class of scattering potentials; condition (iii) can be dropped entirely.

We will write $\varphi(\vecv,\vecu)\in [0,\pi]$ for the angle between
any two vectors $\vecv,\vecu\in\R^d\setminus\{\bn\}$.
Using the spherical symmetry 
and $\Theta_1(\vecv,-\vecv)=-\vecv$ one sees that 
there exists a constant $0\leq B_\Theta<\pi$ such that for each  
$\vecv\in\S_1^{d-1}$, the image of the diffeomorphism 
$\vecw\mapsto\Theta_1(\vecv,\vecw)$ is
\begin{align} \label{VPDEF}
\scrV_{\vecv}:=\{\vecu\in\S_1^{d-1}\col\varphi(\vecv,\vecu)>B_\Theta\}.
\end{align}
Let us write $\vecbeta_\vecv^-:\scrV_\vecv\to
\{\vecw\in\S_1^{d-1}\col\vecv\cdot\vecw<0\}$ for the inverse map.
Then $\vecbeta_\vecv^-$ is spherically symmetric in the sense that
$\vecbeta_{\vecv K}^-(\vecu K)=\vecbeta_{\vecv}^-(\vecu)K$ 
for all $\vecv\in\S_1^{d-1}$, $\vecu\in\scrV_\vecv$, $K\in\O(d)$,
and in particular $\vecbeta_\vecv^-(\vecu)$ is jointly $\C^1$ in $\vecv,\vecu$.

We also define
\begin{align}
\vecbeta^+_{\vecv}(\vecu)=\Theta_2(\vecv,\vecbeta^-_{\vecv}(\vecu))
\qquad (\vecv\in\S_1^{d-1},\:\vecu\in\scrV_\vecv).
\end{align}
The map $\vecbeta^+$ is also spherically symmetric and jointly $\C^1$ in 
$\vecv,\vecu$.
In terms of the original scattering situation, the point of our notation 
is the following:
Given any $\vecv_-,\vecv_+\in\S_1^{d-1}$, there exists
$\vecw_-,\vecw_+\in\S_1^{d-1}$ such that %
$\Theta(\vecv_-,\vecw_-)=(\vecv_+,\vecw_+)$ if and only if
$\varphi(\vecv_-,\vecv_+)>B_\Theta$, and in this case $\vecw_-$ and $\vecw_+$
are uniquely determined, as $\vecw_\pm=\vecbeta^\pm_{\vecv_-}(\vecv_+)$.

For example, in the case of specular reflection \eqref{LORENTZSCATTERING} we have $B_\Theta=0$ and 
\begin{equation}
	\vecw_+=\vecw_-=\frac{\vecv_+-\vecv_-}{\|\vecv_+-\vecv_-\|} .
\end{equation}

\begin{remark} \label{PRESERVELIOUVILLEREMARK}
In the case of specular reflection, the flow $F_t$ preserves the
Liouville measure $\nu$, %
but this does not hold in the case of a general scattering map 
satisfying (i)--(iv).
Indeed, a necessary and sufficient condition for 
$F_t$ to preserve $\nu$ is that the scattering map $\Theta$ is a %
diffeomorphism from $\scrS_-$ onto $\scrS_+$ which carries the volume measure
$|\vecv\cdot\vecw|\,d\!\vol_{\S_1^{d-1}}(\vecv)\,d\!\vol_{\S_1^{d-1}}(\vecw)$
on $\scrS_-$ to 
$(\vecv\cdot\vecw)\,d\!\vol_{\S_1^{d-1}}(\vecv)\,d\!\vol_{\S_1^{d-1}}(\vecw)$
on $\scrS_+$.
Maps with this property can be classified explicitly:
Define the functions $\vartheta_1,\vartheta_2:(-\frac{\pi}2,\frac{\pi}2)\to\R$ through
\begin{align} \label{TAUJDEF}
\Theta_j\bigl(\vece_1,-(\cos\varphi)\vece_1+(\sin\varphi)\vece_2\bigr)
=-(\cos\vartheta_j(\varphi))\vece_1+(\sin\vartheta_j(\varphi))\vece_2.
\end{align}
In view of (iii) we may then assume $\vartheta_j(0)=0$, and take $\vartheta_1$, $\vartheta_2$ to 
be continuous. (Then %
$\vartheta_1,\vartheta_2$ are both odd and $\C^1$,
and $\vartheta_1$ is a $\C^1$ diffeomorphism from
$(-\frac{\pi}2,\frac{\pi}2)$ onto %
\mbox{$(B_\Theta-\pi,\pi-B_\Theta)$}.)
In this notation, one checks by a computation that $\Theta$ carries
$|\vecv\cdot\vecw|\,d\!\vol_{\S_1^{d-1}}(\vecv)\,d\!\vol_{\S_1^{d-1}}(\vecw)$
to $(\vecv\cdot\vecw)\,d\!\vol_{\S_1^{d-1}}(\vecv)\,d\!\vol_{\S_1^{d-1}}(\vecw)$
if and only if, for all $\varphi\in(-\sfrac{\pi}2,\sfrac{\pi}2)\setminus\{0\}$,
\begin{align}
\Bigl|\frac{\sin(\vartheta_2(\varphi)-\vartheta_1(\varphi))}{\sin\varphi}\Bigr|^{d-2}
\cdot\Bigl|\frac{\cos(\vartheta_2(\varphi)-\vartheta_1(\varphi))}{\cos\varphi}\Bigr|
\cdot\bigl|\vartheta_2'(\varphi)-\vartheta_1'(\varphi)\bigr|=1.
\end{align}
This is seen to hold if and only if  
$\vartheta_2(\varphi)=\vartheta_1(\varphi)-\varphi$ for all 
$\varphi\in (-\sfrac{\pi}2,\sfrac{\pi}2)$
or $\vartheta_2(\varphi)=\vartheta_1(\varphi)+\varphi$ for all 
$\varphi\in (-\sfrac{\pi}2,\sfrac{\pi}2)$.
(In physical terms, this
reflects the preservation of the angular momentum $\vecw\wedge\vecv$, or 
its reversal, respectively.)
Translating this condition in terms of $\vecbeta^\pm$, we conclude that
\textit{$F_t$ preserves the Liouville measure if and only if}
\begin{align} \label{PLBETACRITERION}
\vecbeta_{\vecv_1}^+(\vecv_2)\equiv\vecbeta_{-\vecv_2}^-(-\vecv_1)
\qquad \text{or} \qquad
\vecbeta_{\vecv_1}^+(\vecv_2)\equiv \vecbeta_{\vecv_2}^-(\vecv_1)R_{\{\vecv_2\}^\perp},
\end{align}
where $R_{\{\vecv_2\}^\perp}\in \O(d)$ denotes orthogonal reflection in the 
hyperplane $\{\vecv_2\}^\perp\subset\R^d$.
\end{remark}

\subsection{Velocity after the first impact}

\begin{thm}\label{exactpos2-1hit}
Let $\lambda$ be a Borel probability measure on $\S_1^{d-1}$ absolutely 
continuous with respect to $\vol_{\S_1^{d-1}}$. For any bounded continuous function $f:\S_1^{d-1}\times \R_{>0} \times \S_1^{d-1} \to \RR$,
\begin{multline} \label{exactpos2eq-1hit}
\lim_{\rho\to 0}  \int_{\S_1^{d-1}} f\big(\vecv_0, \rho^{d-1} \tau_1(\vecq_{\rho,\vecbeta}(\vecv_0),\vecv_0;\rho), 
\vecv_1(\vecq_{\rho,\vecbeta}(\vecv_0),\vecv_0;\rho)\big) d\lambda(\vecv_0) \\
=%
\int_{\S_1^{d-1}} \int_{\R_{>0}} \int_{\S_1^{d-1}} 
f\big(\vecv_0,\xi,\vecv_1\big) 
p_{\vecalf,\vecbeta}(\vecv_0,\xi,\vecv_1) \, d\lambda(\vecv_0)\,
d\xi \, d\!\vol_{\S_1^{d-1}}(\vecv_1),
\end{multline}
where the probability density $p_{\vecalf,\vecbeta}$ is defined by
\begin{equation} \label{exactpos2-1hit-tpdef}
p_{\vecalf,\vecbeta}(\vecv_0,\xi,\vecv_1)\,d\!\vol_{\S_1^{d-1}}(\vecv_1)
=
\begin{cases}
\Phi_\vecalf\bigl(\xi,\vecw,(\vecbeta(\vecv_0)K(\vecv_0))_\perp\bigr)\,
\, d\vecw & \text{if $\vecv_1\in\scrV_{\vecv_0}$}\\
0 & \text{if $\vecv_1\notin\scrV_{\vecv_0}$,}
\end{cases}
\end{equation}
with
\begin{equation} \label{exactpos2-1hit-subst}
\vecw %
=-\vecbeta_{\vece_1}^-\bigl(\vecv_1K(\vecv_0)\bigr)_\perp
\in\{0\}\times\scrB_1^{d-1}.
\end{equation}
\end{thm}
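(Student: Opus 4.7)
\smallskip
\noindent\textbf{Proof plan.} I would reduce Theorem~\ref{exactpos2-1hit} to Theorem~\ref{exactpos1} by re-parametrizing the impact vector $\vecw_1$ in terms of the post-collision velocity $\vecv_1=\Theta_1(\vecv_0,\vecw_1)$. Assumption (iv) makes $\vecw\mapsto\Theta_1(\vecv_0,\vecw)$ a $\C^1$ diffeomorphism from $\{\vecw\in\S_1^{d-1}\col\vecv_0\cdot\vecw<0\}$ onto $\scrV_{\vecv_0}$, with inverse $\vecbeta_{\vecv_0}^-$. Passing to the hemispherical coordinate $\vecu:=-\vecw_1K(\vecv_0)\in\HS$ used in Theorem~\ref{exactpos1}, the spherical symmetry (i) together with $\vecv_0K(\vecv_0)=\vece_1$ yields
\begin{equation*}
\vecv_1K(\vecv_0)=\Theta_1\bigl(\vece_1,-\vecu\bigr),\qquad \vecu=-\vecbeta_{\vece_1}^-\bigl(\vecv_1K(\vecv_0)\bigr),
\end{equation*}
which is the $\HS\to\scrV_{\vecv_0}$ diffeomorphism underlying the substitution \eqref{exactpos2-1hit-subst} and explains why $p_{\vecalf,\vecbeta}$ vanishes outside $\scrV_{\vecv_0}$.

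The first technical step is to upgrade Theorem~\ref{exactpos1} from indicators of boxes to bounded continuous test functions. Theorem~\ref{exactpos1} (applied with varying $\lambda$) gives the joint convergence of the distributions of $(\vecv_0,\rho^{d-1}\tau_1,-\vecw_1K(\vecv_0))$, as measures on $\S_1^{d-1}\times\R_{>0}\times\HS$, tested against indicators of product sets with regular boundary. The uniform tightness in $\xi$ supplied by the normalization \eqref{PALFINTEQ1} and the uniform tail decay \eqref{PALFBETSPECREMFORMULA}, combined with a routine Portmanteau approximation, extends this convergence to any bounded continuous $g(\vecv_0,\xi,\vecu)$:
\begin{multline*}
\lim_{\rho\to 0}\int_{\S_1^{d-1}} g\bigl(\vecv_0,\rho^{d-1}\tau_1,-\vecw_1K(\vecv_0)\bigr)\,d\lambda(\vecv_0)\\
=\int_{\S_1^{d-1}}\int_{\R_{>0}}\int_{\{0\}\times\scrB_1^{d-1}} g\bigl(\vecv_0,\xi,\vecu(\vecw)\bigr)\,\Phi_\vecalf\bigl(\xi,\vecw,(\vecbeta(\vecv_0)K(\vecv_0))_\perp\bigr)\,d\vecw\,d\xi\,d\lambda(\vecv_0),
\end{multline*}
where $\vecu(\vecw)\in\HS$ denotes the unique lift with perpendicular part $\vecw$. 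I would then specialize to $g(\vecv_0,\xi,\vecu):=f\bigl(\vecv_0,\xi,\Theta_1(\vecv_0,-\vecu K(\vecv_0)^{-1})\bigr)$, which is bounded and continuous away from the measure-zero singular locus of $K$, and change variables in the innermost integral from $\vecw$ to $\vecv_1\in\scrV_{\vecv_0}$ via the diffeomorphism above. This produces exactly the right-hand side of \eqref{exactpos2eq-1hit}, with \eqref{exactpos2-1hit-tpdef} serving as the implicit definition of $p_{\vecalf,\vecbeta}$ as the density of the pushforward.

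The step I expect to be the main obstacle is the bounded-continuous upgrade of Theorem~\ref{exactpos1}: the composed $g$ above is not compactly supported in $\xi$, so the uniform tightness coming from \eqref{PALFBETSPECREMFORMULA} is indispensable --- without it one could in principle lose mass at $\xi=\infty$ and the identity could fail. The scattering-theoretic manipulations, by contrast, are direct consequences of assumptions (i)--(iv): spherical symmetry reduces everything to the reference direction $\vece_1$, and $\vecbeta_{\vece_1}^-$ furnishes the change of variables with no further input.
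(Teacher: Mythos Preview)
Your proposal is correct and follows essentially the same route as the paper's proof: rewrite the integrand via $g(\vecv_0,\xi,\vecw_1)=f(\vecv_0,\xi,\Theta_1(\vecv_0,\vecw_1))$, apply the continuous-test-function version of Theorem~\ref{exactpos1} to obtain an integral over $\HS$, and then change variables $\vecomega\mapsto\vecv_1=\Theta_1(\vece_1,-\vecomega)K(\vecv_0)^{-1}$ with inverse $\vecomega=-\vecbeta_{\vece_1}^-(\vecv_1K(\vecv_0))$. The only cosmetic difference is that the paper outsources your ``Portmanteau + tightness'' upgrade to \cite[Cor.~4.7]{partI}, whereas you sketch it explicitly; your identification of \eqref{PALFBETSPECREMFORMULA} as the needed tightness input is exactly right.
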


\begin{remark} \label{PALFBETCONTREM}
The density $p_{\vecalf,\vecbeta}(\vecv_0,\xi,\vecv_1)$ is independent 
of the choice of the function $K(\vecv_0)$,
since $\Phi_\vecalf(\xi,\vecw,\vecz)$ only depends on the four real numbers 
$\xi$, $\|\vecw\|$, $\|\vecz\|$, $\vecw\cdot\vecz$,
cf.\ Remark \ref{PALFBETSPECREM}.
It also follows from Remark \ref{PALFBETSPECREM} and Remark \ref{XI0REM}
that if $\vecalf\notin\Q^d$ then
$p_{\vecalf,\vecbeta}(\vecv_0,\xi,\vecv_1)=:p(\vecv_0,\xi,\vecv_1)$ is independent of $\vecalf,\vecbeta$, and is continuous 
at each point 
$(\vecv_0,\xi,\vecv_1)\in\S_1^{d-1}\times\R_{\geq 0}\times\S_1^{d-1}$
with $\vecv_1\in\scrV_{\vecv_0}$.
The same continuity statement also holds for 
$p_{\vecalf,\vecbeta}(\vecv_0,\xi,\vecv_1)$ 
if $\vecalf\in\Q^d$ and $\sup\|\vecbeta\|\leq 1$,
except possibly when $d=2$, $\xi>0$ and
$\vecbeta_{\vece_1}^-\bigl(\vecv_1K(\vecv_0)\bigr)_\perp=
(\vecbeta(\vecv_0)K(\vecv_0))_\perp$.
\end{remark}
\begin{remark} \label{PALFBETEXPLICITREM}
The relationship between
$p_{\vecalf,\vecbeta}(\vecv_0,\xi,\vecv_1)$ and 
$\Phi_\vecalf\bigl(\xi,\vecw,\vecz)$ when $\vecv_1\in\scrV_{\vecv_0}$
can be expressed more explicitly as
\begin{align} \label{PALFBETEXPLICITREMFORMULA}
p_{\vecalf,\vecbeta}(\vecv_0,\xi,\vecv_1)=
J(\vecv_0,\vecv_1) \,
\Phi_\vecalf\Bigl(\xi,-\vecbeta_{\vece_1}^-\bigl(\vecv_1K(\vecv_0)\bigr)_\perp
,(\vecbeta(\vecv_0)K(\vecv_0))_\perp\Bigr),
\end{align}
where the factor $J(\vecv_0,\vecv_1)>0$ is a function of
$\varphi=\varphi(\vecv_1,-\vecv_2)$ given by 
\begin{align} \label{JFORMULA}
J(\vecv_0,\vecv_1)=\begin{cases}
\bigl| \frac{\sin\omega(\varphi)}{\sin\varphi}\bigr|^{d-2}
\, \bigl|\omega'(\varphi)\bigr| \cos\omega(\varphi)
& \text{if }\: \varphi>0
\\
|\omega'(0)|^{d-1} & \text{if }\:\varphi=0,\end{cases}
\end{align}
with $\omega=\vartheta_1^{-1}:
(B_\Theta-\pi,\pi-B_\Theta)\to (-\frac\pi 2,\frac\pi 2)$ being
the inverse of the map $\vartheta_1$ 
defined in Remark~\ref{PRESERVELIOUVILLEREMARK}.
In particular in the case of specular reflection \eqref{LORENTZSCATTERING}
we have $\vartheta_1^{-1}(\varphi)=\varphi/2$, and we recover the formula
\cite[(4.24)]{partI}.
\end{remark}

\begin{proof}[Proof of Theorem \ref{exactpos2-1hit}]
The proof follows exactly the same steps as the proof of Theorem 4.8 
in \cite[Sec.\ 9.3]{partI}: The left hand side of \eqref{exactpos2eq-1hit} equals
\begin{align}
\lim_{\rho\to 0}  \int_{\S_1^{d-1}} g\big(\vecv_0, \rho^{d-1} \tau_1(\vecq_{\rho,\vecbeta}(\vecv_0),\vecv_0;\rho), 
\vecw_1(\vecq_{\rho,\vecbeta}(\vecv_0),\vecv_0;\rho)\big)\, d\lambda(\vecv_0),
\end{align}
where $g(\vecv_0,\xi,\vecw_1)=f(\vecv_0,\xi,\Theta_1(\vecv_0,\vecw_1))$.
Using \cite[Cor.\ 4.7]{partI} 
(which in fact is a corollary of Theorem \ref{exactpos1}
in the present paper) we obtain
\begin{align}\notag
&=\int_{\HS}\int_{\R_{>0}}\int_{\S_1^{d-1}}
f(\vecv_0,\xi,\Theta_1(\vecv_0,-\vecomega K(\vecv_0)^{-1}))
\\ \label{exactpos2-1hit-proof1}
& \hspace{100pt}
\times\Phi_\vecalf(\xi,\vecomega_\perp,(\vecbeta(\vecv_0)K(\vecv_0))_\perp)\,
\omega_1\,d\lambda(\vecv_0)\,d\xi\,d\!\vol_{\S_1^{d-1}}(\vecomega).
\end{align}
Now change the order of integration by moving the integral over
$\vecomega\in\HS$ to the innermost position, and then apply the variable
substitution $\vecv_1=\Theta_1(\vece_1,-\vecomega)K(\vecv_0)^{-1}
=\Theta_1(\vecv_0,-\vecomega K(\vecv_0)^{-1})$ 
in the innermost integral; note that this gives
a diffeomorphism $\vecomega\mapsto\vecv_1$ from $\HS$ onto 
$\scrV_{\vecv_0}$,
with the inverse map given by 
$\vecomega=-\vecbeta^-_{\vece_1}(\vecv_1 K(\vecv_0))
=-\vecbeta^-_{\vecv_0}(\vecv_1)K(\vecv_0)$.
Recalling \eqref{exactpos2-1hit-subst} we then see that 
\eqref{exactpos2-1hit-proof1}
equals the right hand side of \eqref{exactpos2eq-1hit}, and we are done.
\end{proof}

\subsection{A uniform version of Theorem \ref{exactpos2-1hit}}
\label{exactpos2-1hit-unif-subsec}

In order to prove Theorem \ref{pathwayThm} we need a version of
Theorem \ref{exactpos2-1hit} which is uniform over certain families of
$\vecbeta$'s, $f$'s and $\lambda$'s.

Given any subset $\scrW\subset\S_1^{d-1}$ we let 
$\partial_\ve\scrW\subset\S_1^{d-1}$
denote the $\ve$-neighborhood of its boundary, i.e.\
\begin{align}
\partial_\ve\scrW:=\bigl\{\vecv\in\S_1^{d-1} \col
\exists \vecw\in\partial\scrW: \: \varphi(\vecv,\vecw)<\ve\bigr\}.
\end{align}

\begin{defn}
A family $F$ of Borel subsets of $\S_1^{d-1}$ is called
\textit{equismooth} if for every $\delta>0$ there is some $\ve>0$ such that
$\vol_{\S_1^{d-1}}(\partial_\ve\scrW)<\delta$ for all $\scrW\in F$.
\end{defn}

\begin{defn}
A family $F$ of measures on $\S_1^{d-1}$ is called
\textit{equismooth} if there exist an equicontinuous and uniformly bounded
family $F'$ of functions from $\S_1^{d-1}$ to $\R_{\geq 0}$ and
an equismooth family $F''$ of Borel subsets of $\S_1^{d-1}$, such that
each $\mu\in F$ can be expressed as
$\mu=(g\cdot \vol_{\S_1^{d-1}})_{|\scrW}$ for some $g\in F'$, $\scrW\in F''$.
\end{defn}

\begin{thm} \label{exactpos2-1hit-unif}
Fix a lattice $\scrL=\Z^dM_0$ and a point $\vecq\in\R^d$, and write $\vecalf=-\vecq M_0^{-1}$.
Let $F_1$ be an equismooth family of probability measures on $\S_1^{d-1}$,
let $F_2$ be a uniformly bounded and equicontinuous family
of functions $f:\S_1^{d-1}\times\R_{>0}\times\S_1^{d-1}\to\R$, 
and let $F_3$ be a uniformly bounded and equicontinuous family
of functions $\vecbeta:\S_1^{d-1}\to\R^d$ such that if $\vecq\in\scrL$ then
$(\vecbeta(\vecv)+\R_{>0}\vecv)\cap\scrB_1^d=\emptyset$
for all $\vecbeta\in F_3$, $\vecv\in\S_1^{d-1}$.
Then the limit relation %
\begin{multline} \label{exactpos2eq-1hit-repeted}
\lim_{\rho\to 0}  \int_{\S_1^{d-1}} f\big(\vecv_0, \rho^{d-1} \tau_1(\vecq_{\rho,\vecbeta}(\vecv_0),\vecv_0;\rho), 
\vecv_1(\vecq_{\rho,\vecbeta}(\vecv_0),\vecv_0;\rho)\big) d\lambda(\vecv_0) \\
=\int_{\S_1^{d-1}} \int_{\R_{>0}} \int_{\S_1^{d-1}} f\big(\vecv_0,\xi,\vecv_1\big) 
p_{\vecalf,\vecbeta}(\vecv_0,\xi,\vecv_1) \, d\lambda(\vecv_0)\, d\xi 
d\!\vol_{\S_1^{d-1}}(\vecv_1)
\end{multline}
holds uniformly with respect to all 
$\lambda\in F_1$, $f\in F_2$, $\vecbeta\in F_3$.
\end{thm}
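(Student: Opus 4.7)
The plan is to follow the proof of Theorem~\ref{exactpos2-1hit} step by step, but upgrading each ingredient to a uniform variant. That proof consists of three moves: (i) rewrite the left-hand side of \eqref{exactpos2eq-1hit-repeted} in terms of the joint distribution of $(\vecv_0,\rho^{d-1}\tau_1,\vecw_1)$ by means of $g(\vecv_0,\xi,\vecw_1):=f(\vecv_0,\xi,\Theta_1(\vecv_0,\vecw_1))$; (ii) apply \cite[Cor.\ 4.7]{partI}, a direct consequence of Theorem~\ref{exactpos1}; (iii) change variables $\vecomega\mapsto\vecv_1$ inside the limiting integral. Moves (i) and (iii) transport the families $F_2$ and $F_3$ via $\C^1$ maps built out of $\Theta_1$, $\vecbeta^\pm_\vecv$ and $K$, which are $\C^1$ by conditions (ii)--(iv) of Section~\ref{secScatt}, and hence preserve both uniform boundedness and equicontinuity. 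The whole theorem therefore reduces to a uniform version of \cite[Cor.\ 4.7]{partI}.

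\textbf{Uniform \cite[Cor.\ 4.7]{partI}.} What I need to establish is that, for any equismooth family $F_1$ of probability measures, any uniformly bounded equicontinuous family $G$ of functions $\S_1^{d-1}\times\R_{\geq 0}\times\S_1^{d-1}\to\R$, and any uniformly bounded equicontinuous family $F_3$ of $\vecbeta$'s,
\begin{equation*}
\int_{\S_1^{d-1}} g\bigl(\vecv_0,\rho^{d-1}\tau_1(\vecq_{\rho,\vecbeta}(\vecv_0),\vecv_0;\rho),\vecw_1(\vecq_{\rho,\vecbeta}(\vecv_0),\vecv_0;\rho)\bigr)\,d\lambda(\vecv_0)
\end{equation*}
converges as $\rho\to 0$ to the corresponding integral against the limiting density $\omega_1\,\Phi_\vecalf(\xi,\vecomega_\perp,(\vecbeta(\vecv_0)K(\vecv_0))_\perp)$, uniformly over $(\lambda,g,\vecbeta)\in F_1\times G\times F_3$. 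I would prove this by a contradiction/compactness argument: if uniformity failed, there would be $\ve>0$, a sequence $\rho_n\to 0$ and triples $(\lambda_n,g_n,\vecbeta_n)$ for which the error exceeds $\ve$ for every $n$. Arzel\`a--Ascoli (applied on $\xi$-compacta, with the $\xi$-tail discarded by the uniform bound \eqref{PALFBETSPECREMFORMULA}) furnishes a subsequence along which $g_n\to g_\infty$ and $\vecbeta_n\to\vecbeta_\infty$ locally uniformly; equismoothness of $F_1$, i.e.\ uniform boundedness and equicontinuity of the densities combined with the shrinking-$\partial\scrW$ condition, yields, after a further extraction, weak convergence $\lambda_n\to\lambda_\infty$ to an absolutely continuous limit.

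\textbf{Passing to the limit.} The error between the finite-$n$ identity and the limiting one then splits into three contributions: replacing $g_n$ by $g_\infty$ (controlled by uniform convergence and boundedness of $\lambda_n$); replacing $\vecbeta_n$ by $\vecbeta_\infty$ (controlled by the continuity of $\Phi_\vecalf$ in its $\vecz$-argument together with the $\xi$-truncation \eqref{PALFBETSPECREMFORMULA}); and finally the pointwise convergence for the single limit triple $(\lambda_\infty,g_\infty,\vecbeta_\infty)$, which is precisely the non-uniform \cite[Cor.\ 4.7]{partI}. For the $\lambda_n\to\lambda_\infty$ part, equismoothness of $F_1$ is invoked a second time so that the functional $\vecv_0\mapsto\int g_\infty\,\Phi_\vecalf\,\omega_1\,d\xi\,d\!\vol_{\S_1^{d-1}}(\vecomega)$, continuous at the required points, may be integrated against the weakly converging $\lambda_n$ with vanishing error.

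\textbf{Main obstacle.} The delicate case is $d=2$ with $\vecalf\in\Q^d$, since Remark~\ref{PALFBETSPECREM} shows that $\Phi_\vecalf(\xi,\vecw,\vecz)$ is only continuous off the locus $\vecw+\vecz=\bn$. Uniformity of the approximation therefore demands that a common $\delta$-neighborhood of this locus be excised whose contribution to every integral in the family is $O(\delta)$; it is \emph{equicontinuity} of $F_3$, not mere pointwise boundedness, that supplies the uniform modulus of continuity needed for this excision to work across the whole family, because the critical locus in the $\vecv_0$-variable depends on $\vecbeta$ through $(\vecbeta(\vecv_0)K(\vecv_0))_\perp$. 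For $d\geq 3$ or $\vecalf\notin\Q^d$ the kernel $\Phi_\vecalf$ is jointly continuous on the relevant domain and the passage to the limit becomes a routine equicontinuity estimate. A cleaner alternative is to first convolve each $\vecbeta\in F_3$ to a $\C^1$ approximation at a scale that is uniform across the family, carry out the uniform argument there, and then remove the smoothing using the modulus of equicontinuity of $F_3$.
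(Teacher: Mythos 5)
There is a genuine gap, and it sits at the exact point your scheme treats as routine: the replacement of $\vecbeta_n$ by $\vecbeta_\infty$ (and likewise of $\lambda_n$ by $\lambda_\infty$) must be performed not only in the limiting integral but also in the finite-$\rho$ integral on the left-hand side, and nothing in your argument controls that. Continuity of $\Phi_\vecalf$ in its $\vecz$-argument, which is what you invoke, concerns only the limit density; it says nothing about the billiard data at fixed small $\rho$. For fixed $\rho$, the map $\vecbeta\mapsto\bigl(\tau_1(\vecq_{\rho,\vecbeta}(\vecv_0),\vecv_0;\rho),\vecw_1(\vecq_{\rho,\vecbeta}(\vecv_0),\vecv_0;\rho)\bigr)$ is badly discontinuous: if $\|\vecbeta_n-\vecbeta_\infty\|_\infty\leq\eta$, the perturbed ray can miss a scatterer that the unperturbed ray hits tangentially, after which $\rho^{d-1}\tau_1$ jumps by an amount of order one. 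So uniform closeness of the $\vecbeta$'s gives no pointwise closeness of the integrands, and your compactness reduction from a family to a convergent sequence does not dispose of the difficulty -- it merely relocates it. The same objection applies to your closing suggestion of smoothing each $\vecbeta$ and "removing the smoothing using the modulus of equicontinuity": undoing the smoothing is again a comparison of the finite-$\rho$ dynamics for two uniformly close $\vecbeta$'s. (The $\xi$-tail at finite $\rho$, which you discard by citing \eqref{PALFBETSPECREMFORMULA}, has the same character: that bound is about the limit density, and the needed uniform tail estimate for the finite-$\rho$ distribution is itself part of what must be proved.) Your diagnosis of the "main obstacle" as the $d=2$, $\vecalf\in\Q^d$ discontinuity of $\Phi_\vecalf$ therefore misidentifies where the work lies.

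What is missing is precisely the content of the paper's Lemma \ref{EXCSET1LEMMA} and the way it is exploited in Step 1 of the paper's proof: for $\|\vecbeta-\vecbeta'\|\leq\eta$, either both free paths are infinite, or the collision is nearly tangential (quantified by hitting the enlarged scatterer of radius $(1+\eta)\rho$ with impact location in the boundary strip $\fU_{3\sqrt\eta}$ of the hemisphere), or else the two trajectories hit the \emph{same} scatterer with collision data differing by $O(\rho\sqrt\eta)$ and $O(\sqrt\eta)$. The near-tangential set is then shown to have small measure by a further application of the limiting theorem (at radius $(1+\eta)\rho$, with target $\fU_{3\sqrt\eta}$, cf.\ \eqref{STEP1ASSONRHO2} and \eqref{STEP1TCHOICEIMPL}), applied only to a \emph{finite} $\eta$-net $F_3''$ of the equicontinuous family -- which is why equicontinuity of $F_3$ enters, not where you place it. A similar remark applies to the measures: the paper never integrates an oscillating finite-$\rho$ integrand against a varying $\lambda$ using weak convergence; instead it compares densities pointwise over a fixed partition $\{D_j\}$, using equicontinuity of the densities and the equismoothness bound on $\vol_{\S_1^{d-1}}(\partial_\eta\scrW)$, and applies the single-measure result to the finitely many normalized measures $\lambda_j$. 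Your extraction argument for $\lambda_n$ could likely be repaired (equismoothness does give an $\L^1$-type precompactness of the indicators), but without a quantitative perturbation lemma of the above kind for $\vecbeta$, the central step of your proof does not go through.
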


This uniform version of Theorem \ref{exactpos2-1hit} will actually be derived
as a corollary of Theorem~\ref{exactpos2-1hit}.
We first need a lemma. Let $\fU_\eta$ denote a small neighborhood
of the boundary in $\HS$:
\begin{align} \label{FUEPSDEF}
\fU_\eta:=\bigl\{\vecw\in\HS\col\varphi(\vecw,\vece_1)>\sfrac{\pi}2-\eta\bigr\}
\qquad (0<\eta<1).
\end{align}

\begin{lem} \label{EXCSET1LEMMA}
Given $B>0$ there exists some number $\rho_0(B,\scrL,\vecq)>0$ such that
for every $0<\rho<\rho_0(B,\scrL,\vecq)$, 
$\vecv\in\S_1^{d-1}$, $K\in\SO(d)$, $\vecbeta,\vecbeta'\in\R^d$ and
$0<\eta<\frac 1{10}$ such that
\begin{equation}
	\vecv K=\vece_1, \qquad
	\|\vecbeta\|\leq B, \qquad 
	\|\vecbeta-\vecbeta'\|\leq\eta, \qquad
	\vecbeta\cdot\vecv\geq 2, \qquad 
	\vecbeta'\cdot\vecv\geq 2,
\end{equation}
one of the following holds:
\begin{enumerate}
	\item[(i)] $\tau_1(\vecq+\rho\vecbeta,\vecv,\rho)=
\tau_1(\vecq+\rho\vecbeta',\vecv,\rho)=\infty$,
	\item[(ii)] $-\vecw_1(\vecq+\rho\vecbeta,\vecv,(1+\eta)\rho)K\in\fU_{3\sqrt\eta}$,
	\item[(iii)] $\bigl |\tau_1(\vecq+\rho\vecbeta',\vecv;\rho)
-\tau_1(\vecq+\rho\vecbeta,\vecv;\rho)\bigr |<3\rho\sqrt\eta$ \\ and 
$\bigl\|\vecw_1(\vecq+\rho\vecbeta',\vecv;\rho)
-\vecw_1(\vecq+\rho\vecbeta,\vecv;\rho)\bigr\|<\sqrt{2\eta}$.
\end{enumerate}
\end{lem}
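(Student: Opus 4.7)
The plan is a direct geometric comparison of the two parallel trajectories $L_\vecbeta:=\{\vecq+\rho\vecbeta+t\vecv:t\geq 0\}$ and $L_{\vecbeta'}$, which are offset by the vector $\rho(\vecbeta'-\vecbeta)$ of norm at most $\rho\eta$. Writing $\vecbeta'-\vecbeta=a\vecv+\vecb_\perp$ with $\vecb_\perp\perp\vecv$ gives $|a|,\|\vecb_\perp\|\leq\eta$, so $L_{\vecbeta'}$ is obtained from $L_\vecbeta$ by a time shift of $|\rho a|\leq\rho\eta$ together with a perpendicular displacement of norm at most $\rho\eta$. I will choose $\rho_0=\rho_0(B,\scrL,\vecq)$ so small that (a) the only point of $\scrL$ within distance $(B+3)\rho$ of $\vecq$ is $\vecq$ itself when $\vecq\in\scrL$ (and no such point exists otherwise), and (b) for every $\vecm\in\scrL$ the ball $\vecm+3\rho\scrB_1^d$ contains no other point of $\scrL$. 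A short calculation using $\vecbeta\cdot\vecv,\vecbeta'\cdot\vecv\geq 2$ shows that when $\vecq\in\scrL$, neither trajectory can enter the $(1+\eta)\rho$-ball centred at $\vecq$ for $t\geq 0$, since at $t=0$ the distance to $\vecq$ is already at least $2\rho$ and grows monotonically.

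If $\tau_1(\vecq+\rho\vecbeta,\vecv;(1+\eta)\rho)=\infty$, then trivially $\tau_1(\vecq+\rho\vecbeta,\vecv;\rho)=\infty$. If $L_{\vecbeta'}$ were to enter some $\vecm'+\rho\scrB_1^d$ at a time $t'\geq 0$, the point on $L_\vecbeta$ at time $t'-\rho a$ would lie within $(1+\eta)\rho$ of $\vecm'$, forcing $t'-\rho a<0$ and hence $t'<\rho\eta$; this in turn would yield $\|\vecm'-\vecq\|\leq(B+2)\rho$, so by (a) $\vecm'=\vecq\in\scrL$, contradicting the previous paragraph. Thus case (i) holds.

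Otherwise set $\tau:=\tau_1(\vecq+\rho\vecbeta,\vecv;(1+\eta)\rho)<\infty$ and let $\vecm\in\scrL$ and $\vecw_\eta:=\vecw_1(\vecq+\rho\vecbeta,\vecv;(1+\eta)\rho)$ be the corresponding hit site and normalized position. Let $\theta\in[0,\pi/2)$ be the angle between $-\vecw_\eta$ and $\vecv$, so that the perpendicular distance from $L_\vecbeta$ to $\vecm$ equals $d=(1+\eta)\rho\sin\theta$ and $-\vecw_\eta K\in\fU_{3\sqrt\eta}$ iff $\theta>\pi/2-3\sqrt\eta$. If the latter holds, case (ii) is satisfied. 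Otherwise $\sin\theta\leq\cos(3\sqrt\eta)$, and a Taylor estimate valid for $\eta<1/10$ gives $(1+\eta)\cos(3\sqrt\eta)\leq 1-5\eta/2$; hence $d\leq(1-5\eta/2)\rho$ and $L_\vecbeta$ enters $\vecm+\rho\scrB_1^d$ at the time
\begin{equation*}
t'_\vecm=\tau+\sqrt{(1+\eta)^2\rho^2-d^2}-\sqrt{\rho^2-d^2}\in[\tau,\tau+\sqrt\eta\,\rho].
\end{equation*}
Since the perpendicular distance from $L_{\vecbeta'}$ to $\vecm$ differs from $d$ by at most $\rho\eta$, a parallel computation shows that $L_{\vecbeta'}$ enters the same ball at a time $t'_{\vecm,\vecbeta'}$ with $|t'_{\vecm,\vecbeta'}-t'_\vecm|\leq\rho\eta+\tfrac12\rho\sqrt\eta$.

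It remains to verify that $\vecm$ is the first $\rho$-hit for \emph{both} trajectories. If some $\vecm'\neq\vecm$ were hit earlier, the time-shift argument from the previous paragraph places $L_\vecbeta$ within $(1+\eta)\rho$ of $\vecm'$ at some nonnegative time in $[\tau,\tau+2\sqrt\eta\,\rho]$ (for $L_\vecbeta$ itself this uses that $\vecm$ is its first $(1+\eta)\rho$-hit; for $L_{\vecbeta'}$ one separately treats the case of a negative shifted time exactly as in Case (i), forcing $\vecm'=\vecq$ and contradicting $\vecbeta'\cdot\vecv\geq 2$). Combined with the fact that $L_\vecbeta$ is at distance $(1+\eta)\rho$ from $\vecm$ at time $\tau$, this yields $\|\vecm-\vecm'\|<3\rho$, contradicting (b). Finally
\begin{equation*}
\|\vecw_1^{\vecbeta'}-\vecw_1^\vecbeta\|\leq\|\vecbeta'-\vecbeta\|+|t'_{\vecm,\vecbeta'}-t'_\vecm|/\rho\leq\eta+\sqrt\eta<\sqrt{2\eta}
\end{equation*}
completes (iii). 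The main technical obstacle is this last verification: we do not assume that $\vecm$ is the first $(1+\eta)\rho$-hit for the perturbed trajectory $L_{\vecbeta'}$, so an interloping lattice point can only be excluded by combining the closeness of the two lines with the separation bound (b), together with the use of the condition $\vecbeta'\cdot\vecv\geq 2$ to rule out trouble near $\vecq$.
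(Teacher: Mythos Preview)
Your approach coincides with the paper's: both pivot on the first $(1+\eta)\rho$-hit of $L_\vecbeta$ to set up the trichotomy, then in case (iii) show both rays hit the same lattice ball $\tm$ and compare the impact data there. The paper's final step is slightly cleaner---subtracting the two hit-point equations and taking perpendicular components gives $(\vecw_1^\vecbeta-\vecw_1^{\vecbeta'})_\perp=(\vecbeta-\vecbeta')_\perp$ directly, hence $\|\vecw_1^\vecbeta-\vecw_1^{\vecbeta'}\|<\sqrt{2\eta}$ from the common hemisphere, which bypasses your hit-time computation (where, incidentally, your constant $\tfrac12$ in $|t'_{\vecm,\vecbeta'}-t'_\vecm|\leq\rho\eta+\tfrac12\rho\sqrt\eta$ is not quite supported by the stated bound $d\leq(1-\tfrac{5\eta}{2})\rho$, the worst case giving $\sqrt5-\sqrt3\approx 0.504$; this does not affect the conclusion, as there is ample slack downstream).
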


\begin{remark}
The conditions $\vecbeta\cdot\vecv\geq 2$, $\vecbeta'\cdot\vecv\geq 2$
are only needed in the case $\vecq\in\scrL$, so as to guarantee that the 
rays $\vecq+\rho\vecbeta+\R_{>0}\vecv$ and $\vecq+\rho\vecbeta'+\R_{>0}\vecv$ 
lie outside the ball $\vecq+\scrB_\rho^d$, and also outside the ball
$\vecq+\scrB_{(1+\eta)\rho}^d$.
\end{remark}

\begin{proof}[Proof of Lemma \ref{EXCSET1LEMMA}]
We choose $\rho_0=\rho_0(B,\scrL,\vecq)>0$ so small that all the balls
$\vecm+\scrB_{2\rho_0}^d$ ($\vecm\in\scrL\setminus\{\vecq\}$)
have disjoint closures, and are each disjoint from
$\vecq+\scrB^d_{(B+1)\rho_0}$. 
Now fix any $\rho,\vecv,K,\vecbeta,\vecbeta',\eta$
as in the statement of the lemma.

After an auxiliary rotation we may assume $\vecv=\vece_1$ and $K=1_d$.
Write $\tau=\tau_1(\vecq+\rho\vecbeta,\vece_1;\rho)$ and
$\vecw=\vecw_1(\vecq+\rho\vecbeta,\vece_1;\rho)$, let
$\tau'$ and $\vecw'$ be the corresponding data for $\vecbeta'$,
and write $\trho=(1+\eta)\rho$, 
$\ttau=\tau_1(\vecq+\rho\vecbeta,\vece_1,\trho)$ and
$\tw=\vecw_1(\vecq+\rho\vecbeta,\vece_1,\trho)$.
By our assumption on $\rho_0$ we see that $\tau,\tau',\ttau$
are well-defined numbers in $\R_{>0}\cup\{\infty\}$.

For any ray $\gamma\subset\R^d$ and any point $\vecm\in\R^d$ we
write $\delta(\gamma,\vecm)=\inf_{\vecp\in\gamma}\|\vecp-\vecm\|$
for the distance between $\gamma$ and $\vecm$.
If $\ttau=\infty$ then 
$\delta(\vecq+\rho\vecbeta+\R_{>0}\vece_1,\vecm)\geq\trho$ for all 
$\vecm\in\scrL$, and thus also
$\delta(\vecq+\rho\vecbeta'+\R_{>0}\vece_1,\vecm)\geq\trho-\rho\bigl\|\vecbeta
-\vecbeta'\bigr\|\geq\rho$ for all $\vecm\in\scrL$, so that
$\tau=\tau'=\infty$.
Hence from now on we may assume $\ttau<\infty$.

By the definition of $\ttau$, $\tw$, there is a unique 
$\tm\in\scrL\setminus\{\vecq\}$ such that 
\begin{align} \label{EXCSETTMREL}
\vecq+\rho\vecbeta+\ttau\vece_1=\tm+\trho\tw, %
\end{align}
and the ray $\vecq+\rho\vecbeta+\R_{>0}\vece_1$ does not intersect
any previous ball, i.e.\ 
\begin{align} \label{EXCSETTMREL2}
\forall \vecm\in\scrL: %
\qquad \vecm\cdot\vece_1<\tm\cdot\vece_1 \Longrightarrow
\delta(\vecq+\rho\vecbeta+\R_{>0}\vece_1,\vecm)\geq\trho. %
\end{align}

Now assume $-\tw\notin\fU_{3\sqrt\eta}$.
We will prove that (iii) holds.
From \eqref{EXCSETTMREL} we get
\begin{equation}
	\delta(\vecq+\rho\vecbeta+\R_{>0}\vece_1,\tm)=\trho\|\tw_\perp\|
\leq(1+\eta)\cos(3\sqrt\eta)\rho<(1-\eta)\rho
\end{equation}
and thus also
\begin{equation}
	\delta(\vecq+\rho\vecbeta'+\R_{>0}\vece_1,\tm)<\rho .
\end{equation}
Hence both the rays $\vecq+\rho\vecbeta+\R_{>0}\vece_1$ and
$\vecq+\rho\vecbeta'+\R_{>0}\vece_1$ intersect the ball $\tm+\scrB_\rho^d$.
Note also that 
$\delta(\vecq+\rho\vecbeta+\R_{>0}\vece_1,\vecm)>\trho$ and thus
$\delta(\vecq+\rho\vecbeta'+\R_{>0}\vece_1,\vecm)>\rho$ for all
$\vecm\in\scrL$ with $\vecm\cdot\vece_1<\tm\cdot\vece_1$,
by \eqref{EXCSETTMREL2}.
Hence neither $\tau$ nor $\tau'$ can arise from any 
previous intersection, i.e.\ we must have
$\vecq+\rho\vecbeta+\tau\vece_1=\tm+\rho\vecw$ and
$\vecq+\rho\vecbeta'+\tau'\vece_1=\tm+\rho\vecw'$. This implies
\begin{equation}
	\bigl\|(\vecw'-\vecw)_\perp\bigr\|
=\|(\vecbeta'-\vecbeta)_\perp\|\leq\eta
\end{equation}
and thus,
using $-\vecw'$, $-\vecw\in\HS$, we conclude
$\bigl\|\vecw'-\vecw\bigr\|<\sqrt{2\eta}$. 
Finally from
$(\tau-\tau')\vece_1=\rho(\vecw-\vecw')-\rho(\vecbeta-\vecbeta')$
we conclude 
\begin{equation}
	|\tau-\tau'|< \rho(\sqrt{2\eta}+\eta)<3\rho\sqrt{\eta} .
\end{equation}
Hence (iii) holds.
\end{proof}

\begin{proof}[Proof of Theorem \ref{exactpos2-1hit-unif}.]

\textit{Step 1: Proof in the case that $F_1$ and $F_2$ are singleton sets,
$F_1=\{\lambda\}$ and $F_2=\{f\}$, and furthermore $f$ is uniformly
continuous.}
Let $\ve>0$ be given. Let $R=1+\sup_{\S_1\times\R_{>0}\times\S_1} |f|$.
Set $B=3+\sup_{\vecbeta\in F_3}\sup_{\S_1^{d-1}} \|\vecbeta\|$ and
\begin{align}
F_3'=\bigl\{\vecv\mapsto\vecbeta(\vecv)+B\vecv\col\vecbeta\in F_3\bigr\}.
\end{align}
Then $\vecbeta_1(\vecv)\cdot\vecv\geq 3$ and $\|\vecbeta_1(\vecv)\|< 2B$
hold for all $\vecbeta_1\in F_3'$ and $\vecv\in\S_1^{d-1}$.

Because of Remark \ref{PALFBETSPECREM} and 
$0\leq\Phi_{\vecalf}(\xi,\vecw,\vecz)\leq 1$,
the following limit holds uniformly with respect to
all $\vecz\in\{0\}\times\R^{d-1}$:
\begin{align} \label{STEP1DELTAFACT}
\lim_{\delta\to 0} \Bigl(\int_0^\delta+\int_{\delta^{-1}}^\infty\Bigr) 
\int_{\{0\}\times\scrB_1^{d-1}}
\Phi_{\vecalf}(\xi,\vecw,\vecz)\,d\vecw\,d\xi=0.
\end{align}
Furthermore, by \cite[Lemma 8.8]{partI},
for any given $\delta>0$ there is some $0<\eta<\frac 1{10}$ so small that
\begin{align} \label{STEP1VEPRIMECHOICE}
\int_{\{0\}\times\scrB_1^{d-1}} \bigl | \Phi_\vecalf(\xi,\vecw,\vecz_1)-
\Phi_\vecalf(\xi,\vecw,\vecz_2)\bigr | \, d\vecw<\frac{\delta\ve}{R}
\end{align}
holds for all $\xi\in [T^{-1},T]$ and all
$\vecz_1,\vecz_2\in\{0\}\times\scrB_{2B}^{d-1}$
with $\|\vecz_1-\vecz_2\|\leq \eta$.
As a consequence of \eqref{STEP1DELTAFACT}
and \eqref{STEP1VEPRIMECHOICE}, we can now fix $\eta>0$ so small that,
for any two (measurable) functions
$\vecz_1,\vecz_2:\S_1^{d-1}\to\{0\}\times\scrB_{2B}^{d-1}$
satisfying $\|\vecz_1(\vecv)-\vecz_2(\vecv)\|\leq \eta$ 
($\forall\vecv\in\S_1^{d-1}$), we have
\begin{align} \label{STEP1VEPRIMECHOICEIMPL}
\int_0^\infty \int_{\{0\}\times\scrB^{d-1}_1} \int_{\S_1^{d-1}} &
\Bigl | \Phi_\vecalf\bigl(\xi,\vecw,\vecz_1(\vecv)\bigr)
-\Phi_\vecalf\bigl(\xi,\vecw,\vecz_2(\vecv)\bigr)\Bigr |
\,d\lambda(\vecv)\,d\vecw\,d\xi<\frac{2\ve}R.
\end{align}

Again using \eqref{STEP1DELTAFACT}
(and $\lim_{\eta\to 0}\vol_{\R^{d-1}} \bigl((\fU_{3\sqrt\eta})_\perp\bigr)=0$),
we see that by possibly further shrinking $\eta$, we may also assume that
\begin{align} \label{STEP1TCHOICEIMPL}
\int_0^\infty \int_{(\fU_{3\sqrt\eta})_\perp} \int_{\S_1^{d-1}} 
\Phi_{\vecalf}\bigl(\xi,\vecw,
(\vecbeta(\vecv)K(\vecv))_\perp\bigr)\,d\lambda(\vecv)\,d\vecw\,d\xi
<\frac{2\ve}{R}
\end{align}
holds for all (measurable) functions $\vecbeta:\S_1^{d-1}\to\R^d$.

Since $f$ is uniformly continuous there is some $\delta>0$ such that
$\bigl | f(\vecv_0,\xi,\vecv_1)-f(\vecv_0,\xi',\vecv_1')\bigr |<\ve$ holds
whenever $|\xi-\xi'|<\delta$ and $\|\vecv_1-\vecv_1'\|<\delta$.
Furthermore, our assumptions on the scattering function $\Theta$ imply that 
$\Theta_1(\vece_1,\vecw)$ is uniformly continuous in $\vecw\in -\HS$;
hence by possibly further shrinking $\eta$ we may assume that
\begin{align} \label{THETA1UNIFCONT}
\forall \vecw,\vecw'\in-\HS: \quad
\|\vecw-\vecw'\|<\sqrt{2\eta} \Longrightarrow
\|\Theta_1(\vece_1,\vecw)-\Theta_1(\vece_1,\vecw')\|<\delta.
\end{align}

Since the family $F_3'$ is uniformly bounded and equicontinuous, there is a 
finite set $F_3''$ of continuous functions 
$\vecbeta_2:\S_1^{d-1}\to\scrB_{2B}^d$ such
that for each $\vecbeta_1\in F_3'$ there is some $\vecbeta_2\in F_3''$
with $\sup_{\S_1^{d-1}} \|\vecbeta_1-\vecbeta_2\|<\eta$.

Given $\vecbeta_2\in F_3''$, let us write $\tbe_2$ for the function
$\tbe_2(\vecv):=(1+\eta)^{-1}{\vecbeta_2}(\vecv)$.
Now since $F_3''$ is finite, 
by Theorem~\ref{exactpos2-1hit} and Theorem \ref{exactpos1} combined
with \eqref{STEP1TCHOICEIMPL}
there is some $\rho_0>0$ such that the following two
inequalities hold for all $\rho\in (0,2\rho_0)$ and all $\vecbeta_2\in F_3''$:
\begin{align} \label{STEP1ASSONRHO1}
& \biggl |
\int_{\S_1^{d-1}} f\big(\vecv_0, \rho^{d-1} \tau_1(\vecq_{\rho,{\vecbeta_2}}(\vecv_0),\vecv_0;\rho), 
\vecv_1(\vecq_{\rho,{\vecbeta_2}}(\vecv_0),\vecv_0;\rho)\big) 
\, d\lambda(\vecv_0) 
\\ \notag
& \hspace{50pt}
-\int_{\S_1^{d-1}} \int_{\R_{>0}} \int_{\S_1^{d-1}} f\big(\vecv_0,\xi,\vecv_1\big) 
p_{\vecalf,{\vecbeta_2}}(\vecv_0,\xi,\vecv_1) \, d\lambda(\vecv_0)\, d\xi 
d\!\vol_{\S_1^{d-1}}(\vecv_1)\biggr | <\ve;
\\ \label{STEP1ASSONRHO2}
& \lambda\bigl(\bigl\{\vecv_0\in\S_1^{d-1}\col 
-\vecw_1(\vecq_{\rho,\tbe_2}(\vecv_0),\vecv_0;\rho)K(\vecv_0)\in\fU_{3\sqrt\eta}
\bigr\}\bigr)<\frac{3\ve}{R}.
\end{align}
By possibly further shrinking $\rho_0$ we may assume that
$(3\sqrt{\eta}+B)\rho_0^{d}<\delta$, and
$\rho_0<\rho_0(2B,\scrL,\vecq)$ (the constant from Lemma \ref{EXCSET1LEMMA}),
and that every ball $\vecm+\scrB_{\rho_0}^d$
with $\vecm\in\scrL\setminus\{\vecq\}$ lies outside $\vecq+\scrB_{2B\rho_0}^d$.

We now claim that for all $\rho\in(0,\rho_0)$ and all $\vecbeta\in F_3$
we have
\begin{align} \label{STEP1CLAIM}
& \Bigl |
\int_{\S_1^{d-1}} f\big(\vecv_0, \rho^{d-1} \tau_1(\vecq_{\rho,\vecbeta}(\vecv_0),\vecv_0;\rho), 
\vecv_1(\vecq_{\rho,\vecbeta}(\vecv_0),\vecv_0;\rho)\big) d\lambda(\vecv_0) 
\\ \notag
& \hspace{50pt}
-\int_{\S_1^{d-1}} \int_{\R_{>0}} \int_{\S_1^{d-1}} f\big(\vecv_0,\xi,\vecv_1\big) 
p_{\vecalf,\vecbeta}(\vecv_0,\xi,\vecv_1) \, d\lambda(\vecv_0)\, d\xi 
d\!\vol_{\S_1^{d-1}}(\vecv_1)\Bigr | <10\ve.
\end{align}
This will complete the proof of Step 1, since $\ve>0$ was arbitrary.

To prove \eqref{STEP1CLAIM}, let $\rho\in(0,\rho_0)$ and 
$\vecbeta\in F_3$
be given. Set $\vecbeta_1(\vecv):\equiv\vecbeta(\vecv)+B\vecv$;
then $\vecbeta_1\in F_3'$, and hence there is some $\vecbeta_2\in F_3''$
such that $\sup_{\S_1^{d-1}} \|\vecbeta_1-\vecbeta_2\|<\eta$.
Now for each $\vecv_0\in\S_1^{d-1}$, Lemma \ref{EXCSET1LEMMA} applies to 
$\rho$, $\vecv_0$, $K(\vecv_0)$, $\vecbeta_2(\vecv_0)$, $\vecbeta_1(\vecv_0)$,
$\eta$,
and hence either
\begin{enumerate}
	\item[(i)] $\tau_1(\vecq_{\rho,\vecbeta_2}(\vecv_0),\vecv_0,\rho)=
\tau_1(\vecq_{\rho,\vecbeta_1}(\vecv_0),\vecv_0,\rho)=\infty$, or
	\item[(ii)] $-\vecw_1(\vecq_{\trho,\tbe_2}(\vecv_0),\vecv_0,\trho)K(\vecv_0)
\in\fU_{3\sqrt\eta}$ with $\trho=\rho(1+\eta)$, or
	\item[(iii)] $\bigl |\tau_1(\vecq_{\rho,\vecbeta_1}(\vecv_0),\vecv_0;\rho)
-\tau_1(\vecq_{\rho,\vecbeta_2}(\vecv_0),\vecv_0;\rho)\bigr |<3\rho\sqrt{\eta}$ \\ and
$\bigl\|\vecw_1(\vecq_{\rho,\vecbeta_1}(\vecv_0),\vecv_0;\rho)
-\vecw_1(\vecq_{\rho,\vecbeta_2}(\vecv_0),\vecv_0;\rho)\bigr\|<\sqrt{2\eta}$.
\end{enumerate}
But 
\begin{equation}
	\tau_1(\vecq_{\rho,\vecbeta_1}(\vecv_0),\vecv_0;\rho)
=\tau_1(\vecq_{\rho,\vecbeta}(\vecv_0),\vecv_0;\rho)-B\rho
\end{equation}
and 
\begin{equation}
	\vecw_1(\vecq_{\rho,\vecbeta_1}(\vecv_0),\vecv_0;\rho)
=\vecw_1(\vecq_{\rho,\vecbeta}(\vecv_0),\vecv_0;\rho),
\end{equation}
since every ball $\vecm+\scrB_\rho^d$ ($\vecm\in\scrL\setminus\{\vecq\}$) 
lies outside $\vecq+\scrB_{2B\rho}^d$ and 
$(\vecbeta(\vecv_0)+\R_{>0}\vecv_0)\cap\scrB_1^d=\emptyset$ if $\vecq\in\scrL$.
Hence if (iii) holds then we have
(using our assumptions on $\rho_0$)
\begin{align}
\bigl | \rho^{d-1}\tau_1(\vecq_{\rho,\vecbeta_2}(\vecv_0),\vecv_0;\rho)
-\rho^{d-1}\tau_1(\vecq_{\rho,\vecbeta}(\vecv_0),\vecv_0;\rho) \bigr |
<(3\sqrt{\eta}+B)\rho^d<\delta
\end{align}
and also, via \eqref{THETA1UNIFCONT},
\begin{equation}
	\bigl\|\vecv_1(\vecq_{\rho,\vecbeta_2}(\vecv_0),\vecv_0;\rho)
-\vecv_1(\vecq_{\rho,\vecbeta}(\vecv_0),\vecv_0;\rho)\bigr\|<\delta .
\end{equation}
Therefore, by our choice of $\delta$,
\begin{align} \label{STEP1FDIFFSMALL}
& \Bigl | f\big(\vecv_0, \rho^{d-1} \tau_1(\vecq_{\rho,\vecbeta_2}(\vecv_0),\vecv_0;\rho), 
\vecv_1(\vecq_{\rho,\vecbeta_2}(\vecv_0),\vecv_0;\rho)\big)
\\ \notag
& \hspace{50pt}
-f\big(\vecv_0, \rho^{d-1} \tau_1(\vecq_{\rho,\vecbeta}(\vecv_0),\vecv_0;\rho),
\vecv_1(\vecq_{\rho,\vecbeta}(\vecv_0),\vecv_0;\rho)\big)\Bigr |<\ve.
\end{align}

Regarding the other possibilities (i) and (ii), the set
$\{\vecv_0\in\S_1^{d-1}\col
\tau_1(\vecq_{\rho,\vecbeta_1}(\vecv_0),\vecv_0,\rho)=\infty\}$ 
has measure zero with respect to
$\vol_{\S_1^{d-1}}$ (see Sec.\ \ref{firstcoll}), and hence also with respect 
to $\lambda$. Furthermore we have
\begin{equation}
	\lambda(\{\vecv_0\in\S_1^{d-1}\col
-\vecw_1(\vecq_{\trho,\tbe_2}(\vecv_0),\vecv_0,\trho)K(\vecv_0)\in\fU_{3\sqrt\eta}\})
<\frac{3\ve}R
\end{equation}
using \eqref{STEP1ASSONRHO2} and $\trho<2\rho_0$.
Hence, since the difference in \eqref{STEP1FDIFFSMALL} is \textit{always}
$<2R$, we conclude
\begin{align} \label{STEP1INTFDIFFSMALL}
& \Bigl |
\int_{\S_1^{d-1}} f\big(\vecv_0, \rho^{d-1} \tau_1(\vecq_{\rho,{\vecbeta}}(\vecv_0),\vecv_0;\rho), 
\vecv_1(\vecq_{\rho,{\vecbeta}}(\vecv_0),\vecv_0;\rho)\big) d\lambda(\vecv_0) 
\\ \notag
& \hspace{50pt}
-\int_{\S_1^{d-1}} f\big(\vecv_0, \rho^{d-1} \tau_1(\vecq_{\rho,{\vecbeta_2}}(\vecv_0),\vecv_0;\rho), 
\vecv_1(\vecq_{\rho,{\vecbeta_2}}(\vecv_0),\vecv_0;\rho)\big) d\lambda(\vecv_0) 
\Bigr | <7\ve.
\end{align}
Using \eqref{STEP1VEPRIMECHOICEIMPL} and the definition of 
$p_{\vecalf,\vecbeta}$ in Theorem \ref{exactpos2-1hit} we also get
\begin{align} \label{STEP1PABDIFFSMALL}
\int_{\S_1^{d-1}} \int_{\R_{>0}} \int_{\S_1^{d-1}}
\Bigl |p_{\vecalf,{\vecbeta_2}}(\vecv_0,\xi,\vecv_1)-
p_{\vecalf,{\vecbeta}}(\vecv_0,\xi,\vecv_1)\Bigr | 
\,d\lambda(\vecv_0)\, d\xi \, d\!\vol_{\S_1^{d-1}}(\vecv_1)<\frac{2\ve}R.
\end{align}
Combining \eqref{STEP1ASSONRHO1}, %
\eqref{STEP1INTFDIFFSMALL} and \eqref{STEP1PABDIFFSMALL},
we conclude that \eqref{STEP1CLAIM} holds, thus completing the proof of Step 1.

\hspace{5pt}

\textit{Step 2: Proof of Theorem \ref{exactpos2-1hit-unif} in the case
that $F_1$ is a singleton set, $F_1=\{\lambda\}$, 
and $F_2$ has the property that $\cup_{f\in F_2} \supp(f)$ is compact.} 
In this case, since the family $F_2$ is also uniformly bounded
and equicontinuous, for any given $\ve>0$ there exists a finite
subfamily $F_2'\subset F_2$ such that for every $f\in F_2$ there is some
$f_0\in F_2'$ with $\sup_{\S_1\times\R_{>0}\times\S_1} |f-f_0|<\ve$.
Hence the desired result follows from Step 1 by a standard approximation 
argument.

\hspace{5pt}

\textit{Step 3: Proof of Theorem \ref{exactpos2-1hit-unif} in the case
that $F_1$ is a singleton set, $F_1=\{\lambda\}$.}
We first prove that the following limit holds uniformly with respect to
all $\vecbeta\in F_3$:
\begin{align} \label{STEP3NB}
\lim_{(\rho,\delta)\to (0,0)}\lambda\bigl(\bigl\{\vecv_0\in\S_1^{d-1}\col
\rho^{d-1}\tau_1(\vecq_{\rho,\vecbeta}(\vecv_0),\vecv_0;\rho)
\notin [\delta,\delta^{-1}] \bigr\}\bigr)=0.
\end{align}
To prove this,
for each $0<\delta<\frac 12$ we fix $c_\delta:\R_{>0}\to [0,1]$ to be some
continuous function with 
$\chi_{[2\delta,(2\delta)^{-1}]}\leq c_\delta\leq\chi_{[\delta,\delta^{-1}]}$,
and view $c_\delta$ as a function on $\S_1^{d-1}\times\R_{>0}\times\S_1^{d-1}$
via projection onto the second component.
Applying Step 1 for the families
$F_1$, $F_2'=\{c_\delta\}$, $F_3$, one proves that for any
$\ve,\delta>0$ there exists some $\rho_0>0$ such that
\begin{align} 
& \lambda\bigl(\bigl\{\vecv_0\in\S_1^{d-1}\col
\rho^{d-1}\tau_1(\vecq_{\rho,\vecbeta}(\vecv_0),\vecv_0;\rho)
\notin [\delta,\delta^{-1}] \bigr\}\bigr)
\\ \notag
& \qquad <\ve+\bigl(\int_0^{2\delta}+\int_{(2\delta)^{-1}}^\infty\bigr)
\int_{\{0\}\times\scrB_1^{d-1}}\int_{\S_1^{d-1}}
\Phi_{\vecalf}(\xi,\vecw,(\vecbeta(\vecv)K(\vecv))_\perp)\,d\lambda(\vecv)
\,d\vecw\,d\xi
\end{align}
for all $\rho\in(0,\rho_0)$ and all $\vecbeta\in F_3$.
Thus \eqref{STEP3NB} follows using \eqref{STEP1DELTAFACT} and %
monotonicity in $\delta$.

Now note that for each fixed $\delta$, 
the family $F_2'':=\{c_\delta\cdot f\col f\in F_2\}$ is uniformly bounded and 
equicontinuous, and all functions in $F_2''$ have support contained in the
compact set $\S_1^{d-1}\times[\delta,\delta^{-1}]\times\S_1^{d-1}$.
Hence Step 2 applies to the families $F_1,F_2'',F_3$.
Now the desired claim follows upon letting $\delta\to 0$ and using
\eqref{STEP1DELTAFACT} and \eqref{STEP3NB} to control the error caused by 
replacing
$f$ by $c_\delta\cdot f$ in both sides of \eqref{exactpos2eq-1hit-repeted}.

\hspace{5pt}

\textit{Step 4: Proof of Theorem \ref{exactpos2-1hit-unif} in the 
general case.}
Since $F_1$ is equismooth there exist
an equicontinuous and uniformly bounded family $F'$ of functions 
from $\S_1^{d-1}$ to $\R_{\geq 0}$ and an equismooth family $F''$
of Borel subsets of $\S_1^{d-1}$ such that each $\lambda\in F_1$ can be 
expressed as $\lambda=(g\cdot \vol_{\S_1^{d-1}})_{|\scrW}$ with
$g\in F'$, $\scrW\in F''$.
Set $R=1+\sup_{f\in F_2}\sup_{\S_1\times\R_{>0}\times\S_1} |f|$ and
$S=1+\sup_{g\in F'}\sup_{\S_1^{d-1}} g$.

Let $\ve>0$ be given.
Fix $\eta>0$ so small that
\begin{align} \label{ETACHOICEREG}
\forall g\in F': \:\forall \vecv_0,\vecv_0'\in\S_1^{d-1}: \:
\varphi(\vecv_0,\vecv_0')<\eta \Longrightarrow
\|g(\vecv_0)-g(\vecv_0')\|<\frac{\ve}{R\vol(\S_1^{d-1})}
\end{align}
and 
\begin{align}
\forall\scrW\in F'':\:
\vol_{\S_1^{d-1}}(\partial_\eta\scrW)<\frac{\ve}{RS}.
\end{align}
Choose a partition $\S_1^{d-1}=\bigsqcup_{j=1}^n D_j$ of $\S_1^{d-1}$
into Borel subsets $D_1,\ldots,D_n$ which are pathwise connected, of
positive volume %
and each has diameter $<\eta$
(with respect the metric $\varphi$ on $\S_1^{d-1}$).
Let $\lambda_j$ be the probability measure
$\lambda_j=\vol_{\S_1^{d-1}}(D_j)^{-1}\bigl(\vol_{\S_1^{d-1}}\bigr)_{|D_j}$
for $j=1,\ldots,n$.
By Step 3 applied $n$ times, there exists some $\rho_0>0$ such that
for all $\rho\in(0,\rho_0)$, $j\in\{1,\ldots,n\}$, $f\in F_2$ and
$\vecbeta\in F_3$, we have
\begin{align} \label{STEP3ASSONRHO1}
\biggl |
\int_{\S_1^{d-1}} f\big(\vecv_0, \rho^{d-1} \tau_1(\vecq_{\rho,{\vecbeta}}(\vecv_0),\vecv_0;\rho), 
\vecv_1(\vecq_{\rho,{\vecbeta}}(\vecv_0),\vecv_0;\rho)\big) d\lambda_j(\vecv_0)
\hspace{100pt} &
\\ \notag
-\int_{\S_1^{d-1}} \int_{\R_{>0}} \int_{\S_1^{d-1}} f\big(\vecv_0,\xi,\vecv_1\big) 
p_{\vecalf,{\vecbeta}}(\vecv_0,\xi,\vecv_1) \, d\lambda_j(\vecv_0)\, d\xi 
d\!\vol_{\S_1^{d-1}}(\vecv_1)\biggr | <\frac{\ve}{S\vol(\S_1^{d-1})}.&
\end{align}

We now claim that for all $\rho\in(0,\rho_0)$, $\lambda\in F_1$,
$f\in F_2$ and $\vecbeta\in F_3$, we have
\begin{align} \label{STEP3CLAIM}
& \biggl |
\int_{\S_1^{d-1}} f\big(\vecv_0, \rho^{d-1} \tau_1(\vecq_{\rho,{\vecbeta}}(\vecv_0),\vecv_0;\rho), 
\vecv_1(\vecq_{\rho,{\vecbeta}}(\vecv_0),\vecv_0;\rho)\big) d\lambda(\vecv_0) 
\\ \notag
& \hspace{50pt}
-\int_{\S_1^{d-1}} \int_{\R_{>0}} \int_{\S_1^{d-1}} f\big(\vecv_0,\xi,\vecv_1\big) 
p_{\vecalf,{\vecbeta}}(\vecv_0,\xi,\vecv_1) \, d\lambda(\vecv_0)\, 
d\xi\,d\!\vol_{\S_1^{d-1}}(\vecv_1)\biggr | <5\ve.
\end{align}
This will complete the proof of %
Theorem \ref{exactpos2-1hit-unif},
since $\ve>0$ was arbitrary.

To prove \eqref{STEP3CLAIM}, let $\rho\in(0,\rho_0)$, $\lambda\in F_1$,
$f\in F_2$ and $\vecbeta\in F_3$ be given. Take $g\in F'$ and $\scrW\in F''$
such that $\lambda=(g\cdot \vol_{\S_1^{d-1}})_{|\scrW}$.
Set 
\begin{align}
M:=\{j\col D_j\subset \scrW\}, \qquad
M':=\{j\col D_j\not\subset \scrW \text{ and } D_j\not\subset 
\S_1^{d-1}\setminus\scrW\}.
\end{align}
Since $D_j$ is pathwise connected, for each $j\in M'$ there is
some point $\vecp\in D_j\cap\partial\scrW$. Therefore, and because $D_j$
has diameter $<\eta$, we have $D_j\subset\partial_\eta\scrW$.
Hence $\sum_{j\in M'}\vol_{\S_1^{d-1}}(D_j)\leq
\vol_{\S_1^{d-1}}(\partial_\eta\scrW)<\frac{\ve}{RS}$.

For each $j\in \{1,\ldots,n\}$ we fix a point $\vecp_j\in D_j$ and set
$g_j:=g(\vecp_j)$; then note that 
$|g(\vecv_0)-g_j|<\frac\ve{R\vol(\S_1^{d-1})}$ for all
$\vecv_0\in D_j$, by our choice of $\eta$ in \eqref{ETACHOICEREG}. Hence
\begin{align} \notag
\biggl |
\int_{\S_1^{d-1}} f\big(\vecv_0, \rho^{d-1} \tau_1(\vecq_{\rho,{\vecbeta}}(\vecv_0),\vecv_0;\rho), 
\vecv_1(\vecq_{\rho,{\vecbeta}}(\vecv_0),\vecv_0;\rho)\big)\, d\lambda(\vecv_0)
\hspace{115pt} &
\\ \notag
-\sum_{j\in M} g_j \vol_{\S_1^{d-1}}(D_j) \int_{\S_1^{d-1}}
f\big(\vecv_0, \rho^{d-1} \tau_1(\vecq_{\rho,{\vecbeta}}(\vecv_0),\vecv_0;\rho), 
\vecv_1(\vecq_{\rho,{\vecbeta}}(\vecv_0),\vecv_0;\rho)\big) \,
d\lambda_j(\vecv_0) \biggr | &
\\ \label{STEP3FACT1}
\leq \sum_{j\in M} \int_{D_j} R\frac{\ve}{R\vol(\S_1^{d-1})}
\,d\!\vol_{\S_1^{d-1}}(\vecv_0)
+\sum_{j\in M'} \int_{D_j} RS\,d\!\vol_{\S_1^{d-1}}(\vecv_0)
< 2\ve. \hspace{30pt} &
\end{align}
Combining this with \eqref{STEP3ASSONRHO1}, applied for each $j\in M$, we get
\begin{align} \notag
\biggl |
\int_{\S_1^{d-1}} f\big(\vecv_0, \rho^{d-1} \tau_1(\vecq_{\rho,{\vecbeta}}(\vecv_0),\vecv_0;\rho), 
\vecv_1(\vecq_{\rho,{\vecbeta}}(\vecv_0),\vecv_0;\rho)\big) d\lambda(\vecv_0)
\hspace{115pt} &
\\ \notag
-\sum_{j\in M} g_j \vol_{\S_1^{d-1}}(D_j) 
\int_{\S_1^{d-1}} \int_{\R_{>0}} \int_{\S_1^{d-1}} f\big(\vecv_0,\xi,\vecv_1\big) 
p_{\vecalf,{\vecbeta}}(\vecv_0,\xi,\vecv_1) \, d\lambda_j(\vecv_0)
\, d\xi \,d\!\vol_{\S_1^{d-1}}(\vecv_1)\biggr | &
\\ \label{STEP3FACT2}
<2\ve+\sum_{j\in M} S \vol_{\S_1^{d-1}}(D_j)\frac{\ve}{S\vol(\S_1^{d-1})}\leq
3\ve. &
\end{align}
On the other hand, using 
$\int_{\S_1^{d-1}} \int_{\R_{>0}} p_{\vecalf,{\vecbeta}}(\vecv_0,\xi,\vecv_1) 
\,d\xi\,d\!\vol_{\S_1^{d-1}}(\vecv_1)=1$
(true for all $\vecv_0\in\S_1^{d-1}$, cf.\ \eqref{exactpos2-1hit-tpdef} 
and \eqref{PALFINTEQ1}) and the same bounds as in \eqref{STEP3FACT1},
we get
\begin{align} \notag
\biggl | 
\int_{\S_1^{d-1}} \int_{\R_{>0}} \int_{\S_1^{d-1}} f\big(\vecv_0,\xi,\vecv_1\big)
p_{\vecalf,{\vecbeta}}(\vecv_0,\xi,\vecv_1) \, d\lambda(\vecv_0)\,d\xi
\,d\!\vol_{\S_1^{d-1}}(\vecv_1) \hspace{120pt} &
\\ \notag
-\sum_{j\in M} g_j \vol_{\S_1^{d-1}}(D_j) 
\int_{\S_1^{d-1}} \int_{\R_{>0}} \int_{\S_1^{d-1}} f\big(\vecv_0,\xi,\vecv_1\big)
p_{\vecalf,{\vecbeta}}(\vecv_0,\xi,\vecv_1) \, d\lambda_j(\vecv_0)\,d\xi
\,d\!\vol_{\S_1^{d-1}}(\vecv_1) \biggr| &
\\ \label{STEP3FACT3}
\leq \sum_{j\in M} \int_{D_j} R\frac{\ve}{R\vol(\S_1^{d-1})}
\,d\!\vol_{\S_1^{d-1}}(\vecv_0)
+\sum_{j\in M'} \int_{D_j} RS\,d\!\vol_{\S_1^{d-1}}(\vecv_0)<2\ve. &
\end{align}
Combining \eqref{STEP3FACT2} and \eqref{STEP3FACT3} we obtain 
\eqref{STEP3CLAIM}, and we are done.
\end{proof}

\section{Iterated scattering maps}\label{reflmapssec}

To prepare for the proof of the main result of this paper, Theorem \ref{pathwayThm}, this section provides a detailed study of the map obtained by iterated scattering in a given sequence of balls 
$\vecm_j+\scrB_\rho^d$, when $\rho$ is small. The central result of this section is
Proposition \ref{REFLPROP} below.

\subsection{Two lemmas}

Given $\vecv\in\S_1^{d-1}$, the tangent space $\T_\vecv(\S_1^{d-1})$
is naturally identified with $\{\vecv\}^\perp\subset\R^d$.
For $\vech\in \T_\vecv(\S_1^{d-1})$ we write $D_\vech$ for the corresponding
derivative. We use the standard Riemannian metric for $\S_1^{d-1}$, and denote by $\T^1_\vecv(\S_1^{d-1})$ the set of unit vectors in $\T_\vecv(\S_1^{d-1})$. For any open subset $\scrV\subset\S_1^{d-1}$ we write $\T^1(\scrV)=\bigsqcup_{\vecv\in\scrV} \T^1_\vecv(\S_1^{d-1})$
for the unit tangent bundle of $\scrV$.

For $\vecw\in\R^d\setminus\{\bn\}$ and $\eta\geq 0$ 
we define $\scrV_\vecw:=\scrV_{\uvecw}$ (cf.\ \eqref{VPDEF}) and
\begin{align} \label{WVPDEF}
\scrV^\eta_\vecw:=\Bigl\{\vecu\in\S_1^{d-1}\col\varphi(\vecu,\vecw)>
B_\Theta+\eta\Bigr\}\subseteq\scrV_\vecw;
\end{align}
thus in particular $\scrV^0_\vecw=\scrV_\vecw$. %
Set
\begin{align} \label{C1THETAETADEF}
C_{\eta}:=1+\max\Bigl(
\sup_{\vech\in \T^1(\scrV_{\vecv}^\eta)} 
\bigl\|D_\vech\vecbeta^+_{\vecv}\bigr\|,
\sup_{\vech\in \T^1(\scrV_{\vecv}^\eta)} 
\bigl\|D_\vech\vecbeta^-_{\vecv}\bigr\|
\Bigr).
\end{align}
Then $C_\eta$ is independent of $\vecv$, depends continuously on 
$\eta\in (0,\pi-B_\Theta)$, and may approach infinity as $\eta\to 0$.

For any $\vecs\in\R^d\setminus\{\bn\}$ we let $\lambda_\vecs$ be the 
probability measure on $\S_1^{d-1}$ which gives
the direction of a ray after it has been scattered 
in the ball $\scrB_1^{d}$, given that the
incoming ray has direction $\vecs$ and is part of the line
$\vecx+\R\vecs$ with $\vecx$ picked at random in 
the $(d-1)$-dimensional unit ball $\{\vecs\}^\perp\cap \scrB_1^d$, with respect to 
the $(d-1)$-dimensional Lebesgue measure.
In particular for $\vecs=\vece_1$ we have 
\begin{align} \label{LAMBDA0DEF}
\lambda_{\vece_1}=\vol(\scrB_1^{d-1})^{-1}\cdot
(V_0)_*\bigl({\vol_{\R^{d-1}}}\bigr), %
\end{align}
where $V_0:\scrB_1^{d-1}\to\S_1^{d-1}$ is the map
\begin{align} \label{V0DEF}
V_0(\vecx)=\Theta_1\bigl(\vece_1,(-\sqrt{1-\|\vecx\|^2},\vecx)\bigr).
\end{align}
Thus $V_0$ is a diffeomorphism from $\scrB_1^{d-1}$ onto $\scrV_{\vece_1}$.
For general $\vecs\neq\bn$ we have $\lambda_\vecs=K_*(\lambda_{\vece_1})$ where $K\in\SO(d)$ is any
rotation such that $\uvecs=\vece_1 K$.
The following lemma shows that for any subset $M\subset\scrV_\vecs^\eta$, the renormalized 
$\S_1^{d-1}$-volume measure on $M$ and the renormalized $\lambda_\vecs$-measure
on $M$ are comparable with a controllable distortion factor.

\begin{lem} \label{COMPARABILITYLEM}
Given $\eta,\ve>0$, there exist constants $K_{\eta}>1$ and
$c_{\eta,\ve}>0$ such that for every 
Borel subset $M\subset\scrV_\vecs^\eta$ with
$\vol_{\S_1^{d-1}}(M)>0$, we have
\begin{align}
\lambda_\vecs(M)^{-1}\lambda_{\vecs |M}=g\cdot
(\vol_{\S_1^{d-1}}(M))^{-1}\cdot \vol_{\S_1^{d-1} |M}
\end{align}
for some continuous function $g:M\to\R_{>0}$ with $g(\vecu)\in
[K_{\eta}^{-1},K_{\eta}]$ for all $\vecu\in M$.

If furthermore $M$ has diameter $\leq c_{\eta,\ve}$,
i.e.\  if $\varphi(\vecu_1,\vecu_2)\leq c_{\eta,\ve}$
for all $\vecu_1,\vecu_2\in M$, then $g(\vecu)\in[1-\ve,1+\ve]$
for all $\vecu\in M$.
\end{lem}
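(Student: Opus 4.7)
The plan is to exhibit $g$ as the ratio of the Radon--Nikodym derivative $d\lambda_\vecs/d\vol_{\S_1^{d-1}}$ to its average value over $M$, and then control this ratio by compactness plus uniform continuity.

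First I would reduce to the case $\vecs=\vece_1$. By condition (i) on $\Theta$, together with the $\O(d)$-invariance of $\vol_{\S_1^{d-1}}$ and $\vol_{\R^{d-1}}$, any $K\in\SO(d)$ with $\vece_1K=\vecs$ satisfies $\lambda_\vecs=K_*(\lambda_{\vece_1})$, $\scrV_\vecs^\eta=\scrV_{\vece_1}^\eta K$, and $\vol_{\S_1^{d-1}}(M)=\vol_{\S_1^{d-1}}(MK^{-1})$. Hence any constants obtained at $\vece_1$ carry over verbatim via the substitution $\vecu\mapsto\vecu K^{-1}$; in particular $K_\eta$ and $c_{\eta,\ve}$ will not depend on $\vecs$.

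Next I would identify $h:=d\lambda_{\vece_1}/d\vol_{\S_1^{d-1}}$ on $\scrV_{\vece_1}$. By \eqref{LAMBDA0DEF}, $\lambda_{\vece_1}$ is the push-forward of $\vol(\scrB_1^{d-1})^{-1}\vol_{\R^{d-1}}$ under $V_0$, and assumption (iv) implies that $V_0$ is a $C^1$ diffeomorphism from $\scrB_1^{d-1}$ onto $\scrV_{\vece_1}$. The standard change-of-variables formula therefore yields
\begin{equation}
h(\vecu)=\vol(\scrB_1^{d-1})^{-1}\bigl|J_{V_0^{-1}}(\vecu)\bigr|,
\end{equation}
a continuous strictly positive function on $\scrV_{\vece_1}$. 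Since $\overline{\scrV_{\vece_1}^\eta}$ is a compact subset of the open set $\scrV_{\vece_1}$, the quantities
\begin{equation}
a_\eta:=\min_{\overline{\scrV_{\vece_1}^\eta}} h,\qquad A_\eta:=\max_{\overline{\scrV_{\vece_1}^\eta}} h
\end{equation}
are both finite and strictly positive. For $M\subset\scrV_\vecs^\eta$ with $\vol_{\S_1^{d-1}}(M)>0$, set $\bar h:=\lambda_\vecs(M)/\vol_{\S_1^{d-1}}(M)$ and define $g(\vecu):=h_\vecs(\vecu)/\bar h$, where $h_\vecs$ is the density transported from $h$ through the rotation above. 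Then $\lambda_{\vecs|M}/\lambda_\vecs(M)=g\cdot\vol_{\S_1^{d-1}|M}/\vol_{\S_1^{d-1}}(M)$ holds by construction, $g$ is continuous on $M$, and from $\bar h\in[a_\eta,A_\eta]$ one obtains $g(\vecu)\in[a_\eta/A_\eta,A_\eta/a_\eta]$, so $K_\eta:=A_\eta/a_\eta$ works.

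For the diameter statement, I would invoke Heine--Cantor: $h$ is uniformly continuous on the compact set $\overline{\scrV_{\vece_1}^\eta}$, so for any $\ve>0$ there exists $c_{\eta,\ve}>0$ such that $\varphi(\vecu_1,\vecu_2)<c_{\eta,\ve}$ forces $|h(\vecu_1)-h(\vecu_2)|<\ve a_\eta$. If $\diam M\leq c_{\eta,\ve}$, then every $h_\vecs(\vecu)$ with $\vecu\in M$ lies within $\ve a_\eta$ of its average $\bar h$, giving
\begin{equation}
|g(\vecu)-1|=\frac{|h_\vecs(\vecu)-\bar h|}{\bar h}\leq\frac{\ve a_\eta}{a_\eta}=\ve,
\end{equation}
hence $g(\vecu)\in[1-\ve,1+\ve]$. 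The only point requiring any thought is the existence and continuity of the density $h$ on all of $\scrV_{\vece_1}$; this rests squarely on assumption (iv), which ensures that $V_0$ is a global $C^1$ diffeomorphism, so the Jacobian $J_{V_0^{-1}}$ is everywhere finite, nonzero, and continuous. After that, everything reduces to compactness plus uniform continuity.
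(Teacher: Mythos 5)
Your proposal is correct and follows essentially the same route as the paper: reduce to $\vecs=\vece_1$ by rotation invariance, write the density of $\lambda_{\vece_1}$ as (a constant times) the Jacobian of $V_0^{-1}$ via \eqref{LAMBDA0DEF} and assumption (iv), and exploit that $\scrV_{\vece_1}^\eta$ has compact closure inside $\scrV_{\vece_1}$ to get two-sided bounds and uniform continuity of that density, with the normalizing constant being its average over $M$. The only (cosmetic) difference is that you make the average $\bar h$ and the constants $a_\eta,A_\eta$ explicit, whereas the paper phrases the same estimate through the normalization $C\int_M f\,d\nu=1$.
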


\begin{proof}
Without loss of generality we may assume $\vecs=\vece_1$.
Set $\nu=(\vol_{\S_1^{d-1}}(M))^{-1}\cdot \vol_{\S_1^{d-1}|M}$ and
$\lambda=\lambda_{\vece_1}(M)^{-1}\lambda_{\vece_1|M}$.
By \eqref{LAMBDA0DEF} we have $\lambda=C \cdot f_{|M} \cdot \nu$, 
where $f:\scrV_{\vece_1}\to\R_{>0}$ is the Jacobian determinant of the 
inverse map $V_0^{-1}$ from $\scrV_{\vece_1}$ to $\scrB_1^{d-1}$,
and $C>0$ is a constant determined by $C\int_M f\, d\nu=1$
(in particular if the ratio of $\sup_M f$ and $\inf_M f$ is close to $1$ then 
the function $C\cdot f_{|M}$ is uniformly close to $1$).
Now the lemma follows from the fact that both $f$ and $f^{-1}$ 
are bounded and uniformly continuous on $\scrV_{\vece_1}^\eta$,
since $\scrV_{\vece_1}^\eta$ has compact closure in $\scrV_{\vece_1}$.
\end{proof}

We define $\lambda_\vecs^\eta$ to be the probability measure
which is obtained by restricting $\lambda_\vecs$ to $\scrV_\vecs^\eta$, and normalizing appropriately.

Given $\vecr,\vecs\in\R^d\setminus\{\bn\}$,
a number $\rho>0$ and a continuous function
$\vecbeta:\scrV_{\vecr}^{\eta}\to\R^d$, we set 
\begin{align} \label{OMEGARHOMBDEF}
\Omega%
=\Bigl\{\vecv\in\scrV_{\vecr}^{\eta}\col (\rho\vecbeta(\vecv)+\R_{>0}\vecv)\cap
(\vecs+\scrB_\rho^d)\neq\emptyset\Bigr\}.
\end{align}
For $\vecv\in\Omega$ we set
\begin{equation}
	\tau(\vecv)=\tau_{\rho,\vecs,\vecbeta}(\vecv)
	:=\inf\{t>0\col
\rho\vecbeta(\vecv)+t\vecv\in\vecs+\scrB_\rho^d\},
\end{equation}
let $W(\vecv)=W_{\rho,\vecs,\vecbeta}(\vecv)$ be the impact location on $\S_1^{d-1}$, i.e., the point for 
which $\rho\vecbeta(\vecv)+\tau(\vecv)\vecv=\vecs+\rho W(\vecv)$,
and let
\begin{align} \label{SHINELEM2VDEF}
V(\vecv)=V_{\rho,\vecs,\vecbeta}(\vecv)
:=\Theta_1(\vecv,W(\vecv))\in\S_1^{d-1},
\end{align}
the outgoing direction after the ray
$\rho\vecbeta(\vecv)+\R_{>0}\vecv$ is scattered in the sphere 
$\vecs+\S_\rho^{d-1}$.

\begin{lem} \label{SHINELEM1}
Given any $0<\eta<\frac {\pi-B_\Theta}{100}$, $C\geq 10$ and $\ve>0$, there exists a constant
$\trho_0=\trho_0(\eta,C,\ve)>0$ such that all the following 
statements hold for any $\rho\in (0,\trho_0)$,
any $\vecr,\vecs\in\R^d\setminus\{\bn\}$ with 
$\|\vecs\|\geq C^{-1}$ and $\varphi(\vecr,\vecs)>B_\Theta+2\eta$,
and for any $\C^1$-function
$\vecbeta:\scrV_{\vecr}^{\eta}\to\R^d$ 
with $\sup_{\scrV_{\vecr}^{\eta}} \|\vecbeta\|\leq C$ and
$\sup_{\vech\in \T^1(\scrV_{\vecr}^{\eta})} \|D_\vech\vecbeta\|\leq C$:
\begin{enumerate}
\item[(i)] Let $\overline{V}=\overline{V}_{\rho,\vecs,\vecbeta}$ be the 
restriction of $V=V_{\rho,\vecs,\vecbeta}$ to $V^{-1}(\scrV_\vecs^\eta)$;
then $\overline{V}$ is a $\C^1$ diffeomorphism onto $\scrV_\vecs^\eta$.
\item[(ii)] If $M\subset\scrV^{\eta}_\vecs$ 
is any Borel subset with $\lambda_\vecs(M)>0$ and if $\mu$ denotes
the measure $\vol_{\S_1^{d-1}}$ restricted to $\overline{V}^{-1}(M)$ and
rescaled to be a probability measure, then
$\overline{V}_*\mu=g\cdot \lambda_{\vecs}(M)^{-1}{\lambda_{\vecs}}_{|M}$
for some continuous function $g:M\to [1-\ve,1+\ve]$.
\item[(iii)] Define the $\C^1$ maps $B^\pm=B^\pm_{\rho,\vecs,\vecbeta}:\scrV^\eta_\vecs
\to \S_1^{d-1}$ through
$B^\pm(\vecu)=\vecbeta^\pm_{\overline{V}^{-1}(\vecu)}(\vecu).$ Then 
$\bigl\| B^\pm(\vecu)-\vecbeta^\pm_{\uvecs}(\vecu)\bigr\|<\ve$
for all $\vecu\in\scrV_\vecs^\eta$ and
$\|D_\vech B^\pm\|<C_{\eta}$ for all 
$\vech\in \T^1(\scrV_\vecs^\eta)$.
\end{enumerate}
\end{lem}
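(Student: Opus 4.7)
The plan is to reduce to the case $\uvecs=\vece_1$ using the spherical symmetry of $\Theta$ and of $\vecbeta^\pm$. From the elementary geometry of the intersection of $\rho\vecbeta(\vecv)+\RR_{>0}\vecv$ with the small ball $\vecs+\scrB_\rho^d$ (radius $\rho$, centre at distance $\geq C^{-1}$ from the origin, starting point within $C\rho$ of the origin), the bounds $\|\vecbeta\|\leq C$ and $\|\vecs\|\geq C^{-1}$ force every $\vecv\in\Omega$ to lie in an $O(C^2\rho)$-cone around $\uvecs$; by shrinking $\trho_0$ we may assume this cone is contained in $\{\vecv\col\varphi(\vecv,\uvecs)<\eta/2\}$.

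For (i), I would solve $V(\vecv)=\vecu$ for a prescribed $\vecu\in\scrV_\vecs^\eta$ as a fixed-point problem. Demanding the outgoing direction to equal $\vecu$ forces the impact location to be $W=\vecbeta_\vecv^-(\vecu)$, and the ray $\rho\vecbeta(\vecv)+\RR_{>0}\vecv$ must pass through $\vecs+\rho\vecbeta_\vecv^-(\vecu)$. Normalising this displacement to a unit vector yields the fixed-point equation
\begin{equation}
\vecv=\frac{\vecs+\rho\bigl(\vecbeta_\vecv^-(\vecu)-\vecbeta(\vecv)\bigr)}{\bigl\|\vecs+\rho\bigl(\vecbeta_\vecv^-(\vecu)-\vecbeta(\vecv)\bigr)\bigr\|}.
\end{equation}
The right-hand side maps a small closed neighbourhood of $\uvecs$ into itself and is a contraction in $\vecv$, uniformly over $\vecu\in\overline{\scrV_\vecs^\eta}$, once $\rho$ is small (Lipschitz constant $O(\rho)$, with implicit constants controlled by $C_\eta$, $C$ and $\|\vecs\|^{-1}\leq C$, invoking the joint $\C^1$-regularity of $\vecbeta_\vecv^-(\vecu)$ and the spherical symmetry to bound the $\vecv$-derivative). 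This gives a unique preimage depending $\C^1$-smoothly on $\vecu$, while forward smoothness of $V$ is evident from solving the quadratic for $\tau(\vecv)$, completing the diffeomorphism claim.

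Statement (iii) then follows quickly. Differentiating the fixed-point equation shows $\|D\overline{V}^{-1}\|=O(\rho)$, hence $\overline{V}^{-1}(\vecu)\to\uvecs$ uniformly for $\vecu\in\scrV_\vecs^\eta$ as $\rho\to 0$. Joint continuity of $\vecbeta^\pm_\vecv(\vecu)$ on the compact set of pairs $(\vecv,\vecu)$ with $\varphi(\vecv,\uvecs)\leq\eta/2$ and $\vecu\in\overline{\scrV_\vecs^\eta}\subset\scrV_\vecv^{\eta/2}$ then yields $\|B^\pm(\vecu)-\vecbeta_{\uvecs}^\pm(\vecu)\|<\ve$ for small $\rho$. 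Moreover, $D_\vech B^\pm$ differs from $D_\vech\vecbeta^\pm_{\uvecs}$ by an $O(\rho)$ contribution arising from $D\overline{V}^{-1}$, so the bound $\leq C_\eta-1$ on the latter from \eqref{C1THETAETADEF} upgrades to $<C_\eta$ once $\trho_0$ is small enough.

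The main obstacle is (ii), which requires an explicit Jacobian comparison. Chain-ruling $V=\Theta_1(\cdot,W(\cdot))$ through the geometric relation $\rho W(\vecv)=\rho\vecbeta(\vecv)+\tau(\vecv)\vecv-\vecs$ yields a $\C^1$ expression for the Jacobian of $V$. In the limit $\rho\to 0$, $\vecv\to\uvecs$ and the rescaled transverse component of $\vecv$ parametrises the impact parameter of a ray meeting a unit ball; accordingly the Jacobian of $V$ converges, uniformly on compacta of $\scrV_\vecs^\eta$, to that of the rotated reference map $V_0$ of \eqref{V0DEF}, which by \eqref{LAMBDA0DEF} is precisely the density of $\lambda_\vecs$ relative to $\vol_{\S_1^{d-1}}$. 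Quantifying this convergence via the uniform continuity of the Jacobians on the compact closure $\overline{\scrV_\vecs^\eta}\subset\scrV_\vecs$, together with the fact that the offset $\rho\vecbeta(\vecv)$ is an $O(\rho)$ perturbation, produces a distortion factor $g(\vecu)\in[1-\ve,1+\ve]$ once $\trho_0$ is chosen sufficiently small, completing the proof.
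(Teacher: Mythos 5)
Your argument is correct in substance, and for part (i) it takes a genuinely different route from the paper. The paper passes to the rescaled transverse chart $\vecx=\vecbeta(\vece_1)_\perp+\rho^{-1}\vecv_\perp$, proves that $V_I=V\circ I$ converges in $\C^1$-norm to the reference map $V_0$ of \eqref{V0DEF} on a ball $\scrB_r^{d-1}=V_0^{-1}(\scrV_{\vece_1}^{\eta/2})$, invokes a stability-of-diffeomorphisms argument (Hirsch) to get that $V_I$ is a diffeomorphism covering $\scrV_{\vece_1}^\eta$, and then needs a separate angle estimate to rule out preimages coming from $\Omega_I\setminus\scrB_r^{d-1}$. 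Your fixed-point equation $\vecv=\bigl(\vecs+\rho(\vecbeta_\vecv^-(\vecu)-\vecbeta(\vecv))\bigr)/\|\cdots\|$, with contraction constant $O(\rho)$ on the whole $\eta/2$-cone around $\uvecs$ (which contains all of $\Omega$), buys global injectivity and surjectivity in one stroke and gives $\|D\overline V^{-1}\|=O(\rho)$ directly via the implicit function theorem, avoiding both the differential-topology citation and the boundary case analysis; the paper's chart-based route, in exchange, hands you the $\C^1$-convergence $V_I\to V_0$ that it then reuses verbatim for (ii). For (ii) your mechanism is essentially the paper's: the comparison must of course be made for the Jacobian in the rescaled impact-parameter coordinate (the Jacobian of $V$ itself is of size $\rho^{-(d-1)}$), as your wording about the rescaled transverse component indicates, and the density of $\lambda_\vecs$ is $\vol(\scrB_1^{d-1})^{-1}$ times the Jacobian of $V_0^{-1}$ (the reciprocal of what you wrote), though since only the normalized ratio enters this is immaterial. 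Two small points to tighten in (iii): the closeness of $D_\vech B^\pm$ to $D_\vech\vecbeta^\pm_{\uvecs}$ is not purely the $O(\rho)$ term coming from $D\overline V^{-1}$ — you also need uniform continuity of the ($\C^1$ but not $\C^2$) derivative of $\vecbeta^\pm$ in its base point, and the fact that $\vecu\in\scrV_\vecs^\eta$ only guarantees $\vecu\in\scrV_{\overline V^{-1}(\vecu)}^{\eta-O(\rho)}$, so the strict bound $<C_\eta$ uses the continuity of $\eta\mapsto C_\eta$ (this is exactly the ``by continuity and the definition of $C_\eta$'' step in the paper); both are easily supplied and do not affect the validity of your approach.
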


\begin{proof}
Since $\Omega$, $W_{\rho,\vecs,\vecbeta}$ and 
$V_{\rho,\vecs,\vecbeta}$ are invariant under 
$\langle\rho,\vecs\rangle\mapsto\langle c\rho,c\vecs\rangle$ for any $c>0$,
it suffices to treat the case $\|\vecs\|=1$. After an auxiliary rotation
we may then assume $\vecs=\vece_1$.
From now on we will always keep $\rho<\frac{\eta}{20C}$.
Then $\rho\vecbeta(\vecv)$ certainly lies outside
the ball $\vece_1+\scrB_\rho^d$ for all $\vecv\in\scrV_\vecr^\eta$.
Let us write $\veca=\veca(\vecv)=\rho^{-1}\vece_1-\vecbeta(\vecv)$
and note that $\|\veca\|>\frac{1}{2\rho}>10^3$ for all $\vecv$. 
Now for each $\vecv\in\Omega$
the ray $\R_{>0}\vecv$ hits $\veca+\scrB_1^d$, and thus 
$\varphi(\vecv,\veca)\leq \arcsin\bigl(\|\veca\|^{-1}\bigr)
\leq \frac{\pi}{2\|\veca\|}<\pi\rho$.
Also $\varphi(\veca,\vece_1)
\leq\arcsin\bigl(\rho\|\vecbeta(\vecv)\|\bigr)
\leq \frac{\pi}{2}C\rho$.
Hence by the triangle inequality for $\varphi$,
using $\rho<\frac{\eta}{20C}$ we obtain
\begin{align} \label{VMCLOSEINOMEGA}
\varphi(\vecv,\vece_1)<
\bigl(\pi+\sfrac{\pi}2 C\bigr)\rho<2C\rho<\frac{\eta}{10},
\qquad \forall \vecv\in\Omega.
\end{align}
We now compute
\begin{align} \label{SHINE1OMEGADEF}
& \Omega=\Bigl\{\vecv\in\S_1^{d-1}\col 1-\|\veca\|^2+(\veca\cdot\vecv)^2>0\Bigr\}, %
\end{align}
which is automatically a subset of $\scrV_\vecr^\eta$, 
because of the assumption $\varphi(\vecr,\vece_1)>B_\Theta+2\eta$,
and furthermore, for all $\vecv\in\Omega$, $\vech\in\T_\vecv(\S_1^{d-1})$,
\begin{equation} \label{WVFORMULA}
W(\vecv)=-\veca+\bigl((\veca\cdot\vecv)-\sqrt{1-\|\veca\|^2
+(\veca\cdot\vecv)^2}\bigr) \vecv
\end{equation}
and
\begin{multline}\label{DHW}
D_\vech W =D_\vech\vecbeta+\biggl(\veca\cdot\vech-\vecv\cdot D_\vech\vecbeta
-\frac {\veca\cdot D_\vech\vecbeta+(\veca\cdot\vecv)(\veca\cdot\vech-
\vecv\cdot D_\vech\vecbeta)}{\sqrt{1-\|\veca\|^2+(\veca\cdot\vecv)^2}}\biggr)\vecv 
\\ 
+\Bigl(\veca\cdot\vecv-\sqrt{1-\|\veca\|^2+(\veca\cdot\vecv)^2}\Bigr) \vech,
\end{multline}
using the standard embedding $\vech\in \T_\vecv(\S_1^{d-1})\subset\R^d$.
Since $\Omega\subset\S_1^{d-1}$ is contained in a small neighborhood of
$\vece_1$ (see \eqref{VMCLOSEINOMEGA}), the map
$\vecv\mapsto\vecbeta(\vece_1)_\perp+\rho^{-1}\vecv_\perp$ gives a 
diffeomorphism from $\Omega$ onto
\begin{equation}
	\Omega_I:=\vecbeta(\vece_1)_\perp+\rho^{-1}\Omega_\perp
\subset\{0\}\times\R^{d-1},
\end{equation}
which transforms the $\S_1^{d-1}$ volume measure
into the standard $\R^{d-1}$ Lebesgue volume measure scaled by $\rho^{d-1}$ 
times a distortion factor %
which is uniformly close to $1$ when $\rho$ is small.
The inverse map is (using the identification $\{0\}\times\R^{d-1}=\R^{d-1}$)
\begin{align}
I:\Omega_I\to\Omega, \qquad
\vecx\mapsto Q\bigl(\rho(\vecx-\vecbeta(\vece_1)_\perp)\bigr),
\end{align} 
where $Q$ is the map
\begin{align}
Q:\scrB_1^{d-1}\to\S_1^{d-1}, \qquad
\vecy\mapsto (\sqrt{1-\|\vecy\|^2},\vecy).
\end{align}

Now, for (i) and (ii), it suffices to prove that (i) and (ii) hold with
the map $V$ replaced throughout by $V_I:=V\circ I:\Omega_I\to\S_1^{d-1}$, 
$\vol_{\S_1^{d-1}}$ replaced by $\vol_{\R^{d-1}}$, and $\ve$ replaced by $\ve/2$.

Recall the definition \eqref{V0DEF} of the diffeomorphism $V_0$
from $\scrB_1^{d-1}$ onto $\scrV_{\vece_1}$.
Using the spherical invariance of $\Theta_1$ it follows that
$V_0^{-1}(\scrV_{\vece_1}^{\eta/2})=\scrB_r^{d-1}$ for some 
$r=r(\Theta,\eta)<1$.

From \eqref{VMCLOSEINOMEGA} we get 
$\Omega_I\subset\{0\}\times\scrB_{3C}^{d-1}$.
Now for all $\vecx\in\scrB_{3C}^{d-1}$ we compute,
with $\vecv=Q(\rho(\vecx-\vecbeta(\vece_1)_\perp))$
and $\veca=\rho^{-1}\vece_1-\vecbeta(\vecv)$ as
before, 
\begin{align} \label{AVASYMPT}
& \veca\cdot\vecv=\rho^{-1}-\beta_1(\vecv)+\sfrac 12
\bigl(\|\vecbeta(\vece_1)_\perp\|^2-\|\vecx\|^2\bigr)\rho+O(\rho^2);
\\
& \|\veca\|^2=\rho^{-2}-2\beta_1(\vecv)\rho^{-1}+
\|\vecbeta(\vece_1)\|^2+O(\rho),
\end{align}
where $\beta_1(\vecv):=\vecbeta(\vecv)\cdot\vece_1$.
Here and in the rest of the proof, the implied constant in any big $O$
depends only on $\Theta,C,\eta,\ve$.
It follows that
\begin{align} \label{EXPRASYMPT}
1-\|\veca\|^2+(\veca\cdot\vecv)^2=1-\|\vecx\|^2+O(\rho),
\end{align}
and hence by \eqref{SHINE1OMEGADEF}, if $\rho$ is sufficiently small,
\begin{align} \label{BRCONTAINED}
V_0^{-1}(\scrV_{\vece_1}^{\eta/2})=\scrB_r^{d-1}\subset\Omega_I.
\end{align}

Let $W_I:=W\circ I:\Omega_I\to\S_1^{d-1}$.
By a computation using \eqref{WVFORMULA} and 
\eqref{AVASYMPT}--\eqref{EXPRASYMPT} we find 
that for each $\vecx\in\scrB_{r}^{d-1}$,
\begin{align} \label{WIW0CLOSE}
W_I(\vecx)=W(\vecv)=W_0(\vecx)+O(\rho),
\qquad \text{with } \: 
W_0(\vecx)=\Bigl(-\sqrt{1-\|\vecx\|^2},\vecx\Bigr).
\end{align}
By a similar computation using \eqref{DHW} we also obtain,
for $\rho$ sufficiently small,
\begin{equation}
	(D_\vech W)_\perp=\rho^{-1}\vech+O(\|\vech\|)
\end{equation}
for all $\vecv\in I(\scrB_{r}^{d-1})$ and $\vech\in\T_\vecv(\S_1^{d-1})$.
Therefore, for all $\vecx\in\scrB_r^{d-1}$ and
$\vecu\in\T_\vecx(\Omega_I)\cong\{0\}\times\R^{d-1}$,
\begin{align} \label{DWIAPPRPREL}
\bigl(D_\vecu W_I\bigr)_\perp %
=\rho^{-1}D_\vecu I + O(\|D_\vecu I\|)
=\vecu + O(\rho\|\vecu\|),
\end{align}
since $D_\vecu I=\rho\vecu+O(\rho^2\|\vecu\|)$.
We also have, trivially, $(D_\vecu W_0)_\perp=\vecu$.
But we know 
\begin{equation} \label{DUWIDUW0}
	D_\vecu W_I\in \T_{W_I(\vecx)}(\S_1^{d-1})
=\{W_I(\vecx)\}^\perp\subset\R^d,
\qquad
D_\vecu W_0\in \{W_0(\vecx)\}^\perp\subset\R^d .
\end{equation}
For $\rho$ sufficiently small, \eqref{WIW0CLOSE}
implies that $W_I(\vecx)$ and $W_0(\vecx)$ are close on $\S_1^{d-1}$
and both have $\vece_1$-component $<-\frac 12\sqrt{1-r^2}$.
Hence \eqref{DWIAPPRPREL}, $(D_\vecu W_0)_\perp=\vecu$ and
\eqref{DUWIDUW0} imply
\begin{align} \label{DWIAPPR}
D_\vecu W_I=D_\vecu W_0+O(\rho\|\vecu\|).
\end{align}
It follows from \eqref{WIW0CLOSE} and \eqref{DWIAPPR} that the function
$(W_I)_{|\scrB_{r}^{d-1}}$ converges to $(W_0)_{|\scrB_{r}^{d-1}}$ in 
$\C^1$-norm as $\rho\to 0$.

To continue, note that $V_I(\vecx)=\Theta_1(I(\vecx),W_I(\vecx))$.
The function $I:\Omega_I\to\Omega$ tends to the 
constant function $\vecx\mapsto\vece_1$ in $\C^1$-norm, 
as $\rho\to 0$.
Using that $\Theta_1:\scrS_-\to\S_1^{d-1}$ is $\C^1$ and that
$I(\scrB_{r}^{d-1})\times W_I(\scrB_{r}^{d-1})$ is a subset with compact
closure in $\scrS_-$ for $\rho$ sufficiently small,
we conclude that the map
$(V_I)_{|\scrB^{d-1}_{r}}:\scrB_{r}^{d-1} \to\S_1^{d-1}$ 
tends to $(V_0)_{|\scrB^{d-1}_{r}}$ in $\C^1$-norm as $\rho\to 0$.
Since the latter map is a $\C^1$ diffeomorphism from 
$\scrB^{d-1}_r$ onto $\scrV_{\vece_1}^{\eta/2}$ which is a restriction of
a diffeomorphism from all of $\scrB^{d-1}_1$,
it follows by a standard argument in differential 
geometry (cf.,\ e.g.,\ \cite[Sec.\ 2.1]{hirsch})
that, for $\rho$ small enough,
$(V_I)_{|\scrB^{d-1}_{r}}$ must be a diffeomorphism as well, mapping
$\scrB_{r}^{d-1}$ onto some open subset of $\S_1^{d-1}$ which contains
$\scrV_{\vece_1}^\eta$, and that (ii) holds with $\overline{V}$ 
replaced by $(V_I)_{|\scrB^{d-1}_{r}}$
(and $\vol_{\S_1^{d-1}}$ replaced by $\vol_{\R^{d-1}}$, and $\ve$
replaced by $\ve/2$).

Hence to conclude the proof of (i) and (ii), it only remains to prove that
the map $\overline{V}$ is injective, or equivalently that 
$V_I(\vecx)\notin\scrV_{\vece_1}^\eta$ for all 
$\vecx\in\Omega_I\setminus\scrB^{d-1}_r$.
A computation shows that if $\rho$ is small and
$\vecx\in\Omega_I\setminus\scrB^{d-1}_r$, then
$W_I(\vecx)\cdot\vece_1\geq -\sqrt{1-r^2}-O(\rho)$,
and thus
\begin{align}
\varphi(I(\vecx),W_I(\vecx))\leq
\varphi(\vece_1,W_I(\vecx))+O(\rho)\leq\sfrac{\pi}2+\arccos(r)+O(\rho).
\end{align}
It follows from our assumptions on $\Theta_1$ that
$\varphi(I(\vecx),V_I(\vecx))$ is an increasing $\C^1$ function of
$\varphi(I(\vecx),W_I(\vecx))\in(\frac{\pi}2,\pi]$, and 
$\varphi(I(\vecx),V_I(\vecx))=B_\Theta+\frac{\eta}2$
when $\varphi(I(\vecx),W_I(\vecx))=\frac{\pi}2+\arccos(r)$.
Hence
\begin{align}
\varphi(\vece_1,V_I(\vecx))\leq\varphi(I(\vecx),V_I(\vecx))+O(\rho)
\leq B_\Theta+\sfrac{\eta}2+O(\rho),
\qquad\forall\vecx\in\Omega_I\setminus\scrB^{d-1}_r,
\end{align}
and in particular if $\rho$ is sufficiently small then
$V_I(\vecx)\notin\scrV_{\vece_1}^\eta$ for all
$\vecx\in\Omega_I\setminus\scrB^{d-1}_r$, as desired.

Finally we turn to (iii).
Set $\overline{V}_I=(V_I)_{|V_I^{-1}(\scrV_{\vece_1}^\eta)}$.
Then, since $(V_I)_{|\scrB^{d-1}_{r}}$ tends to $(V_0)_{|\scrB^{d-1}_{r}}$ 
in $\C^1$-norm, we have $D_\vecu(\overline{V}_I^{-1})=O(\|\vecu\|)$
for all $\vecx\in\scrV_{\vece_1}^\eta$ and 
$\vecu\in\T_\vecx(\scrV_{\vece_1}^\eta)$, so long as $\rho$ is sufficiently 
small.
But 
\begin{equation}
	\overline{V}^{-1}=I\circ \overline{V}_I^{-1}:
\scrV_{\vece_1}^\eta\to\S_1^{d-1}
\end{equation}
and $I$ tends to the constant function $\vecx\to\vece_1$ in $\C^1$-norm.
Hence also $\overline{V}^{-1}$  
tends to the constant function $\vecx\to\vece_1$ in $\C^1$-norm as $\rho\to 0$.
Therefore, by continuity and the definition of $C_{\eta}$, \eqref{C1THETAETADEF}, claim (iii) is established.
\end{proof}

\subsection{The main proposition}

Let us consider a particle trajectory that follows the ray emerging at $\vecm_0+\rho\vecbeta(\vecv_0)$ in direction $\vecv_0$, is scattered at the ball $\vecm_1+\scrB_\rho^{d}$, exits with velocity $\vecv_1$ and moves with constant speed until scattered at $\vecm_2+\scrB_\rho^{d}$, and so on; after the final scattering at $\vecm_n+\scrB_\rho^{d}$ the particle exists with velocity $\vecv_n$. We call $\vec{\vecm}=(\vecm_0,\vecm_1,\ldots,\vecm_n)$ the {\em scattering sequence} associated with this trajectory.
We denote by $\overline{\vecbeta}^-(\vecv_n)$ and
$\overline{\vecbeta}^+(\vecv_n)\in\S_1^{d-1}$
the position of impact and exit on the last scatterer; thus
$\overline{\vecbeta}^-(\vecv_n)=\vecbeta_{\vecv_{n-1}}^-(\vecv_n)$
and
$\overline{\vecbeta}^+(\vecv_n)=\vecbeta_{\vecv_{n-1}}^+(\vecv_n)$.
We call $\vecv_0$ and $\vecv_n$ the {\em initial} and {\em final velocity}, respectively. Set furthermore
\begin{equation}
	\vecs_k:=\vecm_{k}-\vecm_{k-1}.
\end{equation}
Note that $\vecs_k$ differs from the path segment $\tau_k\vecv_{k-1}$ defined in \eqref{pathseg} by $\ll \rho$.

Given positive constants $\eta,C$, we say a sequence $\vec{\vecm}=(\vecm_0,\vecm_1,\ldots,\vecm_n)$ of scatterer positions is $(\eta,C)$-{\em admissible}, if 
\begin{equation}
	\|\vecs_k\|\geq C^{-1}, \qquad (k=1,\ldots,n)
\end{equation}
and
\begin{equation}
	\varphi\bigl(\vecs_{k+1},\vecs_{k}\bigr)>B_\Theta+2\eta, \qquad (k=1,\ldots,n-1)	.
\end{equation}

\begin{prop} \label{REFLPROP}
Given any $N\in\Z_{>0}$, $0<\eta<\frac{\pi-B_\Theta}{100}$, 
$C\geq 10$ and $\ve>0$, there exists a constant
$\rho_0=\rho_0(N,\eta,C,\ve)>0$ such that for any 
$\rho\in (0,\rho_0)$, any $(\eta,C)$-admissible sequence 
$\vec{\vecm}=(\vecm_0,\vecm_1,\ldots,\vecm_n)$ with $n\leq N$,
and any $\C^1$ function $\vecbeta:\S_1^{d-1}\to\R^d$ with
$\sup_{\S_1^{d-1}} \|\vecbeta\|\leq C$ and
$\sup_{\vech\in \T^1(\S_1^{d-1})} \|D_\vech\vecbeta\|\leq C$,
the following holds:
\begin{enumerate}
	\item[(i)] There exists a $\C^1$ diffeomorphism 
$\Phi=\Phi_{\rho,\vec{\vecm},\vecbeta}$ from an open subset 
$\Delta=\Delta_{\rho,\vec{\vecm},\vecbeta}\subset\S_1^{d-1}$ onto
$\scrV_{\vecs_n}^{\eta}$ such that, for every $\vecv\in\S_1^{d-1}$, $\vecu\in\scrV_{\vecs_n}^{\eta}$, the following statements are equivalent:
\begin{enumerate}
	\item[(a)] There is a particle trajectory with scattering sequence $\vec{\vecm}$, initial velocity $\vecv_0=\vecv$ and final velocity $\vecv_n=\vecu$.
	\item[(b)] $\vecv\in\Delta$ and $\Phi(\vecv)=\vecu$.
\end{enumerate}
	\item[(ii)] The position of impact and exit on the last scatterer, $\overline{\vecbeta}^-=\overline{\vecbeta}^-_{\rho,\vec{\vecm},\vecbeta}$ 
and $\overline{\vecbeta}^+=\overline{\vecbeta}^+_{\rho,\vec{\vecm},\vecbeta}$, 
are $\C^1$ maps from
$\scrV_{\vecs_n}^{\eta}$ to $\S_1^{d-1}$, satisfying 
$\bigl\| \overline{\vecbeta}^\pm(\vecu)
-\vecbeta^\pm_{\uvecs_n}(\vecu)\bigr\|<\ve$
for all $\vecu\in\scrV_{\vecs_n}^{\eta}$ and
$\sup_{\vech\in \T^1(\scrV_{\vecs_n}^{\eta})} 
\|D_\vech \overline{\vecbeta}^\pm\|<C_{\eta}$.
	\item[(iii)] If $\mu$ is the $\S_1^{d-1}$ volume measure restricted to
$\Delta$ and rescaled to be a probability measure, then
$\Phi_*\mu=g\cdot\lambda_{\vecs_n}^{\eta}$ for some continuous function 
$g:\scrV_{\vecs_n}^{\eta}\to [1-\ve,1+\ve]$.
\end{enumerate}
\end{prop}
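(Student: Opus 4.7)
The natural strategy is induction on $n$, applying Lemma \ref{SHINELEM1} once per scattering event and composing the resulting maps.

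For the base case $n=1$, I would apply Lemma \ref{SHINELEM1} with $\vecr=-\vecs_1$, $\vecs=\vecs_1$, and the given $\vecbeta$. The angle condition $\varphi(-\vecs_1,\vecs_1)=\pi>B_\Theta+2\eta$ is automatic, and $(\eta,C)$-admissibility provides $\|\vecs_1\|\geq C^{-1}$. A direction $\vecv_0$ can hit $\vecm_1+\scrB_\rho^d$ only if it lies within angular distance $O(\rho)$ of $\vecs_1/\|\vecs_1\|$ (as in \eqref{VMCLOSEINOMEGA}), so for $\rho$ small the restriction imposed by the domain $\scrV_{-\vecs_1}^\eta$ is vacuous. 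I then set $\Phi:=\overline{V}$, $\Delta:=\overline{V}^{-1}(\scrV_{\vecs_1}^\eta)$, $\overline{\vecbeta}^\pm:=B^\pm$, and the three claims (i)--(iii) of the proposition reduce to parts (i), (iii), and (ii) of Lemma \ref{SHINELEM1}, respectively.

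For the inductive step (from $n-1$ to $n$), the inductive hypothesis applied to $(\vecm_0,\ldots,\vecm_{n-1})$ furnishes $\Phi_{n-1}:\Delta_{n-1}\to\scrV_{\vecs_{n-1}}^\eta$ and $\overline{\vecbeta}^\pm_{n-1}:\scrV_{\vecs_{n-1}}^\eta\to\S_1^{d-1}$, with $\overline{\vecbeta}^+_{n-1}$ of derivative bounded by $C_\eta$. I then apply Lemma \ref{SHINELEM1} a second time with $\vecr=\vecs_{n-1}$, $\vecs=\vecs_n$, $\vecbeta=\overline{\vecbeta}^+_{n-1}$, and enlarged lemma-constant $C':=\max(C,C_\eta)$; the hypothesis $\varphi(\vecs_{n-1},\vecs_n)>B_\Theta+2\eta$ is precisely the $(\eta,C)$-admissibility of $\vec{\vecm}$. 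Writing $\overline{V}_n$ for the resulting diffeomorphism and $B^\pm_n$ for the impact/exit maps, I define $\Delta_n:=\Phi_{n-1}^{-1}(V_n^{-1}(\scrV_{\vecs_n}^\eta))$, $\Phi_n:=\overline{V}_n\circ\Phi_{n-1}|_{\Delta_n}$, and $\overline{\vecbeta}^\pm_n:=B^\pm_n$. Claim (i) follows because the pairing $\vecv\leftrightarrow\vecu$ produced by the composition coincides with the pairing of initial and final velocities of trajectories scattering along $\vec{\vecm}$; claim (ii) is immediate from Lemma \ref{SHINELEM1}(iii) applied at the final step.

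The subtle point is claim (iii). Writing $\mu_n$ for the normalized volume measure on $\Delta_n$, assume inductively that $(\Phi_{n-1})_*\mu_{n-1}=g_{n-1}\lambda_{\vecs_{n-1}}^\eta$ with $g_{n-1}\in[1-\ve',1+\ve']$, for a small parameter $\ve'$ of order $\ve/N$. Since $\Delta_n\subset\Delta_{n-1}$, the pushforward of $\mu_n$ by $\Phi_{n-1}$ is proportional to $g_{n-1}\cdot\lambda_{\vecs_{n-1}}^\eta$ restricted to $\Omega'_n:=V_n^{-1}(\scrV_{\vecs_n}^\eta)$. The key observation is that $\Omega'_n$ has diameter $O(\rho)$, by the same cap estimate \eqref{VMCLOSEINOMEGA} as before, so once $\rho$ is small enough Lemma \ref{COMPARABILITYLEM} applies: $\lambda_{\vecs_{n-1}}^\eta|_{\Omega'_n}$ and $\vol_{\S_1^{d-1}}|_{\Omega'_n}$ are proportional up to a factor $h_n\in[1-\ve',1+\ve']$. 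The pushforward through $\Phi_{n-1}$ is therefore proportional to $(g_{n-1}h_n)\vol_{\S_1^{d-1}}|_{\Omega'_n}$, with integrand pointwise close to $1$; pushing once more by $\overline{V}_n$ and invoking Lemma \ref{SHINELEM1}(ii) yields $(\Phi_n)_*\mu_n=g_n\lambda_{\vecs_n}^\eta$, where $g_n$ is a normalized product of three functions each within $\ve'$ of $1$. The per-step cumulative deviation grows by roughly $2\ve'$, so taking $\ve'\leq\ve/(3N)$ preserves the bound $g_n\in[1-\ve,1+\ve]$ through all $n\leq N$ iterations.

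The main technical obstacle is the bookkeeping required to choose $\rho_0=\rho_0(N,\eta,C,\ve)$ small enough that (a) all $N$ applications of Lemma \ref{SHINELEM1} succeed simultaneously with per-step tolerance $\ve'$, (b) the small-diameter hypothesis of Lemma \ref{COMPARABILITYLEM} is met on every $\Omega'_k$, and (c) each intermediate outgoing direction $\vecv_{k-1}$ actually lies in $\scrV_{\vecs_{k-1}}^\eta$ (the domain required by the next application of the lemma). The inclusion (c) follows from the cap estimate combined with admissibility: $\vecv_{k-1}$ lies within angle $O(\rho)$ of $\vecs_k/\|\vecs_k\|$, and $\varphi(\vecs_k,\vecs_{k-1})>B_\Theta+2\eta$ then forces $\varphi(\vecv_{k-1},\vecs_{k-1})>B_\Theta+\eta$ for $\rho$ sufficiently small.
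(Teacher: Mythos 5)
Your proposal is correct and takes essentially the same route as the paper's proof: the paper builds the very same composed map via the recursion $V^{(k+1)}=\overline{V}_{\rho,\vecs_{k+1},\vecbeta^{(k)}}\circ V^{(k)}$ of Lemma \ref{SHINELEM1}, uses the same cap-estimate-plus-admissibility argument to control the intermediate directions (and hence the implication (a)$\Rightarrow$(b)), and proves (iii) by alternating Lemma \ref{SHINELEM1}(ii) with Lemma \ref{COMPARABILITYLEM} on image sets of diameter $O(\rho)$. The only differences are cosmetic: induction on $n$ instead of an explicit recursion, and additive error bookkeeping ($\ve'\approx\ve/(3N)$) instead of the paper's multiplicative $(1\pm\ve_1)^{2N-1}$.
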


\begin{proof}
We assume that $C\geq C_{\eta}$.
Fix $\ve_1\in (0,\ve)$ so small that
$(1+\ve_1)^{2N-1}<1+\ve$ and $(1-\ve_1)^{2N-1}>1-\ve$,
and then fix 
\begin{align}
\rho_0=\rho_0(N,\eta,C,\ve):=\min\bigl(
\trho_0(\eta,C,\ve_1),\sfrac{c_{\eta,\ve_1}}{2\pi C},
\sfrac{\eta}{20C^2}\bigr),
\end{align}
where $\trho_0(\eta,C,\ve_1)$ is as in Lemma \ref{SHINELEM1},
and $c_{\eta,\ve_1}$ is as in Lemma \ref{COMPARABILITYLEM}.

Now take arbitrary $\rho\in (0,\rho_0)$ and $\vecm_0,\ldots,\vecm_n$
and $\vecbeta$ satisfying the assumptions of the proposition.
Using the notation introduced in the context of Lemma \ref{SHINELEM1}
(setting $\vecr=-\vecs_1$, say, and considering the restriction of
$\vecbeta$ to $\scrV_\vecr^\eta$), we set 
\begin{align}
& \Omega^{(1)}=V_{\rho,\vecs_1,\vecbeta}^{-1}
(\scrV_{\vecs_1}^\eta),
\\
& V^{(1)}=\overline{V}_{\rho,\vecs_1,\vecbeta}
:\Omega^{(1)}\to\S_1^{d-1} ,
\\
& \vecbeta^{(1)}=B^+_{\rho,\vecs_1,\vecbeta}
:\scrV^\eta_{\vecs_1}\to\S_1^{d-1} ,
\end{align} 
and recursively for $k=1,2,\ldots,n-1$
($\vecs=\vecs_{k+1}=\vecm_{k+1}-\vecm_k$ and $\vecr=\vecs_k=\vecm_k-\vecm_{k-1}$, in the the context of Lemma \ref{SHINELEM1}),
\begin{align} \label{OMEGAKPDEF}
& \Omega^{(k+1)}=(V^{(k)})^{-1}\bigl(
V_{\rho,\vecs_{k+1},\vecbeta^{(k)}}^{-1}
(\scrV_{\vecs_{k+1}}^\eta)\bigr),
\\ \label{VKPDEF}
& V^{(k+1)}=\overline{V}_{\rho,\vecs_{k+1},\vecbeta^{(k)}}
\circ V^{(k)}:
\Omega^{(k+1)}\to\S_1^{d-1},
\\ \label{BETAKPDEF}
& \vecbeta^{(k+1)}=B^+_{\rho,\vecs_{k+1},\vecbeta^{(k)}}:
\scrV_{\vecs_{k+1}}^\eta\to\S_1^{d-1}.
\end{align}
Then $\Omega^{(n)}\subset\Omega^{(n-1)}\subset\ldots\subset\Omega^{(1)}$
by definition. 

Lemma \ref{SHINELEM1} implies, by induction over $k$, that
$V^{(k)}$ is a $\C^1$ diffeomorphism from $\Omega^{(k)}$ onto
$\scrV_{\vecs_{k}}^\eta$ and that $\vecbeta^{(k)}$ is a 
$\C^1$ map from $\scrV_{\vecs_{k}}^\eta$ to $\S_1^{d-1}$
satisfying
$\sup_{\vech\in\T^1(\scrV_{\vecs_{k}}^\eta)}\bigl\|D_\vech
\vecbeta^{(k)}\bigr\|<C_{\eta}\leq C$, for each $k=1,\ldots,n$.

Now set $\Delta=\Omega^{(n)}$ and $\Phi=V^{(n)}$. Then $\Phi$ is a 
$\C^1$ diffeomorphism from $\Delta$ onto $\scrV_{\vecs_n}^\eta$.

{\em Proof of} (i). The implication ``(b)$\Rightarrow$(a)'' in Proposition \ref{REFLPROP}
follows directly by construction. Indeed, suppose $\vecv\in\Delta$ and 
$\vecu=\Phi(\vecv)$.
Then since $\vecv=\vecv_0\in\Omega^{(1)}$ %
the ray $\vecm_0+\rho\vecbeta(\vecv)+\R_{>0}\vecv$ hits the ball
$\vecm_1+\scrB_\rho^d$ at the point
$\vecm_1+\rho W_{\rho,\vecs_1,\vecbeta}(\vecv)$,
and after scattering we obtain the ray
$\vecm_1+\rho \vecbeta^{(1)}(V^{(1)}(\vecv))+\R_{>0} V^{(1)}(\vecv)$.
Similarly it follows from our definitions that for each 
$k\in\{1,\ldots,n-1\}$, the ray
$\vecm_k+\rho\vecbeta^{(k)}(V^{(k)}(\vecv))+\R_{>0} V^{(k)}(\vecv)$
hits the ball $\vecm_{k+1}+\scrB_\rho^d$ and after scattering gives the
ray $\vecm_{k+1}+\rho \vecbeta^{(k+1)}\bigl(V^{(k+1)}(\vecv)\bigr)
+\R_{>0}V^{(k+1)}(\vecv)$. Using this for $k=1,\ldots,n-1$ we see that
(a) holds for the given vectors $\vecv=\vecv_0$, $\vecu=\vecv_n$.

Conversely, we now prove ``(a)$\Rightarrow$(b)''.
Suppose that $\vecv=\vecv_0\in\S_1^{d-1}$ and 
$\vecu=\vecv_n\in\scrV_{\vecs_n}^\eta$ satisfy the assumptions in (a),
i.e.\ the ray $\vecm_0+\rho\vecbeta(\vecv_0)+\R_{>0}\vecv_0$ hits 
$\vecm_1+\scrB_\rho^{d}$; after scattering in this ball we get a ray which hits
$\vecm_2+\scrB_\rho^{d}$, 
and so on for $\vecm_3+\scrB_\rho^{d},\ldots,\vecm_n+\scrB_\rho^{d}$, and
after the final scattering in $\vecm_n+\scrB_\rho^{d}$ we get a ray 
with direction $\vecv_n=\vecu$.
Let $\vecv_k\in\S_1^{d-1}$ be the
direction of the ray leaving the ball $\vecm_k+\scrB_\rho^d$
in this scenario.
For each $k\in\{1,\ldots,n-1\}$, by assumption there exists some ray 
with direction $\vecv_k$ and starting point on $\vecm_k+\S_\rho^{d-1}$ 
which hits the ball $\vecm_{k+1}+\scrB_\rho^d$; this implies
$\varphi(\vecv_k,\vecs_{k+1})<\frac{\eta}{10}$ by the
same argument that led to \eqref{VMCLOSEINOMEGA}, 
using $\rho<\sfrac{\eta}{20C^2}$.
Since we are also assuming $\varphi(\vecs_k,\vecs_{k+1})
>B_\Theta+2\eta$ we conclude $\vecv_k\in\scrV_{\vecs_k}^\eta$.
This is also true for $k=n$, since $\vecv_n=\vecu$ and
$\vecu\in\scrV_{\vecs_n}^\eta$ by assumption.

Now since the ray $\vecm_{0}+\rho\vecbeta(\vecv_0)+\R_{>0}\vecv_0$
hits $\vecm_{1}+\scrB_\rho^d$ and gives a ray with direction 
$\vecv_1\in\scrV_{\vecs_1}^\eta$ after scattering,
we must have, by our definitions, 
$\vecv_0\in V_{\rho,\vecs_1,\vecbeta}^{-1}
(\scrV_{\vecs_1}^\eta)=\Omega^{(1)}$ and
$V^{(1)}(\vecv_0)=\vecv_1$; in fact the ray obtained after the first scattering
equals
\begin{align} \label{BIMPLAFIRSTRAY2}
\vecm_{1}+\rho\vecbeta^{(1)}(\vecv_1)+\R_{>0}\vecv_1,
\qquad \vecv_1=V^{(1)}(\vecv).
\end{align}
Similarly one now proves by induction that for each $k\in\{1,\ldots,n\}$
we have $\vecv_0\in\Omega^{(k)}$, and the ray obtained after the
$k$th scattering in our scenario equals
\begin{align} \label{BIMPLAKTHRAY}
\vecm_{k}+\rho\vecbeta^{(k)}(\vecv_k)+\R_{>0}\vecv_k,\qquad
\vecv_k=V^{(k)}(\vecv).
\end{align}
In particular for $k=n$ we obtain $\vecv=\vecv_0\in\Delta$
and $\vecv_n=\Phi(\vecv)$, i.e.\ (b) holds for the given vectors $\vecv=\vecv_0$, $\vecu=\vecv_n$.
This completes the proof of the implication
``(a)$\Rightarrow$(b)''. %

{\em Proof of} (ii). It follows from the above discussion that the functions 
$\overline{\vecbeta}^\pm$ defined in Proposition \ref{REFLPROP} are the same as
\begin{equation}
	\vecbeta^{(n)}=B^+_{\rho,\vecs_n,\vecbeta^{(n-1)}}
:\scrV^\eta_{\vecs_n}\to\S_1^{d-1}
\end{equation}
and 
\begin{equation}
	B^-_{\rho,\vecs_n,\vecbeta^{(n-1)}}
:\scrV^\eta_{\vecs_n}\to\S_1^{d-1},
\end{equation}
respectively,
where if $n=1$ we understand $\vecbeta^{(0)}=\vecbeta$.
Hence the claims about $\overline{\vecbeta}^\pm$ %
are direct consequences of Lemma \ref{SHINELEM1} (iii) (since $\ve_1<\ve$).

{\em Proof of} (iii). 
When $n=1$ the statement %
follows directly from Lemma \ref{SHINELEM1} (ii);
thus from now on we assume $n\geq 2$. Let us write $\mu^{(k)}=V^{(k)}_*\mu$
for $k=1,\ldots,n$, so that $\mu^{(n)}=\Phi_*\mu$.
We know that for each $\vecv\in V^{(1)}(\Delta)$ the ray
$\vecm_1+\rho\vecbeta^{(1)}(\vecv)+\R_{>0}\vecv$ hits
$\vecm_2+\scrB_\rho^d$. Hence $\varphi(\vecv,\vecs_2)<\pi C\rho$
by the same argument that led to \eqref{VMCLOSEINOMEGA}.
It follows that 
\begin{equation}
	\varphi(\vecv,\vecv')<2\pi C\rho\leq c_{\eta,\ve_1}
\end{equation}
for all $\vecv,\vecv'\in V^{(1)}(\Delta)$, by our choice of $\rho$.
Hence Lemma \ref{SHINELEM1} (ii), using $\rho<\trho_0(\eta,C,\ve_1)$,
together with Lemma \ref{COMPARABILITYLEM} imply that
\begin{equation}
	\mu^{(1)}=(\overline{V}_{\rho,\vecs_1,\vecbeta})_*(\mu)
=g_1\cdot\nu_1
\end{equation}
where $\nu_1$ is the $\S_1^{d-1}$ volume measure
restricted to $V^{(1)}(\Delta)$ and rescaled to be a probability measure,
and $g_1$ is some continuous function from $V^{(1)}(\Delta)$ to
$[(1-\ve_1)^2,(1+\ve_1)^2]$.

Repeating the same argument, using 
\begin{equation}
	\mu^{(k+1)}=
(\overline{V}_{\rho,\vecs_{k+1},\vecbeta^{(k)}})_*(\mu^{(k)}),
\end{equation}
we obtain $\mu^{(k)}=g_k\cdot\nu_k$ for each $k=2,\ldots,n-1$, 
where $\nu_k$ is the $\S_1^{d-1}$ volume measure
restricted to $V^{(k)}(\Delta)$ and rescaled to be a probability measure,
and $g_k$ is some continuous function from $V^{(k)}(\Delta)$ to
$[(1-\ve_1)^{2k},(1+\ve_1)^{2k}]$.
Using this fact for $k=n-1$ together with one more application of 
Lemma \ref{SHINELEM1} (ii), we finally obtain
$\Phi_*\mu=\mu^{(n)}=g\cdot\lambda_{\vecs_n}^\eta$,
where $g$ is some continuous function from
$V^{(n)}(\Delta)=\scrV_{\vecs_n}^\eta$ to
$[(1-\ve_1)^{2n-1},(1+\ve_1)^{2n-1}]$, thus proving the desired claim.
\end{proof}

\section{Loss of memory}\label{secLoss}

\subsection{Statement of the main theorem}

For $\vecv_0\in\S_1^{d-1}$, we define the probability density
$p_{\bn,\vecbeta_{\vecv_{0}}^+}(\vecv_{1},\xi,\vecv_{2})$ on 
$\S_1^{d-1}\times\R_{\geq 0}\times\S_1^{d-1}$ by 
\begin{equation}  \label{p0betdefrep}
p_{\bn,\vecbeta_{\vecv_{0}}^+}(\vecv_{1},\xi,\vecv_{2})
\,d\!\vol_{\S_1^{d-1}}(\vecv_{2})
=
\begin{cases}
\Phi_\bn\bigl(\xi,\vecw,(\vecbeta_{\vecv_{0}}^+(\vecv_{1})K(\vecv_{1}))_\perp\bigr)\,d\vecw
& \text{if $\vecv_{1}\in\scrV_{\vecv_{0}}$, $\vecv_{{2}}\in\scrV_{\vecv_{1}}$} \\
0 & \text{otherwise,}
\end{cases}
\end{equation}
with $\vecw=-\vecbeta_{\vece_{1}}^-(\vecv_{2} K(\vecv_{1}))_\perp\in
\{0\}\times\scrB_1^{d-1}$.

\begin{remark} \label{PALFBETPLUSCONTREM}
As in Remark \ref{PALFBETCONTREM}, the density
$p_{\bn,\vecbeta_{\vecv_{0}}^+}(\vecv_{1},\xi,\vecv_{2})$ is independent
of the choice of the function $K(\vecv_1)$;
also
$p_{\bn,\vecbeta_{\vecv_{0}}^+}(\vecv_{1},\xi,\vecv_{2})$ is continuous
at each point $(\vecv_1,\xi,\vecv_2)\in\S_1^{d-1}\times\R_{\geq 0}\times\S_1^{d-1}$
with $\vecv_1\in\scrV_{\vecv_0}$, $\vecv_2\in\scrV_{\vecv_1}$,
except possibly when $d=2$, $\xi>0$ and
$\vecbeta_{\vecv_1}^-\bigl(\vecv_2\bigr)=
-\vecbeta_{\vecv_0}^+(\vecv_1)R_{\vecv_1}$
($\Leftrightarrow$ $\vecv_2=\Theta_1(\vecv_1,
-\vecbeta_{\vecv_0}^+(\vecv_1)R_{\vecv_1})$).
Here $R_{\vecv_1}$ %
denotes reflection in the line $\R\vecv_1$.
\end{remark}
\begin{remark}\label{rot-inv}
Due to the spherical symmetry of the scattering map $\Theta$ (cf.~Section
\ref{secScatt}) we have
\begin{equation}
        p_{\bn,\vecbeta_{\vecv_{0} K}^+}(\vecv_{1} K,\xi,\vecv_{2} K)
        =p_{\bn,\vecbeta_{\vecv_{0}}^+}(\vecv_{1},\xi,\vecv_{2})
\end{equation}
for any $K\in\O(d)$.
\end{remark}
\begin{remark} \label{PALFBETPLUSEXPLICITREM}
The explicit formula in Remark \ref{PALFBETEXPLICITREM} carries
over directly to the present case.
\end{remark}
\begin{remark} \label{PNORMREMARK}
By \eqref{PALFINTEQ1} we have, whenever $\vecv_1\in\scrV_{\vecv_0}$,
\begin{equation} \label{p-norm}
	\int_{\S_1^{d-1}\times\RR_{>0}} p_{\bn,\vecbeta_{\vecv_{0}}^+}(\vecv_{1},\xi,\vecv_{2})
\, d\xi \,d\!\vol_{\S_1^{d-1}}(\vecv_{2})	= 1.
\end{equation}
\end{remark}
For the definition of $p_{\vecalf,\vecbeta_0}$, recall \eqref{exactpos2-1hit-tpdef}. The analogue of relation \eqref{p-norm} of course also holds for $p_{\vecalf,\vecbeta_0}$, again by \eqref{PALFINTEQ1}.

\begin{thm}\label{pathwayThm}
Fix a lattice $\scrL=\ZZ^d M_0$ and a point $\vecq\in\RR^d$,
set $\vecalf=-\vecq M_0^{-1}$,
and let $\vecbeta_0:\S_1^{d-1}\to\R^d$ be a $\C^1$ function.
If $\vecq\in\scrL$ %
we assume that
$(\vecbeta_0(\vecv)+\R_{>0}\vecv)\cap\scrB_1^d=\emptyset$ for all 
$\vecv\in\S_1^{d-1}$.
Then for any Borel probability measure $\lambda_0$ on $\S_1^{d-1}$ which is
absolutely continuous with respect to $\vol_{\S_1^{d-1}}$,
and for any bounded continuous function $f_0:\S_1^{d-1}\times (\R_{>0} \times \S_1^{d-1})^n\to \RR$, we have
\begin{align} \notag
\lim_{\rho\to 0}  \int_{\S_1^{d-1}} f_0\big(\vecv_0, \rho^{d-1} \tau_1(\vecq_{\rho,\vecbeta_0}(\vecv_0),\vecv_0;\rho),\vecv_1(\vecq_{\rho,\vecbeta_0}(\vecv_0),\vecv_0;\rho), \ldots, \vecv_n(\vecq_{\rho,\vecbeta_0}(\vecv_0),\vecv_0;\rho) \big) \, d\lambda_0(\vecv_0) &
\\ \label{pathwayEq}
=  \int_{\S_1^{d-1}\times (\R_{>0}\times \S_1^{d-1})^n}  
f_0\big(\vecv_0,\xi_1,\vecv_1,\ldots,\xi_n,\vecv_n \big)  p_{\vecalf,\vecbeta_0}(\vecv_0,\xi_1,\vecv_1) p_{\bn,\vecbeta_{\vecv_0}^+}(\vecv_1,\xi_2,\vecv_2) &
\\ \notag
\cdots p_{\bn,\vecbeta_{\vecv_{n-2}}^+}(\vecv_{n-1},\xi_n,\vecv_n)
\, d\lambda_0(\vecv_0)\,\prod_{k=1}^n  d\xi_k \,
d\!\vol_{\S_1^{d-1}}(\vecv_k) & .
\end{align}
\end{thm}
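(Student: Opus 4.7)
The plan is to argue by induction on $n$; the base case $n=1$ is Theorem~\ref{exactpos2-1hit}. For the inductive step, write $F_\rho(\vecv_0):=f_0(\vecv_0,\rho^{d-1}\tau_1,\vecv_1,\ldots,\rho^{d-1}\tau_n,\vecv_n)$ for the integrand in \eqref{pathwayEq}, and define
\begin{equation*}
G(\vecv_0,\xi_1,\vecv_1,\ldots,\xi_{n-1},\vecv_{n-1})
:=\int_{\R_{>0}\times\S_1^{d-1}}\! f_0(\vecv_0,\xi_1,\vecv_1,\ldots,\xi_{n-1},\vecv_{n-1},\xi,\vecv)\,
p_{\bn,\vecbeta^+_{\vecv_{n-2}}}(\vecv_{n-1},\xi,\vecv)\,d\xi\,d\!\vol_{\S_1^{d-1}}(\vecv).
\end{equation*}
Then $G$ is bounded and continuous outside a $\vol$-null set (cf.\ Remark~\ref{PALFBETPLUSCONTREM}), and applying the inductive hypothesis to $G$ recovers exactly the right-hand side of \eqref{pathwayEq}. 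It therefore suffices to establish the \emph{loss of memory} identity
\begin{multline*}
\int_{\S_1^{d-1}} F_\rho(\vecv_0)\,d\lambda_0(\vecv_0)\\
-\int_{\S_1^{d-1}} G\bigl(\vecv_0,\rho^{d-1}\tau_1,\vecv_1,\ldots,\rho^{d-1}\tau_{n-1},\vecv_{n-1}\bigr)\,d\lambda_0(\vecv_0)\longrightarrow 0\qquad (\rho\to 0).
\end{multline*}

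To prove this, fix small $\eta>0$ and large $C>0$. Using the tail bound \eqref{PALFBETSPECREMFORMULA} (applied iteratively through the densities $p_{\bn,\vecbeta^+_{\vecv_{k-2}}}$), both integrals are, up to an error vanishing as $\eta\to 0$, $C\to\infty$, supported on the set where the scattering sequence $\vec{\vecm}=(\vecm_0,\ldots,\vecm_{n-1})$ of the first $n-1$ collisions is $(\eta,C)$-admissible; only finitely many such $\vec{\vecm}$ arise for each fixed $\rho$. I would partition $\S_1^{d-1}$ accordingly. Proposition~\ref{REFLPROP}, applied with $n$ replaced by $n-1$ and $\vecbeta=\vecbeta_0$, parametrizes each piece by the $\C^1$ diffeomorphism $\Phi_{\rho,\vec{\vecm},\vecbeta_0}:\Delta_{\rho,\vec{\vecm},\vecbeta_0}\to\scrV_{\vecs_{n-1}}^\eta$ sending the initial velocity $\vecv_0$ to the final velocity $\vecv_{n-1}$; the particle then enters the $n$th segment at position $\vecm_{n-1}+\rho\,\overline{\vecbeta}^+_{\rho,\vec{\vecm},\vecbeta_0}(\vecv_{n-1})$ with direction $\vecv_{n-1}$. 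Since $\vecm_{n-1}\in\scrL$, this last segment is a first-collision problem in the sense of Section~\ref{secFirst}, with $\vecalf\equiv\bn\bmod\Z^d$ and perturbation $\overline{\vecbeta}^+_{\rho,\vec{\vecm},\vecbeta_0}$.

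The workhorse is now Theorem~\ref{exactpos2-1hit-unif}. By Proposition~\ref{REFLPROP}(ii), the perturbations $\overline{\vecbeta}^+_{\rho,\vec{\vecm},\vecbeta_0}$ (smoothly extended to all of $\S_1^{d-1}$ and pulled back by $K(\uvecs_{n-1})$ to a reference position) form, as $\vec{\vecm}$ varies and $\rho\to 0$, a uniformly bounded, $\C^1$-equicontinuous family; by Proposition~\ref{REFLPROP}(iii), the probability measures $(\Phi_{\rho,\vec{\vecm},\vecbeta_0})_*\bigl(\bigl(\vol_{\S_1^{d-1}}\bigr)_{|\Delta}/\vol_{\S_1^{d-1}}(\Delta)\bigr)$ differ from $\lambda_{\vecs_{n-1}}^\eta$ by a factor $1+O(\ve)$ and, again after rotation, form an equismooth family. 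The first $n-1$ arguments of $f_0$ become $\C^1$-functions of $\vecv_{n-1}$ with derivatives bounded uniformly in $\vec{\vecm}$ (by the recursive estimates built into the proof of Proposition~\ref{REFLPROP}), so $f_0$ itself, viewed as a function of the last three variables $(\vecv_{n-1},\xi,\vecv)$, ranges over a uniformly bounded, equicontinuous family. Theorem~\ref{exactpos2-1hit-unif} then gives the limit of the $n$th-step integral uniformly across $\vec{\vecm}$; reassembling the partition, using Remark~\ref{rot-inv} to undo the rotations, and letting $\eta\to 0$, $C\to\infty$ produces the loss-of-memory identity.

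The principal obstacle is that the number of admissible scattering sequences $\vec{\vecm}$ grows without bound as $\rho\to 0$, so a naive ``sum of finitely many $\ve$'s'' argument fails; the uniformity of Theorem~\ref{exactpos2-1hit-unif} across equismooth families of measures and equicontinuous families of functions is precisely what converts a growing partition into a single uniform error bound. Two secondary technical points require care: the exceptional continuity locus of $p_{\bn,\vecbeta^+_{\vecv_{n-2}}}$ in dimension $d=2$ (Remark~\ref{PALFBETPLUSCONTREM}), which has measure zero in the $(\xi,\vecv)$ variables and is handled by an additional small cutoff; and the replacement of $p_{\bn,\overline{\vecbeta}^+_{\rho,\vec{\vecm},\vecbeta_0}}$ by $p_{\bn,\vecbeta^+_{\vecv_{n-2}}}$ in the limit, which is justified by the $O(\ve)$ closeness in Proposition~\ref{REFLPROP}(ii) together with the $\vecz$-continuity of $\Phi_\bn(\xi,\vecw,\vecz)$ in Remark~\ref{PALFBETSPECREM}.
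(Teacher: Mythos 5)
Your high-level strategy is the paper's: condition on the scattering sequence, use Proposition \ref{REFLPROP} to transport the conditional law of the final velocity and the exit point $\overline{\vecbeta}^+$, feed these into the uniform Theorem \ref{exactpos2-1hit-unif}, and replace $\overline{\vecbeta}^+$ by $\vecbeta^+_{\vecv_{n-2}}$ via the $\vecz$-continuity of $\Phi_\bn$. However, two steps in your outline are genuinely unproved, and they are exactly where the real work lies. First, the measures you propose to feed into Theorem \ref{exactpos2-1hit-unif} are the pushforwards of normalized volume on the set of initial velocities whose \emph{actual} billiard orbit has collision history $\vec{\vecm}$. That set is an irregular, $\rho$-dependent subset of $\Delta_{\rho,\vec{\vecm},\vecbeta_0}$ (note that Proposition \ref{REFLPROP}(i) only asserts the existence of a trajectory scattering in the prescribed balls; it does not exclude the ray hitting intervening scatterers, so membership in $\Delta$ does not coincide with the true collision history). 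Proposition \ref{REFLPROP}(iii) compares the pushforward of volume on \emph{all} of $\Delta$ with $\lambda^\eta_{\vecs_{n-1}}$; it says nothing about restrictions to these irregular subsets, so your claim that the resulting family of conditional measures is equismooth has no justification. The paper resolves this by discretizing: it fixes a finite partition $\S_1^{d-1}=\bigsqcup_j D_j$ with cells of boundary measure zero, defines the good sets $\scrU^{(k)}$ whose condition (III) forces each cell of final velocities to be ``fully realized'' by actual orbits, and thereby works only with the measures $\lambda_{M,\vecs}$ supported on unions of fixed cells — these form the equismooth family $F_1$, with the cell-boundary error controlled by \eqref{RHOVOLBDRYASS}. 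Some such rounding device (or a substitute) is indispensable; without it the appeal to Theorem \ref{exactpos2-1hit-unif} does not go through.

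Second, your reduction to $(\eta,C)$-admissible sequences ``up to an error vanishing as $\eta\to 0$, $C\to\infty$, by the tail bound \eqref{PALFBETSPECREMFORMULA}'' is a statement about the limiting densities, whereas what is needed is a bound, at each small positive $\rho$, on the $\lambda_0$-measure of initial velocities whose orbit has a near-grazing collision, a nearly tangential exit, or a collision with a scatterer not in the nominal sequence. This is the content of Lemmas \ref{UDIFFLEM} and \ref{UDIFFLEM0}, whose proofs require the comparison with the inflated radius $(1+\eta)\rho$ and the shifted perturbations $\tbe$, and whose conclusions are converted into measure bounds only through a further application of the uniform theorem to the cutoff function $H\in F_2$, yielding \eqref{LAMBDAEDGESMALL} and then \eqref{UKDIFFSMALL}. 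Your proposal has no counterpart to this, so the error from discarding bad sequences is not actually controlled. A minor additional point: your induction applies the theorem for $n-1$ to $G$, which is bounded but only almost-everywhere continuous, while the inductive statement assumes a continuous test function; the paper avoids this by first reducing to $f_0$ compactly supported in $X_0$, which makes all the iterated integrals $f_m$ continuous — you would need an analogous reduction or a sandwiching argument before invoking the inductive hypothesis.
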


Note that the case $n=1$ specializes to the statement of Theorem 
\ref{exactpos2-1hit}. Hence from now on we will assume $n\geq 2$.

The proof of Theorem \ref{pathwayThm} is given in Section \ref{pathwayThmproofsec}, and is based on an iterative application of Theorem \ref{exactpos2-1hit}. The idea is that at the 
$n$th step, for a given, small $\rho>0$, we apply Theorem~\ref{exactpos2-1hit} once for each possible sequence of balls $\vecm_1+\scrB_\rho^d$, $\vecm_2+\scrB_\rho^d$, $\ldots$, 
$\vecm_{n-1}+\scrB_\rho^d$ ($\vecm_k\in\scrL$) causing the first $n-1$ 
collisions in the orbit of the flow $\varphi_t$.
For each such sequence $\{\vecm_k\}_{k=1}^{n-1}$ we apply 
Theorem \ref{exactpos2-1hit} with $\lambda$ as the probability measure on 
$\S_1^{d-1}$ describing the random variable 
$\vecv_{n-1}(\vecq_{\rho,\vecbeta_0}(\vecv_0),\vecv_0;\rho)$
\textit{conditioned} on $\vecv_0$ leading to $\{\vecm_k\}_{k=1}^{n-1}$,
and with $\vecbeta$ as the function
``$\vecv_{n-1}(\vecq_{\rho,\vecbeta_0}(\vecv_0),\vecv_0;\rho)
\mapsto\vecw_{n-1}(\vecq_{\rho,\vecbeta_0}(\vecv_0),\vecv_0;\rho)$''
(this makes sense once we restrict to $\vecv_0$ leading to the
fixed sequence $\{\vecm_k\}_{k=1}^{n-1}$, see Prop.\ \ref{REFLPROP}).
Since $\rho$ is small, $\vecbeta$ is well approximated by
$\vecbeta_{\vecv_{n-2}}^+$ (see Prop.\ \ref{REFLPROP}),
and the resulting limiting density is 
$p_{\vecnull,\vecbeta_{\vecv_{n-2}}^+}(\vecv_{n-1},\xi_n,\vecv_n)$.
Note that in Theorem \ref{exactpos2-1hit}, $p_{\vecalf,\vecbeta}$ does not 
depend on the choice of $\lambda$; hence our limiting density
$p_{\vecnull,\vecbeta_{\vecv_{n-2}}^+}(\vecv_{n-1},\xi_n,\vecv_n)$
is independent of the measure $d\lambda(\vecv_{n-1})$,
and thus independent of 
$\xi_1,\ldots,\xi_{n-1}$ and $\vecv_0,\ldots,\vecv_{n-3}$.

To make the above argument rigorous, we need to use the uniform version
Theorem \ref{exactpos2-1hit-unif} in place of Theorem \ref{exactpos2-1hit}, 
for note that both $\lambda$ and $\vecbeta$ depend on the hit sequence
$\{\vecm_k\}_{k=1}^{n-1}$, as well as on $\rho$.
For the application of Theorem \ref{exactpos2-1hit-unif}
we need to prove that all our $\lambda$'s are contained in some
equismooth family, and furthermore that $\vecbeta$ is indeed sufficiently well approximated
by $\vecbeta_{\vecv_{n-2}}^+$ as stated above.

\subsection{Sets of good initial velocities} \label{GOODSETSSECTION}
In this section we will define sets of initial velocities 
$\vecv_0$ with good properties,
and prove two lemmas which will later allow us to see that the complement of 
these sets have small $\lambda_0$-measure.

Let $n\geq 2$ and $\vecbeta_0$ be given as in Theorem \ref{pathwayThm}, and set
\begin{align} \label{GOODSETSSECTIONCchoice}
C=10 \max\Bigl(1,\sup_{\S_1^{d-1}}\|\vecbeta_0\|,
\sup_{\vech\in \T^1(\S_1^{d-1})} \|D_\vech \vecbeta_0\|,
\sup_{\vecy\in\scrL\setminus\{\bn\}} \|\vecy\|^{-1},
\sup_{\vecy\in(\scrL-\vecq)\setminus\{\bn\}} \|\vecy\|^{-1}\Bigr).
\end{align}
Let $\eta$ and $\rho$ be arbitrary numbers
subject to the conditions
\begin{align} \label{ETARHOCONDITIONS}
0<\eta<\sfrac{\pi-B_\Theta}{100}\quad\text{and}\quad
0<\rho<\min\bigl(\sfrac{\eta}{C^3C_{\eta}},\rho_0(n,\eta,C,\eta)\bigr),
\end{align}
where $\rho_0(n,\eta,C,\eta)$ is as in Proposition \ref{REFLPROP},
and $C_\eta$ as in \eqref{C1THETAETADEF}.
These two numbers $\eta$ and $\rho$ will be kept fixed throughout the present 
section.
Recall \eqref{FUEPSDEF}, and define $\omega(\eta)\in(0,\frac\pi 2)$
to be the angle such that 
\begin{align} \label{OMEGAETADEF}
\fU_{\omega(\eta)}=\fU_{2\sqrt\eta}\,\cup\,
-\vecbeta^-_{\vece_1}\Bigl(\scrV_{\vece_1}\setminus
\overline{\scrV_{\vece_1}^{20\eta}}\Bigr).
\end{align} 
We also fix a partition $\S_1^{d-1}=\bigsqcup_{j=1}^N D_j$ of the sphere
$\S_1^{d-1}$   %
into Borel subsets $D_1,\ldots,D_N$, each of positive volume 
and boundary of measure zero, and with diameter
$<\eta/C_{\eta}$ with respect to the metric $\varphi$.

For $\vecv\in\S_1^{d-1}$ and $k\geq 1$ we let
\begin{equation}
	\vecw_k(\vecv)=\vecw_k(\vecq_{\rho,{\vecbeta_0}}(\vecv),\vecv;\rho)\in\S_1^{d-1}, \qquad
	\vecm_k(\vecv)=\vecm_k(\vecq_{\rho,{\vecbeta_0}}(\vecv),\vecv;\rho)\in\scrL
\end{equation}
be the impact position and the ball label at the $k$th collision
for initial condition $(\vecq+\rho\vecbeta_0(\vecv),\vecv)\in\T^1(\scrK_\rho)$.
Let $\vecv_k(\vecv)=\vecv_k(\vecq_{\rho,{\vecbeta_0}}(\vecv),\vecv;\rho)\in\S_1^{d-1}$
be the velocity directly after this collision, and 
$\tau_k(\vecv)=\tau_k(\vecq_{\rho,{\vecbeta_0}}(\vecv),\vecv;\rho)\in\R_{>0}$
be the time elapsed between the $(k-1)$th and $k$th collision.
(Cf.\ \eqref{MNWNDEF} and \eqref{TAUNDEF}.)
We also set $\vecm_0=\vecq\in\R^d$ and
\begin{equation}
	\vecs_k(\vecv)=\vecs_k(\vecq_{\rho,{\vecbeta_0}}(\vecv),\vecv;\rho)
=\vecm_k(\vecv)-\vecm_{k-1}(\vecv)
\end{equation}
for $k\geq 1$.
Note that $\|\vecs_k(\vecv)\|\geq 10/C$ always holds,
by our choice of $C$. %

Given any $\veca\in\S_1^{d-1}$ we let $[\veca]$ be the unique set $D_j$
for which $\veca\in D_j$.
Set $\scrU^{(0)}=\S_1^{d-1}$, and define the subsets
$\scrU^{(0)}\supset\scrU^{(1)}\supset\scrU^{(2)}\supset\ldots$
recursively as follows:
For each $k\geq 1$, we let
$\scrU^{(k)}$ be the set of all $\vecv\in\scrU^{(k-1)}$ satisfying the
following three conditions: 
\begin{enumerate}
	\item[(I)] $\tau_k(\vecv)<\infty$, 
	\item[(II)] $[\vecv_k(\vecv)]\subset
\scrV_{\vecs_k}^{10\eta}$, and
	\item[(III)] for each
$\vecu\in [\vecv_k(\vecv)]$ 
there is some $\vecv'\in\scrU^{(k-1)}$ satisfying
\begin{align} \label{FULLYLIGHTEDK}
\begin{cases} \vecm_\ell(\vecv')=
\vecm_\ell(\vecv), \quad
\ell=1,\ldots,k;
\\
\vecv_k(\vecv')=\vecu.
\end{cases}
\end{align} 
\end{enumerate}
If $\vecv\in\scrU^{(k)}$ and
$\vecu\in[\vecv_k(\vecv)]$,
then every $\vecv'\in\scrU^{(k-1)}$
satisfying \eqref{FULLYLIGHTEDK} will actually lie in $\scrU^{(k)}$.

Given a vector $\vecv\in\scrU^{(k)}$ ($1\leq k\leq n$)
we let $M_\vecv^k$ be the set of 
those indices $i$ for which \eqref{FULLYLIGHTEDK} holds for some
$\vecu\in D_i$ and $\vecv'\in\scrU^{(k)}$. Then note that 
$\cup_{i\in M_\vecv^k} D_i\subset\scrV_{\vecs_k}^{10\eta}$ and for 
\textit{each} $\vecu\in \cup_{i\in M_\vecv^k} D_i$ there is some 
$\vecv'\in\scrU^{(k)}$ such that \eqref{FULLYLIGHTEDK} holds.
Write $\vec{\vecm}^k_\vecv:=(\vecm_0,\ldots,\vecm_k)$ where
$\vecm_0=\vecq$ and 
$\vecm_\ell=\vecm_\ell(\vecq_{\rho,{\vecbeta_0}}(\vecv),\vecv;\rho)$
($\ell=1,\ldots,k$) as before.
For each $\ell\in\{1,\ldots,k-1\}$ we have
$\vecv_\ell\in\scrV^{10\eta}_{\vecs_\ell}$ since $\vecv\in\scrU^{(\ell)}$,
and also, since the ray $\vecm_\ell+\rho\vecbeta^+_{\vecv_{\ell-1}}(\vecv_\ell)
+\R_{>0}\vecv_\ell$
hits $\vecm_{\ell+1}+\scrB_\rho^d$ we have
\begin{align} \label{ARCSININEQ}
\varphi(\vecv_\ell,\vecs_{\ell+1})\leq
\arcsin \frac{2\rho}{\|\vecs_{\ell+1}\|}
\leq \arcsin \frac{C\rho}{5}
<\sfrac 12 C\rho<\eta;
\end{align}
hence $\varphi(\vecs_\ell,\vecs_{\ell+1})>B_\Theta+9\eta$.
Also recall $\rho<\rho_0(n,\eta,C,\eta)$.
Hence the data $\vec{\vecm}_\vecv^k$, $\rho$ and $\vecbeta_0$ satisfy
all the assumptions of Proposition~\ref{REFLPROP}.
Now taking $\Phi=\Phi_{\rho,\vec{\vecm}^k_\vecv,\vecbeta_0}:\Delta
\dto\scrV^\eta_{\vecs_k}$ to be the
diffeomorphism in that proposition, it follows 
that for every $\vecu\in \cup_{i\in M_\vecv^k} D_i$ there is a {\em unique}
$\vecv'\in\scrU^{(k)}$ for which \eqref{FULLYLIGHTEDK} holds,
namely $\vecv'=\Phi^{-1}(\vecu)$.
Proposition \ref{REFLPROP} also implies that for any such
pair $\vecu,\vecv'$, we have regarding the starting point of the $\vecu$-ray 
leaving the $\vecm_k$-ball: 
$\vecbeta^+_{\vecv_{k-1}(\vecq_{\rho,{\vecbeta_0}}(\vecv'),\vecv';\rho)}(\vecu)
=\overline{\vecbeta}^+(\vecu)$
with $\overline{\vecbeta}^+=
\overline{\vecbeta}^+_{\rho,\vec{\vecm}^k_\vecv,\vecbeta_0}
:\scrV_{\vecs_k}^\eta\to\S_1^{d-1}$. 

\begin{lem} \label{UDIFFLEM}
Assume $1\leq k\leq n-1$, $\vecv\in\scrU^{(k)}$ and 
$\overline{\vecbeta}^+=
\overline{\vecbeta}^+_{\rho,\vec{\vecm}^k_\vecv,\vecbeta_0}$.
If $\vecv\notin \scrU^{(k+1)}$ then one of the following statements holds:
\begin{enumerate} \label{SCATTERINGCOND1TO4}
	\item[(i)] $\tau_{k+1}(\vecv)=\infty$,
	\item[(ii)] $\vecv_k(\vecv)\in\partial_{C\rho} \bigl(\cup_{i\in M_\vecv^k} D_i\bigr)$,
	\item[(iii)] $-\vecw_1\bigl(\rho\overline{\vecbeta}^+(\vecv_k),\vecv_k;\rho\bigr)K(\vecv_k)
\in \fU_{\omega(\eta)}$,
	\item[(iv)] $-\vecw_1\Bigl(\rho(\overline{\vecbeta}^+(\vecv_k)+3\vecv_k),
\vecv_k;(1+\eta)\rho\Bigr)K(\vecv_k)\in\fU_{\omega(\eta)}$.
\end{enumerate}
\end{lem}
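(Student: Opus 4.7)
Plan of proof. The argument proceeds by case analysis on which of the conditions (I), (II), (III) in the definition of $\scrU^{(k+1)}$ is violated by $\vecv$. If $\tau_{k+1}(\vecv)=\infty$, then alternative (i) of the Lemma holds directly. Assume henceforth $\tau_{k+1}(\vecv)<\infty$; translating the lattice by $-\vecm_k$ one has $\vecw_{k+1}(\vecv)=\vecw_1(\rho\overline{\vecbeta}^+(\vecv_k),\vecv_k;\rho)$ and $\vecv_{k+1}(\vecv)=\Theta_1(\vecv_k,\vecw_{k+1})$. If $-\vecw_{k+1}(\vecv)K(\vecv_k)\in\fU_{\omega(\eta)}$ then alternative (iii) holds; assume the contrary henceforth.

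I next show that (II) at step $k+1$ is then automatic, so that the source of failure must be (III). Using the decomposition \eqref{OMEGAETADEF} of $\fU_{\omega(\eta)}$ together with the spherical-symmetry identity $\vecv_{k+1}K(\vecv_k)=\Theta_1(\vece_1,\vecw_{k+1}K(\vecv_k))$ and the fact that any scattered direction lies in $\scrV_{\vece_1}$, the negation of (iii) forces $\vecv_{k+1}K(\vecv_k)\in\overline{\scrV^{20\eta}_{\vece_1}}$, i.e., $\varphi(\vecv_k,\vecv_{k+1})\geq B_\Theta+20\eta$. The small-angle bound of the kind producing \eqref{ARCSININEQ} yields $\varphi(\vecv_k,\vecs_{k+1})<\eta$, and combining these with the triangle inequality and the diameter bound $<\eta/C_\eta\leq\eta$ on $[\vecv_{k+1}]$ gives $[\vecv_{k+1}]\subset\scrV^{18\eta}_{\vecs_{k+1}}\subset\scrV^{10\eta}_{\vecs_{k+1}}$. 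Hence (III) must fail at step $k+1$: there exists $\vecu\in[\vecv_{k+1}(\vecv)]$ admitting no $\vecv'\in\scrU^{(k)}$ with $\vecm_\ell(\vecv')=\vecm_\ell(\vecv)$ for $\ell\leq k+1$ and $\vecv_{k+1}(\vecv')=\vecu$.

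The sequence $\vec{\vecm}^{k+1}_\vecv$ is $(\eta,C)$-admissible---the estimate for the final link combines $\varphi(\vecs_k,\vecv_k)>B_\Theta+10\eta$, which follows from $\vecv\in\scrU^{(k)}$, with $\varphi(\vecv_k,\vecs_{k+1})<\eta$---so Proposition \ref{REFLPROP}(i) furnishes a unique $\vecv'\in\Delta_{\rho,\vec{\vecm}^{k+1}_\vecv,\vecbeta_0}$ with $\Phi_{\rho,\vec{\vecm}^{k+1}_\vecv,\vecbeta_0}(\vecv')=\vecu$, and this $\vecv'$ automatically realizes the prescribed $\vecm$-sequence and final direction. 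Matching the definition of $M_\vecv^k$ with the recursive description of $\scrU^{(k)}$ following \eqref{FULLYLIGHTEDK}, one verifies that membership of $\vecv'$ in $\scrU^{(k)}$ is equivalent to $\vecv_k(\vecv')\in\bigcup_{i\in M_\vecv^k}D_i$. So the problem reduces to deducing alternative (ii) or (iv) of the Lemma from the scenario $\vecv_k(\vecv')\notin\bigcup_{i\in M_\vecv^k}D_i$.

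This last step is the main obstacle. The strategy is to invoke Lemma \ref{EXCSET1LEMMA} in the lattice translated by $-\vecm_k$, with $\vecv=\vecv_k$, $K=K(\vecv_k)$, $\vecbeta=\overline{\vecbeta}^+(\vecv_k)+3\vecv_k$, and $\vecbeta'$ chosen so that the ray $\rho\vecbeta'+\R_{>0}\vecv_k$ meets the translated ball $\vecs_{k+1}+\scrB_\rho^d$ at the same point at which the $\vecv'$-trajectory meets $\vecm_{k+1}+\scrB_\rho^d$ (after translation by $-\vecm_k$). The shift by $3\vecv_k$ is precisely what is needed to enforce $\vecbeta\cdot\vecv_k,\vecbeta'\cdot\vecv_k\geq 2$; the bound on $\|\vecbeta-\vecbeta'\|$ is controlled by $C_\eta\,\varphi(\vecv_k,\vecv_k(\vecv'))$ via Proposition \ref{REFLPROP}(ii) and elementary scattering geometry. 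Alternative (i) of Lemma \ref{EXCSET1LEMMA} is excluded by $\tau_{k+1}<\infty$; its alternative (ii) is exactly condition (iv) of the present Lemma, the parameters $(1+\eta)\rho$ and $3\vecv_k$ matching on the nose; its alternative (iii) gives $\varphi(\vecv_k,\vecv_k(\vecv'))<C\rho$, and then the negation of (ii) of the present Lemma would force $\vecv_k(\vecv')\in\bigcup_{i\in M_\vecv^k}D_i$, a contradiction. The delicate point is to construct $\vecbeta'$ precisely enough that the hypothesis $\|\vecbeta-\vecbeta'\|\leq\eta$ of Lemma \ref{EXCSET1LEMMA} is verified; this requires careful tracking of the geometry at the $(k+1)$-th scattering.
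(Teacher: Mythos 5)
The first half of your argument is sound and coincides with the paper's proof up to taking contrapositives: case (i) is immediate, the implication ``(II) fails $\Rightarrow$ (iii)'' (equivalently your ``$\neg$(iii) $\Rightarrow$ (II)'') is handled exactly as in the paper via \eqref{OMEGAETADEF} and the small-angle bound of \eqref{ARCSININEQ}, and the construction of $\vecv'=\Phi_{\rho,\vec{\vecm}^{k+1}_\vecv,\vecbeta_0}^{-1}(\vecu)$ via Proposition \ref{REFLPROP} is also the paper's step. The gap is in your reduction. You claim that, since no element of $\scrU^{(k)}$ satisfies \eqref{FULLYLIGHTEDKP1}, the constructed $\vecv'$ must lie outside $\scrU^{(k)}$, i.e.\ $\vecv_k(\vecv')\notin\cup_{i\in M_\vecv^k}D_i$. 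This inference is false: Proposition \ref{REFLPROP} only guarantees that the successive rays of the iterated-scattering construction hit the prescribed balls; it makes no claim that $\vecm_{k+1}+\scrB_\rho^d$ is the \emph{first} ball met by the ray leaving $\vecm_k+\scrB_\rho^d$. Hence it can happen that $\vecv'\in\scrU^{(k)}$ (so $\vecv_k(\vecv')\in\cup_{i\in M_\vecv^k}D_i$) and yet the genuine billiard trajectory of $\vecv'$ has its $(k+1)$st collision at an intervening scatterer $\vecm'\neq\vecm_{k+1}$, so that \eqref{FULLYLIGHTEDKP1} fails for $\vecv'$ all the same. This blocking-scatterer scenario is precisely what alternative (iv) --- with the inflated radius $(1+\eta)\rho$ and the shifted base point $\overline{\vecbeta}^+(\vecv_k)+3\vecv_k$ --- is designed to capture, and it is where the real work lies: one compares the segment of the $\vecv'$-trajectory from $\vecm_k$ towards $\vecm_{k+1}$ with the corresponding segment in direction $\vecv_k$, using $\varphi(\vecv_k,\vecv_k(\vecv'))<C\rho$ and the $C_{\eta}$-derivative bounds of Proposition \ref{REFLPROP}(ii) to see that the two segments stay within $\rho\eta$ of one another, concludes that the $\vecv_k$-ray meets $\vecm'+\scrB_{(1+\eta)\rho}^d$, and rules out (by a length estimate using $\|\vecm'-\vecm_{k+1}\|\geq 10/C$, which would force $\tau_{k+1}$ to be too small) that it meets $\vecm'+\scrB_\rho^d$; the resulting grazing hit gives (iv). Your proposal never confronts this case.

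Moreover, in the case you do isolate, namely $\vecv_k(\vecv')\notin\cup_{i\in M_\vecv^k}D_i$, alternative (ii) follows at once: both $\vecv_k$ and $\vecv_k(\vecv')$ point from within $O(\rho)$ of $\vecm_k$ at the ball $\vecm_{k+1}+\scrB_\rho^d$, which is at distance $\geq 10/C$, so $\varphi(\vecv_k,\vecv_k(\vecv'))<C\rho$ and $\vecv_k\in\partial_{C\rho}(\cup_{i\in M_\vecv^k}D_i)$; no appeal to Lemma \ref{EXCSET1LEMMA} is needed, and that lemma cannot play the role you assign it. Lemma \ref{EXCSET1LEMMA} compares two rays with the \emph{same} direction $\vecv$ and nearby base points $\rho\vecbeta$, $\rho\vecbeta'$, whereas the comparison needed in the missing case is between rays in the two \emph{different} directions $\vecv_k$ and $\vecv_k(\vecv')$; and the conclusion you cite (``alternative (iii) gives $\varphi(\vecv_k,\vecv_k(\vecv'))<C\rho$'') is not what Lemma \ref{EXCSET1LEMMA}(iii) asserts --- it bounds the difference of hitting times and impact points of the two parallel rays, not an angle between scattered directions. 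So the final step of your plan does not close even within the reduced scenario, and the reduction itself omits the essential case.
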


\begin{proof}
Fix any vector $\vecv\in\scrU^{(k)}\setminus\scrU^{(k+1)}$, 
assume $\tau_{k+1}(\vecv)<\infty$ so that 
$\vecw_{k+1}=\vecw_{k+1}(\vecv)$ and
$\vecv_{k+1}=\vecv_{k+1}(\vecv)$ exist,
and write $D_j=[\vecv_{k+1}]$.
Now by the definition of $\scrU^{(k+1)}$ we have either
$D_j\not\subset\scrV_{\vecs_{k+1}}^{10\eta}$,
or else there is some $\vecu\in D_j$ such that there does \textit{not}
exist any $\vecv'\in\scrU^{(k)}$ with
\begin{align} \label{FULLYLIGHTEDKP1}
\begin{cases} \vecm_\ell(\vecv')= \vecm_\ell(\vecv), \quad
\ell=1,\ldots,k+1;
\\
\vecv_{k+1}(\vecv')=\vecu.
\end{cases}
\end{align}

First assume that $D_j\not\subset\scrV_{\vecs_{k+1}}^{10\eta}$, i.e.\
there is some $\vecu\in D_j$ with $\vecu\notin\scrV_{\vecs_{k+1}}^{10\eta}$.
Then $\varphi(\vecu,\vecs_{k+1})\leq B_\Theta+10\eta$;
also $\varphi(\vecv_{k+1},\vecu)<\eta$ since $D_j$ has diameter less than 
$\eta$, and $\varphi(\vecs_{k+1},\vecv_k)<\frac 12 C\rho<\eta$ as in 
\eqref{ARCSININEQ}.
Hence %
$\varphi(\vecv_{k+1},\vecv_k)<B_\Theta+12\eta$, and using 
$\vecw_{k+1}=\vecbeta^-_{\vecv_k}(\vecv_{k+1})$ and the spherical
invariance of $\vecbeta^-$ this implies
$-\vecw_{k+1} K(\vecv_k)\in\fU_{\omega(\eta)}$ (cf.\ \eqref{OMEGAETADEF}).
But 
\begin{equation}
	\vecw_{k+1}
=\vecw_1(\vecm_k+\rho\overline{\vecbeta}^+(\vecv_k),\vecv_k;\rho)
=\vecw_1(\rho\overline{\vecbeta}^+(\vecv_k),\vecv_k;\rho),
\end{equation}
since
$\scrL-\vecm_k=\scrL$. %
Hence (iii) in Lemma \ref{UDIFFLEM} holds.

It remains to consider the case when $D_j\subset\scrV_{\vecs_{k+1}}^{10\eta}$
but there is some $\vecu\in D_j$ (which we now consider as fixed)
such that \eqref{FULLYLIGHTEDKP1} does
not hold for any $\vecv'\in\scrU^{(k)}$.
As in the discussion preceding the lemma we have
$\varphi(\vecs_\ell,\vecs_{\ell+1})>B_\Theta+9\eta$
for all $\ell=1,\ldots,k$, so that 
the data $\vec{\vecm}_\vecv^{k+1}$, $\rho$ and $\vecbeta_0$ satisfy
all the assumptions of Proposition \ref{REFLPROP}.
Now $\vecu\in D_j\subset\scrV_{\vecs_{k+1}}^{10\eta}$ lies in the
range of $\Phi_{\rho,\vec{\vecm}_\vecv^{k+1},\vecbeta_0}$;
we set $\vecv'=\Phi_{\rho,\vec{\vecm}_\vecv^{k+1},\vecbeta_0}^{-1}(\vecu)
\in \S_1^{d-1}$.
Then by ``(b)$\Rightarrow$(a)'' in Proposition~\ref{REFLPROP},
the ray $\vecq_{\rho,{\vecbeta_0}}(\vecv')+\R_{>0}\vecv'$ hits
$\vecm_1+\scrB_\rho^d$; after scattering in this ball we get a ray which hits
$\vecm_2+\scrB_\rho^d$, and so on for 
$\vecm_3+\scrB_\rho^d,\ldots,\vecm_{k+1}+\scrB_\rho^d$,
and after the final scattering we get a ray with direction $\vecu$.
(So far we make no claim for any $j$ on whether or not the ray leaving
$\vecm_j+\scrB_\rho^d$ passes through any \textit{other}
ball in $\scrL+\scrB_\rho^d$ before hitting $\vecm_{j+1}+\scrB_\rho^d$.)

Let $\vecv_\ell'$ be the direction of the $\ell$th ray in the above
sequence ($\ell\in\{0,\ldots,k+1\}$, thus $\vecv_0'=\vecv'$, 
$\vecv_{k+1}'=\vecu$), so that
$\vecm_k+\rho\vecbeta^+_{\vecv'_{k-1}}(\vecv'_k)+\R_{>0}\vecv'_k$ is
the ray which leaves $\vecm_k+\scrB_\rho^d$ and hits 
$\vecm_{k+1}+\scrB_\rho^d$.
We have $\varphi(\vecs_{k+1},\vecv'_k)
<\frac 12 C \rho<\eta$
(cf.\ \eqref{ARCSININEQ}) 
and thus 
\begin{equation}
	\varphi(\vecs_k,\vecv'_k)\geq\varphi(\vecs_k,\vecs_{k+1})
-\varphi(\vecs_{k+1},\vecv'_k)>B_\Theta+9\eta-\eta
\end{equation}
so that
$\vecv'_k\in\scrV_{\vecs_k}^\eta$. Hence by ``(a)$\Rightarrow$(b)'' in Proposition \ref{REFLPROP},
$\vecv'$ also belongs to the domain of
$\Phi_{\rho,\vec{\vecm}_\vecv^{k},\vecbeta_0}$, and
$\vecv'_k=\Phi_{\rho,\vec{\vecm}_\vecv^{k},\vecbeta_0}(\vecv')$ and 
\begin{equation}
	\vecbeta^+_{\vecv'_{k-1}}(\vecv'_k)=\overline{\vecbeta}^+(\vecv'_k)
=\overline{\vecbeta}^+_{\rho,\vec{\vecm}_\vecv^k,\vecbeta_0}(\vecv'_k).
\end{equation}
From $\varphi(\vecs_{k+1},\vecv_k)<\frac 12 C \rho$ and
$\varphi(\vecs_{k+1},\vecv'_k)<\frac 12 C \rho$ we get
$\varphi(\vecv_k,\vecv'_k)<C \rho$.
Note that $\vecv_k\in\cup_{i\in M_\vecv^k} D_i$ since $\vecv\in\scrU^{(k)}$.
Now if $\vecv'_k$ lies outside $\cup_{i\in M_\vecv^k} D_i$ then there is
some point on the geodesic between $\vecv_k$ and $\vecv'_k$ which
lies in $\partial(\cup_{i\in M_\vecv^k} D_i)$, and hence %
$\vecv_k\in\partial_{C\rho}(\cup_{i\in M_\vecv^k} D_i)$,
i.e.\ (ii) in Lemma \ref{UDIFFLEM} holds.

It remains to consider the case $\vecv'_k\in\cup_{i\in M_\vecv^k} D_i$.
As in the discussion preceding the lemma,
$\vecv'_k\in\cup_{i\in M_\vecv^k} D_i$ and 
$\vecv'=\Phi_{\rho,\vec{\vecm}_\vecv^{k},\vecbeta_0}^{-1}(\vecv'_k)$
imply that $\vecv'\in\scrU^{(k)}$ and that
\eqref{FULLYLIGHTEDK} holds with $\vecv'_k$ in place of $\vecu$; thus
$\vecv'_\ell=\vecv_\ell(\vecq_{\rho,{\vecbeta_0}}(\vecv'),\vecv';\rho)$
for all $\ell=1,\ldots,k$.
We know from above that the ray 
$\vecm_k+\rho\vecbeta^+_{\vecv'_{k-1}}(\vecv'_k)+\R_{>0}\vecv'_k$
hits $\vecm_{k+1}+\scrB_\rho^d$;
let us choose $\tau'>0$ and $\vecw'_{k+1}\in\S_1^{d-1}$ so that the
point of impact is
\begin{equation}
	\vecm_k+\rho\vecbeta^+_{\vecv'_{k-1}}(\vecv'_k)+\tau'\vecv'_k
=\vecm_{k+1}+\rho\vecw'_{k+1}.
\end{equation}
After scattering in the ball $\vecm_{k+1}+\scrB_\rho^d$ 
we get a ray with direction $\vecu$. It follows that the ray 
\begin{equation}
	\vecm_k+\rho\vecbeta^+_{\vecv'_{k-1}}(\vecv'_k)+\R_{>0}\vecv'_k
\end{equation}
must intersect some \textit{other} ball in $\scrL+\scrB_\rho^d$ before it hits
$\vecm_{k+1}+\scrB_\rho^d$, for otherwise we would conclude
$\vecm_{k+1}(\vecq_{\rho,{\vecbeta_0}}(\vecv'),\vecv';\rho)=\vecm_{k+1}$
and 
$\vecv_{k+1}(\vecq_{\rho,{\vecbeta_0}}(\vecv'),\vecv';\rho)=\vecu$,
i.e.\ \eqref{FULLYLIGHTEDKP1} would hold for our $\vecv'\in\scrU^{(k)}$, 
contrary to our assumption. Thus there is some point
\begin{equation}
	\vecm'=\vecm_{k+1}(\vecq_{\rho,{\vecbeta_0}}(\vecv'),\vecv';\rho)\in\scrL, \qquad \vecm'\neq\vecm_{k+1},
\end{equation}
and some $t'\in (0,\tau')$ such that
$\vecm_k+\rho\vecbeta^+_{\vecv'_{k-1}}(\vecv'_k)+t'\vecv'_k
\in\vecm'+\scrB_\rho^d$.
Note that $\|\vecm'-\vecm_{k+1}\|\geq 10/C>10^3\rho$,
and hence $t'<\tau'-100\rho$.

Using $\varphi(\vecv_k,\vecv_k')<C\rho<\eta$ and
$\vecv_k\in %
\scrV_{\vecs_k}^{10\eta}$ we see that the ($\S_1^{d-1}$-)geodesic
from $\vecv_k$ to $\vecv_k'$ is contained inside 
$\scrV_{\vecs_k}^{\eta}$; hence from
$\sup_{\vech\in\T^1(\scrV_{\vecs_k}^\eta)} \|D_\vech \overline{\vecbeta}^+\|
<C_{\eta}$ (see Proposition \ref{REFLPROP}) we deduce
\begin{equation}
	\|\overline{\vecbeta}^+(\vecv_k)-\overline{\vecbeta}^+(\vecv'_k)\|
<C_{\eta} C\rho<\eta.
\end{equation}
Similarly, writing
$\overline{\vecbeta}^-_{k+1}:=\overline{\vecbeta}^-_{\rho,
\vec{\vecm}_\vecv^{k+1},\vecbeta_0}$ and using
$\vecw_{k+1}=\overline{\vecbeta}^-_{k+1}(\vecv_{k+1})$,
$\vecw_{k+1}'=\overline{\vecbeta}^-_{k+1}(\vecu)$
and $\vecv_{k+1},\vecu\in D_j\subset\scrV_{\vecs_{k+1}}^{10\eta}$
(thus $\varphi(\vecv_{k+1},\vecu)<\frac{\eta}{C_{\eta}}$), we obtain
\begin{equation}
	\|\vecw_{k+1}-\vecw'_{k+1}\|<\eta .
\end{equation}
Thus the line segment 
$\vecm_k+\rho\overline{\vecbeta}^+(\vecv_k')+(0,\tau')\vecv_k'$
has both its endpoints
at distance $<\rho\eta$ from the corresponding endpoints of the line segment
$\vecm_k+\rho\overline{\vecbeta}^+(\vecv_k)+(0,\tau_{k+1})\vecv_k$.
Hence %
$|\tau'-\tau_{k+1}|<2\rho\eta$,
and there exists some $t>0$ with $|t-t'|<2\rho\eta$ such that
\begin{equation}
	\bigl\|(\vecm_k+\rho\overline{\vecbeta}^+(\vecv_k)+t\vecv_k)-
(\vecm_k+\rho\overline{\vecbeta}^+(\vecv_k')+t'\vecv'_k)\bigr\|<\rho\eta.
\end{equation}
It follows from this that $\vecm_k+\rho\overline{\vecbeta}^+(\vecv_k)+t\vecv_k
\in\vecm'+\scrB_{\trho}^d$, with $\trho=(1+\eta)\rho$, and therefore
$\tau_1'':=\tau_1(\vecm_k+\rho(\overline{\vecbeta}^+(\vecv_k)+3\vecv_k),\vecv_k,\trho)$
satisfies $\tau_1''<t-3\rho$.
Take $\vecm''\in\scrL$ and set
$\vecw''_1=\vecw_1(\vecm_k+\rho(\overline{\vecbeta}^+(\vecv_k)+3\vecv_k),\vecv_k,\trho)
\in\S_1^{d-1}$, so that
\begin{align}
\vecm_k+\rho(\overline{\vecbeta}^+(\vecv_k)+3\vecv_k)+\tau_1''\vecv_k=\vecm''+\trho\vecw''_1.
\end{align}

Note that if the ray 
$\vecm_k+\rho\overline{\vecbeta}^+(\vecv_k)+\R_{>0}\vecv_k$ would also
intersect the slightly smaller ball $\vecm''+\scrB_\rho^d$ then 
$\tau_1(\vecm_k+\rho(\overline{\vecbeta}^+(\vecv_k)+3\vecv_k),\vecv_k,\rho)<\tau_1''+3\rho$,
and we would get a contradiction:
\begin{align} \label{CONTRINEQSEQ}
\tau_{k+1} %
=\tau_1(\vecm_k+\rho(\overline{\vecbeta}^+(\vecv_k)+3\vecv_k),\vecv_k,\rho)
+3\rho<\tau_1''+6\rho<t+3\rho<t'+4\rho &
\\ \notag
<\tau'-96\rho<\tau_{k+1}-95\rho & .
\end{align}
Thus the ray $\vecm_k+\rho\overline{\vecbeta}^+(\vecv_k)+\R_{>0}\vecv_k$ 
intersects $\vecm''+\scrB_\trho^d$ but \textit{not} $\vecm''+\scrB_\rho^d$.
This implies 
$-\vecw''_1 K(\vecv_k)\in\fU_{2\sqrt\eta}\subset\fU_{\omega(\eta)}$.
Hence (iv) in Lemma \ref{UDIFFLEM} holds.
\end{proof}

We next give the analogue of Lemma \ref{UDIFFLEM} in the case $k=0$.
Let us define $\tbe_0\in \C^1(\S_1^{d-1})$ by
\begin{align} \label{TBE0DEF}
\tbe_0(\vecv)=(1+\eta)^{-1}\bigl(\vecbeta_0(\vecv)+(C+3)\vecv\bigr).
\end{align}

\begin{lem} \label{UDIFFLEM0}
Set $\trho=(1+\eta)\rho$. For each $\vecv\in\S_1^{d-1}\setminus\scrU^{(1)}$, one of the following statements holds:
\begin{enumerate}
	\item[(i)] $\tau_{1}(\vecv)=\infty$,
	\item[(ii)] $-\vecw_1\bigl(\vecq+\rho\vecbeta_0(\vecv),\vecv;\rho\bigr)K(\vecv)
\in \fU_{\omega(\eta)}$,
	\item[(iii)] $-\vecw_1\Bigl(\vecq+\trho\tbe_0(\vecv),\vecv;\trho\Bigr)K(\vecv)
\in\fU_{\omega(\eta)}$.
\end{enumerate}
\end{lem}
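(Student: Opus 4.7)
The plan is to follow the three-case structure of the proof of Lemma \ref{UDIFFLEM}, adapted to the initial step $k=0$. Fix $\vecv\in\S_1^{d-1}\setminus\scrU^{(1)}$ and assume $\tau_1(\vecv)<\infty$ (otherwise (i) holds); then condition (II) or (III) in the definition of $\scrU^{(1)}$ must fail. If (II) fails, pick $\vecu\in D_j:=[\vecv_1(\vecv)]$ with $\vecu\notin\scrV_{\vecs_1}^{10\eta}$; then arguing exactly as in the proof of Lemma \ref{UDIFFLEM}, the inequalities $\varphi(\vecu,\vecs_1)\leq B_\Theta+10\eta$, $\varphi(\vecv_1,\vecu)<\eta/C_\eta\leq\eta$ and $\varphi(\vecv,\vecs_1)<\frac{1}{2}C\rho<\eta$ (the last from \eqref{ARCSININEQ}, valid because $\|\vecs_1\|=\|\vecm_1-\vecq\|\geq 10/C$ by \eqref{GOODSETSSECTIONCchoice}) combine to give $\varphi(\vecv_1,\vecv)<B_\Theta+12\eta$. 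Thus $\vecv_1K(\vecv)\in\scrV_{\vece_1}\setminus\overline{\scrV_{\vece_1}^{20\eta}}$, and since $\vecw_1=\vecbeta^-_{\vecv}(\vecv_1)$ the spherical symmetry of $\vecbeta^-$ yields $-\vecw_1K(\vecv)=-\vecbeta^-_{\vece_1}(\vecv_1K(\vecv))\in\fU_{\omega(\eta)}$ by \eqref{OMEGAETADEF}, which is (ii).

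Suppose instead (II) holds but (III) fails: then $D_j\subset\scrV_{\vecs_1}^{10\eta}$ and some $\vecu\in D_j$ has no preimage $\vecv'\in\S_1^{d-1}$ with $\vecm_1(\vecv')=\vecm_1$ and $\vecv_1(\vecv')=\vecu$. The sequence $(\vecq,\vecm_1)$ is $(\eta,C)$-admissible (the angle condition is vacuous for $n=1$) and $\vecu\in\scrV_{\vecs_1}^{\eta}$, so setting $\vecv':=\Phi^{-1}_{\rho,(\vecq,\vecm_1),\vecbeta_0}(\vecu)$ and invoking (b)$\Rightarrow$(a) of Proposition~\ref{REFLPROP}, the ray from $\vecq+\rho\vecbeta_0(\vecv')$ in direction $\vecv'$ does hit $\vecm_1+\scrB_\rho^d$ with post-scattering direction $\vecu$; but our hypothesis forces this ray to first meet a different ball $\vecm'+\scrB_\rho^d$ with $\vecm'\in\scrL$, $\vecm'\neq\vecm_1$, at some time $t'<\tau'-100\rho$ (using $\|\vecm'-\vecm_1\|\geq 10/C>10^3\rho$). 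Now combining $\varphi(\vecv,\vecv')<C\rho$, the Lipschitz bound on $\vecbeta_0$, and $\|D\overline{\vecbeta}^-\|<C_\eta$ with $\varphi(\vecv_1,\vecu)<\eta/C_\eta$ (for $\overline{\vecbeta}^-=\overline{\vecbeta}^-_{\rho,(\vecq,\vecm_1),\vecbeta_0}$ supplied by Proposition~\ref{REFLPROP}(ii)) yields $\|\vecbeta_0(\vecv)-\vecbeta_0(\vecv')\|<\eta$ and $\|\vecw_1-\vecw_1'\|<\eta$, so that the line segments $\vecq+\rho\vecbeta_0(\vecv)+[0,\tau_1]\vecv$ and $\vecq+\rho\vecbeta_0(\vecv')+[0,\tau']\vecv'$ have $\rho\eta$-close endpoints; a linear interpolation then produces $t>0$ with $|t-t'|<2\rho\eta$ and $\vecq+\rho\vecbeta_0(\vecv)+t\vecv\in\vecm'+\scrB_\trho^d$.

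The final step is to convert this into a statement about $\tau_1'':=\tau_1(\vecq+\trho\tbe_0(\vecv),\vecv;\trho)$, where $\trho\tbe_0(\vecv)=\rho(\vecbeta_0(\vecv)+(C+3)\vecv)$ by \eqref{TBE0DEF}. First, the shifted starting point lies outside every ball $\vecm+\scrB_\trho^d$: for $\vecm\neq\vecq$ one uses $\|\vecm-\vecq\|\geq 10/C$ from \eqref{GOODSETSSECTIONCchoice}, while in the degenerate case $\vecq\in\scrL$, $\vecm=\vecq$, the assumption $(\vecbeta_0(\vecv)+\R_{>0}\vecv)\cap\scrB_1^d=\emptyset$ together with $C+3\geq\|\vecbeta_0(\vecv)\|+3$ gives $\|\vecbeta_0(\vecv)+(C+3)\vecv\|>1+\eta$; so $\tau_1''$ is well-defined. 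The inclusion $\vecq+\rho\vecbeta_0(\vecv)+t\vecv\in\vecm'+\scrB_\trho^d$ gives $\tau_1''\leq t-\rho(C+3)$. If the shifted ray with the smaller radius $\rho$ were to also hit the corresponding ball $\vecm''+\scrB_\rho^d$, then the shift identity $\tau_1(\vecq+\trho\tbe_0(\vecv),\vecv;\rho)=\tau_1(\vecv)-\rho(C+3)$ combined with the annular bound $\tau_1(\vecq+\trho\tbe_0(\vecv),\vecv;\rho)\leq \tau_1''+3\rho$ would give $\tau_1(\vecv)\leq\tau_1''+\rho(C+6)$; chaining this with $\tau_1''\leq t-\rho(C+3)$, $t<t'+2\rho\eta$, $t'<\tau'-100\rho$ and $|\tau'-\tau_1(\vecv)|<2\rho\eta$ produces $\tau_1(\vecv)<\tau_1(\vecv)-90\rho$, the promised contradiction of the type \eqref{CONTRINEQSEQ}. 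Hence the impact parameter on $\vecm''+\scrB_\trho^d$ lies in $[\rho,\trho]$, forcing $-\vecw_1''K(\vecv)\in\fU_{2\sqrt\eta}\subset\fU_{\omega(\eta)}$, giving (iii). The main obstacle is calibrating the shift constant $C+3$ in \eqref{TBE0DEF}: it must dominate $\sup\|\vecbeta_0\|\leq C$ so that the shifted starting point avoids $\vecq+\scrB_\trho^d$ even in the degenerate case $\vecq\in\scrL$, while the $(1+\eta)^{-1}$ prefactor in $\tbe_0$ is precisely what makes $\trho\tbe_0(\vecv)$ equal the desired shift $\rho(\vecbeta_0(\vecv)+(C+3)\vecv)$; with this bookkeeping in place, the chain of inequalities parallels the $k\geq 1$ case verbatim.
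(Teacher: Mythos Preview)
Your argument is correct and matches the approach the paper indicates (its proof reads simply ``This is very similar to the proof of Lemma \ref{UDIFFLEM}, but easier in certain respects''). One harmless slip: citing \eqref{ARCSININEQ} directly for $\varphi(\vecv,\vecs_1)<\tfrac12 C\rho$ overlooks that here the starting offset is $\rho\|\vecbeta_0(\vecv)\|$ rather than $\rho$, so the correct bound is $O(C^2\rho)$ (cf.\ the line below \eqref{pathwayintegral2}); this does not affect any conclusion since \eqref{ETARHOCONDITIONS} still forces the relevant quantities to be $<\eta$.
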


\begin{proof}
This is very similar to the proof of Lemma \ref{UDIFFLEM}, but easier
in certain respects.
\end{proof}

\subsection{Proof of Theorem \ref{pathwayThm}}\label{pathwayThmproofsec}

Let $n\geq 2,\lambda_0,\vecbeta_0,f_0$ be given as in the statement of the 
theorem.

\subsubsection{Some reductions}
Set $X=\S_1^{d-1}\times(\R_{>0}\times\S_1^{d-1})^n$ and let 
$\nu$ be the Borel measure on $X$ which appears in the
right hand side of \eqref{pathwayEq} in Theorem~\ref{pathwayThm}, i.e.\ set
\begin{align} \label{pathwaymeasure}
\nu(M)=\int_M p_{\vecalf,\vecbeta_0}(\vecv_0,\xi_1,\vecv_1) 
\Bigl(\prod_{j=1}^{n-1}
p_{\bn,\vecbeta_{\vecv_{j-1}}^+}(\vecv_j,\xi_{j+1},\vecv_{j+1})\Bigr)
\, d\lambda_0(\vecv_0)\, d\xi_1\cdots d\!\vol_{\S_1^{d-1}}(\vecv_n)
\end{align}
for every Borel subset $M\subset X$. Note that repeated application of \eqref{p-norm}, and the analogous relation for $p_{\vecalf,\vecbeta_0}(\vecv_0,\xi_1,\vecv_1)$, yields $\nu(X)=1$.
Set 
\begin{align}
X_0=\{(\vecv_0,\xi_1,\ldots,\vecv_n)\in X \col
\varphi(\vecv_{j-1},\vecv_{j})>B_\Theta,\: \forall j\in\{1,2,\ldots,n\}\}.
\end{align}
This is an open subset of $X$ with $\nu(X\setminus X_0)=0$, i.e.\ $\nu(X_0)=1$.
Hence we may now assume, without loss of generality, that \textit{$f_0$ has
compact support contained inside $X_0$.}
The general case of Theorem~\ref{pathwayThm} then follows by a
standard approximation argument.

We define functions $f_m: \S_1^{d-1}\times(\R_{>0}\times\S_1^{d-1})^{n-m}
\to\R$ for $m=1,\ldots,n-1$ by the following recursive formula:
\begin{align} \notag
f_m\bigl(\vecv_0,\xi_1,\vecv_1,\ldots,\xi_{n-m},\vecv_{n-m}\bigr)
:=\int_{\S_1^{d-1}}\int_{\R_{>0}} f_{m-1}\bigl(
\vecv_0,\xi_1,\vecv_1,\ldots,\vecv_{n-m},\xi,\vecv\bigr)\, \hspace{50pt} &
\\ \label{FMRECDEF}
\times p_{\bn,\vecbeta_{\vecv_{n-m-1}}^+}(\vecv_{n-m},\xi,\vecv)
\, d\xi\, d\!\vol_{\S_1^{d-1}}(\vecv) & .
\end{align}
The point of this %
is that now %
the right hand side of \eqref{pathwayEq} in Theorem~\ref{pathwayThm}
can be expressed as
\begin{align} \label{POINTOFRECDEF}
\int_X f_0\, d\nu=
\int_{\S_1^{d-1}}\int_{\R_{>0}}\int_{\S_1^{d-1}} 
f_{n-1}(\vecv_0,\xi_1,\vecv_1)
p_{\vecalf,\vecbeta_0}(\vecv_0,\xi_1,\vecv_1)\,d\lambda_0(\vecv_0)
\, d\xi_1\, d\!\vol_{\S_1^{d-1}}(\vecv_1).
\end{align}

Since $f_0$ has compact support contained in $X_0$ there exists
a constant $\delta>0$ such that $f_0(\vecv_0,\xi_1,\ldots,\vecv_n)=0$
unless $\varphi(\vecv_{j-1},\vecv_j)>B_\Theta+\delta$ for all $j=1,2,\ldots,n$.
Hence by induction each $f_m$ has compact support, and we have
$f_m(\vecv_0,\xi_1,\ldots,\vecv_{n-m})=0$ unless 
$\varphi(\vecv_{j-1},\vecv_j)>B_\Theta+\delta$ for all $j=1,2,\ldots,n-m$.
Using this fact together with Remark \ref{PALFBETSPECREM}
(rewriting \eqref{FMRECDEF} via \eqref{p0betdefrep})
one shows that \textit{each $f_m$ is continuous.}
Furthermore, by repeated use of \eqref{p-norm}
we see that $\sup |f_m|\leq \sup |f_0|$ for all $m=0,1,\ldots,n-1$.

Finally,
by a standard approximation argument we may assume without loss of generality
that $\lambda_0$ has a continuous density, i.e.\ that 
\begin{align} \label{LAMBDA0GDEF}
\lambda_0=g\cdot\vol_{\S_1^{d-1}}\quad\text{for some continuous function }
\: g:\S_1^{d-1}\to\R_{\geq 0}.
\end{align}
Indeed, note that for $\lambda_0=g\cdot\vol_{\S_1^{d-1}}$, $g\in \L^1$,
the right hand side of \eqref{pathwayEq} depends linearly on $g$
and is bounded in absolute value by $(\sup |f_0|)\cdot \|g\|_{\L^1}$;
the same is true for the left hand side of \eqref{pathwayEq}, 
for each fixed $\rho$.
We can thus mimic the first paragraph in the proof of Theorem~1.2 
in \cite[Sec.\ 9.2]{partI}.

\subsubsection{Choosing $\eta,\rho_0$ and $F_1,F_2,F_3$}
Let $\ve>0$ be given. On the next few pages we will describe how
to choose auxiliary positive numbers $\eta$ and $\rho_0$,
as well as families $F_1$, $F_2$, $F_3$ of measures and functions
to use in applications of Theorem \ref{exactpos2-1hit-unif}.
The goal is to set up things so that we will be able to prove
that the two sides in \eqref{pathwayEq} in Theorem \ref{pathwayThm}
differ by at most $O(\ve)$ for every $\rho\in (0,\rho_0)$;
see \eqref{pathwayproofclaim} below.

Recall the definition of $\omega(\eta)$, \eqref{OMEGAETADEF},
and note that $\lim_{\eta\to 0}\omega(\eta)=0$.
As in the discussion leading to \eqref{STEP1VEPRIMECHOICEIMPL} and
\eqref{STEP1TCHOICEIMPL} 
we see that we may fix 
$0<\eta<\min\bigl(\frac{\pi-B_\Theta}{100},\ve\bigr)$ 
so small that $2\omega(\eta)<\frac\pi 2$ and so that
the following two inequalities hold for 
all absolutely continuous probability measures $\lambda$ on $\S_1^{d-1}$,
all continuous functions $\vecbeta:\S_1^{d-1}\to\R^d$,
any $\vecalf'\in\{\bn,\vecalf\}$, 
and any two measurable functions
$\vecz_1,\vecz_2:\supp(\lambda)\to\{0\}\times\overline{\scrB_1^{d-1}}$
satisfying $\|\vecz_1(\vecv)-\vecz_2(\vecv)\|\leq \eta$ for all 
$\vecv\in\supp(\lambda)$:
\begin{align} \label{pwetachoice2}
& \int_0^\infty \int_{(\fU_{2\omega(\eta)})_\perp} \int_{\S_1^{d-1}} 
\Phi_{\vecalf'}\bigl(\xi,\vecw,
(\vecbeta(\vecv)K(\vecv))_\perp\bigr)\,d\lambda(\vecv)d\vecw d\xi<\ve;
\\ \label{VEPRIMECHOICEIMPL}
& \int_0^\infty \int_{\{0\}\times\scrB^{d-1}_1} \int_{\supp(\lambda)} 
\Bigl | \Phi_\bn\bigl(\xi,\vecw,\vecz_1(\vecv)\bigr)
-\Phi_\bn\bigl(\xi,\vecw,\vecz_2(\vecv)\bigr)\Bigr |
\,d\lambda(\vecv)\,d\vecw\,d\xi<\ve.
\end{align}

As in Section \ref{GOODSETSSECTION} we fix a partition
$\S_1^{d-1}=\bigsqcup_{j=1}^N D_j$ of the sphere
$\S_1^{d-1}$   %
into Borel subsets $D_1,\ldots,D_N$, each of positive volume 
and boundary of measure zero, and with diameter
$<\eta/C_{\eta}$ with respect to the metric $\varphi$.
Given $\vecs\in\RR^d\setminus\{\bn\}$ 
and any subset $M\subset\{1,\ldots,N\}$ with
$\lambda_\vecs(\cup_{j\in M} D_j)>0$, we let
$\lambda_{M,\vecs}$ be the restriction of the measure $\lambda_\vecs$
(cf.\ Sec.\ \ref{reflmapssec})
to $\cup_{j\in M} D_j$, rescaled to be a probability measure:
\begin{align}
\lambda_{M,\vecs}=\lambda_\vecs\bigl(\cup_{M} D_j\bigr)^{-1}
\cdot {\lambda_\vecs}_{|(\cup_{M} D_j)}.
\end{align}
Let $F_1$ be the following family of probability measures:
\begin{align} \label{F1DEF}
F_1:=\{\lambda_0\} \bigcup 
\bigl\{\lambda_{M,\vecs}\col \emptyset\neq M\subset\{1,\ldots,N\},\:
\vecs\in\S_1^{d-1},\:
(\cup_{M} D_j) \subset \scrV_\vecs^{10\eta}\bigr\}.
\end{align}
Note that $\lim_{h\to 0^+}\vol_{\S_1^{d-1}}(\partial_h D_j)=0$ for each $j$,
since $D_j$ has boundary of measure zero.
Also note that $\scrV_\vecs^{10\eta}$ has compact closure in $\scrV_\vecs$,
and recall \eqref{LAMBDA0DEF}.
From these facts together with \eqref{LAMBDA0GDEF}, 
it follows that \textit{$F_1$ is equismooth}.

Let us fix a continuous function 
$H:\S_1^{d-1}\times\R_{>0}\times\S_1^{d-1}\to [0,1]$ of the form
$H(\vecv_0,\xi,\vecv_1)=H_0(\varphi(\vecv_0,\vecv_1))$ such that
$H(\vecv_0,\xi,\vecv_1)=1$ whenever $\varphi(\vecv_0,\vecv_1)\leq B_\Theta$ or
$\varphi(\vecv_0,-\vecbeta_{\vecv_0}^-(\vecv_1))\geq\frac{\pi}2-\omega(\eta)$,
and $H(\vecv_0,\xi,\vecv_1)=0$ whenever 
$\varphi(\vecv_0,-\vecbeta_{\vecv_0}^-(\vecv_1))\leq\frac{\pi}2-2\omega(\eta)$.
This is possible since it follows from our assumptions on $\Theta_1$ 
in Sec.\ \ref{secScatt} that
$\varphi(\vecv_0,-\vecbeta_{\vecv_0}^-(\vecv_1))$ only depends on
$\varphi(\vecv_0,\vecv_1)$, as a strictly decreasing $\C^1$ function.
Now let $F_2$ be the following family of functions on
$\S_1^{d-1}\times\R_{>0}\times\S_1^{d-1}$,
\begin{align} \notag
F_2=\bigl\{H\bigr\} \cup \Bigl\{(\vecv,\xi,\vecv')\mapsto
f_m(\vecv_0,\xi_1,\ldots,\xi_{n-m-1},\vecv,\xi,\vecv')
\col \hspace{140pt} &
\\ \label{F2DEF}
0\leq m \leq n-1,\:
(\vecv_0,\xi_1,\ldots,\xi_{n-m-1})\in (\S_1^{d-1}\times\R_{>0})^{n-m-1}\Bigr\}.
\end{align}
Using the fact that each $f_m$ has compact support we see that
$F_2$ is equicontinuous and uniformly bounded.

Let $C$ be fixed as in \eqref{GOODSETSSECTIONCchoice},
and let, for any $\vecq'\in\R^d$:
\begin{align} \notag
F_3^{(\vecq')}=\Bigl\{\vecbeta:\S_1^{d-1}\to\R^d \:\:\col\:\:
\vecbeta\text{ is }\C^1, \:\:
\sup \|\vecbeta\|\leq 3C, \:\:
\sup_{\vech\in \T^1(\S_1^{d-1})} \|D_\vech \vecbeta\|\leq 
2C+\sfrac 4\eta+C_{\eta} , \hspace{10pt} &
\\ \label{F3DEF}
\text{if $\vecq'\in\scrL$:} \quad
(\vecbeta(\vecv)+\R_{>0}\vecv)\cap\scrB_1^d=\emptyset, 
\:\forall\vecv\in\S_1^{d-1}\Bigr\}. & %
\end{align}

Let $\rho_0(n,\eta,C,\eta)$ be as in Proposition \ref{REFLPROP}.
Now fix $0<\rho_0<\min\Bigl(\frac{\eta}{C^3C_{\eta}},
\rho_0(n,\eta,C,\eta)\Bigr)$
so small that for $\vecq'\in\{\bn,\vecq\}$ and 
all $\rho\in(0,2\rho_0)$, 
$\lambda\in F_1$, $f\in F_2$ and $\vecbeta\in F_3^{(\vecq')}$, we have
(here and below we write $\vecalf'=\bn$ if $\vecq'=\bn$; $\:$
$\vecalf'=\vecalf$ if $\vecq'=\vecq$)
\begin{align} \notag
\biggl |
\int_{\S_1^{d-1}} f\big(\vecv, \rho^{d-1} \tau_1(\vecq'+\rho\vecbeta(\vecv),\vecv;\rho), 
\vecv_1(\vecq'+\rho\vecbeta(\vecv),\vecv;\rho)\big) \, d\lambda(\vecv) 
\hspace{100pt} & 
\\ \label{exactpos2-1hit-unif-used}
-\int_{\S_1^{d-1}} \int_{\R_{>0}} \int_{\S_1^{d-1}} f\big(\vecv,\xi,\vecv_1\big) 
p_{\vecalf',{\vecbeta}}(\vecv,\xi,\vecv_1) \, d\lambda(\vecv)\, 
d\xi\,d\!\vol_{\S_1^{d-1}}(\vecv_1)\biggr | < %
\ve. &
\end{align}
This is possible by Theorem \ref{exactpos2-1hit-unif}.
Let us shrink $\rho_0$ further if necessary, so that 
(with $K_\eta$ as in Lemma \ref{COMPARABILITYLEM})
\begin{align} 
\label{RHOVOLBDRYASS}
& \vol_{\S_1^{d-1}}\bigl(\partial_{C\rho_0}(D_j)\bigr)<
K_\eta^{-1}\ve\vol_{\S_1^{d-1}}(D_j), \qquad 
\forall j\in\{1,\ldots,N\};
\\ \label{GRHOCONTASS}
& \forall \vecv,\vecv'\in\S_1^{d-1}: \qquad
\varphi(\vecv,\vecv')\leq 2C^2\rho_0
\Longrightarrow |g(\vecv)-g(\vecv')|<\ve;
\end{align}
and also, for all $m\in\{0,\ldots,n-1\}$ and all points
$(\vecv_0,\xi_1,\ldots,\vecv_{n-m})$ and
$(\vecv_0',\xi_1',\ldots,\vecv_{n-m}')$ in 
$\S_1^{d-1}\times(\R_{>0}\times\S_1^{d-1})^{n-m}$:
\begin{align} \notag
 \varphi(\vecv_j',\vecv_j)\leq C^2\rho_0 \quad (j=0,\ldots,n-m), \qquad
|\xi_j'-\xi_j|\leq 2C\rho_0^d \quad (j=1,\ldots,n-m) &
\\ \label{FCONTRHOASS}
 \Longrightarrow \Bigl |f_m\bigl(\vecv_0',\xi_1',\ldots,\vecv_{n-m}'\bigr)
-f_m\bigl(\vecv_0,\xi_1,\ldots,\vecv_{n-m}\bigr)\Bigr |
< & \ve.
\end{align}
Here %
\eqref{FCONTRHOASS} can be achieved since each $f_m$
is continuous with compact support.

Having thus fixed a choice of $\rho_0$, we now claim that for all 
$\rho\in(0,\rho_0)$ we have
\begin{align} \label{pathwayproofclaim}
& \biggl | \int_{\S_1^{d-1}} f_0\big(\vecv_0, \rho^{d-1} \tau_1(\vecq_{\rho,\vecbeta_0}(\vecv_0),\vecv_0;\rho), \ldots, \vecv_n(\vecq_{\rho,\vecbeta_0}(\vecv_0),\vecv_0;\rho) \big) d\lambda_0(\vecv_0) 
-\int_X f_0 \, d\nu \biggr | \ll \ve. \hspace{-10pt}
\end{align}
Here and in any $\ll$ bound in the remainder of the proof,
the implied constant depends only on $f_0$, $\lambda_0$, $d$, $n$, $C$
and $\Theta$ (the scattering map).
Since $\ve>0$ was arbitrary, the bound \eqref{pathwayproofclaim} 
will complete the proof of Theorem \ref{pathwayThm}.

\subsubsection{Bounding $\lambda_0(\S_1^{d-1}\setminus\scrU^{(n)})$}\label{BOUNDINGLAMBDA0DIFF}
Take an arbitrary $\rho\in(0,\rho_0)$, and keep $\rho$ fixed for the rest 
of this proof.
Note that \eqref{ETARHOCONDITIONS} holds.
We now define the subsets
$\S_1^{d-1}=\scrU^{(0)}\supset\scrU^{(1)}\supset\scrU^{(2)}\supset\ldots
\supset\scrU^{(n)}$ as in Section \ref{GOODSETSSECTION}.
We will prove that $\lambda_0(\S_1^{d-1}\setminus\scrU^{(n)})$ is small.

Let us first make explicit the conclusion from
\eqref{exactpos2-1hit-unif-used} in the case $f=H$.
In this case, by changing variables (via \eqref{exactpos2-1hit-tpdef},
\eqref{exactpos2-1hit-subst})
in the triple integral in \eqref{exactpos2-1hit-unif-used},
and using the definition of $H$,
we see that the triple integral is less than the expression in
\eqref{pwetachoice2}, and hence $<\ve$.
Note also that if $\vecv\in\S_1^{d-1}$ is such that
$\vecw_1=\vecw_1(\vecq'+\rho\vecbeta(\vecv),\vecv;\rho)$
satisfies $-\vecw_1K(\vecv)\in\fU_{\omega(\eta)}$, then
$\vecv_1=\vecv_1(\vecq'+\rho\vecbeta(\vecv),\vecv;\rho)
=\Theta_1(\vecv,\vecw_1)$ satisfies $H(\vecv,\xi,\vecv_1)=1$ for all $\xi>0$.
Hence \eqref{exactpos2-1hit-unif-used} implies
that for all $\vecq'\in\{\bn,\vecq\}$, $\rho\in(0,2\rho_0)$, 
$\lambda\in F_1$ and $\vecbeta\in F_3^{(\vecq')}$, we have
\begin{align} \label{LAMBDAEDGESMALL}
\lambda\bigl(\bigl\{\vecv\in\S_1^{d-1}\col 
-\vecw_1(\vecq'+\rho\vecbeta(\vecv),\vecv;\rho)K(\vecv)\in\fU_{\omega(\eta)}
\bigr\}\bigr)<2\ve.
\end{align}

We now apply Lemma \ref{UDIFFLEM0} to prove that $\lambda_0(\S_1^{d-1}\setminus
\scrU^{(1)})$ is small. 
The set $\{\vecv\in\S_1^{d-1}\col\tau_1=\infty\}$ has measure zero with respect
to $\vol_{\S_1^{d-1}}$ (see Sec.\ \ref{firstcoll}), and hence also with 
respect to $\lambda_0$.
Note $\trho<2\rho_0$ and $\vecbeta_0,\tbe_0
\in F_3^{(\vecq)}$, cf.\ \eqref{TBE0DEF} and \eqref{F3DEF}; hence we may apply
\eqref{LAMBDAEDGESMALL} with
$\vecq'=\vecq$, $\lambda=\lambda_0$ and $\vecbeta_0$, $\rho$ 
resp.\ $\tbe_0$, $\trho$. 
This implies that the set of $\vecv$'s %
which satisfy (ii) or (iii)
in Lemma \ref{UDIFFLEM0} has $\lambda_0$-measure $<4\ve$.
Thus
\begin{align} \label{LAMBDAUU1SMALL}
\lambda_0(\S_1^{d-1}\setminus\scrU^{(1)})<4\ve.
\end{align}

Next take $k\in\{1,\ldots,n-1\}$;
we will apply Lemma \ref{UDIFFLEM} to prove that $\lambda_0(\scrU^{(k)}
\setminus\scrU^{(k+1)})$ is small.
We call any two vectors $\vecv,\vecv'\in\scrU^{(k)}$
equivalent if and only if $\vec{\vecm}_\vecv^k=\vec{\vecm}_{\vecv'}^k$.
Let $\scrU'\subset\scrU^{(k)}$ be any fixed equivalence class for this
relation. %
Then the subset $\emptyset\neq M_\vecv^k\subset\{1,\ldots,N\}$ 
and the functions $\Phi_{\rho,\vec{\vecm}_\vecv^{k},\vecbeta_0}$,
and $\overline{\vecbeta}^+_{\rho,\vec{\vecm}_\vecv^{k},\vecbeta_0}$ 
are independent of $\vecv\in\scrU'$, and we will write simply
$M$, $\Phi$ and $\overline{\vecbeta}^+$ for these.
Note in particular $\scrU'=\Phi^{-1}(\cup_{i\in M} D_i)$.

We need to modify $\overline{\vecbeta}^+$ to get a function in 
$F_3^{(\bn)}$.
Let us fix $\C^1$ functions $c_1,c_2:\R_{>0}\to [0,1]$ with the properties
\begin{align} \label{CONDONC}
& c_1(\varphi)=0, \: \forall\varphi\leq B_\Theta+5\eta; \qquad
c_1(\varphi)=1, \: \forall\varphi\geq B_\Theta+6\eta; \qquad 
|c_1'(\varphi)|\leq 2\eta^{-1}, \: \forall\varphi>0; 
\\ \notag
& c_2(\varphi)=1, \: \forall\varphi\leq B_\Theta+6\eta; \qquad
c_2(\varphi)=0, \: \forall\varphi\geq B_\Theta+7\eta; \qquad 
|c_2'(\varphi)|\leq 2\eta^{-1}, \: \forall\varphi>0.
\end{align}
Now define $\ttbe=\ttbe_{\rho,\vec{\vecm}^k_\vecv,\vecbeta_0}\in
\C^1(\S_1^{d-1})$ by
\begin{align} \label{TTBEDEF}
\ttbe(\vecv)=\ttbe_{\rho,\vec{\vecm}^k_\vecv,\vecbeta_0}(\vecv)
=\begin{cases}
c_1(\varphi(\vecv,\vecs_k))\cdot\overline{\vecbeta}^+(\vecv)+
c_2(\varphi(\vecv,\vecs_k))\cdot\vecv
& \text{if } \: \vecv\in\scrV_{\vecs_k}^{\eta}
\\
\vecv & \text{otherwise}.
\end{cases}
\end{align}
We then have
\begin{align} \label{TTBINF3}
\ttbe_{\rho,\vec{\vecm}^k_\vecv,\vecbeta_0}(\vecv)=
\overline{\vecbeta}^+_{\rho,\vec{\vecm}^k_\vecv,\vecbeta_0}(\vecv),
\quad\forall\vecv\in\scrV_{\vecs_k}^{10\eta}.
\end{align}
Furthermore, for $\vecv\in\S_1^{d-1}$ with
$B_\Theta+4\eta<\varphi(\vecv,\vecs_k)<B_\Theta+8\eta$ we have, for any
$\vech\in T^1_\vecv(\S_1^{d-1})$,
\begin{equation} \label{DHTTBEBOUND}
	\bigl\|D_\vech\ttbe(\vecv)\bigr\|=
\bigl\|(D_\vech c_1(\varphi(\vecv,\vecs_k)))\cdot\overline{\vecbeta}^+(\vecv)
+c_1\cdot D_\vech\overline{\vecbeta}^+(\vecv)
+(D_\vech c_2(\varphi(\vecv,\vecs_k)))\cdot\vecv+c_2\cdot\vech\bigr\|
\leq \frac 4{\eta}+C_{\eta}+1
\end{equation}
by Proposition \ref{REFLPROP} and \eqref{CONDONC}.
From this one easily deduces
$\ttbe_{\rho,\vec{\vecm}^k_\vecv,\vecbeta_0}\in F_3^{(\bn)}.$

Using Lemma \ref{UDIFFLEM},
$\cup_{i\in M} D_i\subset\scrV_{\vecs_k}^{10\eta}$ and \eqref{TTBINF3}
we have
\begin{align} \notag
\Phi\bigl(\scrU'\setminus\scrU^{(k+1)}\bigr)
\quad \subset \quad \partial_{ C\rho}\bigl(\cup_{i\in M} D_i\bigr)\:\:
& \cup \:\:\bigl\{\vecv'\in\cup_{i\in M} D_i\col
\tau_1(\rho\ttbe(\vecv'),\vecv',\rho\bigr)=\infty\bigr\}
\\ \label{UDIFFCAPT}
& \cup \:\:\bigl\{\vecv'\in\cup_{i\in M} D_i\col
-\vecw_1\bigl(\rho\ttbe(\vecv'),\vecv',\rho\bigr)K(\vecv')
\in \fU_{\omega(\eta)}\bigr\}
\\ \notag
& \cup \:\: \bigl\{\vecv'\in\cup_{i\in M} D_i\col
-\vecw_1\bigl(\trho\tbe(\vecv'),\vecv',\trho\bigr)K(\vecv')
\in \fU_{\omega(\eta)}\bigr\},
\end{align}
where $\trho=(1+\eta)\rho$ and 
$\tbe(\vecv)=(1+\eta)^{-1}(\ttbe(\vecv)+3\vecv)$.
Here note that, by Lemma \ref{COMPARABILITYLEM} and \eqref{RHOVOLBDRYASS},
\begin{align}
\lambda_{M,\vecs_k}\Bigl(\partial_{ C\rho}
\bigl(\cup_{i\in M} D_i\bigr)\Bigr)\leq
K_{\eta}\vol_{\S_1^{d-1}}\Bigl(\cup_{i\in M} D_i\Bigr)^{-1}
\vol_{\S_1^{d-1}}\Bigl(\partial_{ C\rho}
\bigl(\cup_{i\in M} D_i\bigr)\Bigr) &
\\ \notag
\leq K_{\eta}\vol_{\S_1^{d-1}}\Bigl(\cup_{i\in M} D_i\Bigr)^{-1}
\sum_{i\in M} \vol_{\S_1^{d-1}}\Bigl(
\partial_{ C\rho}\bigl(D_i\bigr)\Bigr)
\leq \ve & .
\end{align}
The second set in the right hand side of \eqref{UDIFFCAPT}
has $\lambda_{M,\vecs_k}$-measure zero.
Next recall that \eqref{DHTTBEBOUND} led to $\ttbe\in F_3^{(\bn)}$;
by a similar argument we also verify $\tbe\in F_3^{(\bn)}$.
Hence from \eqref{LAMBDAEDGESMALL} applied with $\vecq'=\bn$, 
$\lambda=\lambda_{M,\vecs_k}$ and 
$\ttbe$, $\rho$, resp.\ $\tbe$, $\trho$, %
we see that each of the last two sets in \eqref{UDIFFCAPT}
have $\lambda_{M,\vecs_k}$-measure $<2\ve$. Hence
\begin{align}
\lambda_{M,\vecs_k}\Bigl(\Phi\bigl(\scrU'\setminus\scrU^{(k+1)}\bigr)\Bigr)
<5\ve.
\end{align}
Set $\mu=\vol_{\S_1^{d-1}}(\scrU')^{-1}\cdot (\vol_{\S_1^{d-1}})_{|\scrU'}$.
By Proposition \ref{REFLPROP},
and our assumption $\rho<\rho_0(n,\eta,C,\eta)$, $\Phi_{|\scrU'}$
is a diffeomorphism from $\scrU'$ onto $\cup_{M} D_i$,
which transforms the measure $\mu$ into
$c\tg\cdot\lambda_{M,\vecs_{k}}$ where $c>0$ is a constant and
$\tg$ is a continuous function from $\cup_{M} D_i$ to $[1-\eta,1+\eta]$.
Using $c\int_{\cup_{M} D_i} \tg(\vecv)\, d\lambda_{M,\vecs_{k}}(\vecv)=1$
we find $(1+\eta)^{-1}\leq c \leq (1-\eta)^{-1}$, and thus
$c\tg(\vecv)\in [1-3\eta,1+3\eta]$ for all $\vecv\in\cup_{M} D_i$.
Hence 
\begin{align} \label{UPRIMDIFFINEQ}
\mu\bigl(\scrU'\setminus\scrU^{(k+1)}\bigr)
<(1+3\eta)5\ve<10\ve.
\end{align}
In other words $\vol_{\S_1^{d-1}}\bigl(\scrU'\setminus\scrU^{(k+1)}\bigr)
<10\ve\vol_{\S_1^{d-1}}\bigl(\scrU'\bigr)$.
Adding this over all equivalence classes $\scrU'\subset\scrU^{(k)}$
we obtain $\vol_{\S_1^{d-1}}\bigl(\scrU^{(k)}\setminus\scrU^{(k+1)}\bigr)
<10\ve \vol_{\S_1^{d-1}}\bigl(\scrU^{(k)}\bigr)\ll\ve$.
Hence since $\lambda_0=g\cdot \vol_{\S_1^{d-1}}$ with $g$ bounded
(cf.\ \eqref{LAMBDA0GDEF}), we have
\begin{align} \label{LAMBDAUKUKPSMALL}
\lambda_0\bigl(\scrU^{(k)}\setminus\scrU^{(k+1)}\bigr)\ll\ve.
\end{align}
Adding this over $k=1,\ldots,n-1$ and combining
with \eqref{LAMBDAUU1SMALL} we finally obtain
\begin{align} \label{UKDIFFSMALL}
\lambda_0\bigl(\S_1^{d-1}\setminus\scrU^{(n)}\bigr)\ll\ve.
\end{align}

\subsubsection{Conclusion: Proof of (\ref{pathwayproofclaim})}
We intend to show that for each $m\in\{0,\ldots,n-2\}$ we have
\begin{align} \label{pathwayfmkey}
& \biggl | \int_{\S_1^{d-1}} f_m\big(\vecv_0, \rho^{d-1} \tau_1(\vecq_{\rho,{\vecbeta_0}}(\vecv_0),\vecv_0;\rho), \ldots, \vecv_{n-m}(\vecq_{\rho,{\vecbeta_0}}(\vecv_0),\vecv_0;\rho) \big) \, d\lambda_0(\vecv_0) 
\\ \notag
& -
\int_{\S_1^{d-1}} f_{m+1}\big(\vecv_0, \rho^{d-1} \tau_1(\vecq_{\rho,{\vecbeta_0}}(\vecv_0),\vecv_0;\rho), \ldots, \vecv_{n-m-1}(\vecq_{\rho,{\vecbeta_0}}(\vecv_0),\vecv_0;\rho) \big) \, d\lambda_0(\vecv_0) \biggr | \ll\ve.
\end{align}
This will imply \eqref{pathwayproofclaim} (and
thus complete our proof of Theorem \ref{pathwayThm}), for note that
\eqref{exactpos2-1hit-unif-used} applied with
$\vecq'=\vecq$, $\lambda=\lambda_0$, $f=f_{n-1}$ and $\vecbeta=\vecbeta_0$
gives, in view of \eqref{POINTOFRECDEF}:
\begin{align} \label{pathwayfmkey2}
& \biggl | \int_{\S_1^{d-1}} f_{n-1}\big(\vecv_0, \rho^{d-1} \tau_1(\vecq_{\rho,\vecbeta_0}(\vecv_0),\vecv_0;\rho), \vecv_{1}(\vecq_{\rho,\vecbeta_0}(\vecv_0),\vecv_0;\rho) \big) \, d\lambda_0(\vecv_0)
-\int_X f_0\, d\nu \biggr | <\ve.
\end{align}
Combining \eqref{pathwayfmkey} for $m=0,1,\ldots,n-2$ and
\eqref{pathwayfmkey2} we indeed obtain \eqref{pathwayproofclaim}, as desired.

Now to prove \eqref{pathwayfmkey} we fix $m\in\{0,\ldots,n-2\}$
and set $k=n-m-1\in\{1,\ldots,n-1\}$.
In the following %
we will use the shorthand notation
$\vecv_\ell=\vecv_\ell(\vecq_{\rho,\vecbeta_0}(\vecv_0),\vecv_0;\rho)$,
$\tau_\ell=\tau_\ell(\vecq_{\rho,\vecbeta_0}(\vecv_0),\vecv_0;\rho)$,
$\vecm_\ell=\vecm_\ell(\vecq_{\rho,\vecbeta_0}(\vecv_0),\vecv_0;\rho)$,
and
$\vecs_\ell=\vecm_\ell-\vecm_{\ell-1}$, and $\uvecs_\ell=\|\vecs_\ell\|^{-1}\vecs_\ell$.
As before   %
we call two vectors $\vecv,\vecv'\in\scrU^{(k)}$ %
equivalent if and only if
$\vec{\vecm}_\vecv^{k}=\vec{\vecm}_{\vecv'}^{k}$.
Let $\scrU'\subset\scrU^{(k)}$ be any fixed equivalence class for this
relation; thus by construction $\vecm_0,\vecm_1,\ldots,\vecm_k$ are 
\textit{constant} as $\vecv_0$ varies through $\scrU'$.
We write $M\subset\{1,\ldots,N\}$ and
$\Phi$, $\overline{\vecbeta}^+$, $\ttbe$ for the index set and functions
corresponding to our fixed class $\scrU'$, 
as in the discussion just above \eqref{UDIFFCAPT}.
For each $\vecv_0\in\scrU'$ we have
$\bigl|\tau_\ell-\|\vecs_\ell\|\bigr |\leq 2\rho$
and $\varphi(\vecv_{\ell-1},\vecs_{\ell})<\frac 12 C\rho$ 
for all $\ell=2,\ldots,k$ (cf.\ \eqref{ARCSININEQ}), and similarly using
$\sup \|\vecbeta_0\|\leq C$ we get
$\bigl|\tau_1-\|\vecs_1\|\bigr |\leq (C+1)\rho$
and $\varphi(\vecv_{0},\vecs_{1})<\frac 12 C(C+1)\rho$.
Hence by \eqref{FCONTRHOASS} we have the following approximation result for the
$\scrU'$-contribution to the left integral in \eqref{pathwayfmkey}:
\begin{align} \notag
\biggl | \int_{\scrU'} f_m\big(\vecv_0,\rho^{d-1}\tau_1,\vecv_1,\ldots,\vecv_k,
\rho^{d-1}\tau_{k+1},\vecv_{k+1}\big) \, d\lambda_0(\vecv_0) \hspace{150pt} &
\\ \label{pathwayintegral2}
-\int_{\scrU'} f_m
\Big(\uvecs_1,\rho^{d-1}\|\vecs_{1}\|,\ldots,\uvecs_k,
\rho^{d-1}\|\vecs_{k}\|,
\vecv_{k},\rho^{d-1}\tau_{k+1},\vecv_{k+1}\Big)\, d\lambda_0(\vecv_0)
\biggr | <\ve\lambda_0(\scrU') &. 
\end{align}
Recall $\lambda_0=g\cdot\vol_{\S_1^{d-1}}$, and 
set $\mu=\vol_{\S_1^{d-1}}(\scrU')^{-1}\cdot (\vol_{\S_1^{d-1}})_{|\scrU'}$.
From above we have $\varphi(\vecv_0,\vecs_1)<C^2\rho$
for all $\vecv_0\in\scrU'$; by our assumption \eqref{GRHOCONTASS} 
this implies
\begin{align}
\Bigl | g(\vecv_0)-\lambda_0(\scrU')\vol_{\S_1^{d-1}}(\scrU')^{-1}\Bigr |
=\Bigl | g(\vecv_0)-\int_{\scrU'} g(\vecv)\, d\mu(\vecv)\Bigr |<\ve,
\qquad \forall \vecv_0\in\scrU'.
\end{align}
Hence replacing ``$d\lambda_0(\vecv_0)$'' with
``$\lambda_0(\scrU')\, d\mu(\vecv_0)$'' in the last integral in
\eqref{pathwayintegral2}
causes an error $\leq \ve(\sup |f_m|)\vol_{\S_1^{d-1}}(\scrU')$.
Furthermore, by Proposition \ref{REFLPROP}, $\vecv_0\mapsto
\vecv_k=\vecv_{k}(\vecq_{\rho,\vecbeta_0}(\vecv_0),\vecv_0;\rho)=\Phi(\vecv_0)$
is a diffeomorphism from $\scrU'$ onto $\cup_{M} D_i$ which,
as we saw in the discussion preceding \eqref{UPRIMDIFFINEQ},
transforms the measure $\mu$ into $c\tg\cdot\lambda_{M,\vecs_{k}}$ 
where $c\tg(\vecv)\in [1-3\eta,1+3\eta]$ for all $\vecv\in\cup_{M} D_i$.
We also have
$\tau_{k+1}=\tau_1(\vecm_k+\rho\ttbe(\vecv_k),\vecv_k;\rho)
=\tau_1(\rho\ttbe(\vecv_k),\vecv_k;\rho)$
for all $\vecv_0\in\scrU'$, and similarly $\vecv_{k+1}
=\vecv_1(\rho\ttbe(\vecv_k),\vecv_k;\rho)$.
Hence, using also $\eta<\ve$,
\begin{align} \notag
\biggl | \int_{\scrU'} f_m
\Big(\uvecs_1,\rho^{d-1}\|\vecs_{1}\|,\ldots,\uvecs_k,
\rho^{d-1}\|\vecs_{k}\|,
\vecv_{k},\rho^{d-1}\tau_{k+1},\vecv_{k+1}\Big)\, d\lambda_0(\vecv_0) 
\hspace{100pt} &
\\ \notag
- \lambda_0(\scrU') \int_{\cup_{M} D_i} 
f_m\Big(\uvecs_1,\ldots,\rho^{d-1}\|\vecs_{k}\|,\vecv,
\rho^{d-1}\tau_1(\rho\ttbe(\vecv),\vecv;\rho),
\vecv_1(\rho\ttbe(\vecv),\vecv;\rho)\Bigr)\,
d\lambda_{M,\vecs_{k}}(\vecv) \biggr |  &
\\ \label{pathwayintegral4}
\leq \ve(\sup |f_m|)\vol_{\S_1^{d-1}}(\scrU')+ 3\eta(\sup |f_m|)
\lambda_0(\scrU')
\ll \ve\bigl(\vol_{\S_1^{d-1}}(\scrU')+\lambda_0(\scrU')\bigr). &
\end{align}
Now $\lambda_{M,\vecs_{k}}\in F_1$, 
$f_m(\uvecs_1,\ldots,\rho^{d-1}\|\vecs_{k}\|,
\cdot,\cdot,\cdot)\in F_2$ 
and $\ttbe\in F_3^{(\bn)}$, so that by \eqref{exactpos2-1hit-unif-used},
\begin{align} \notag
\biggl | \lambda_0(\scrU') \int_{\cup_{M} D_i} 
f_m\Big(\uvecs_1, %
\ldots, %
\rho^{d-1}\|\vecs_{k}\|,\vecv, 
\rho^{d-1}\tau_1(\rho\ttbe(\vecv),\vecv;\rho),
\vecv_1(\rho\ttbe(\vecv),\vecv;\rho)\Bigr)\,
d\lambda_{M,\vecs_{k}}(\vecv)  \hspace{20pt} &
\\ \notag
-\lambda_0(\scrU') \int_{\cup_{M} D_i} \int_{\R_{>0}} \int_{\S_1^{d-1}}
f_m\Big(\uvecs_1, \rho^{d-1}\|\vecs_1\|,\uvecs_2,\ldots,
\uvecs_{k},\rho^{d-1}\|\vecs_{k}\|,\vecv,\xi,\vecv'\Big) \hspace{20pt} &
\\ \label{pathwayintegral5}
\times p_{\bn,\ttbe}(\vecv,\xi,\vecv')
\,d\!\vol_{\S_1^{d-1}}(\vecv') \, d\xi\, d\lambda_{M,\vecs_{k}}(\vecv) 
\biggr | <\ve\lambda_0(\scrU') & .
\end{align}
(We have $\lambda_{M,\vecs_{k}}(A)=0$ for any 
$A\subset\S_1^{d-1}$ disjoint from $\cup_{M} D_i$, so that we can indeed
take the domain of integration for $\vecv$ in the 
triple integral to be $\cup_{M} D_i$ instead of $\S_1^{d-1}$.)
Next we wish to replace $\ttbe$ with 
$\vecbeta^+_{\uvecs_{k}}$ in the triple integral in 
\eqref{pathwayintegral5}, so that it can be rewritten in
terms of $f_{m+1}$ using \eqref{FMRECDEF}.
Using \eqref{exactpos2-1hit-tpdef} and \eqref{p0betdefrep},
we see that the error caused by this replacement is bounded above by
$\lambda_0(\scrU')\sup |f_m|$ times the integral in 
\eqref{VEPRIMECHOICEIMPL},
with $\lambda=\lambda_{M,\vecs_{k}}$,
$\vecz_1(\vecv)=(\ttbe(\vecv)K(\vecv))_\perp$ and
$\vecz_2(\vecv)=(\vecbeta^+_{\uvecs_k}(\vecv)K(\vecv))_\perp$.
Note that for all $\vecv\in\cup_{M} D_i\subset\scrV_{\vecs_{k}}^{10\eta}$
we have, by \eqref{TTBINF3} and Proposition~\ref{REFLPROP} 
and our assumption $\rho<\rho_0(n,\eta,C,\eta)$:
\begin{align}
\|\vecz_1(\vecv)-\vecz_2(\vecv)\|\leq
\|\ttbe(\vecv)-\vecbeta^+_{\uvecs_k}(\vecv)\|
=\|\overline{\vecbeta}^+(\vecv)-\vecbeta^+_{\uvecs_k}(\vecv)\|<\eta.
\end{align}
Hence the inequality in \eqref{VEPRIMECHOICEIMPL} is valid
for our choices of $\lambda,\vecz_1,\vecz_2$,
and we obtain:   %
\begin{align} \notag
\biggl | \lambda_0(\scrU') \int_{\cup_{M} D_i} \int_{\R_{>0}} \int_{\S_1^{d-1}}
f_m\Big(\uvecs_1, \rho^{d-1}\|\vecs_1\|,\uvecs_2,\ldots,
\uvecs_{k},\rho^{d-1}\|\vecs_{k}\|,\vecv,\xi,\vecv'\Big) 
\hspace{70pt} &
\\ \label{pathwayintegral6}
\times p_{\bn,\ttbe}(\vecv,\xi,\vecv')
\,d\!\vol_{\S_1^{d-1}}(\vecv') \, d\xi\, d\lambda_{M,\vecs_{k}}(\vecv) 
\hspace{30pt} &
\\ \notag
-\lambda_0(\scrU') \int_{\cup_{M} D_i}
f_{m+1}\Big(\uvecs_1, \rho^{d-1}\|\vecs_1\|,\uvecs_2,\ldots,
\uvecs_{k},\rho^{d-1}\|\vecs_{k}\|,\vecv\Big)
\, d\lambda_{M,\vecs_{k}}(\vecv) \biggr | \ll \ve\lambda_0(\scrU') & .
\end{align}
Next, by imitating the argument which led to \eqref{pathwayintegral4}, 
and then the argument which led to \eqref{pathwayintegral2},
we obtain
\begin{align} \notag
\biggl | \lambda_0(\scrU') \int_{\cup_{M} D_i}
f_{m+1}\Big(\uvecs_1, \rho^{d-1}\|\vecs_1\|,\uvecs_2,\ldots,
\uvecs_{k},\rho^{d-1}\|\vecs_{k}\|,\vecv\Big)
\, d\lambda_{M,\vecs_{k}}(\vecv) \hspace{35pt} &
\\ \notag
- \int_{\scrU'}
f_{m+1}\Big(\vecv_0, \rho^{d-1}\tau_1(\vecq_{\rho,\vecbeta_0}(\vecv_0),\vecv_0;\rho),\ldots,\vecv_{k}(\vecq_{\rho,\vecbeta_0}(\vecv_0),\vecv_0;\rho)\Big)
\, d\lambda_0(\vecv_0) \biggr |
\\ \label{pathwayintegral7}
\ll \ve\bigl(\vol_{\S_1^{d-1}}(\scrU')+\lambda_0(\scrU')\bigr). &
\end{align}
Combining \eqref{pathwayintegral2}, \eqref{pathwayintegral4}, 
\eqref{pathwayintegral5}, \eqref{pathwayintegral6}
and \eqref{pathwayintegral7}, and adding over all the 
equivalence classes $\scrU'\subset\scrU^{(k)}$, using %
$\vol_{\S_1^{d-1}}(\scrU^{(k)})\leq \vol(\S_1^{d-1})\ll 1$ and
$\lambda_0(\scrU^{(k)})\leq 1$, we get
\begin{align} \notag
\biggl | \int_{\scrU^{(k)}} f_m\big(\vecv_0, \rho^{d-1} \tau_1(\vecq_{\rho,\vecbeta_0}(\vecv_0),\vecv_0;\rho), \ldots, \vecv_{k+1}(\vecq_{\rho,\vecbeta_0}(\vecv_0),\vecv_0;\rho) \big) \, d\lambda_0(\vecv_0) \hspace{50pt} &
\\ \label{pathwayintegral9}
-\int_{\scrU^{(k)}} f_{m+1}\big(\vecv_0, \rho^{d-1} \tau_1(\vecq_{\rho,\vecbeta_0}(\vecv_0),\vecv_0;\rho), \ldots, \vecv_{k}(\vecq_{\rho,\vecbeta_0}(\vecv_0),\vecv_0;\rho) \big) d\lambda_0(\vecv_0) \biggr | 
\ll\ve. &
\end{align}
This implies \eqref{pathwayfmkey}, since 
$\lambda_0(\S_1^{d-1}\setminus\scrU^{(k)})\ll\ve$
(from \eqref{UKDIFFSMALL}).

Since \eqref{pathwayproofclaim} follows from \eqref{pathwayfmkey} and
\eqref{pathwayfmkey2}, the proof of Theorem \ref{pathwayThm} is now complete.
\hfill$\square$

\subsection{Proof of Theorem \ref{secThmMicro}}\label{ThmMicroproofsec}
The generalization of Theorem \ref{secThmMicro} to the case of a general
scattering map and a more general initial condition
$(\vecq_0+\rho\vecbeta(\vecv_0),\vecv_0)$ is as follows.
We set
\begin{align} \label{SCRBNDEF}
& \scrB_n:=\bigl\{(\vecS_1,\ldots,\vecS_n)\in(\RR^{d}\setminus\{\vecnull\})^n\col 
\varphi(\vecS_j,\vecS_{j+1})>B_\Theta \: (j=1,\ldots,n-1)\bigr\}.
\end{align}

\begin{thm}\label{secThmMicroGS}
Fix a lattice $\scrL=\ZZ^d M_0$ and a point $\vecq_0\in\RR^d$,
set $\vecalf=-\vecq_0 M_0^{-1}$,
and let $\vecbeta:\S_1^{d-1}\to\R^d$
be a $\C^1$ function.
If $\vecq_0\in\scrL$ %
we assume that
$(\vecbeta(\vecv)+\R_{>0}\vecv)\cap\scrB_1^d=\emptyset$ for all 
$\vecv\in\S_1^{d-1}$.
Then for each $n\in\ZZ_{>0}$ there exists a (Borel measurable) function 
$P_{\vecalf,\vecbeta}^{(n)}:\scrB_n\to\R_{\geq 0}$ such that, 
for any Borel probability measure $\lambda$ on $\S_1^{d-1}$ which is 
absolutely continuous with respect to $\vol_{\S_1^{d-1}}$,
and for any set $\scrA\subset\RR^{nd}$ with boundary of Lebesgue measure zero,
\begin{align} \notag
\lim_{\rho\to 0}\lambda\big( \big\{ \vecv_0\in \S_1^{d-1} : 
(\vecs_1(\vecq_0+\rho\vecbeta(\vecv_0),\vecv_0;\rho),\ldots,
\vecs_n(\vecq_0+\rho\vecbeta(\vecv_0),\vecv_0;\rho)) \in 
\rho^{-(d-1)}\scrA \big\} \big) \hspace{15pt} &
\\ \label{secThm-eqGS}
= \int_{\scrB_n\cap\scrA} P_{\vecalf,\vecbeta}^{(n)}(\vecS_1,\ldots,\vecS_n)\,
\lambda'(\uvecS_1)\,d\!\vol_{\RR^d}(\vecS_1)\cdots d\!\vol_{\RR^d}(\vecS_n) & ,
\end{align}
where $\lambda'\in \L^1(\S_1^{d-1})$ is the Radon-Nikodym derivative of
$\lambda$ with respect to $\vol_{\S_1^{d-1}}$.
Furthermore, there is a function $\Psi:\scrB_3\to\R_{\geq 0}$ 
which only depends on the scattering map, such that
\begin{equation} \label{jointlimdensGS}
	P_{\vecalf,\vecbeta}^{(n)}(\vecS_1,\ldots,\vecS_n) = P_{\vecalf,\vecbeta}^{(2)}(\vecS_1,\vecS_2)
	\prod_{j=3}^n \Psi(\vecS_{j-2},\vecS_{j-1},\vecS_j) 
\end{equation}
for all $n\geq 3$ and all $(\vecS_1,\ldots,\vecS_n)\in\scrB_n$.
\end{thm}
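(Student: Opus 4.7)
The plan is to deduce Theorem \ref{secThmMicroGS} from Theorem \ref{pathwayThm} by applying the latter to a test function that depends only on the macroscopic path segments $\vecS_j=\xi_j\vecv_{j-1}$, and then performing the change of variables $(\vecv_{j-1},\xi_j)\leftrightarrow\vecS_j$. Concretely, given a bounded continuous $F:\R^{nd}\to\R$, set
\begin{equation*}
f_0(\vecv_0,\xi_1,\vecv_1,\ldots,\xi_n,\vecv_n):=F(\xi_1\vecv_0,\xi_2\vecv_1,\ldots,\xi_n\vecv_{n-1}).
\end{equation*}
This $f_0$ is bounded continuous and is independent of $\vecv_n$, so combining Theorem \ref{pathwayThm} with Remark \ref{PNORMREMARK} (integrating out $\vecv_n$, which produces a factor $1$ since $\vecv_{n-1}\in\scrV_{\vecv_{n-2}}$ on the support of the integrand) yields
\begin{multline*}
\lim_{\rho\to 0}\int_{\S_1^{d-1}}F\bigl(\rho^{d-1}\vecs_1,\ldots,\rho^{d-1}\vecs_n\bigr)\,d\lambda(\vecv_0) = \int F(\xi_1\vecv_0,\ldots,\xi_n\vecv_{n-1})\,p_{\vecalf,\vecbeta}(\vecv_0,\xi_1,\vecv_1)\\
\times\prod_{j=1}^{n-2}p_{\bn,\vecbeta^+_{\vecv_{j-1}}}(\vecv_j,\xi_{j+1},\vecv_{j+1})\,d\lambda(\vecv_0)\,d\xi_1\,d\!\vol_{\S_1^{d-1}}(\vecv_1)\cdots d\xi_n\,d\!\vol_{\S_1^{d-1}}(\vecv_{n-1}).
\end{multline*}

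Next, for each $j=1,\ldots,n$ I change variables $(\vecv_{j-1},\xi_j)\mapsto\vecS_j=\xi_j\vecv_{j-1}$, a diffeomorphism $\S_1^{d-1}\times\R_{>0}\dto\R^d\setminus\{\bn\}$ with $d\xi_j\,d\!\vol_{\S_1^{d-1}}(\vecv_{j-1})=\|\vecS_j\|^{-(d-1)}\,d\!\vol_{\R^d}(\vecS_j)$, and substitute $\vecv_{j-1}=\uvecS_j$, $\xi_j=\|\vecS_j\|$ (using $d\lambda=\lambda'\,d\!\vol_{\S_1^{d-1}}$ to pull out $\lambda'(\uvecS_1)$). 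The right-hand side then becomes the integral of $F(\vecS_1,\ldots,\vecS_n)P^{(n)}_{\vecalf,\vecbeta}(\vecS_1,\ldots,\vecS_n)\lambda'(\uvecS_1)$ against Lebesgue measure on $\R^{nd}$, where
\begin{equation*}
P^{(n)}_{\vecalf,\vecbeta}(\vecS_1,\ldots,\vecS_n):=\Bigl(\prod_{j=1}^n\|\vecS_j\|^{-(d-1)}\Bigr)\,p_{\vecalf,\vecbeta}(\uvecS_1,\|\vecS_1\|,\uvecS_2)\prod_{j=1}^{n-2}p_{\bn,\vecbeta^+_{\uvecS_j}}(\uvecS_{j+1},\|\vecS_{j+1}\|,\uvecS_{j+2}).
\end{equation*}
The support conditions in \eqref{exactpos2-1hit-tpdef} and \eqref{p0betdefrep} force $P^{(n)}_{\vecalf,\vecbeta}$ to vanish outside $\scrB_n$ (cf.\ \eqref{SCRBNDEF}). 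The factorization \eqref{jointlimdensGS} is then immediate by grouping the $n$ Jacobian factors with the $n-1$ density factors: set $P^{(2)}_{\vecalf,\vecbeta}(\vecS_1,\vecS_2)=\|\vecS_1\|^{-(d-1)}\|\vecS_2\|^{-(d-1)}p_{\vecalf,\vecbeta}(\uvecS_1,\|\vecS_1\|,\uvecS_2)$ and
\begin{equation*}
\Psi(\vecS_{j-2},\vecS_{j-1},\vecS_j):=\|\vecS_j\|^{-(d-1)}\,p_{\bn,\vecbeta^+_{\uvecS_{j-2}}}(\uvecS_{j-1},\|\vecS_{j-1}\|,\uvecS_j).
\end{equation*}
Since $\Phi_\bn$ (which defines $p_{\bn,\cdot}$ via \eqref{p0betdefrep}) is constructed from the universal space $X_1=\SLSL$ and its measure $\nu_\vecy$, while $\vecbeta^\pm$ depends only on the scattering map, the resulting $\Psi$ is automatically independent of $\scrL$, $\vecalf$ and $\vecbeta$.

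Finally, to pass from bounded continuous $F$ to the indicator $\chi_\scrA$ of a set $\scrA\subset\R^{nd}$ with $\partial\scrA$ of Lebesgue measure zero, I use a standard sandwich argument: the measure $\mu:=P^{(n)}_{\vecalf,\vecbeta}\lambda'(\uvecS_1)\,d\!\vol_{\R^d}(\vecS_1)\cdots d\!\vol_{\R^d}(\vecS_n)$ is a probability measure on $\R^{nd}$ of total mass $1$ (by iterated application of \eqref{PALFINTEQ1} and \eqref{p-norm}, combined with the change of variables) that is absolutely continuous with respect to Lebesgue measure, so one can squeeze $\chi_\scrA$ between bounded continuous $F_\pm$ with $F_-\leq\chi_\scrA\leq F_+$ and $\int(F_+-F_-)\,d\mu<\ve$ for arbitrarily small $\ve>0$; applying the continuous case to $F_\pm$ and letting $\ve\to 0$ yields \eqref{secThm-eqGS}. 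Since all the serious analytic content is already packaged into Theorem \ref{pathwayThm}, no genuine obstacle remains: the present derivation is a careful change of variables together with a routine approximation.
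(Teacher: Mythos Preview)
There is a genuine error in your marginal-integration step. You invoke Remark \ref{PNORMREMARK} to claim that integrating out $\vecv_n$ alone gives $1$, but that remark normalizes the integral over \emph{both} $\xi$ and $\vecv_2$; integrating only over $\vecv_n$ with $\xi_n$ fixed yields
\[
\int_{\S_1^{d-1}} p_{\bn,\vecbeta^+_{\vecv_{n-2}}}(\vecv_{n-1},\xi_n,\vecv_n)\,d\!\vol_{\S_1^{d-1}}(\vecv_n)
=:I(\vecS_{n-1},\vecS_n),
\]
which genuinely depends on $\|\vecS_n\|$. Your formula for $P^{(n)}_{\vecalf,\vecbeta}$ therefore omits this factor, and in fact your density has no $\|\vecS_n\|$-dependence apart from the Jacobian $\|\vecS_n\|^{-(d-1)}$; it is not even integrable in $\|\vecS_n\|$, so your later claim that $\mu$ has total mass $1$ is false and the sandwich argument breaks down.

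The paper keeps this $I$-factor, obtaining
\[
P^{(n)}_{\vecalf,\vecbeta}(\vecS_1,\ldots,\vecS_n)
=\Bigl(\prod_{j=1}^n\|\vecS_j\|^{1-d}\Bigr)
p_{\vecalf,\vecbeta}(\uvecS_1,\|\vecS_1\|,\uvecS_2)
\Bigl(\prod_{j=1}^{n-2} p_{\bn,\vecbeta^+_{\uvecS_j}}(\uvecS_{j+1},\|\vecS_{j+1}\|,\uvecS_{j+2})\Bigr)
I(\vecS_{n-1},\vecS_n).
\]
This forces $\Psi$ to carry a telescoping correction,
\[
\Psi(\vecS_1,\vecS_2,\vecS_3)
=\|\vecS_3\|^{1-d}\,p_{\bn,\vecbeta^+_{\uvecS_1}}(\uvecS_2,\|\vecS_2\|,\uvecS_3)\,
\frac{I(\vecS_2,\vecS_3)}{I(\vecS_1,\vecS_2)},
\]
rather than your simpler expression. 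One must then also treat the degenerate case $I(\vecS_1,\vecS_2)=0$, which requires a continuity argument (and, when $d=2$, a null-set modification of $P^{(n)}_{\vecalf,\vecbeta}$) to verify \eqref{jointlimdensGS}. Apart from this missing factor and its consequences, your overall strategy---reduce to continuous test functions, apply Theorem \ref{pathwayThm}, change variables $(\vecv_{j-1},\xi_j)\mapsto\vecS_j$---is exactly the paper's.
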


Explicit formulas for $P_{\vecalf,\vecbeta}^{(2)}$ and
$\Psi$ are given in the proof below.
As in \eqref{MUPROBMEAS} we define the probability measure corresponding 
to \eqref{secThm-eqGS} by 
\begin{equation} \label{MUPROBMEASGS}
\mu_{\vecalf,\vecbeta,\lambda}^{(n)}(\scrA)
:= \int_{\scrB_n\cap\scrA} P_{\vecalf,\vecbeta}^{(n)}(\vecS_1,\ldots,\vecS_n)
\lambda'(\uvecS_1)\,d\!\vol_{\RR^d}(\vecS_1)\cdots d\!\vol_{\RR^d}(\vecS_n).
\end{equation}

\begin{proof}[Proof of Theorem \ref{secThmMicroGS}]
By a standard approximation argument, using the 
absolute continuity of the limit measure and the assumption that
$\scrA$ has boundary of Lebesgue measure zero, one finds that
it suffices to prove the
corresponding statement for bounded continuous functions, i.e.\ to prove
that for each bounded continuous function $g:\R^{nd}\to\R_{\geq 0}$ we have
\begin{align} \label{secThm-eq-cont}
\lim_{\rho\to 0} \int_{\S_1^{d-1}} g\bigl(\rho^{d-1}
\vecs_1(\vecq_0+\rho\vecbeta(\vecv_0),\vecv_0;\rho),\ldots,
\rho^{d-1}\vecs_n(\vecq_0+\rho\vecbeta(\vecv_0),\vecv_0;\rho)\bigr) \, 
d\lambda(\vecv_0) \hspace{15pt} &
\\ \notag
= \int_{\scrB_n} g(\vecS_1,\ldots,\vecS_n)
P_{\vecalf,\vecbeta}^{(n)}(\vecS_1,\ldots,\vecS_n)\,  
\lambda'(\uvecS_1)\,d\!\vol_{\RR^d}(\vecS_1)\cdots d\!\vol_{\RR^d}(\vecS_n) & .
\end{align}
This, however, is a direct consequence of Theorem \ref{pathwayThm},
applied with $f_0(\vecv_0,\xi_1,\vecv_1,\ldots,\xi_n,\vecv_n)
:=g(\xi_1\vecv_0,\xi_2\vecv_1,\ldots,\xi_n\vecv_{n-1})$.
Indeed, substituting $\xi_j\vecv_{j-1}=\vecS_j$ 
in the right hand side of
\eqref{pathwayEq} in Theorem \ref{pathwayThm} gives exactly the right
hand side of \eqref{secThm-eq-cont}, with
\begin{align} \notag
P_{\vecalf,\vecbeta}^{(n)}(\vecS_1,\ldots,\vecS_n)
=\Bigl(\prod_{j=1}^n\|\vecS_j\|^{1-d}\Bigr)
p_{\vecalf,\vecbeta}(\uvecS_1,\|\vecS_1\|,\uvecS_2)\,
\prod_{j=1}^{n-2} 
p_{\bn,\vecbeta_{\uvecS_j}^+}(\uvecS_{j+1},\|\vecS_{j+1}\|,\uvecS_{j+2})
&
\\ \label{PNLQ0EXPL}
\times \int_{\S_1^{d-1}} 
p_{\bn,\vecbeta_{\uvecS_{n-1}}^+}(\uvecS_n,\|\vecS_n\|,\uvecS)
\, d\!\vol_{\S_1^{d-1}}(\uvecS) & 
\end{align}
if $n\geq 2$, and
\begin{align} \label{PNLQ0EXPLNONE}
P_{\vecalf,\vecbeta}^{(1)}(\vecS_1)
=\|\vecS_1\|^{1-d} \int_{\S_1^{d-1}} 
p_{\vecalf,\vecbeta}(\uvecS_1,\|\vecS_1\|,\uvecS)
\, d\!\vol_{\S_1^{d-1}}(\uvecS) & 
\end{align}
if $n=1$. 

Next we claim that \eqref{jointlimdensGS} holds (for $d\geq 3$) if we set 
\begin{align}
I(\vecS_1,\vecS_2)=\int_{\S_1^{d-1}} 
p_{\bn,\vecbeta_{\uvecS_{1}}^+}(\uvecS_{2},\|\vecS_{2}\|,\uvecS)
\, d\!\vol_{\S_1^{d-1}}(\uvecS)
\end{align}
and then define 
\begin{align} \label{PSIDEF}
\Psi(\vecS_1,\vecS_2,\vecS_3):=\|\vecS_3\|^{1-d}
p_{\bn,\vecbeta_{\uvecS_{1}}^+}(\uvecS_{2},\|\vecS_{2}\|,\uvecS_{3})
\frac{I(\vecS_2,\vecS_3)}{I(\vecS_1,\vecS_2)}
\end{align}
if $I(\vecS_1,\vecS_2)\neq 0$,
while setting $\Psi(\vecS_1,\vecS_2,\vecS_3)=0$ %
whenever $I(\vecS_1,\vecS_2)=0$.
To prove this claim it suffices to check
\begin{align} \label{PRODUCTIND}
P_{\vecalf,\vecbeta}^{(n)}(\vecS_1,\ldots,\vecS_n)
=P_{\vecalf,\vecbeta}^{(n-1)}(\vecS_1,\ldots,\vecS_{n-1})
\Psi(\vecS_{n-2},\vecS_{n-1},\vecS_{n})
\end{align} 
for all $n\geq 3$ and all $(\vecS_1,\ldots,\vecS_n)\in\scrB_n$.

If $I(\vecS_{n-2},\vecS_{n-1})\neq 0$ then \eqref{PRODUCTIND} is clear from 
\eqref{PNLQ0EXPL} and \eqref{PSIDEF}.
Now assume $I(\vecS_{n-2},\vecS_{n-1})=0$.
The function $f:\uvecS\mapsto
p_{\bn,\vecbeta_{\uvecS_{n-2}}^+}(\uvecS_{n-1},\|\vecS_{n-1}\|,\uvecS)$
is non-negative on all $\S_1^{d-1}$ and vanishes outside the open subset 
$\scrV_{\uvecS_{n-1}}$.
If $d\geq 3$ then $f$ is continuous on $\scrV_{\uvecS_{n-1}}$
(see Remark \ref{PALFBETPLUSCONTREM}),
so that $I(\vecS_{n-2},\vecS_{n-1})=0$ implies that
$f(\uvecS)=0$ holds for \textit{all} $\uvecS\in\scrV_{\uvecS_{n-1}}$;
in particular we have
$P_{\vecalf,\vecbeta}^{(n)}(\vecS_1,\ldots,\vecS_{n})=0$ as well as
$P_{\vecalf,\vecbeta}^{(n-1)}(\vecS_1,\ldots,\vecS_{n-1})=0$,
and hence \eqref{PRODUCTIND} holds.

To treat the remaining case, $d=2$, it is simplest to first
\textit{modify} the function $P_{\vecalf,\vecbeta}^{(n)}$ given by
\eqref{PNLQ0EXPL} by setting it to be zero at any point
$(\vecS_1,\ldots,\vecS_n)\in\scrB_n$ such that 
$\uvecS_{j+2}=\Theta_1(\uvecS_{j+1},-\vecbeta_{\uvecS_j}^+(\uvecS_{j+1})
R_{_{\vecS_{j+1}}})$ for some $1\leq j \leq n-2$,
where $R_{_{\vecS}}\in \O(2)$ denotes reflection in the line $\R\vecS$.
(This alteration only concerns a subset of $\scrB_n$ of Lebesgue measure zero,
and hence it does not affect the validity of \eqref{secThm-eqGS}.)
Now the proof can be completed as before, using the fact that when $d=2$,
the function $f$ considered in the last paragraph is continuous on $\scrV_{\uvecS_{n-1}}$
except possibly at the single point 
$\uvecS=\Theta_1(\uvecS_{n-1},
-\vecbeta_{\uvecS_{n-2}}^+(\uvecS_{n-1})R_{_{\vecS_{n-1}}})$,
again by Remark \ref{PALFBETPLUSCONTREM}.
\end{proof}

\begin{remark} \label{MUNPONEREL}
Note that $\mu_{\vecalf,\vecbeta,\lambda}^{(n+1)}(\scrA\times\RR^d)
=\mu_{\vecalf,\vecbeta,\lambda}^{(n)}(\scrA)$ for every 
measurable subset $\scrA\subset\R^{nd}$. This follows from
\eqref{MUPROBMEASGS}, \eqref{PNLQ0EXPL} and \eqref{PNLQ0EXPLNONE}, 
using \eqref{p-norm}.
\end{remark}

\begin{remark} \label{indi-remarkGS}
Recall that for $\vecalf\in\R^d\setminus\Q^d$ the function
$p_{\vecalf,\vecbeta}(\vecv_0,\xi_1,\vecv_1)$ is independent of
both $\vecalf$ and $\vecbeta$, cf.\ Remark \ref{PALFBETCONTREM}.
Hence by \eqref{PNLQ0EXPL}, \eqref{PNLQ0EXPLNONE}, 
the same is true for the function $P_{\vecalf,\vecbeta}^{(n)}=:P^{(n)}$. 
\end{remark}

\begin{remark} \label{secThmMicrocontrem}
If $d\geq 3$ then $P^{(n)}$ is continuous on all of $\scrB_n$,
and if also $\sup \|\vecbeta\|\leq 1$ then
$P_{\vecalf,\vecbeta}^{(n)}$ is continuous on $\scrB_n$ 
for every $\vecalf\in\R^d$.
Next suppose $d=2$, and 
let $\scrB_n'$ be the following dense open subset of $\scrB_n$:
\begin{align} \notag
\scrB_n':=\bigl\{(\vecS_1,\ldots,\vecS_n)\in\scrB_n \col
\vecbeta_{\vece_1}^-\bigl(\uvecS_2K(\uvecS_1)\bigr)_\perp\neq
(\vecbeta(\uvecS_1)K(\uvecS_1))_\perp, \hspace{95pt}&
\\ \label{SCRBNPRIMDEF}
\uvecS_{j+2}\neq\Theta_1(\uvecS_{j+1},-\vecbeta_{\uvecS_j}^+(\uvecS_{j+1})
R_{_{\vecS_{j+1}}}) 
\:\: (j=1,\ldots,n-2) \bigr\} & .
\end{align}
Then $P^{(n)}$ is continuous on all of $\scrB_n'$,
and if $\sup \|\vecbeta\|\leq 1$ then
$P_{\vecalf,\vecbeta}^{(n)}$ is continuous on $\scrB_n'$ 
for every $\vecalf\in\R^d$.
These statements follow from \eqref{PNLQ0EXPL}, \eqref{PNLQ0EXPLNONE},
Remark \ref{PALFBETCONTREM} and Remark \ref{PALFBETPLUSCONTREM}.
\end{remark}

\subsection{Proof of Theorem \ref{secThmMacro}}
We now prove the ``macroscopic'' version of Theorem \ref{secThmMicro},
i.e.\ Theorem \ref{secThmMacro},
in the case of a general scattering map.
The precise statement is as follows. Let $\scrB_n$ be as in 
\eqref{SCRBNDEF}.
\begin{thm}\label{secThmMacroGS}
Let $\Lambda$ be a Borel probability measure on $\T^1(\RR^d)$ which is absolutely continuous with respect to Lebesgue measure. Then, for each $n\in\ZZ_{>0}$, and for any set $\scrA\subset \RR^d\times\RR^{nd}$ with boundary of Lebesgue measure zero,
\begin{multline} \label{secThm-eq-macroGS}
	\lim_{\rho\to 0}\Lambda\big( \big\{ (\vecQ_0,\vecV_0)\in \T^1(\rho^{d-1}\scrK_\rho) : (\vecQ_0,\vecS_1(\vecQ_0,\vecV_0;\rho),\ldots,\vecS_n(\vecQ_0,\vecV_0;\rho)) \in \scrA \big\} \big) \\
= \int_{\scrA\cap(\R^d\times\scrB_n)} 
P^{(n)}(\vecS_1,\ldots,\vecS_n)\, \Lambda'\big(\vecQ_0,\uvecS_1\big)\, d\!\vol_{\RR^d}(\vecQ_0)\, d\!\vol_{\RR^d}(\vecS_1)\cdots d\!\vol_{\RR^d}(\vecS_n) ,
\end{multline}
with $P^{(n)}$ as in Remark \ref{indi-remarkGS},
and where $\Lambda'$ is the Radon-Nikodym derivative of
$\Lambda$ with respect to $\vol_{\R^d}\times\vol_{\S_1^{d-1}}$.
\end{thm}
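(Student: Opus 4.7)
The strategy is to reduce Theorem~\ref{secThmMacroGS} to the microscopic statement Theorem~\ref{secThmMicroGS} by averaging over a fundamental domain of $\scrL$. By absolute continuity of the target measure and the assumption that $\partial\scrA$ has Lebesgue measure zero, it suffices to prove the statement with $\chi_\scrA$ replaced by an arbitrary continuous compactly supported test function $F:\R^d\times\R^{nd}\to\R$. By density of continuous compactly supported functions in $L^1$, together with the estimate $|\text{RHS}[\Lambda']-\text{RHS}[\Lambda'_\varepsilon]|\leq\|F\|_\infty\|\Lambda'-\Lambda'_\varepsilon\|_{L^1}$ (valid since $\int P^{(1)}(\vecS_1)\,d\vecS_1=\vol(\S_1^{d-1})$ by rotational invariance of $P^{(1)}$ and Remark~\ref{MUNPONEREL}), I may further assume $\Lambda'$ is continuous with compact support on $\R^d\times\S_1^{d-1}$. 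The mismatch between $\T^1(\rho^{d-1}\scrK_\rho)$ and $\T^1(\R^d)$ has macroscopic volume $O(\rho^d)$ and can be discarded.

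Switch to microscopic coordinates $\vecq_0=\rho^{-(d-1)}\vecQ_0$ and decompose $\vecq_0=\vecm+\vecq$ with $\vecm\in\scrL$ and $\vecq$ in a fundamental domain $\scrD$ of $\scrL$ (so $\vol(\scrD)=1$). Using the $\scrL$-periodicity of the scatterers, $\vecs_j(\vecm+\vecq,\vecV_0;\rho)=\vecs_j(\vecq,\vecV_0;\rho)$, the left-hand side of \eqref{secThm-eq-macroGS} (with $F$ in place of $\chi_\scrA$) becomes
\begin{align*}
\int_\scrD d\vecq\,\Bigl[\rho^{d(d-1)}\sum_{\vecm\in\scrL} H_{\vecq,\rho}\bigl(\rho^{d-1}(\vecm+\vecq)\bigr)\Bigr],\quad H_{\vecq,\rho}(\vecQ):=\int_{\S_1^{d-1}}\Lambda'(\vecQ,\vecV_0)F\bigl(\vecQ,\rho^{d-1}\vecs_1,\ldots,\rho^{d-1}\vecs_n\bigr)d\vecV_0.
\end{align*}
The bracketed expression is a Riemann sum of spacing $\rho^{d-1}$ approximating $\int_{\R^d}H_{\vecq,\rho}(\vecQ)\,d\vecQ$. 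Since the $\vecQ$-dependence of $H_{\vecq,\rho}$ enters only through the arguments of $\Lambda'$ and $F$ (both uniformly continuous with fixed compact support, the vectors $\vecs_j(\vecq,\vecV_0;\rho)$ being $\vecQ$-independent), its modulus of continuity and its support are uniform in $\vecq$ and $\rho$. Hence the Riemann-sum error is $o_\rho(1)$ uniformly in $\vecq\in\scrD$.

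For Lebesgue-a.e. $\vecq\in\scrD$, the vector $\vecalf:=-\vecq M_0^{-1}$ lies outside $\Q^d$ (in particular $\vecq\notin\scrL$). For such $\vecq$ and each fixed $\vecQ\in\R^d$, Theorem~\ref{secThmMicroGS} applied with $\vecbeta\equiv\bn$ to the absolutely continuous measure $\Lambda'(\vecQ,\cdot)\,d\!\vol_{\S_1^{d-1}}$ (normalize to a probability measure and use linearity), combined with Remark~\ref{indi-remarkGS}, yields
\begin{align*}
\int_{\S_1^{d-1}}F\bigl(\vecQ,\rho^{d-1}\vecs_1(\vecq,\vecV_0;\rho),\ldots\bigr)\Lambda'(\vecQ,\vecV_0)\,d\vecV_0 \xrightarrow{\rho\to 0}\int F(\vecQ,\vecS_1,\ldots,\vecS_n)P^{(n)}(\vecS)\Lambda'(\vecQ,\uvecS_1)\,d\vecS_1\cdots d\vecS_n.
\end{align*}
The integrands are dominated in $\vecQ$ by $\|F\|_\infty\int|\Lambda'(\vecQ,\vecV_0)|d\vecV_0\in L^1(\R^d)$, so dominated convergence transports this to the $\vecQ$-integral. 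The resulting limit is \emph{independent of $\vecq$}; a second dominated convergence in $\vecq\in\scrD$, with uniform bound $\|F\|_\infty\|\Lambda'\|_{L^1}$ and $\vol(\scrD)=1$, gives the required right-hand side.

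The principal obstacle is arranging three successive limit/integral interchanges: the Riemann-sum step (requiring uniformity in $\vecq$ via the continuity and compact-support reductions), then dominated convergence in $\vecQ$, then in $\vecq$. The decisive input beyond Theorem~\ref{secThmMicroGS} is Remark~\ref{indi-remarkGS}: on the full-measure set of irrational $\vecalf$, the microscopic limit density $P^{(n)}_{\vecalf,\bn}$ is the constant $P^{(n)}$, which is precisely what kills the $\vecq$-dependence and collapses the fundamental-domain average to a factor of $\vol(\scrD)=1$.
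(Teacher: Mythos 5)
Your proposal is correct and follows essentially the same route as the paper's own proof: reduce to continuous compactly supported test functions and a continuous compactly supported density, unfold the $\vecQ_0$-integral over the lattice/fundamental domain using periodicity of $\vecs_j$ in the microscopic position, replace the lattice sum by a $\vecQ$-integral via a uniform Riemann-sum estimate, apply Theorem \ref{secThmMicroGS} (in its continuous-test-function form) at almost every $\vecq$ in the fundamental domain where $\vecalf\notin\QQ^d$ so that Remark \ref{indi-remarkGS} gives the universal density $P^{(n)}$, and finish with dominated (bounded) convergence. The only cosmetic difference is that you split the final interchange into two successive dominated-convergence steps (in $\vecQ$, then in $\vecq$) where the paper uses one bounded-convergence application over the product $F\times\RR^d$; this changes nothing of substance.
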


The probability measure corresponding to the above limiting distribution is defined by
\begin{equation} \label{muLambdandefGS}
	\mu_{\Lambda}^{(n)}(\scrA)
:= \int_{\scrA\cap(\R^d\times\scrB_n)} P^{(n)}(\vecS_1,\ldots,\vecS_n)\, \Lambda'\big(\vecQ_0,\uvecS_1\big)\, d\!\vol_{\RR^d}(\vecQ_0)\, d\!\vol_{\RR^d}(\vecS_1)\cdots d\!\vol_{\RR^d}(\vecS_n) .
\end{equation}

\begin{proof}[Proof of Theorem \ref{secThmMacroGS}.]
We will use the same basic technique as in \cite[Sec.\ 9.2]{partI}.
Note that by a standard approximation argument,
using the fact that $\mu_\Lambda^{(n)}$ is %
absolutely continuous with respect to Lebesgue measure,
it suffices to prove the corresponding statement for continuous functions
of compact support, i.e.\ 
to prove that for any continuous function $g:\R^d\times(\R^d)^n\to\R$
of compact support we have
\begin{align} \notag
\int_{(\vecQ_0,\vecV_0)\in\T^1(\rho^{d-1}\scrK_\rho)}
g(\vecQ_0,\vecS_1(\vecQ_0,\vecV_0;\rho),\ldots,\vecS_n(\vecQ_0,\vecV_0;\rho))
\hspace{115pt} &
\\ \label{THMMACROTESTFUNC2}
\times
\Lambda'\big(\vecQ_0,\vecV_0\big)\, d\!\vol_{\RR^d}(\vecQ_0)\,
d\!\vol_{\S_1^{d-1}}(\vecV_0)
\to\int_{\R^d\times\R^{nd}} g \, d\mu_\Lambda^{(n)}&
\end{align}
as $\rho\to 0$.
By a further approximation argument, we may also assume $\Lambda'$ to be 
continuous and of compact support
(keeping $\Lambda'\geq 0$ and $\int_{\T^1(\R^d)} \Lambda'
\,d\!\vol_{\R^d} d\!\vol_{\S_1^{d-1}}=1$).

Recalling $\vecS_n(\vecQ_0,\vecV_0;\rho)
=\rho^{d-1}\vecs_n(\rho^{-(d-1)}\vecQ_0,\vecV_0;\rho)$
where $\vecs_n$ is $\scrL$-periodic in its first variable,
the left hand side of \eqref{THMMACROTESTFUNC2} may be expressed as
\begin{align} \notag
\int_{F\cap\scrK_\rho} \int_{\S_1^{d-1}}
\Bigl\{\rho^{d(d-1)} \sum_{\vecq\in\vecq_0+\scrL}
g\bigl(\rho^{d-1}\vecq,\rho^{d-1}\vecs_1(\vecq_0,\vecv_0;\rho),\ldots,
\rho^{d-1}\vecs_n(\vecq_0,\vecv_0;\rho)\bigr) \hspace{50pt} &
\\ \label{THMMACROTESTFUNC2STEP1}
\times \Lambda'(\rho^{d-1}\vecq,\vecv_0)\Bigr\}\,d\!\vol_{\S_1^{d-1}}(\vecv_0)
\, d\!\vol_{\R^d}(\vecq_0) & ,
\end{align}
where $F\subset\R^d$ is a fundamental parallelogram for $\R^d/\scrL$.
Because of our assumptions on $g$ and $\Lambda'$ we have
\begin{align*}
\rho^{d(d-1)}
\sum_{\vecq\in\vecq_0+\scrL} g(\rho^{d-1}\vecq,\veca_1,\ldots,\veca_n)
\Lambda'(\rho^{d-1}\vecq,\vecv_0)
\to \int_{\R^d} g(\vecq,\veca_1,\ldots,\veca_n) \Lambda'(\vecq,\vecv_0)
\,d\!\vol_{\R^d}(\vecq) 
\end{align*}
as $\rho\to 0$, uniformly over all $\veca_1,\ldots,\veca_n\in\R^d$,
$\vecv_0\in\S_1^{d-1}$ and $\vecq_0$ in compact subsets of $\R^d$.
Hence, using also Fubini's theorem, \eqref{THMMACROTESTFUNC2STEP1} equals
\begin{align} \notag
\int_{F} \int_{\R^d} \int_{\S_1^{d-1}}
 g\bigl(\vecq,\rho^{d-1}\vecs_1(\vecq_0,\vecv_0;\rho),\ldots,
\rho^{d-1}\vecs_n(\vecq_0,\vecv_0;\rho)\bigr) \hspace{120pt} &
\\ \label{THMMACROTESTFUNC2STEP2}
\times\Lambda'(\vecq,\vecv_0)
\,d\!\vol_{\S_1^{d-1}}(\vecv_0)\,d\!\vol_{\R^d}(\vecq)
\,d\!\vol_{\R^d}(\vecq_0)+o(1), &
\end{align}
where the innermost integral should be interpreted as zero whenever
$\vecq_0\notin\scrK_\rho$.
Now by Theorem~\ref{secThmMicroGS} reformulated in the context of
continuous test functions, cf.\ \eqref{secThm-eq-cont},
for almost all $(\vecq_0,\vecq)\in F\times\R^d$
the  innermost integral in \eqref{THMMACROTESTFUNC2STEP2} tends to
\begin{align}
\int_{\scrB_n}g(\vecq,\vecS_1,\ldots,\vecS_n)
P^{(n)}(\vecS_1,\ldots,\vecS_n)\,  
\Lambda'(\vecq,\uvecS_1)\, d\!\vol_{\RR^d}(\vecS_1)\cdots d\!\vol_{\RR^d}(\vecS_n).
\end{align}
Hence \eqref{THMMACROTESTFUNC2} follows by applying 
Lebesgue's Bounded Convergence Theorem to \eqref{THMMACROTESTFUNC2STEP2}
(with $(\vecq_0,\vecq)\mapsto
(\sup |g|)\int_{\S_1^{d-1}} \Lambda'(\vecq,\vecv_0)\,d\vecv_0$ as a majorant
function), and using $\vol(F)=1$.
\end{proof}

\section{Convergence to the stochastic process $\Xi(t)$}\label{secConvergence}

In this section we will prove Theorem \ref{secThmMicro2},
generalized to the %
situation of a general scattering
map, and with $(\vecq(t),\vecv(t))$ being the orbit of the flow $\varphi_t$
for initial data of the form $(\vecq_0+\rho\vecbeta(\vecv_0),\vecv_0)$.

The probability 
$\PP_{\vecalf,\vecbeta,\lambda}^{(\vecn)}\big(\Xi(t_1)\in\scrD_1
,\ldots, \Xi(t_M)\in\scrD_M\big)$ is defined in a similar way 
as before:
We again write 
$T_0:=0$, $T_n:=\sum_{j=1}^n \|\vecS_j\|$,
and given any $\vecn=(n_1,\ldots,n_M)\in\Z_{\geq 0}^M$ we set
\begin{multline} \label{PNLQ0LDEFGS}
	\PP_{\vecalf,\vecbeta,\lambda}^{(\vecn)}\big(\Xi(t_1)\in\scrD_1,\ldots, \Xi(t_M)\in\scrD_M \text{ and } T_{n_1}\leq t_1< T_{n_1+1},\ldots, T_{n_M}\leq t_M< T_{n_M+1}  \big) \\
	:=
\mu_{\vecalf,\vecbeta,\lambda}^{(n+1)}\big(\big\{(\vecS_1,\ldots,\vecS_{n+1}): \Xi_{n_j}(t_j) \in\scrD_j,\; T_{n_j}\leq t_j< T_{n_j+1}\;(j=1,\ldots,M) \big\}\big) 
\end{multline}
with $n=\max(n_1,\ldots,n_M)$ and
$\Xi_n(t):= \big( \sum_{j=1}^n \vecS_j + (t-T_n) \uvecS_{n+1}, \uvecS_{n+1}\big) ,$
and with the measure $\mu_{\vecalf,\vecbeta,\lambda}^{(\vecn)}$ now being given by
Theorem \ref{secThmMicroGS} and \eqref{MUPROBMEASGS}.
(Thus $\mu_{\vecalf,\vecbeta,\lambda}^{(\vecn)}$ depends on the given scattering 
map and the given function $\vecbeta$.)
We then set
\begin{multline} \label{PPLQ0LDEFGS}
		\PP_{\vecalf,\vecbeta,\lambda}\big(\Xi(t_1)\in\scrD_1,\ldots, \Xi(t_M)\in\scrD_M\big) 
		\\
	  := \sum_{\vecn\in\ZZ_{\geq 0}^M} \PP_{\vecalf,\vecbeta,\lambda}^{(\vecn)}\big(\Xi(t_1)\in\scrD_1,\ldots, \Xi(t_M)\in\scrD_M \text{ and } T_{n_1}\leq t_1< T_{n_1+1},\ldots, T_{n_M}\leq t_M< T_{n_M+1}  \big).
\end{multline}
Recall that given any set $\scrD\subset\T^1(\R^d)$ we say that $t\geq 0$ is
\textit{$\scrD$-admissible} if 
$(t\uvecS_1,\uvecS_1)\notin\partial\scrD$ holds for 
($\vol_{\S_1^{d-1}}$-)almost all $\uvecS_1\in\S_1^{d-1}$,
and we write $\adm(\scrD)$ for the set of all 
$\scrD$-admissible numbers $t\geq 0$.

The generalization of Theorem \ref{secThmMicro2} now reads as follows.
\begin{thm} \label{secThmMicro2GS}
Fix a lattice $\scrL=\ZZ^d M_0$ and a point $\vecq_0\in\RR^d$,
let $\lambda$ be a Borel probability measure on $\S_1^{d-1}$ which is absolutely continuous with respect to Lebesgue measure, and let
$\vecbeta:\S_1^{d-1}\to\R^d$
be a $\C^1$ function. If $\vecq_0\in\scrL$ we assume that
$(\vecbeta(\vecv)+\R_{>0}\vecv)\cap\scrB_1^d=\emptyset$ for all 
$\vecv\in\S_1^{d-1}$.
Then, for any subsets $\scrD_1,\ldots,\scrD_M\subset \T^1(\RR^{d})$ 
with boundary of Lebesgue measure zero, and any numbers
$t_j\in\adm(\scrD_j)$ ($j=1,\ldots,M$),
\begin{multline} \label{secThm-eq2GS}
	\lim_{\rho\to 0}\lambda\big( \big\{ \vecv_0\in \S_1^{d-1} : (\rho^{d-1}\vecq(\rho^{-(d-1)}t_j),\vecv(\rho^{-(d-1)} t_j)) \in\scrD_j, \; j=1,\ldots,M \big\} \big) \\
= \PP_{\vecalf,\vecbeta,\lambda}\big(\Xi(t_1)\in\scrD_1,\ldots, \Xi(t_M)\in\scrD_M\big) .
\end{multline}
The convergence is uniform for $(t_1,\ldots,t_M)$ in compact subsets of 
$\adm(\scrD_1)\times\cdots\times\adm(\scrD_M)$. 
\end{thm}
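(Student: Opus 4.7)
The strategy is a truncation argument: reduce the stochastic-process convergence to the joint path-segment convergence already established in Theorem \ref{secThmMicroGS}. Let $N^\rho(t,\vecv_0):=\max\{n\col \rho^{d-1}\sum_{j=1}^n\tau_j(\vecq_{\rho,\vecbeta}(\vecv_0),\vecv_0;\rho)\leq t\}$ be the number of collisions before scaled time $t$, and abbreviate the initial position to the trivial one: $\rho^{d-1}\vecq_0\to\vecnull$ in the limit, so the macroscopic trajectory through the first $n$ collisions is described by $\Xi_n^\rho(t)=\bigl(\rho^{d-1}\vecq_0+\sum_{j=1}^n\vecS_j^\rho+(t-T_n^\rho)\uvecS_{n+1}^\rho,\uvecS_{n+1}^\rho\bigr)$ with $\vecS_j^\rho:=\rho^{d-1}\vecs_j$ and $T_n^\rho:=\sum_{j=1}^n\|\vecS_j^\rho\|$. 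The left-hand side of \eqref{secThm-eq2GS} then decomposes as
\begin{equation*}
\sum_{\vecn\in\Z_{\geq 0}^M}\lambda\bigl(\bigl\{\vecv_0\col N^\rho(t_j,\vecv_0)=n_j,\:\Xi^\rho_{n_j}(t_j)\in\scrD_j\:\:(j=1,\ldots,M)\bigr\}\bigr).
\end{equation*}

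The first main step is a uniform tail bound on $N^\rho(t_j,\vecv_0)$. Using Theorem \ref{secThmMicroGS} for $n=N$ together with the uniform tail estimate \eqref{PALFBETSPECREMFORMULA} of Remark \ref{PALFBETSPECREM}, one shows that $\rho^{d-1}\tau_j$ has uniformly (in $\rho$ and $j$) controlled upper tails, so that
\begin{equation*}
\limsup_{\rho\to 0}\lambda\bigl(\bigl\{\vecv_0\col \max_j N^\rho(t_j,\vecv_0)>N\bigr\}\bigr)\to 0 \quad\text{as }\:N\to\infty,
\end{equation*}
uniformly for $(t_1,\ldots,t_M)$ in any fixed compact set. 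The analogous bound for the limit process follows from \eqref{PNLQ0EXPL}--\eqref{PNLQ0EXPLNONE}, \eqref{PALFINTEQ1} and \eqref{PALFBETSPECREMFORMULA}. These bounds control the tail of the series \eqref{PPLQ0LDEFGS} uniformly.

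Having truncated at level $N$, the second step is to pass to the limit in each term with $\max_j n_j\leq N$. For each such $\vecn$ the event
\begin{equation*}
F_{\vecn}:=\bigl\{(\vecS_1,\ldots,\vecS_{N+1})\in\R^{(N+1)d}\col T_{n_j}\leq t_j<T_{n_j+1},\:\Xi_{n_j}(t_j)\in\scrD_j\:\:(j=1,\ldots,M)\bigr\}
\end{equation*}
is a Borel subset of $\R^{(N+1)d}$, and by Theorem \ref{secThmMicroGS} (applied with $n=N+1$) it suffices to verify that $\partial F_{\vecn}$ has Lebesgue measure zero. The boundary decomposes into pieces contained either in sets of the form $\{T_{n_j}=t_j\}$ or $\{T_{n_j+1}=t_j\}$ (codimension-$1$ hyperplanes, clearly of measure zero), or in preimages of $\partial\scrD_j$ under the map $(\vecS_1,\ldots,\vecS_{N+1})\mapsto\Xi_{n_j}(t_j)$. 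For $n_j\geq 1$ these preimages are null by Fubini and the hypothesis $\vol(\partial\scrD_j)=0$. For $n_j=0$ the corresponding condition reduces to $(t_j\uvecS_1,\uvecS_1)\in\partial\scrD_j$, which is null \emph{precisely} by the $\scrD_j$-admissibility of $t_j$. This is exactly where the hypothesis $t_j\in\adm(\scrD_j)$ is essential.

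Combining the two steps, each truncated finite sum converges to the corresponding finite sum in \eqref{PPLQ0LDEFGS}, and the uniform tail bound allows us to pass $N\to\infty$. For the stated uniformity in $(t_1,\ldots,t_M)$, observe first that the limit $\PP_{\vecalf,\vecbeta,\lambda}$ depends continuously on $(t_1,\ldots,t_M)$ on $\adm(\scrD_1)\times\cdots\times\adm(\scrD_M)$: within a fixed truncation the measure of $F_{\vecn}$ varies continuously by dominated convergence combined with admissibility, and tails are uniformly small. A standard equicontinuity argument (or direct inspection of the convergence in Theorem \ref{secThmMicroGS}, which gives convergence of integrals against any bounded continuous function and hence is uniform over bounded equicontinuous families of test functions arising from compact $t$-parameters) then promotes pointwise to uniform convergence on compacta.

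The principal obstacle is the combination of the tail bound with the admissibility issue: one must simultaneously (a) show that with probability close to $1$ only finitely many (bounded in $N$) collisions occur before each $t_j$, uniformly in $\rho$ and in $(t_1,\ldots,t_M)$ on compacta, and (b) correctly identify all boundary contributions---in particular the $n_j=0$ piece, which is null exactly under admissibility and is genuinely non-null in its absence (as Remark \ref{ADMREMARK} shows). Everything else is a straightforward assembly of Theorem \ref{secThmMicroGS} and Remark \ref{PALFBETSPECREM}.
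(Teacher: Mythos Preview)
Your overall architecture matches the paper's: decompose over the collision-count vector $\vecn$, control the tail with a Lemma~\ref{diffLem}-type bound, and for each fixed $\vecn$ reduce to Theorem~\ref{secThmMicroGS}. Your identification of the boundary contributions to $\partial F_{\vecn}$ is also correct, including the crucial observation that the $n_j=0$ case is handled exactly by admissibility (this is Lemmas~\ref{NZEROKEYLEMMA} and \ref{NPOSKEYLEMMA} in the paper).

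There is, however, a genuine gap in your second step. The actual $\rho$-event is not $\{(\vecS_1^\rho,\ldots,\vecS_{N+1}^\rho)\in F_{\vecn}\}$: the macroscopic starting point is $\rho^{d-1}\vecq_0+\rho^d\vecbeta(\vecv_0)$, which you dismiss with ``abbreviate the initial position to the trivial one''. Since Theorem~\ref{secThmMicroGS} requires a \emph{fixed} target set, you cannot simply apply it to a $\rho$-dependent translate of $F_{\vecn}$. The paper closes this gap with a sandwiching construction (Lemma~\ref{KEYLEMMAFORTHMMICRO2}): one builds inner and outer approximants $\scrD_j'\subset\scrD_j\subset\scrD_j''$ with $\scrN_\delta\scrD_j'\subset\scrD_j\subset\scrN_\delta^{-1}\scrD_j''$, so that for $\rho$ small the offset is absorbed and one obtains the inclusions \eqref{INCLUSIONREL}; then Theorem~\ref{secThmMicroGS} is applied to $\scrA_{\vecn}(\scrD_1',\ldots)$ and $\scrA_{\vecn}(\scrD_1'',\ldots)$ separately. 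Establishing that these approximants can be chosen with $\mu_{\vecalf,\vecbeta,\lambda}^{(n+1)}$-measure arbitrarily close to that of $\scrA_{\vecn}(\scrD_1,\ldots,\scrD_M)$ uses precisely the quantitative versions (Lemmas~\ref{NZEROKEYLEMMA}, \ref{NPOSKEYLEMMA}) of your measure-zero claims. Your proof sketch omits this entire mechanism.

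A smaller point: your tail bound invokes the wrong ingredient. The estimate \eqref{PALFBETSPECREMFORMULA} controls \emph{upper} tails of $\rho^{d-1}\tau_1$, but to show $\lambda(\{\sum_{j=1}^N\|\vecS_j^\rho\|\leq t\})\to 0$ as $N\to\infty$ you need the densities to be \emph{bounded} (namely $\Phi_{\vecalf},\Phi_{\vecnull}\leq 1$), yielding the volume bound $\vol(\scrB_1^{d-1})^N t^N/N!$ exactly as in the proof of Lemma~\ref{diffLem}. Upper tails of individual free path lengths play no role here.
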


\subsection{Four lemmas}\label{FOURLEMMASSEC}

As a preparation for the proof of Theorem \ref{secThmMicro2GS} we will
require the following four lemmas.

\begin{lem}\label{diffLem}
Given any $t>0$ and $\ve>0$ there exist $\rho_0>0$ and $N\in\Z_{>0}$ 
such that, for all $\rho<\rho_0$, 
\begin{equation}\label{lemN}
\lambda\big( \{ \vecv_0\in\S_1^{d-1} \col
\|\vecs_1\|+\|\vecs_2\|+\ldots+\|\vecs_N\|\leq \rho^{-(d-1)}t\} \big) < \ve,
\end{equation}
where $\vecs_k=\vecs_k(\vecq_0+\rho\vecbeta(\vecv_0),\vecv_0;\rho)$.
\end{lem}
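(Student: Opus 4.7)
The plan is to reduce the statement, via the joint limit theorem for the first $N$ path segments (Theorem \ref{secThmMicroGS}), to a uniform bound on the probability that the limit measure assigns to trajectories whose first $N$ segments have total length at most $t$; then I would show this probability is $O(1/N)$ and choose $N$ large and $\rho_0$ small accordingly.

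First I would set $\scrA_N(t):=\bigl\{(\vecS_1,\ldots,\vecS_N)\in(\R^d)^N\col \|\vecS_1\|+\cdots+\|\vecS_N\|\leq t\bigr\}$. Its boundary lies in the codimension-one Lipschitz hypersurface $\{\sum_k\|\vecS_k\|=t\}$ and hence has Lebesgue measure zero. The event in \eqref{lemN} is exactly $\{(\vecs_1,\ldots,\vecs_N)\in\rho^{-(d-1)}\scrA_N(t)\}$, so Theorem \ref{secThmMicroGS} gives, for each fixed $N$,
\begin{align*}
\lim_{\rho\to 0}\lambda\bigl(\{\vecv_0\col \|\vecs_1\|+\cdots+\|\vecs_N\|\leq \rho^{-(d-1)}t\}\bigr)
=\mu_{\vecalf,\vecbeta,\lambda}^{(N)}(\scrA_N(t)).
\end{align*}

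The main step, and the only non-routine one, is to bound $\mu_{\vecalf,\vecbeta,\lambda}^{(N)}(\scrA_N(t))$ by a quantity that vanishes as $N\to\infty$. Under the limit measure I view $(\xi_1,\vecv_1,\ldots,\xi_N,\vecv_N)$ as a random process with the explicit joint density $p_{\vecalf,\vecbeta}(\vecv_0,\xi_1,\vecv_1)\prod_{k=1}^{N-1} p_{\bn,\vecbeta_{\vecv_{k-1}}^+}(\vecv_k,\xi_{k+1},\vecv_{k+1})$, where $\xi_k=\|\vecS_k\|$. The key uniform estimate is that, since $\Phi_{\vecalf'}\leq 1$ by its representation in \eqref{exactpos1limitrat} as the $\nu_\vecy$-measure of a set, formulas \eqref{exactpos2-1hit-tpdef} and \eqref{p0betdefrep} give
\begin{align*}
\int_{\S_1^{d-1}}\int_0^\delta p_{\bn,\vecbeta_\vecu^+}(\vecv,\xi,\vecv')\,d\xi\,d\!\vol_{\S_1^{d-1}}(\vecv')
=\int_0^\delta\int_{\{0\}\times\scrB_1^{d-1}}\Phi_\bn(\xi,\vecw,\vecz)\,d\vecw\,d\xi
\leq c\delta,
\end{align*}
with $c=\vol(\scrB_1^{d-1})$, uniformly in $\vecu,\vecv$ (and analogously for $p_{\vecalf,\vecbeta}$). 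Integrating the remaining factors using \eqref{p-norm} yields $\mathbb{P}(\xi_k<\delta\mid\mathcal{F}_{k-1})\leq c\delta$ for every $k$, so $\mathbb{E}[\#\{k\leq N\col \xi_k<\delta\}]\leq cN\delta$. If $\sum_{k=1}^N\xi_k\leq t$ and $\delta:=2t/N$, then at most $N/2$ of the $\xi_k$ can satisfy $\xi_k\geq\delta$, hence at least $N/2$ satisfy $\xi_k<\delta$; Markov's inequality then gives
\begin{align*}
\mu_{\vecalf,\vecbeta,\lambda}^{(N)}(\scrA_N(t))
\leq \mathbb{P}\bigl(\#\{k\col \xi_k<2t/N\}\geq N/2\bigr)
\leq \frac{cN(2t/N)}{N/2}=\frac{4ct}{N}.
\end{align*}

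Given $\ve>0$, I would first choose $N\in\Z_{>0}$ so large that $4ct/N<\ve/2$; the limit relation above then furnishes a $\rho_0>0$ such that, for $\rho<\rho_0$, the left-hand side of \eqref{lemN} differs from $\mu_{\vecalf,\vecbeta,\lambda}^{(N)}(\scrA_N(t))$ by less than $\ve/2$, and the bound $\ve$ follows. The only delicate aspect is the uniformity of the small-$\xi$ estimate across the conditional distributions at each step, which is what makes the Markov-with-memory-two structure of the limit process compatible with the simple counting argument; everything else is a direct application of results already established.
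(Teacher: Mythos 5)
Your proposal is correct, and its first half (reduce via the joint limiting distribution of the first $N$ segments to a bound on the limit measure of the event $\{\sum_{k\le N}\|\vecS_k\|\le t\}$, using only the uniform bound $\Phi\le 1$ coming from \eqref{exactpos1limitrat}) is exactly the reduction the paper makes, except that the paper invokes Theorem \ref{pathwayThm} with a short approximation argument rather than Theorem \ref{secThmMicroGS}; the two are interchangeable here. Where you diverge is in how the limiting probability is estimated: the paper simply notes that after the change of variables the integrand is bounded by $1$ on $\{0\}\times\scrB_1^{d-1}$ in each $\vecw$-variable, so the whole quantity is at most
\begin{equation*}
\vol(\scrB_1^{d-1})^N\int_{\substack{\xi_1+\cdots+\xi_N<t\\ \xi_k>0}} d\xi_1\cdots d\xi_N=\vol(\scrB_1^{d-1})^N\,\frac{t^N}{N!},
\end{equation*}
which already tends to $0$ in $N$. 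You instead run a counting argument: the same uniform bound gives $\mathbb{P}(\xi_k<\delta)\le c\delta$ for each $k$ (correct, since the future factors integrate to $1$ by \eqref{p-norm}), and then Markov's inequality with $\delta=2t/N$ yields the bound $4ct/N$. Both arguments are sound and rest on the identical uniform estimate; the paper's direct simplex-volume computation is shorter and gives a factorially decaying bound, while your route is a valid but more roundabout way to obtain a weaker $O(1/N)$ bound, which is still ample for choosing $N$ and then $\rho_0$ as you do.
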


\begin{proof}
For $t>0$ and $N\in\Z_{>0}$ we have by Theorem \ref{pathwayThm}
(coupled with a simple approximation argument)
\begin{align} \label{diffLemproof1}
\lim_{\rho\to 0}\lambda & \big( \{ \vecv_0\in\S_1^{d-1} \col
\|\vecs_1\|+\|\vecs_2\|+\ldots+\|\vecs_N\|\leq \rho^{-(d-1)}t\} \big)
\\ \notag
& =\nu\bigl(\bigl\{(\vecv_0,\xi_1,\vecv_1,\ldots,\xi_N,\vecv_N)\in
\S_1^{d-1}\times (\R_{>0}\times \S_1^{d-1})^N\col
{\textstyle\sum_{j=1}^N}\xi_j<t\bigr\}\bigr),
\end{align}
where $\nu$ is the measure defined in \eqref{pathwaymeasure}.
Transforming the integral in \eqref{pathwaymeasure}
via \eqref{p0betdefrep} and \eqref{exactpos2-1hit-tpdef},
and then using the fact that $\Phi_\vecalf(\xi,\vecw,\vecz)$
and $\Phi_\bn(\xi,\vecw,\vecz)$ are uniformly bounded by $1$,
we see that \eqref{diffLemproof1} is bounded from above by
\begin{align}
\vol(\scrB_1^{d-1})^N 
\int_{\substack{\xi_1+\ldots+\xi_N<t\\\xi_1,\ldots,\xi_N>0}} 
d\xi_1\cdots d\xi_N
=\vol(\scrB_1^{d-1})^N \frac{t^N}{N!}.
\end{align}
The lemma follows from the fact that this expression tends
to zero as $N\to\infty$, for any fixed $t>0$.
\end{proof}

Given any $\scrD\subset\T^1(\R^d)=\R^d\times\S_1^{d-1}$ and $\delta>0$, 
we write $\partial_\delta\scrD$ and $\scrN_\delta\scrD$ for the 
$\delta$-neighborhoods of $\partial \scrD$ and $\scrD$, respectively,
viz.:
\begin{align} \label{PARTDELTA}
& \partial_\delta\scrD=\bigl\{(\vecp,\vecv)\in\R^d\times\S_1^{d-1}\col
\exists (\vecp_1,\vecv_1)\in\partial\scrD:\: 
\|\vecp_1-\vecp\|+\varphi(\vecv_1,\vecv)
<\delta\bigr\}.
\\ \label{NEIGHDELTA}
& \scrN_\delta\scrD=\scrD\cup\partial_\delta\scrD
=\bigl\{(\vecp,\vecv)\in\R^d\times\S_1^{d-1}\col
\exists (\vecp_1,\vecv_1)\in\scrD:\: 
\|\vecp_1-\vecp\|+\varphi(\vecv_1,\vecv)
<\delta\bigr\}.
\end{align}

\begin{lem} \label{NZEROKEYLEMMA}
Let $\scrD\subset\T^1(\R^d)$ and $t\in\adm(\scrD)$. Then 
\begin{align} \label{NZEROKEYLEMMAEQ}
\lim_{\delta\to 0}\vol_{\S_1^{d-1}}\bigl(\bigl\{\uvecS_1\in\S_1^{d-1}\col
(t\uvecS_1,\uvecS_1)\in\partial_\delta\scrD\bigr\}\bigr)=0.
\end{align}
\end{lem}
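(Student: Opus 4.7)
The plan is to use the downward continuity of finite measures: $\S_1^{d-1}$ has finite volume, so for any monotone decreasing family of measurable sets $A_\delta$ indexed by $\delta\to 0^+$, we have $\vol_{\S_1^{d-1}}(A_\delta)\to\vol_{\S_1^{d-1}}(\bigcap_{\delta>0}A_\delta)$. Thus it suffices to identify the intersection and show it has measure zero.

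Concretely, set $A_\delta:=\bigl\{\uvecS_1\in\S_1^{d-1}\col(t\uvecS_1,\uvecS_1)\in\partial_\delta\scrD\bigr\}$. Since $\delta_1<\delta_2$ implies $\partial_{\delta_1}\scrD\subset\partial_{\delta_2}\scrD$ by \eqref{PARTDELTA}, the family $\{A_\delta\}_{\delta>0}$ is monotone decreasing as $\delta\to 0^+$. The first key step is to observe that
\[
\bigcap_{\delta>0}\partial_\delta\scrD=\partial\scrD,
\]
which follows directly from the definition \eqref{PARTDELTA}: a point $(\vecp,\vecv)$ lies in every $\partial_\delta\scrD$ iff there exist boundary points $(\vecp_n,\vecv_n)\in\partial\scrD$ with $(\vecp_n,\vecv_n)\to(\vecp,\vecv)$, and since $\partial\scrD$ is closed, this is equivalent to $(\vecp,\vecv)\in\partial\scrD$. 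Consequently
\[
\bigcap_{\delta>0}A_\delta=\bigl\{\uvecS_1\in\S_1^{d-1}\col(t\uvecS_1,\uvecS_1)\in\partial\scrD\bigr\},
\]
which has $\vol_{\S_1^{d-1}}$-measure zero by the definition of $t\in\adm(\scrD)$.

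Finally, applying continuity of $\vol_{\S_1^{d-1}}$ from above (valid since $\vol_{\S_1^{d-1}}(\S_1^{d-1})<\infty$) yields $\vol_{\S_1^{d-1}}(A_\delta)\to 0$ as $\delta\to 0^+$, which is precisely \eqref{NZEROKEYLEMMAEQ}.

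There is essentially no obstacle here; the only minor point worth checking is measurability of $A_\delta$, which is immediate since $\partial_\delta\scrD$ is open (hence Borel) and the map $\uvecS_1\mapsto(t\uvecS_1,\uvecS_1)$ is continuous. The lemma is really a repackaging of the admissibility hypothesis into a neighborhood-thickening statement.
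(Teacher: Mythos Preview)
Your proof is correct. It takes a genuinely different route from the paper's, though both are short. The paper argues via outer regularity and compactness: it covers the null set $M=\{\uvecS_1:(t\uvecS_1,\uvecS_1)\in\partial\scrD\}$ by an open set $U$ of small volume, observes that the image of $\S_1^{d-1}\setminus U$ under $\uvecS_1\mapsto(t\uvecS_1,\uvecS_1)$ is compact and disjoint from the closed set $\partial\scrD$, and hence lies at positive distance $\delta_0$ from it; thus $A_\delta\subset U$ for all $\delta\leq\delta_0$. Your argument instead identifies $\bigcap_{\delta>0}A_\delta$ directly (using that $\partial\scrD$ is closed) and invokes continuity of the finite measure $\vol_{\S_1^{d-1}}$ from above. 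Your approach is slightly more economical and purely measure-theoretic; the paper's approach is more hands-on and yields an explicit $\delta_0$ for each $\ve$, though that extra information is not used anywhere.
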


\begin{proof}
By assumption the set
$M=\bigl\{\uvecS_1\in\S_1^{d-1}\col
(t\uvecS_1,\uvecS_1)\in\partial\scrD\bigr\}$ has volume zero;
hence for any given $\ve>0$ there is an
open set $U\subset\S_1^{d-1}$ with $M\subset U$ and
$\vol_{\S_1^{d-1}}(U)<\ve$. Then $C=\{(t\uvecS_1,\uvecS_1)\col
\uvecS_1\in\S_1^{d-1}\setminus U\}$ is a compact subset of $\T^1(\R^d)$,
disjoint from $\partial \scrD$.
Since $\partial\scrD$ is closed, there is some
$\delta_0>0$ such that $\|\vecp_1-\vecp_2\|+\varphi(\vecv_1,\vecv_2)>\delta_0$ 
for all $(\vecp_1,\vecv_1)\in C$, $(\vecp_2,\vecv_2)\in \partial\scrD$.
Hence for each $\delta\leq\delta_0$ the set
$M_\delta=\{\uvecS_1\in\S_1^{d-1}\col
(t\uvecS_1,\uvecS_1)\in\partial_\delta\scrD\}$ is contained in $U$,
and $\vol_{\S_1^{d-1}}(M_\delta)\leq\vol_{\S_1^{d-1}}(U)<\ve$.
\end{proof}

\begin{lem} \label{NPOSKEYLEMMA}
Given any $n\geq 1$, $t\geq 0$ and $\ve>0$, 
there exists some $\delta>0$ such that,
for every measurable subset $\scrD\subset\R^d\times\S_1^{d-1}$ with
$\bigl[\vol_{\R^d}\times\vol_{\S_1^{d-1}}\bigr](\scrD)\leq\delta$,
\begin{align} \notag
\bigl[(\vol_{\R^d})^n\times\vol_{\S_1^{d-1}}\bigr]
\Bigl(\Bigl\{(\vecS_1,\ldots,\vecS_{n},\uvecS_{n+1})\in
\R^{nd}\times\S_1^{d-1}\col
\sum_{k=1}^{n}\|\vecS_k\|< t, \hspace{60pt} &
\\ \label{NPOSKEYLEMMAINEQ}
\Bigl(\sum_{k=1}^{n}\vecS_k+\Bigl(t-\sum_{k=1}^{n}\|\vecS_k\|\Bigr)\uvecS_{n+1},
\uvecS_{n+1}\Bigr)\in\scrD\Bigr\}\Bigr) <\ve. &
\end{align}
\end{lem}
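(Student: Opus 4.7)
The key observation is that the quantity on the left-hand side of \eqref{NPOSKEYLEMMAINEQ} equals $\nu(\scrD)$, where $\nu$ is the pushforward under the map
\begin{align*}
F:\bigl\{(\vecS_1,\ldots,\vecS_n,\vecu)\in\R^{nd}\times\S_1^{d-1}:T_n<t\bigr\}&\to\R^d\times\S_1^{d-1},
\\
(\vecS_1,\ldots,\vecS_n,\vecu)&\mapsto\Bigl(\textstyle\sum_{k=1}^n\vecS_k+(t-T_n)\vecu,\:\vecu\Bigr),
\end{align*}
with $T_n:=\sum_{k=1}^n\|\vecS_k\|$, of the restriction of $(\vol_{\R^d})^n\times\vol_{\S_1^{d-1}}$ to the set $\{T_n<t\}$. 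The plan is to show that $\nu$ is a finite Borel measure on $\R^d\times\S_1^{d-1}$ that is absolutely continuous with respect to $\vol_{\R^d}\times\vol_{\S_1^{d-1}}$. The conclusion will then follow from the standard measure-theoretic fact that any such $\nu$ satisfies the desired uniform property: writing $d\nu=\phi\,d\vol_{\R^d}\,d\vol_{\S_1^{d-1}}$ with $\phi\in\L^1$, one picks $L>0$ so large that $\int \phi\cdot\chi_{\{\phi>L\}}<\ve/2$ and sets $\delta:=\ve/(2L)$; then for any Borel $\scrD$ with $(\vol_{\R^d}\times\vol_{\S_1^{d-1}})(\scrD)\leq\delta$ one gets $\nu(\scrD)\leq L\delta+\ve/2<\ve$.

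Finiteness of $\nu$ is elementary: $\nu(\R^d\times\S_1^{d-1})$ equals $\vol(\S_1^{d-1})$ times $\vol\{(\vecS_1,\ldots,\vecS_n):T_n<t\}$, and introducing polar coordinates $\vecS_k=r_k\vecw_k$ evaluates the latter as the finite Dirichlet-type integral $\vol(\S_1^{d-1})^n\cdot((d-1)!)^n\cdot t^{nd}/(nd)!$. For absolute continuity of $\nu$, by Fubini it is enough to show that for each fixed $\vecu\in\S_1^{d-1}$ and each fixed $\vecS_1,\ldots,\vecS_{n-1}\in\R^d$ with $T_{n-1}<t$, the partial map
\begin{align*}
\Phi:\bigl\{\vecS_n\in\R^d:\|\vecS_n\|<t'\bigr\}\to\R^d,\qquad
\vecS_n\mapsto A+\vecS_n+(t'-\|\vecS_n\|)\vecu
\end{align*}
(where $A:=\sum_{k=1}^{n-1}\vecS_k$ and $t':=t-T_{n-1}$) pulls back Lebesgue null sets to Lebesgue null sets. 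Writing $\vecS_n=r\vecw$ in polar coordinates, a direct calculation shows that the Jacobian determinant of $(r,\vecw)\mapsto A+r\vecw+(t'-r)\vecu$ equals $r^{d-1}|1-\vecw\cdot\vecu|$, which is nonzero off the ray $\R_{\geq 0}\vecu$. Moreover, the equation $\vecq-A=\vecS_n+(t'-\|\vecS_n\|)\vecu$ is explicitly solvable as $\|\vecS_n\|=\|\vecq-A-t'\vecu\|^2/\bigl[2(t'-(\vecq-A)\cdot\vecu)\bigr]$, exhibiting $\Phi$ as a $\C^\infty$ diffeomorphism from $\{\vecS_n:0<\|\vecS_n\|<t',\:\uvecS_n\neq\vecu\}$ onto $A+(\scrB_{t'}^d\setminus\{t'\vecu\})$. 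Since $\Phi^{-1}$ is locally Lipschitz on this open set (exhausted by compacta where the Jacobian stays away from zero), $|\Phi^{-1}(N)|=0$ whenever $|N|=0$, as required.

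The main point to be careful about is that the Jacobian $|1-\uvecS_n\cdot\vecu|$ degenerates along the singular ray $\R_{\geq 0}\vecu$, so the density $\phi$ of $\nu$ is in general unbounded near the ``target point'' $A+t'\vecu$. Consequently one cannot hope for a linear bound $\nu(\scrD)\ll(\vol_{\R^d}\times\vol_{\S_1^{d-1}})(\scrD)$, only the qualitative uniform absolute continuity recorded above; fortunately that is exactly what the lemma asks for.
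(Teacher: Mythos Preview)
Your proof is correct and follows the same core idea as the paper: the left-hand side is the pushforward measure $\nu=F_*\bigl[(\vol_{\R^d})^n\times\vol_{\S_1^{d-1}}\bigr]_{|\{T_n<t\}}$, and one shows $\nu$ is a finite measure absolutely continuous with respect to $\vol_{\R^d}\times\vol_{\S_1^{d-1}}$, whence the uniform $\delta$--$\ve$ statement follows by the standard argument you give (the paper cites \cite[Thm.\ 6.11]{Rudin} for this step). For $n=1$ the two proofs are essentially identical---the paper's map $\Theta_t$ is your $F$, and both verify it is a diffeomorphism off the singular ray.

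For $n\geq 2$ the arguments diverge slightly. The paper integrates out $\vecS_1,\ldots,\vecS_{n-1}$ first (keeping $\uvecS_{n+1}$ variable), reducing to the $n=1$ case with a shifted target and a parameter $t'=t-T_{n-1}$; this forces an additional uniformity-in-$t'$ argument, handled by the scaling relation $\Theta_{t'}^{-1}=L_{t'/t}\circ\Theta_t^{-1}\circ L_{t/t'}$ together with an a priori bound for small $t'$. Your route is more direct: you fix \emph{both} $\vecu=\uvecS_{n+1}$ and $\vecS_1,\ldots,\vecS_{n-1}$, establish absolute continuity slice-by-slice via the same diffeomorphism computation, and let Fubini assemble these into absolute continuity of the single finite measure $\nu$; no uniformity in $t'$ is then needed. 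This is a genuine (if modest) simplification.
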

\begin{proof}
We may assume $t>0$ since otherwise the left hand side of 
\eqref{NPOSKEYLEMMAINEQ} is zero.
Let us first treat the case $n=1$. Set 
\begin{align}
U_t=\{(\vecS_1,\uvecS_2)\in\scrB_t^d\times\S_1^{d-1}\col
\vecS_1\notin\R_{\geq 0}\uvecS_2\}
\end{align} 
and let $\Theta_t$ be the map
\begin{align}
\Theta_t:U_t\to\R^d\times\S_1^{d-1};\qquad
(\vecS_1,\uvecS_2)\mapsto
\bigl(\vecS_1+(t-\|\vecS_1\|)\uvecS_2,\uvecS_2\bigr).
\end{align}
Then since $\scrB_t^d\times\S_1^{d-1}\setminus U_t$ has measure zero,
the left hand side of \eqref{NPOSKEYLEMMAINEQ} equals
$\vol\bigl(\Theta_t^{-1}(\scrD)\bigr)$,
where we write $\vol:=\vol_{\R^d}\times\vol_{\S_1^{d-1}}$.
But one verifies that $\Theta_t$ is a diffeomorphism from $U_t$ onto
$\scrB_t^d\times\S_1^{d-1}$. Hence $\vol\circ\,\Theta_t^{-1}$
is a bounded measure on $\R^d\times\S_1^{d-1}$
which is absolutely continuous with respect to $\vol$.
This implies the desired claim (cf.\ \cite[Thm.\ 6.11]{Rudin}).

In the remaining case $n\geq 2$, the left hand side of 
\eqref{NPOSKEYLEMMAINEQ} can be expressed as
\begin{align}
\int_{\substack{(\vecS_1,\ldots,\vecS_{n-1})\in\R^{(n-1)d}\\ T_{n-1}<t}}
\vol\Bigl(\Theta_{t-T_{n-1}}^{-1}\Bigl(\scrD-\sum_{j=1}^{n-1}\vecS_j
\Bigr)\Bigr)
\,d\!\vol_{\R^d}(\vecS_1)\cdots d\!\vol_{\R^d}(\vecS_{n-1}),
\end{align}
where $\scrD-\vecq:=\{(\vecp-\vecq,\vecv)\col(\vecp,\vecv)\in\scrD\}$
(also recall $T_{n-1}:=\sum_{j=1}^{n-1} \|\vecS_j\|$).
Hence it suffices to prove that for any given $\ve_1>0$ we can choose
$\delta>0$ so small that for every measurable subset 
$\scrD\subset\R^d\times\S_1^{d-1}$ with $\vol(\scrD)\leq\delta$ 
\textit{and for every $t'\in (0,t]$,}
we have $\vol\bigl(\Theta_{t'}^{-1}(\scrD)\bigr)<\ve_1$.
To prove this, for $t'$ so small that $\vol(U_{t'})<\ve_1$ we use the a priori 
bound $\vol\bigl(\Theta_{t'}^{-1}(\scrD)\bigr)\leq\vol(U_{t'})$;
the remaining $t'$-interval can then be treated using the above discussion 
of $\vol\circ\,\Theta_t^{-1}$, together with the relation 
$\Theta_{t'}^{-1}=L_{t'/t}\circ\Theta_t^{-1}\circ L_{t/t'}$
where $L_x:\R^d\times\S_1^{d-1}\to\R^d\times\S_1^{d-1}$ is the map
$(\vecp,\vecv)\mapsto(x\vecp,\vecv)$.
\end{proof}

Given any $\vecn\in\Z^M_{\geq 0}$ and subsets 
$\scrD_1',\ldots,\scrD_M'\subset\T^1(\R^d)$, we set
$n=\max(n_1,\ldots,n_M)$ and
\begin{align} \label{ANDDDDEF}
& \scrA_\vecn(\scrD_1',\ldots,\scrD_M')
:=\big\{(\vecS_1,\ldots,\vecS_{n+1})\col \Xi_{n_j}(t_j) \in\scrD_j',\; T_{n_j}\leq t_j< T_{n_j+1}\;(j=1,\ldots,M) \big\},
\end{align}
a subset of $\R^{(n+1)d}$. 

\begin{lem} \label{KEYLEMMAFORTHMMICRO2}
Let $\scrD_1,\ldots,\scrD_M$ and $t_1,\ldots,t_M$ be as in 
Theorem \ref{secThmMicro2GS}, and assume furthermore that each set
$\scrD_j$ is bounded.
Then for each $\vecn\in\Z^M_{\geq 0}$ and $\ve>0$, there exists a choice of
subsets $\scrD_j',\scrD_j''\subset\T^1(\R^d)$ with
$\scrD_j'\subset\scrD_j\subset\scrD_j''$ ($j=1,\ldots,M$), such that the
following holds:
\begin{enumerate}
	\item[(i)] There is some $\delta>0$ such that 
$\scrN_\delta\scrD_j'\subset\scrD_j$ and $\scrN_\delta\scrD_j\subset\scrD_j''$.
        \item[(ii)] 
\mbox{$\mu_{\vecalf,\vecbeta,\lambda}^{(n+1)}(\scrA_\vecn(\scrD_1'',\ldots,\scrD_M''))
-\ve
<\mu_{\vecalf,\vecbeta,\lambda}^{(n+1)}(\scrA_\vecn(\scrD_1,\ldots,\scrD_M))
<\mu_{\vecalf,\vecbeta,\lambda}^{(n+1)}(\scrA_\vecn(\scrD_1',\ldots,\scrD_M'))
+\ve.$}
        \item[(iii)] Both $\scrA_\vecn(\scrD_1',\ldots,\scrD_M')$ and
$\scrA_\vecn(\scrD_1'',\ldots,\scrD_M'')$ have boundaries of Lebesgue 
measure zero.
\end{enumerate}
\end{lem}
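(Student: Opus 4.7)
The plan is to take $\scrD_j'$ and $\scrD_j''$ as, respectively, a distance-$r$ shrinking and expansion of $\scrD_j$ for some small $r>0$, chosen carefully so that (ii) and (iii) hold. Concretely, let $\phi_j(\vecp,\vecv):=\dist\bigl((\vecp,\vecv),\scrD_j^{\mathrm{c}}\bigr)-\dist\bigl((\vecp,\vecv),\scrD_j\bigr)$ be the signed distance (positive inside $\scrD_j$, negative outside) and set $U_j^s:=\{\phi_j>s\}$; then one takes $\scrD_j':=U_j^r\subset\scrD_j\subset U_j^{-r}=:\scrD_j''$. With $\delta:=r/2$ the nesting $\scrN_\delta\scrD_j'\subset\scrD_j$ and $\scrN_\delta\scrD_j\subset\scrD_j''$ is immediate, so (i) holds for any $r>0$.

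For (ii), as $r\downarrow 0$ the sets $\scrA_\vecn(U_1^{-r},\ldots,U_M^{-r})\setminus\scrA_\vecn(U_1^r,\ldots,U_M^r)$ decrease monotonically and their intersection is contained in $\bigcup_{j=1}^M B_j$, where
\[
B_j:=\bigl\{(\vecS_1,\ldots,\vecS_{n+1})\col\Xi_{n_j}(t_j)\in\partial\scrD_j,\:T_{n_j}\leq t_j\leq T_{n_j+1}\bigr\}.
\]
By continuity of the probability measure $\mu_{\vecalf,\vecbeta,\lambda}^{(n+1)}$ it therefore suffices to prove that $\mu_{\vecalf,\vecbeta,\lambda}^{(n+1)}(B_j)=0$ for every $j$. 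For $n_j=0$, $B_j$ is cut out by the condition that $\uvecS_1$ lie in $\{\uvecS\in\S_1^{d-1}\col(t_j\uvecS,\uvecS)\in\partial\scrD_j\}$, a set of $\vol_{\S_1^{d-1}}$-measure zero by $t_j\in\adm(\scrD_j)$; combined with $\lambda\ll\vol_{\S_1^{d-1}}$ and Remark~\ref{MUNPONEREL} this gives $\mu^{(n+1)}(B_j)=0$. For $n_j\geq 1$, I would integrate out $\|\vecS_{n_j+1}\|\geq t_j-T_{n_j}$ and all later vectors using \eqref{p-norm}, \eqref{PNLQ0EXPL} and Remark~\ref{MUNPONEREL}; the cancellation of $\|\vecS_{n_j+1}\|^{1-d}$ against the spherical-coordinate Jacobian $\xi^{d-1}$ shows that the marginal of $\mu_{\vecalf,\vecbeta,\lambda}^{(n+1)}$ on $(\vecS_1,\ldots,\vecS_{n_j},\uvecS_{n_j+1})\in\RR^{n_jd}\times\S_1^{d-1}$ is absolutely continuous with respect to $\vol_{\RR^d}^{n_j}\times\vol_{\S_1^{d-1}}$. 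Lemma~\ref{NPOSKEYLEMMA} applied with $\scrD=\partial\scrD_j$ (which is Lebesgue null by hypothesis) then yields that the projection of $B_j$ onto these coordinates is null for this marginal, whence $\mu^{(n+1)}(B_j)=0$.

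For (iii), the boundary $\partial\scrA_\vecn(\scrD_1',\ldots,\scrD_M')$ (and analogously the doubly-primed set) is contained in the finite union of the codimension-one loci $\{T_{n_j}=t_j\}$ and $\{T_{n_j+1}=t_j\}$ together with the level-set conditions $\{\Xi_{n_j}(t_j)\in\phi_j^{-1}(\pm r)\}$. The time-constraint loci are codimension-one in $\RR^{(n+1)d}$ and hence Lebesgue null. For the level-set contributions, Fubini applied to $\phi_j$ (respectively to the one-dimensional function $\uvecS\mapsto\phi_j(t_j\uvecS,\uvecS)$ on $\S_1^{d-1}$) shows that for a.e.\ $s\in\RR$, both the level set $\phi_j^{-1}(s)\subset\RR^d\times\S_1^{d-1}$ is Lebesgue null and the sphere section $\{\uvecS\col\phi_j(t_j\uvecS,\uvecS)=s\}$ is $\vol_{\S_1^{d-1}}$-null. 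Consequently, for a.e.\ $r>0$, $\partial U_j^{\pm r}$ is Lebesgue null and $t_j\in\adm(U_j^{\pm r})$ for every $j$. Picking $r>0$ in this co-null set and further small enough that the continuity-of-measure bound from the previous paragraph gives $\mu^{(n+1)}(\scrA_\vecn(U^{-r})\setminus\scrA_\vecn(U^r))<\ve$, the Lebesgue-null assertion in (iii) then follows by one further application of Lemma~\ref{NPOSKEYLEMMA} (to the $n_j\geq 1$ level-set contributions) and directly from admissibility (for the $n_j=0$ ones), together with the codimension argument for the time constraints.

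The principal technical obstacle lies in the absolute-continuity step used in the $n_j\geq 1$ half of (ii): one must carefully unpack the explicit product formula \eqref{PNLQ0EXPL} and apply \eqref{p-norm} iteratively to verify that integrating out the ``tail'' $(\|\vecS_{n_j+1}\|,\vecS_{n_j+2},\ldots,\vecS_{n+1})$ indeed produces a genuine density, with respect to $\vol_{\RR^d}^{n_j}\times\vol_{\S_1^{d-1}}$, on $(\vecS_1,\ldots,\vecS_{n_j},\uvecS_{n_j+1})$. Once this absolute continuity is secured, Lemma~\ref{NPOSKEYLEMMA} is tailored precisely to convert Lebesgue-nullity of $\partial\scrD_j$ into nullity of the trajectory preimage, and the remaining arguments are routine.
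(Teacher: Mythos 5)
Your argument is correct, but it reaches the lemma by a construction genuinely different from the paper's. The paper builds $\scrD_j'$ and $\scrD_j''$ as finite unions of product boxes (cubes in $\R^d$ times small spherical pieces chosen with null boundaries) from a $\delta$-grid, so that $\scrD_j\setminus\scrD_j'$ and $\scrD_j''\setminus\scrD_j$ lie inside the $(d+2)\delta$-neighborhood of $\partial\scrD_j$; claim (ii) is then deduced from the shrinking-neighborhood statements, Lemma \ref{NZEROKEYLEMMA} for indices with $n_j=0$ and Lemma \ref{NPOSKEYLEMMA} with small positive measure for $n_j\geq 1$, while (iii) holds for every $\delta$ by design of the boxes. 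You instead take sub- and super-level sets of the signed distance to $\scrD_j$, which gives (i) at once, reduce (ii) by monotone convergence to the nullity of the exact-boundary events $\{\Xi_{n_j}(t_j)\in\partial\scrD_j,\ T_{n_j}\leq t_j\}$ (so you need only $t_j\in\adm(\scrD_j)$ and the Lebesgue-nullity of $\partial\scrD_j$, and you can dispense with Lemma \ref{NZEROKEYLEMMA} altogether), and secure (iii) by genericity in $r$: all but countably many level values give level sets that are Lebesgue-null and whose sphere sections are $\vol_{\S_1^{d-1}}$-null. The measure-theoretic core is shared: Remark \ref{MUNPONEREL}, the absolute continuity of the pushforward of $\mu_{\vecalf,\vecbeta,\lambda}^{(n_j+1)}$ under $(\vecS_1,\ldots,\vecS_{n_j+1})\mapsto(\vecS_1,\ldots,\vecS_{n_j},\uvecS_{n_j+1})$, and Lemma \ref{NPOSKEYLEMMA} to transfer (zero or small) measure of subsets of $\T^1(\R^d)$ to the trajectory variables. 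Note that the absolute-continuity step you single out as the main obstacle is in fact immediate: since $\mu_{\vecalf,\vecbeta,\lambda}^{(n_j+1)}$ is absolutely continuous with respect to Lebesgue measure on $\R^{(n_j+1)d}$ and the preimage under this projection of a $\bigl[(\vol_{\R^d})^{n_j}\times\vol_{\S_1^{d-1}}\bigr]$-null set is Lebesgue-null (polar coordinates in the last variable), no unpacking of \eqref{PNLQ0EXPL} is needed; the paper likewise just asserts it. Your route buys a shorter proof of (ii) at the cost of the almost-every-$r$ selection in (iii); the paper's neighborhood estimates are slightly heavier but are of the type it reuses in the uniformity argument. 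Only cosmetic points remain: in (iii) one should also include the null loci $\{\vecS_{n_j+1}=\vecnull\}$ where $\uvecS_{n_j+1}$ is undefined, and observe that the degenerate constraint $T_0\leq t_j$ contributes no boundary; neither affects the argument.
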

\begin{proof}
Given any $\delta>0$ we let $\scrF_1$ be the family of all cubes of the form
$C=\delta\vecm+[0,\delta]^d$ with $\vecm\in\Z^d$,
and fix $\scrF_2$ to be a finite family of closed subsets
$D\subset\S_1^{d-1}$ with diameter $<\delta$
(with respect to the metric $\varphi$) and boundary of measure zero, 
such that $\cup_{D\in\scrF_2} D=\S_1^{d-1}$.
Let $\scrF$ be the family of all sets $B=C\times D\subset\T^1(\R^d)$
with $C\in\scrF_1$ and $D\in\scrF_2$.
Now for each $j\in\{1,\ldots,M\}$ we define 
$\scrD_j'$ as the union of 
all $B\in\scrF$ with $\scrN_\delta B\subset\scrD_j$;
and define $\scrD_j''$ as the union of 
all $B\in\scrF$ with $B\cap\scrN_\delta\scrD_j\neq\emptyset.$
Note that both these unions are finite, since $\scrD_j$ is bounded, and 
(i) holds by construction.
Also note that $\scrD_j\setminus\scrD_j'\subset\partial_{(d+2)\delta}\scrD_j$
and $\scrD_j''\setminus\scrD_j\subset\partial_{(d+2)\delta}\scrD_j$.

We now prove that (ii) holds provided that $\delta$ is sufficiently small.
Every $(\vecS_1,\ldots,\vecS_{n+1})\in
\scrA_\vecn(\scrD_1,\ldots,\scrD_M)\setminus
\scrA_\vecn(\scrD_1',\ldots,\scrD_M')$ satisfies
$\Xi_{n_j}(t_j)\in\scrD_j\setminus\scrD_j'$ for some $j$, and hence,
using also Remark \ref{MUNPONEREL} repeatedly,
\begin{align} \notag
&\mu_{\vecalf,\vecbeta,\lambda}^{(n+1)}
\Bigl(\scrA_\vecn(\scrD_1,\ldots,\scrD_M)\setminus
\scrA_\vecn(\scrD_1',\ldots,\scrD_M')\Bigr)
\\ \label{KEYLEMMAFORTHMMICRO2JSUM}
& \leq\sum_{j=1}^M \mu_{\vecalf,\vecbeta,\lambda}^{(n_j+1)}
\bigl(\bigl\{
(\vecS_1,\ldots,\vecS_{n_j+1})\in\R^{(n_j+1)d}\col
\Xi_{n_{j}}(t_{j})\in\partial_{(d+2)\delta}\scrD_j,\:
T_{n_{j}}\leq t_{j}\bigr\}\bigr).
\end{align}
For any $j\in\{1,\ldots,M\}$, if $n_j=0$ then 
(via \eqref{PNLQ0EXPLNONE})
the $j$th term in the above sum equals
\begin{align}
\lambda\bigl(\bigl\{\uvecS_1\in\S_1^{d-1}\col (t_j\uvecS_1,\uvecS_1)\in
\partial_{(d+2)\delta}\scrD_j\bigr\}\bigr),
\end{align}
which tends to zero as $\delta\to 0$ by Lemma \ref{NZEROKEYLEMMA}.
On the other hand if $n_j\geq 1$, then the $j$th term in 
\eqref{KEYLEMMAFORTHMMICRO2JSUM}
equals 
$\mu_{\vecalf,\vecbeta,\lambda}^{(n_j+1)}\bigl(\varpi^{-1}(\scrM)\bigr)$
where $\varpi$ is the projection
\begin{align}
\varpi:\R^{n_jd}\times(\R^d\setminus\{\bn\})\to\R^{n_jd}\times\S_1^{d-1};
\qquad (\vecS_1,\ldots,\vecS_{n+1})\mapsto
(\vecS_1,\ldots,\vecS_n,\uvecS_{n+1}),
\end{align}
and $\scrM$ is the same set as in the left hand side of
\eqref{NPOSKEYLEMMAINEQ}, with $n_j,$ $t_j$ and $\partial_{(d+2)\delta}\scrD_j$
in place of $n$, $t$, $\scrD$.
Now $\mu_{\vecalf,\vecbeta,\lambda}^{(n_j+1)}\circ \varpi^{-1}$ is a
Borel probability measure on $\R^{n_jd}\times\S_1^{d-1}$ which is
absolutely continuous with respect to 
$(\vol_{\R^d})^n\times\vol_{\S_1^{d-1}}$.
Also 
\begin{align}
\bigl[\vol_{\R^d}\times\vol_{\S_1^{d-1}}\bigr]
\bigl(\partial_{(d+2)\delta}\scrD_j\bigr)\to 0
\qquad\text{as }\:\delta\to 0,
\end{align}
since $\scrD_j$ is bounded and has boundary of measure zero.
Hence by Lemma \ref{NPOSKEYLEMMA}, 
\begin{align}
\mu_{\vecalf,\vecbeta,\lambda}^{(n_j+1)}\bigl(\varpi^{-1}(\scrM)\bigr)\to 0
\qquad\text{as }\:\delta\to 0.
\end{align}
In conclusion we see that for $\delta$ sufficiently small,
the sum in \eqref{KEYLEMMAFORTHMMICRO2JSUM} must be $<\ve$,
so that the second inequality in (ii) holds.
Similarly, using 
$\scrD_j''\setminus\scrD_j\subset\partial_{(d+2)\delta}\scrD_j$, 
one proves that also the first inequality in (ii) must hold
for $\delta$ sufficiently small.

The property (iii) is proved in a similar way, making use the fact that
each set $\scrD_j'$ and $\scrD_j''$ is a finite union of sets $B\in\scrF$,
and that, by construction, each such set $B$ satisfies\linebreak
$\vol_{\S_1^{d-1}}\bigl(
\bigl\{\uvecS_1\in\S_1^{d-1}\col (t\uvecS_1,\uvecS_1)\in\partial B\bigr\}\bigr)
=0$ and 
$\bigl[\vol_{\R^d}\times\vol_{\S_1^{d-1}}\bigr]\bigl(\partial B\bigr)=0$.
\end{proof}

\subsection{Proof of Theorem \ref{secThmMicro2GS}} \label{ThmMicro2GSpfsec}
\subsubsection{Convergence} \label{ThmMicro2GSpfsecsubsec1}
Note that both sides of \eqref{secThm-eq2GS} remain unchanged if we replace
$\scrD_j$ with $\{(\vecq,\vecv)\in\scrD_j\col \|\vecq\|<t_j+1\}$ for
each $j=1,\ldots,M$. Hence from now on we may assume without loss of generality
that \textit{each set $\scrD_j$ is bounded.}
Writing $\vecv_k=\vecv_k(\vecq_0+\rho\vecbeta(\vecv_0),\vecv_0;\rho)$,
$\vecs_k=\vecs_k(\vecq_0+\rho\vecbeta(\vecv_0),\vecv_0;\rho)$
and $T_n=T_n(\vecq_0+\rho\vecbeta(\vecv_0),\vecv_0;\rho)
:=\rho^{d-1}\sum_{k=1}^n\|\vecs_k\|$,
the left hand side of \eqref{secThm-eq2GS} may be expressed as
$\lim_{\rho\to 0}\sum_{\vecn\in\Z^M_{\geq 0}} \lambda(\scrS_{\vecn,\rho})$,
where
\begin{align} \notag
\scrS_{\vecn,\rho}=
\Big\{ \vecv_0\in \S_1^{d-1} \col
\Bigl(\rho^{d-1}\vecq_0+\rho^d\vecbeta(\vecv_0)
+\sum_{k=1}^{n_j}\rho^{d-1}\vecs_k
+ (t_j-T_{n_j}) \vecv_{n_j}, \vecv_{n_j}\Bigr) \in\scrD_j 
& ,
\\ \label{ANRHODEF}
T_{n_j}\leq t_j <T_{n_j+1}\: (j=1,\ldots,M) \Big\} & .
\end{align}
The right hand side of \eqref{secThm-eq2GS} is, by definition, 
\begin{align} \label{secThmMicroproof2}
\PP_{\vecalf,\vecbeta,\lambda}\big(\Xi(t_1)\in\scrD_1,\ldots, \Xi(t_M)\in\scrD_M\big)=
\sum_{\vecn\in\Z^M_{\geq 0}}
\mu_{\vecalf,\vecbeta,\lambda}^{(n+1)}(\scrA_\vecn(\scrD_1,\ldots,\scrD_M)).
\end{align}
By Lemma \ref{diffLem}, for any given $\ve>0$ there
is a finite subset $S\subset\Z_{\geq 0}^M$ such that 
$\sum_{\vecn\in \Z_{\geq 0}^M\setminus S} \lambda(\scrS_{\vecn,\rho})<\ve$ 
holds for all sufficiently small $\rho>0$.
Hence to prove \eqref{secThm-eq2GS}, and also deduce that the
sum in \eqref{secThmMicroproof2} is indeed convergent,
it suffices to prove that 
for every \textit{fixed} $\vecn\in\Z_{\geq 0}^M$, we have
\begin{align} \label{secThmMicroproof1}
\lim_{\rho\to 0} \lambda(\scrS_{\vecn,\rho})
= \mu_{\vecalf,\vecbeta,\lambda}^{(n+1)}(\scrA_\vecn(\scrD_1,\ldots,\scrD_M)).
\end{align}

To prove \eqref{secThmMicroproof1}, let $\ve>0$ be given.
Take $\scrD_j',\scrD_j''\subset\T^1(\R^d)$ and $\delta>0$ so that all the
claims in Lemma \ref{KEYLEMMAFORTHMMICRO2} hold. %
By Lemma \ref{KEYLEMMAFORTHMMICRO2} (i), if $\rho>0$ is so small that
$\|\rho^{d-1}\vecq_0+\rho^d\vecbeta(\vecv_0)\|<\delta$ for all 
$\vecv_0\in\S_1^{d-1}$, then
\begin{align} \notag
\big\{ \vecv_0\in \S_1^{d-1} : 
(\vecs_1,\ldots,\vecs_{n+1})\in\rho^{-(d-1)}
\scrA_\vecn(\scrD_1',\ldots,\scrD_M')\big\}
\hspace{130pt} &
\\ \label{INCLUSIONREL}
\subset \scrS_{\vecn,\rho} \subset \big\{ \vecv_0\in \S_1^{d-1} : 
(\vecs_1,\ldots,\vecs_{n+1})\in\rho^{-(d-1)}
\scrA_\vecn(\scrD_1'',\ldots,\scrD_M'')\big\} &.
\end{align}
Hence by Theorem \ref{secThmMicroGS}, using also
Lemma \ref{KEYLEMMAFORTHMMICRO2} (ii) and (iii), we have
\begin{align} \label{secThmMicroproof1a}
& \limsup_{\rho\to 0} \lambda(\scrS_{\vecn,\rho})
\leq \mu_{\vecalf,\vecbeta,\lambda}^{(n+1)}
(\scrA_\vecn(\scrD_1'',\ldots,\scrD_M''))
< \mu_{\vecalf,\vecbeta,\lambda}^{(n+1)}
(\scrA_\vecn(\scrD_1,\ldots,\scrD_M))+\ve;
\\ \label{secThmMicroproof1b}
& \liminf_{\rho\to 0} \lambda(\scrS_{\vecn,\rho})
\geq \mu_{\vecalf,\vecbeta,\lambda}^{(n+1)}
(\scrA_\vecn(\scrD_1',\ldots,\scrD_M'))
> \mu_{\vecalf,\vecbeta,\lambda}^{(n+1)}
(\scrA_\vecn(\scrD_1,\ldots,\scrD_M))-\ve.
\end{align}
Since this is true for each $\ve>0$ we have now proved 
\eqref{secThmMicroproof1}.

\subsubsection{Uniformity} \label{ThmMicro2GSpfsecsubsec2}
We now turn to the statement about uniformity in \eqref{secThm-eq2GS}. 
By similar arguments as above one proves that
the right hand side of \eqref{secThm-eq2GS} is continuous as a
function of $(t_1,\ldots,t_M)$
at each point of $\adm(\scrD_1)\times\ldots\times\adm(\scrD_M)$.
Hence the desired statement about 
uniform convergence for $(t_1,\ldots,t_M)$ in compact 
subsets of $\adm(\scrD_1)\times\ldots\times\adm(\scrD_M)$ 
will follow if we can prove that
\begin{multline} \label{secThm-eq2GS-unif}
	\lim_{\rho\to 0}\lambda\bigl( \bigl\{ \vecv_0\in \S_1^{d-1} : \bigl(\rho^{d-1}\vecq(\rho^{-(d-1)}t_j(\rho)),\vecv(\rho^{-(d-1)} t_j(\rho))\bigr) \in\scrD_j, \; j=1,\ldots,M \bigr\} \bigr) \\
= \PP_{\vecalf,\vecbeta,\lambda}\big(\Xi(t_1)\in\scrD_1,\ldots, \Xi(t_M)\in\scrD_M\big) .
\end{multline}
holds for any functions $t_j(\rho)$ from $\R_{>0}$ to $\R_{\geq 0}$ satisfying
$t_j=\lim_{\rho\to 0}t_j(\rho)\in\adm(\scrD_j)$.
By Lemma \ref{diffLem}, we see that it suffices to prove that
\eqref{secThmMicroproof1} 
holds for any fixed $\vecn\in\Z_{\geq 0}^M$ and with
$\scrS_{\vecn,\rho}$ redefined using \eqref{ANRHODEF} with each 
``$t_j$'' replaced by ``$t_j(\rho)$''.

To this end, let us define $\scrE_\rho$ to be the 
set of those $\vecv_0\in\S_1^{d-1}$ for which there is some $j$ such that
exactly one of the two numbers $t_j(\rho)$, $t_j$
lies in %
$\bigl[T_{n_j},T_{n_j+1}\bigr)$.
The point of this is that if we keep $\rho>0$ so small that
$\|\rho^{d-1}\vecq_0+\rho^d\vecbeta(\vecv_0)\|+|t_j(\rho)-t_j|<\delta$ 
for all $j$ and $\vecv_0$, then for our redefined $\scrS_{\vecn,\rho}$ the inclusions 
in \eqref{INCLUSIONREL} \textit{can only fail for 
vectors $\vecv_0\in\scrE_\rho$.}
Now $\lim_{\rho\to 0} \lambda(\scrE_\rho)=0$,
as we see by applying Theorem \ref{secThmMicroGS} to the sets
\begin{align}
\scrA_\delta:=  %
\cup_{j=1}^M \cup_{k\in\{n_j,n_j+1\}\setminus\{0\}}
\bigl\{(\vecS_1,\ldots,\vecS_{n+1})\in\R^{(n+1)d}\col T_k\in
(t_j-\delta,t_j+\delta)\bigr\}
\end{align}
for $\delta=\frac 11, \frac 12, \frac 13,\ldots$.
(Indeed, note that $\lim_{\delta\to 0}
\mu_{\vecalf,\vecbeta,\lambda}^{(n+1)}(\scrA_\delta)=0$,
since $\mu_{\vecalf,\vecbeta,\lambda}^{(n+1)}$ is bounded and absolutely
continuous with respect to $(\vol_{\R^d})^{n+1}$.)
Using these observations, the proof of \eqref{secThmMicroproof1}
carries over to the present situation.
This completes the proof of Theorem \ref{secThmMicro2GS}.\hfill$\square$

\subsection{A counterexample} \label{ADMREMARKSEC}
We now give an example to show that the condition
$t_j\in\adm(\scrD_j)$ in the statement of Theorem \ref{secThmMicro2GS}
(or Theorem \ref{secThmMicro2}) cannot be disposed with.
Suppose $M=1$, $t_1>0$ and $\scrD_1=\scrB_{t_1}^d\times\S_1^{d-1}$.
Then as %
in the above proof we have 
\begin{align} \label{ADMREMARKPROOF0}
\sum_{n \geq 1} \lambda(\scrS_{(n),\rho})\to\sum_{n\geq 1}
\mu_{\vecalf,\vecbeta,\lambda}^{(n+1)}(\scrA_{(n)}(\scrD_1)) \qquad
\text{as }\:\rho\to 0.
\end{align}
However, in general we have
\begin{align} \label{ADMREMARKPROOF1}
\lambda(\scrS_{(0),\rho})\not\to
\mu_{\vecalf,\vecbeta,\lambda}^{(1)}(\scrA_{(0)}(\scrD_1)) \qquad
\text{as }\:\rho\to 0.
\end{align}
Indeed, there are many choices of $\vecq_0$, $\vecbeta$ and 
$\lambda$ such that 
$\rho^{d-1}\vecq_0+\rho^d\vecbeta(\vecv_0)+t_1\vecv_0\in\scrB_{t_1}^d$ holds
for all sufficiently small $\rho>0$ and all 
$\vecv_0$ in the support of $\lambda$, 
and in this case we have
\begin{align}
\lambda(\scrS_{(0),\rho}) &=\lambda\bigl(\bigl\{\vecv_0\in\S_1^{d-1}\col
(\rho^{d-1}\vecq_0+\rho^d\vecbeta(\vecv_0)+t_1\vecv_0,\vecv_0)\in\scrD_1,
\: \|\vecs_1\|>\rho^{-(d+1)}t_1\bigr\}\bigr)
\\ \notag
&=\lambda\bigl(\bigl\{\vecv_0\in\S_1^{d-1}\col 
\rho^{d-1}\tau_1(\vecq_0+\rho\vecbeta(\vecv_0),\vecv_0;\rho)>t_1\bigr\}\bigr)
\to \int_{t_1}^\infty \Phi_{\vecalf,\vecbeta}(\xi)\,d\xi,
\end{align}
where $\vecalf=-\vecq_0 M_0^{-1}$, cf.\ \cite[Cor.\ 4.2]{partI}.
This limit is in general non-zero. On the other hand
we have $\scrA_{(0)}(\scrD_1)=\emptyset$ since
$(t_1\uvecS_1,\uvecS_1)\notin\scrD_1$ for all $\uvecS_1\in\S_1^{d-1}$,
and this proves \eqref{ADMREMARKPROOF1}.
Combining \eqref{ADMREMARKPROOF0} and \eqref{ADMREMARKPROOF1} we see that
the limit relation \eqref{secThm-eq2GS} \textit{fails} for our $M=1$, $\scrD_1$,
$t_1$ and many choices of $\vecq_0,\vecbeta,\lambda$.

Note that we may take $\vecbeta\equiv\bn$ above, i.e.\ there are many choices
of $\vecq_0$ and $\lambda$ such that 
$\rho^{d-1}\vecq_0+t_1\vecv_0\in\scrB_{t_1}^d$ holds
for all sufficiently small $\rho>0$ and all $\vecv_0$ in the support 
of $\lambda$; thus the above example applies in particular to the situation in 
Theorem \ref{secThmMicro2}.

\subsection{Macroscopic initial conditions}
Finally we discuss the proof of Theorem \ref{secThmMacro2},
again in the setting of a general scattering map.
(The statement of the theorem remains the same. In the definition of the
limiting stochastic process we use 
$\mu_{\Lambda}^{(n)}$ from \eqref{muLambdandefGS}.)

The basic strategy of the proof is to mimic the proof of 
Theorem \ref{secThmMicro2GS} given in 
sections \ref{FOURLEMMASSEC}--\ref{ThmMicro2GSpfsec},
using Theorem \ref{secThmMacroGS} in place of Theorem \ref{secThmMicroGS}.
We will here only point out the main differences.

Both Lemma \ref{NZEROKEYLEMMA} and Lemma \ref{NPOSKEYLEMMA} can in the 
present case be replaced by the much simpler
\begin{lem} \label{NPOSKEYLEMMAMACRO}
Given any $n\geq 0$, $t\geq 0$ and $\ve>0$, 
there exists some $\delta>0$ such that,
for every measurable subset $\scrD\subset\R^d\times\S_1^{d-1}$ with
$\bigl[\vol_{\R^d}\times\vol_{\S_1^{d-1}}\bigr](\scrD)\leq\delta$,
\begin{align} \notag
\bigl[(\vol_{\R^d})^n\times\vol_{\S_1^{d-1}}\bigr]
\Bigl(\Bigl\{(\vecQ_0,\vecS_1,\ldots,\vecS_{n},\uvecS_{n+1})\in
\R^d\times\R^{nd}\times\S_1^{d-1}\col
\sum_{k=1}^{n}\|\vecS_k\|\leq t, \hspace{40pt} &
\\ \label{NPOSKEYLEMMAMACROINEQ}
\Bigl(\vecQ_0+\sum_{k=1}^{n}\vecS_k+(t-\sum_{k=1}^{n}\|\vecS_k\|)\uvecS_{n+1},
\uvecS_{n+1}\Bigr)\in\scrD\Bigr\}\Bigr) <\ve. &
\end{align}
\end{lem}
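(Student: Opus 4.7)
The plan is to exploit the extra integration over $\vecQ_0\in\R^d$, which trivializes the argument compared to its microscopic counterpart Lemma \ref{NPOSKEYLEMMA}. The key observation is that for each fixed tuple $(\vecS_1,\ldots,\vecS_n,\uvecS_{n+1})$ the map
\begin{equation*}
\vecQ_0\mapsto\vecQ_0+\sum_{k=1}^n\vecS_k+\Bigl(t-\sum_{k=1}^n\|\vecS_k\|\Bigr)\uvecS_{n+1}
\end{equation*}
is merely a translation of $\R^d$, hence preserves Lebesgue measure, so that after integration in $\vecQ_0$ the indicator of $\scrD$ becomes ``independent'' of $(\vecS_1,\ldots,\vecS_n)$.

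I would apply Fubini's theorem to perform the $\vecQ_0$-integration first. For each fixed $(\vecS_1,\ldots,\vecS_n,\uvecS_{n+1})$ with $\sum_k\|\vecS_k\|\leq t$, translation invariance yields
\begin{equation*}
\int_{\R^d}\chi_\scrD\bigl(\vecQ_0+\vecc,\uvecS_{n+1}\bigr)\,d\!\vol_{\R^d}(\vecQ_0)=\vol_{\R^d}(\scrD_{\uvecS_{n+1}}),
\end{equation*}
where $\vecc:=\sum_k\vecS_k+(t-\sum_k\|\vecS_k\|)\uvecS_{n+1}$ and $\scrD_\vecu:=\{\vecQ\in\R^d\col(\vecQ,\vecu)\in\scrD\}$ denotes the $\vecu$-slice of $\scrD$. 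Crucially, this inner integral depends on neither $(\vecS_1,\ldots,\vecS_n)$ nor $t$.

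Integrating out the remaining variables, the left-hand side of \eqref{NPOSKEYLEMMAMACROINEQ} equals
\begin{equation*}
C_n(t)\cdot\int_{\S_1^{d-1}}\vol_{\R^d}(\scrD_{\uvecS_{n+1}})\,d\!\vol_{\S_1^{d-1}}(\uvecS_{n+1})
=C_n(t)\cdot\bigl[\vol_{\R^d}\times\vol_{\S_1^{d-1}}\bigr](\scrD),
\end{equation*}
where $C_0(t):=1$ and, for $n\geq 1$,
\begin{equation*}
C_n(t):=\vol_{\R^{nd}}\Bigl(\Bigl\{(\vecS_1,\ldots,\vecS_n)\in\R^{nd}\col\sum_{k=1}^n\|\vecS_k\|\leq t\Bigr\}\Bigr)<\infty.
\end{equation*}
Setting $\delta:=\ve/(1+C_n(t))$ then gives the claim. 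There is essentially no obstacle here: the linear dependence on $[\vol_{\R^d}\times\vol_{\S_1^{d-1}}](\scrD)$ makes absolute continuity automatic, in contrast to Lemma \ref{NPOSKEYLEMMA} where one had to analyze the push-forward of $\vol_{\R^d}\times\vol_{\S_1^{d-1}}$ under the nontrivial diffeomorphism $\Theta_t$.
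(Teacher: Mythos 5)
Your proof is correct and is essentially the paper's own argument: the paper also observes that, by Fubini and translation invariance in $\vecQ_0$, the left-hand side equals $\bigl[\vol_{\R^d}\times\vol_{\S_1^{d-1}}\bigr](\scrD)$ times a finite constant depending only on $n$ and $t$, whence the choice of $\delta$ is immediate. Your write-up just makes this one-line computation explicit.
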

\begin{proof}
By Fubini's Theorem the left hand side is equal to 
$\bigl[\vol_{\R^d}\times\vol_{\S_1^{d-1}}\bigr](\scrD)$ times a finite
constant which only depends on $n$ and $t$.
\end{proof}
We replace the definition \eqref{ANDDDDEF} by 
\begin{align} \label{ANDDDDEFMACRO}
& \scrA_\vecn(\scrD_1',\ldots,\scrD_M')
:=\big\{(\vecQ_0,\vecS_1,\ldots,\vecS_{n+1})\col \Xi_{n_j}(t_j) \in\scrD_j',\; T_{n_j}\leq t_j< T_{n_j+1}\;(j=1,\ldots,M) \big\},
\end{align}
a subset of $\R^d\times\R^{(n+1)d}$. 
(Recall that $\Xi_n(t)$ is now given by \eqref{XNDEFMACRO}.)

Now Lemma \ref{KEYLEMMAFORTHMMICRO2} and the 
discussion in Section \ref{ThmMicro2GSpfsec} carries over with only
few and obvious changes. 
By a simple approximation argument we may assume from the start
that $\Lambda$ has compact support; using this we may then also assume
without loss of generality that each set $\scrD_j$ is bounded, as before.
The proof of the convergence (Sec.\ \ref{ThmMicro2GSpfsecsubsec1}) 
takes a somewhat simpler shape in the present case, since
\eqref{ANRHODEF} is now replaced by the exact identity
\begin{align} \label{ANRHODEFMACRO}
\scrS_{\vecn,\rho}=
\Big\{(\vecQ_0,\vecV_0)\in\T^1(\rho^{d-1}\scrK_\rho)\col
(\vecQ_0,\vecS_1,\ldots,\vecS_{n+1})\in\scrA_\vecn(\scrD_1,\ldots,\scrD_M)
\Bigr\},
\end{align}
with $\vecS_k=\vecS_k(\vecQ_0,\vecV_0;\rho)$.

\section{A continuous-time Markov process}\label{secExtended}

As mentioned in Section \ref{secMacroscopic}, the operator $L_t$ describing the dynamics of a particle cloud in the Boltzmann-Grad limit does not form a semigroup, and thus the stochastic process $\Xi(t)$ is not Markovian. To overcome this difficulty, we set 
\begin{align}
\XX:=
\bigl\{(\vecQ,\vecV,\scrT,\vecV_+)\in
\T^1(\RR^d)\times\RR_{\geq 0}\times\S_1^{d-1}\col
\varphi(\vecV,\vecV_+)>B_\Theta\bigr\}
\end{align}
and extend phase space by the map
\begin{align}
\scrR:\T^1(\rho^{d-1}\scrK_\rho) \to \XX,
\qquad (\vecQ,\vecV) \mapsto  (\vecQ,\vecV,\scrT,\vecV_+),
\end{align}
where
\begin{equation}
	\scrT=\scrT(\vecQ,\vecV)=\|\vecS_1(\vecQ,\vecV;\rho)\|
\end{equation}
represents the free path length until the next scatterer, and $\vecV_+=\vecV_+(\vecQ,\vecV)$ is the velocity after the collision. 

For random initial data $(\vecQ_0,\vecV_0)$, distributed as before with respect to an absolutely continuous probability measure $\Lambda$, the dynamics in this extended phase space is again described, in the limit $\rho\to 0$, by a stochastic process $\widehat\Xi$ (cf.\ Theorem \ref{secThmMacro2-X} below) which is Markovian (Proposition \ref{SEMIGROUPLEM}). 

\subsection{Transcription of Theorem \ref{secThmMacro2}}

Set
\begin{align}
& \scrX^{(n)}=\bigl\{(\vecQ_0,\vecv_0,\xi_1,\ldots,\xi_n,\vecv_n)
\in\T^1(\RR^d)\times(\RR_{\geq 0}\times\S_1^{d-1})^n
\\ \notag
& \hspace{200pt}
\col
\varphi(\vecv_{j-1},\vecv_{j})>B_\Theta,\: j=1,\ldots,n\bigr\}
\end{align}
(thus $\scrX^{(1)}=\XX$)
and define the volume measure $\sigma_n$ on $\scrX^{(n)}$ by
\begin{align}
d\sigma_n(\vecQ_0,\vecv_0,\xi_1,\ldots,\xi_n,\vecv_n)
=d\!\vol_{\RR^d}(\vecQ_0) \, d\!\vol_{\S_1^{d-1}}(\vecv_0) \, d\xi_1
\,\cdots\,d\xi_n\, d\!\vol_{\S_1^{d-1}}(\vecv_n).
\end{align}
Given a probability density $f\in\L^1(\XX,\sigma_1)$ 
we define for every $n\geq 1$ the probability measure $\nu_{f}^{(n)}$ on $\scrX^{(n)}$ by
\begin{multline} \label{NUFNDEF}
	\nu_{f}^{(n)}(\scrA):=\int_{\scrA} f(\vecQ_0,\vecv_0,\xi_1,\vecv_1)\prod_{j=1}^{n-1} p_{\vecnull,\vecbeta^+_{\vecv_{j-1}}}(\vecv_j,\xi_{j+1},\vecv_{j+1}) 
\, d\sigma_n(\vecQ_0,\vecv_0,\xi_1,\ldots,\xi_n,\vecv_n),
\end{multline}
for any Borel subset $\scrA\subset\scrX^{(n)}$.
We will use the shorthand notation $T_n=\xi_1+\ldots+\xi_n$. Let us define
\begin{equation} \label{XNDEFMACRO-X}
	\widehat\Xi_n(t):= \bigg( \vecQ_0+ \sum_{j=1}^n \xi_j\vecv_{j-1} + (t-T_n) \vecv_n, \vecv_n, T_{n+1}-t, \vecv_{n+1} \bigg) .
\end{equation}
The stochastic process $\widehat\Xi(t)$ is now characterized via the probability
\begin{multline} \label{PPLQ0LDEF-X}
		\PP_{f}\big(\widehat \Xi(t_1)\in\scrD_1,\ldots, \widehat \Xi(t_M)\in\scrD_M\big) 
		\\
	  := \sum_{\vecn\in\ZZ_{\geq 0}^M} \PP_{f}^{(\vecn)}\big(\widehat \Xi(t_1)\in\scrD_1,\ldots, \widehat\Xi(t_M)\in\scrD_M \text{ and } T_{n_1}\leq t_1<T_{n_1+1},\ldots, T_{n_M}\leq t_M<T_{n_M+1}  \big),
\end{multline}	  
where $\scrD_j\subset\XX$ %
are Borel subsets, and
\begin{multline} \label{PNLQ0LDEF-X}
	\PP_{f}^{(\vecn)}\big(\widehat\Xi(t_1)\in\scrD_1,\ldots, \widehat\Xi(t_M)\in\scrD_M \text{ and } T_{n_1}\leq t_1<T_{n_1+1},\ldots, T_{n_M}\leq t_M<T_{n_M+1} \big) \\
	:=
	\nu_{f}^{(n+1)}\big(\big\{(\vecQ_0,\vecv_0,\xi_1,\vecv_1\ldots,\xi_{n+1},\vecv_{n+1})\col \widehat\Xi_{n_j}(t_j) \in\scrD_j,\; T_{n_j}\leq t_j\;(j=1,\ldots,M) \big\}\big) ,
\end{multline}
with $n:=\max(n_1,\ldots,n_M)$.
Note that we have automatically 
$t_j\leq T_{n_j+1}$ in the right hand side of \eqref{PNLQ0LDEF-X},
by the definition of $\XX$, and the subset corresponding to $t_j= T_{n_j+1}$ 
of course has measure zero with respect to $\nu_{f}^{(n+1)}$.

The following theorem is an extension of Theorem \ref{secThmMacro2}. 
Set $\widehat F_t:=\scrR\circ F_t: \T^1(\rho^{d-1}\scrK_\rho)\to\scrX$.

\begin{thm}\label{secThmMacro2-X}
Fix a lattice $\scrL$ and let $\Lambda$ be a Borel probability measure on $\T^1(\RR^d)$ which is absolutely continuous with respect to Lebesgue measure. Then, for any $t_1,\ldots,t_M\in\RR_{\geq 0}$, and any subsets $\scrD_1,\ldots,\scrD_M\subset \XX$ with 
$\sigma_1(\partial\scrD_1)=\ldots=\sigma_1(\partial\scrD_M)=0$,
\begin{multline} \label{secThm-eq2-macro-X}
	\lim_{\rho\to 0}\Lambda\big(\big\{(\vecQ_0,\vecV_0)\in \T^1(\rho^{d-1}\scrK_\rho) : \widehat F_{t_1}(\vecQ_0,\vecV_0) \in \scrD_1,\ldots, \widehat F_{t_M}(\vecQ_0,\vecV_0) \in \scrD_M \big\}\big) \\
= \PP_{f}\big(\widehat\Xi(t_1)\in\scrD_1,\ldots, \widehat\Xi(t_M)\in\scrD_M\big) 
\end{multline}
where
\begin{equation}
	f(\vecQ,\vecV,\xi,\vecV_+)=\Lambda'(\vecQ,\vecV) p(\vecV,\xi,\vecV_+),
\end{equation}
for $p(\vecV,\xi,\vecV_+)$ as in Remark \ref{PALFBETCONTREM}.
The convergence is uniform for $t_1,\ldots,t_M$ in compact subsets of $\RR_{\geq 0}$. 
\end{thm}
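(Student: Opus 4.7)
The plan is to adapt the proof of Theorem~\ref{secThmMacro2} in the general-scattering setting to the extended phase space $\XX$. The key observation is that the extra observables $\scrT$ and $\vecV_+$ at time $t_j$ are determined by the path segment following the collision that precedes $t_j$: on the event that exactly $n_j$ collisions have occurred before time $t_j$, i.e., $T_{n_j}\leq t_j<T_{n_j+1}$ with $T_k=\sum_{\ell=1}^k\|\vecS_\ell\|$, one has
\begin{equation*}
\widehat F_{t_j}(\vecQ_0,\vecV_0)=\Bigl(\vecQ_0+\sum_{k=1}^{n_j}\vecS_k+(t_j-T_{n_j})\uvecS_{n_j+1},\:\uvecS_{n_j+1},\:T_{n_j+1}-t_j,\:\uvecS_{n_j+2}\Bigr),
\end{equation*}
so this is a continuous function of $(\vecQ_0,\vecS_1,\ldots,\vecS_{n_j+2})$ alone. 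Thus tracking the extended state requires exactly one more path segment than tracking $F_{t_j}$ itself.

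First I would reduce to the case when $\Lambda$ has compact support and each $\scrD_j$ is bounded, as in the proof of Theorem~\ref{secThmMacro2}. Splitting the left-hand side of \eqref{secThm-eq2-macro-X} as $\sum_{\vecn\in\Z_{\geq 0}^M}\Lambda(\scrS_{\vecn,\rho})$ indexed by the collision counts $\vecn=(n_1,\ldots,n_M)$, and writing $n=\max_j n_j$, I would define $\scrA_\vecn(\scrD_1,\ldots,\scrD_M)\subset\R^d\times\R^{(n+2)d}$ as the set of tuples $(\vecQ_0,\vecS_1,\ldots,\vecS_{n+2})$ for which both $T_{n_j}\leq t_j<T_{n_j+1}$ and the point displayed above lies in $\scrD_j$, for every $j$. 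A sandwich argument modeled on Lemma~\ref{KEYLEMMAFORTHMMICRO2} (with Lemma~\ref{NPOSKEYLEMMAMACRO} playing the role of Lemmas~\ref{NZEROKEYLEMMA} and~\ref{NPOSKEYLEMMA}), combined with Theorem~\ref{secThmMacroGS} applied to $(\vecQ_0,\vecS_1,\ldots,\vecS_{n+2})$, will yield
\begin{equation*}
\lim_{\rho\to 0}\Lambda(\scrS_{\vecn,\rho})=\mu_\Lambda^{(n+2)}\bigl(\scrA_\vecn(\scrD_1,\ldots,\scrD_M)\bigr)
\end{equation*}
for each fixed $\vecn$. The tail of the sum over $\vecn$ is controlled uniformly in $\rho$ by the macroscopic analog of Lemma~\ref{diffLem}.

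The crucial density identification is that $\mu_\Lambda^{(n+2)}(\scrA_\vecn(\scrD_1,\ldots,\scrD_M))$ equals the corresponding summand $\PP_f^{(\vecn)}(\,\cdots)$ appearing in \eqref{PNLQ0LDEF-X}. I would verify this by parametrising $\vecS_k=\xi_k\vecv_{k-1}$ (with Jacobian $\xi_k^{d-1}$) and invoking the explicit formula \eqref{PNLQ0EXPL} for $P^{(n+2)}$ (with $\vecalf\notin\Q^d$, by Remark~\ref{indi-remarkGS}); the factor $\prod_{k=1}^{n+2}\|\vecS_k\|^{1-d}=\prod_{k=1}^{n+2}\xi_k^{1-d}$ cancels the product of Jacobians, and combining the inner integral in \eqref{PNLQ0EXPL} over $\uvecS\in\S_1^{d-1}$ with the integral $\int_0^\infty d\xi_{n+2}$ gives $1$ by \eqref{p-norm}. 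Since $\scrA_\vecn$ depends on $\vecS_{n+2}$ only through its direction $\uvecS_{n+2}=\vecv_{n+1}$, this integration is legitimate, and what remains is exactly $\nu_f^{(n+1)}$ with $f(\vecQ_0,\vecv_0,\xi_1,\vecv_1)=\Lambda'(\vecQ_0,\vecv_0)\,p(\vecv_0,\xi_1,\vecv_1)$.

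The principal obstacle is the sandwich argument itself: one must verify that $\partial\scrA_\vecn(\scrD_1,\ldots,\scrD_M)$ has Lebesgue measure zero in $\R^d\times\R^{(n+2)d}$ given only $\sigma_1(\partial\scrD_j)=0$, and construct inner and outer approximations $\scrA_\vecn(\scrD_j'),\,\scrA_\vecn(\scrD_j'')$ whose $\mu_\Lambda^{(n+2)}$-measures pinch that of $\scrA_\vecn(\scrD_j)$ to within $\ve$. Following Lemma~\ref{KEYLEMMAFORTHMMICRO2}, this is achieved by a grid-cube decomposition in $\XX$ using a fine partition of all four coordinates $\vecQ,\vecV,\scrT,\vecV_+$. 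Absolute continuity of the pushforward of $\mu_\Lambda^{(n+2)}$ under the map $(\vecQ_0,\vecS_1,\ldots,\vecS_{n_j+2})\mapsto\widehat\Xi_{n_j}(t_j)$, supplied by Lemma~\ref{NPOSKEYLEMMAMACRO} applied in the extended state space, then converts the measure-zero boundary assumption on $\scrD_j$ into the desired boundary property of $\scrA_\vecn$. Uniformity in $(t_1,\ldots,t_M)$ on compact subsets of $\R_{\geq 0}^M$ follows by the same argument as in Section~\ref{ThmMicro2GSpfsecsubsec2}.
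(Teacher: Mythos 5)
Your proposal is correct, and its overall architecture (reduction to bounded sets and compactly supported $\Lambda$, decomposition by collision counts, a sandwich approximation in the spirit of Lemma \ref{KEYLEMMAFORTHMMICRO2}, tail control as in Lemma \ref{diffLem}, and the exact formula for $\widehat F_{t_j}$ in terms of the first $n_j+2$ segments) matches the paper's. The route differs in the key convergence input and in how the limiting measure is identified. The paper does not invoke Theorem \ref{secThmMacroGS} here: it proves a separate limit statement, \eqref{pathwayEq-macro-cor}, directly in the coordinates $(\vecQ_0,\vecv_0,\xi_1,\vecv_1,\ldots,\xi_n,\vecv_n)$ (its proof repeats the averaging argument of Theorem \ref{secThmMacroGS}, but starting from Theorem \ref{pathwayThm}), so the limiting density is literally the integrand defining $\nu_f^{(n)}$ with $f=\Lambda'\,p$, and no change of variables or marginalization is needed; it also replaces Lemma \ref{NPOSKEYLEMMAMACRO} by the exact computation \eqref{KEYLEMMAMACRO-X-bound}, whose left-hand side equals $\frac{t^n}{n!}\vol_{\S_1^{d-1}}(\scrV_{\vece_1})^n\sigma_1(\scrD)$. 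You instead keep Theorem \ref{secThmMacroGS} as a black box, at the cost of carrying the redundant variable $\|\vecS_{n_j+2}\|$: you pass from $\mu_\Lambda^{(n+2)}$ to $\nu_f^{(n+1)}$ by the polar substitution $\vecS_k=\xi_k\vecv_{k-1}$ (the Jacobians cancelling the factors $\|\vecS_k\|^{1-d}$ in \eqref{PNLQ0EXPL}) and by integrating out $\xi_{n+2}$ via \eqref{p-norm}, which is legitimate because the event depends on $\vecS_{n_j+2}$ only through its direction; this essentially reverses the derivation of \eqref{PNLQ0EXPL}, so both arguments rest on the same facts. The paper's formulation is marginally cleaner in that the admissibility condition is stated as $\sigma_n(\partial\scrA)=0$ in the coordinates in which $\widehat\Xi$ is defined, whereas you must additionally check that your grid-built sets $\scrA_\vecn(\scrD_1',\ldots)$ and $\scrA_\vecn(\scrD_1'',\ldots)$ have Lebesgue-null boundaries in segment coordinates (which transfers without difficulty, the polar map being a diffeomorphism off a null set); your ``extended-space Lemma \ref{NPOSKEYLEMMAMACRO}'' is exactly the paper's bound \eqref{KEYLEMMAMACRO-X-bound} and is immediate from Fubini.
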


\begin{proof}%
This is analogous to the proof of Theorem \ref{secThmMacro2},
using in place of Theorem \ref{secThmMacroGS} the fact that,
for any subset $\scrA\subset\scrX^{(n)}$
with $\sigma_n(\partial\scrA)=0$,
\begin{align} \notag
\lim_{\rho\to 0}  \Lambda\bigl(\bigl\{
(\vecQ_0,\vecV_0)\in\T^1(\rho^{d-1}\scrK_\rho)\col
\big(\vecQ_0,\vecV_0,  %
\rho^{d-1}\tau_1(\rho^{1-d}\vecQ_0,\vecV_0;\rho),
\vecv_1(\rho^{1-d}\vecQ_0,\vecV_0;\rho),\ldots, \hspace{10pt}&
\\ \label{pathwayEq-macro-cor}
\rho^{d-1}\tau_n(\rho^{1-d}\vecQ_0,\vecV_0;\rho),
\vecv_n(\rho^{1-d}\vecQ_0,\vecV_0;\rho)\big)\in\scrA\bigr\}\bigr) &
\\ \notag
=  \int_{\scrA}
p(\vecv_0,\xi_1,\vecv_1) \prod_{j=1}^{n-1}
p_{\bn,\vecbeta_{\vecv_{j-1}}^+}(\vecv_j,\xi_{j+1},\vecv_{j+1})
\Lambda'(\vecQ_0,\vecv_0)\,d\sigma_n(\vecQ_0,\vecv_0,\xi_1,\ldots,\xi_n,
\vecv_n)&.
\end{align}
The proof of this statement is almost identical to that of
Theorem \ref{secThmMacroGS}, with Theorem \ref{secThmMicroGS} replaced by Theorem
\ref{pathwayThm}.

The second main ingredient of the proof of Theorem \ref{secThmMacro2-X} is a substitute
of Lemma \ref{NPOSKEYLEMMAMACRO}, which we formulate as follows. Given any $n\geq 0$,
$t\geq 0$, $\ve>0$, there exists some $\delta>0$
such that, for every measurable subset $\scrD\subset\XX$
with $\sigma_1(\scrD)\leq\delta$,
\begin{align} \label{KEYLEMMAMACRO-X-bound}
\sigma_{n+1}
\Bigl(\Bigl\{(\vecQ_0,\vecv_0,\xi_1,\ldots,\xi_{n+1},\vecv_{n+1})\in\scrX^{
(n+1)}
\col \widehat{\Xi}_n(t)\in\scrD,\: T_n\leq t\Bigr\}\Bigr)<\ve.
\end{align}
To establish \eqref{KEYLEMMAMACRO-X-bound}, note that the left hand side equals
\begin{align}
\frac{t^n}{n!}
\vol_{\S_1^{d-1}}(\scrV_{\vece_1})^n \sigma_1(\scrD).
\end{align}
\end{proof}

\subsection{A semigroup of propagators}

We write $\Ll(\XX,\sigma_1)$ for the space of measurable functions
$f:\XX\to\R$ satisfying $\int_C \bigl | f \bigr | \, d\sigma_1<\infty$ for any
compact subset \mbox{$C\subset\overline{\scrX}$.}
In order to show that $\widehat\Xi(t)$ is a Markov process, 
for each $t\geq 0$ we introduce the operator 
\begin{equation}
	K_t : \Ll(\XX,\sigma_1) \to \Ll(\XX,\sigma_1)
\end{equation}
by the relation
\begin{equation}\label{Kt}
	\int_\scrD [K_t f](\vecQ,\vecV,\xi,\vecV_+)\, d\sigma_1(\vecQ,\vecV,\xi,\vecV_+)
= \PP_{f}(\widehat\Xi(t)\in\scrD)
\end{equation}
for every $f\in\Ll(\XX,\sigma_1)$ and every bounded Borel subset
$\scrD\subset\XX$.
The right hand side of \eqref{Kt} is defined by extending the definition of 
$\PP_f$ in \eqref{NUFNDEF}, \eqref{PPLQ0LDEF-X}, \eqref{PNLQ0LDEF-X}
for $f\in\Ll(\XX,\sigma_1)$: Note that for a given 
$f\in\Ll(\XX,\sigma_1)$, \eqref{NUFNDEF} defines $\nu_f^{(n)}$
as a signed Borel measure on any bounded open subset of $\scrX^{(n)}$,
and then \eqref{PPLQ0LDEF-X}, \eqref{PNLQ0LDEF-X} define
$\scrD\mapsto\PP_{f}(\widehat\Xi(t)\in\scrD)$ 
as a signed Borel measure on any bounded open subset of $\XX$.
Since this signed measure is absolutely continuous with respect to $\sigma_1$,
there exists a unique $K_t f\in \Ll(\XX,\sigma_1)$ such that
\eqref{Kt} holds for each bounded Borel subset $\scrD\subset\XX$.

If $f\in\L^1(\XX,\sigma_1)$ then
$\scrD\mapsto\PP_{f}(\widehat\Xi(t)\in\scrD)$ is in fact a signed Borel measure
on all of $\XX$, of total variation $\leq \|f\|_{\L^1}$.
Hence the restriction of $K_t$ to $\L^1(\XX,\sigma_1)$ maps into
$\L^1(\XX,\sigma_1)$, and gives a bounded linear operator 
on $\L^1(\XX,\sigma_1)$ of norm $\leq 1$.

Note that $K_t$ commutes with the translation operators $\{ T_\vecR : \vecR\in\RR^d\}$,
\begin{equation}
        [T_\vecR f](\vecQ,\vecV,\xi,\vecV_+) := f(\vecQ-\vecR,\vecV,\xi,\vecV_+),
\end{equation}
and with the rotation operators $\{ R_K : K\in \O(d)\}$,
\begin{equation}
        [R_K f](\vecQ,\vecV,\xi,\vecV_+) := f(\vecQ K,\vecV K,\xi,\vecV_+ K) ,
\end{equation}
cf.~Remark \ref{rot-inv}.

\begin{prop}\label{eigenlemma}
Suppose that the flow $F_t$ preserves the Liouville measure $\nu$
(cf.\ Remark~\ref{PRESERVELIOUVILLEREMARK}).
Then the function $f(\vecQ,\vecV,\xi,\vecV_+)=p(\vecV,\xi,\vecV_+)$
satisfies
\begin{equation}
	K_t f = f 
\end{equation}
for all $t\geq 0$.
\end{prop}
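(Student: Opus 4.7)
The plan is to relate $K_t$ to the Boltzmann--Grad limit of $\widehat F_t = \scrR\circ F_t$ via Theorem~\ref{secThmMacro2-X}, and then exploit the Liouville invariance of $F_t$ at finite~$\rho$. Concretely, I would fix a bounded Borel set $\scrD\subset\XX$ with $\sigma_1(\partial\scrD)=0$ contained in $\scrB_S^d\times\S_1^{d-1}\times\R_{\geq 0}\times\S_1^{d-1}$, and for each $R\geq S+t$ take $\Lambda_R$ to be the uniform probability measure on $\scrB_R^d\times\S_1^{d-1}$ with density $\Lambda'_R(\vecQ,\vecV)=c_R\chi_{\scrB_R^d}(\vecQ)$, and set $f_R:=\Lambda'_R\cdot p\in\L^1(\XX,\sigma_1)$. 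Theorem~\ref{secThmMacro2-X} then gives
\begin{equation*}
\int_\scrD K_t f_R\,d\sigma_1=\lim_{\rho\to 0}\Lambda_R\bigl(\widehat F_t^{-1}(\scrD)\bigr).
\end{equation*}

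The decisive step is a change of variables at finite $\rho$: since $\widehat F_t=\scrR\circ F_t$ and $F_t$ preserves $\nu$ by assumption,
\begin{equation*}
\Lambda_R\bigl(\widehat F_t^{-1}(\scrD)\bigr)=c_R\bigl|\scrR^{-1}(\scrD)\cap F_t(\scrB_R^d\times\S_1^{d-1})\bigr|.
\end{equation*}
Because trajectories move at unit speed, $F_t(\scrB_R^d\times\S_1^{d-1})\supset\scrB_{R-t}^d\times\S_1^{d-1}$, and the projection of $\scrR^{-1}(\scrD)$ onto the first factor lies in $\scrB_S^d$; thus for $R\geq S+t$ the intersection equals $\scrR^{-1}(\scrD)=\widehat F_0^{-1}(\scrD)$. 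Applying Theorem~\ref{secThmMacro2-X} a second time at $t=0$ (where $K_0=\id$, as is immediate from \eqref{PPLQ0LDEF-X}--\eqref{PNLQ0LDEF-X}) one concludes
\begin{equation*}
\int_\scrD K_t f_R\,d\sigma_1=\int_\scrD f_R\,d\sigma_1=c_R\int_\scrD p\,d\sigma_1,
\end{equation*}
using that $\chi_{\scrB_R^d}(\vecQ)=1$ on $\scrD$ for $R\geq S$. Dividing by $c_R$ and invoking linearity of $K_t$ yields $\int_\scrD K_t[\chi_{\scrB_R^d}p]\,d\sigma_1=\int_\scrD p\,d\sigma_1$ whenever $R\geq S+t$.

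To pass from $\chi_{\scrB_R^d}p$ to $p$ itself I would verify that $K_t[\chi_{\R^d\setminus\scrB_R^d}p]$ vanishes on $\scrD$ when $R>S+t$. Running the same argument with the annular density $c\chi_{\scrB_{R'}^d\setminus\scrB_R^d}$ in place of $c_R\chi_{\scrB_R^d}$: every preimage in $\widehat F_t^{-1}(\scrD)$ has $\vecQ$-coordinate in $\scrB_{S+t}^d$, hence does not meet $\scrB_{R'}^d\setminus\scrB_R^d$, so $\int_\scrD K_t[\chi_{\scrB_{R'}^d\setminus\scrB_R^d}p]\,d\sigma_1=0$. Since $K_t$ is positivity-preserving and compatible with monotone limits of non-negative densities (a direct consequence of monotone convergence applied to the measures $\nu_f^{(n)}$), letting $R'\to\infty$ gives $\int_\scrD K_t[\chi_{\R^d\setminus\scrB_R^d}p]\,d\sigma_1=0$, and therefore $\int_\scrD K_tp\,d\sigma_1=\int_\scrD p\,d\sigma_1$. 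Since $\scrD$ was an arbitrary bounded test set with negligible boundary, $K_tp=p$ in $\Ll(\XX,\sigma_1)$.

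The main obstacle is the interplay between two limits: the Boltzmann--Grad limit $\rho\to 0$ that defines $K_t$ and the spatial cutoff limit $R\to\infty$ needed because $p$ is only locally integrable on $\XX$. The Liouville invariance of $F_t$ must be used at finite $\rho$, \emph{before} applying Theorem~\ref{secThmMacro2-X}, which converts the microscopic identity $\Lambda_R(\widehat F_t^{-1}\scrD)=\Lambda_R(\scrR^{-1}\scrD)$ (valid for all $R\geq S+t$) into the macroscopic identity $K_tf_R=f_R$ locally. Verifying that the cutoff error $K_t[(1-\chi_{\scrB_R^d})p]$ also obeys a finite-speed-of-propagation bound---so that it dies on every bounded $\scrD$ once $R$ is large enough---is what closes the argument.
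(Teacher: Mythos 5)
Your proposal is correct and takes essentially the same route as the paper's own proof: both combine Theorem~\ref{secThmMacro2-X} with the $F_t$-invariance of the Liouville measure at finite $\rho$ to conclude that the limiting quantity $\PP_f\big(\widehat\Xi(t)\in\scrD\big)$ is independent of $t$, hence equals its value at $t=0$, namely $\int_\scrD f\,d\sigma_1$, and then invoke the defining relation \eqref{Kt} for $K_t$. The only difference is that you make explicit, via the cutoff measures $\Lambda_R$, finite speed of propagation, and the annulus/monotone-convergence step, the localization argument that the paper leaves implicit when it applies the theorem with the (infinite) measure $\nu$ in place of a probability measure.
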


\begin{proof}
Eq.~\eqref{secThm-eq2-macro-X} implies that for any {\em bounded} set $\scrD$ with boundary of Lebesgue measure zero,
\begin{equation} \label{secThm-eq2-macro-XX}
	\nu\big(\big\{(\vecQ_0,\vecV_0)\in \T^1(\rho^{d-1}\scrK_\rho) : \widehat F_{t}(\vecQ_0,\vecV_0)\in \scrD \big\}\big) 
\to \PP_{f}\big(\widehat\Xi(t)\in\scrD\big) 
\end{equation}
as $\rho\to 0$, %
with $f$ as assumed above. The $F_t$-invariance of $\nu$ implies that the left hand side of \eqref{secThm-eq2-macro-XX} equals
\begin{equation} \label{secThm-eq2-macro-XXX}
	\nu\big(\big\{(\vecQ_0,\vecV_0)\in \T^1(\rho^{d-1}\scrK_\rho) : (\vecQ_0,\vecV_0,\scrT(\vecQ_0,\vecV_0),\vecV_+(\vecQ_0,\vecV_0))\in \scrD \big\}\big) ,
\end{equation}
for all $t\geq 0$, and hence
\begin{equation}
	\PP_{f}\big(\widehat\Xi(t)\in\scrD\big)=\PP_{f}\big(\widehat\Xi(0)\in\scrD\big)
	=\nu_f^{(1)}(\scrD)
=\int_\scrD f(\vecQ,\vecV,\xi,\vecV_+)\, d\sigma_1(\vecQ,\vecV,\xi,\vecV_+).
\end{equation}
Hence by \eqref{Kt}, $K_tf=f\in \Ll(\XX,\sigma_1)$.
\end{proof}

Using \eqref{Kt} we can write, more explicitly,
\begin{equation} \label{KTEXPLICIT}
K_t = \sum_{n=0}^\infty K_t^{(n)}	,
\end{equation}
where
\begin{multline}
	\int_\scrD [K_t^{(n)} f](\vecQ,\vecv_n,\xi_{n+1},\vecv_{n+1})\,
	d\!\vol_{\RR^d}(\vecQ)\,d\!\vol_{\S_1^{d-1}}(\vecv_n)\,d\xi_{n+1}\,d\!\vol_{\S_1^{d-1}}(\vecv_{n+1}) \\
	= \nu_{f}^{(n+1)}\big(\big\{(\vecQ_0,\vecv_0,\xi_1,\vecv_1\ldots,\xi_{n+1},\vecv_{n+1}): \widehat\Xi_{n}(t) \in\scrD,\; T_{n}\leq t\big\}\big) .
\end{multline}
So in the case $n=0$,
\begin{equation} \label{KT0EXPLDEF}
[K_t^{(0)} f](\vecQ,\vecv_0,\xi,\vecv_{1}) =
f\big(\vecQ-t\vecv_0,\vecv_0,\xi+t,\vecv_1\big) ,
\end{equation}
and for $n\geq 1$,
\begin{multline} \label{KTNEXPLDEF}
[K_t^{(n)} f](\vecQ,\vecv_n,\xi,\vecv_{n+1}) =
\int_{T_n\leq t}  f\bigg(\vecQ-\bigg(\sum_{j=1}^n \xi_j\vecv_{j-1} + (t-T_n) \vecv_n\bigg),\vecv_0,\xi_1,\vecv_1\bigg)\\ 
\times\prod_{j=1}^{n} p_{\vecnull,\vecbeta^+_{\vecv_{j-1}}}(\vecv_j,\xi_{j+1},\vecv_{j+1}) \,
d\!\vol_{\S_1^{d-1}}(\vecv_0)\,d\xi_1\cdots d\!\vol_{\S_1^{d-1}}(\vecv_{n-1})\,d\xi_n 
\end{multline}
with the shorthand $\xi_{n+1}:=\xi+t-T_n$.
We remark that when restricting $K_t$ and $K_t^{(n)}$ to %
$\L^1(\XX,\sigma_1)$,
the right hand side of \eqref{KTNEXPLDEF} can be estimated from above
as in the proof of Lemma~\ref{diffLem}, yielding
\begin{align} \label{KTNNORMBOUND}
\bigl\|K_t^{(n)}\bigr\|_{\L^1}
\leq 
\frac{t^{n-1}\vol(\scrB_1^{d-1})^{n-1}}{(n-1)!}
\qquad \text{for }\: n\geq 1,
\end{align}
and hence the sum \eqref{KTEXPLICIT} is uniformly operator convergent
on $\L^1(\XX,\sigma_1)$.

The following proposition implies $\widehat\Xi(t)$ is Markovian.

\begin{prop} \label{SEMIGROUPLEM}
The family $\{ K_t : t\geq 0\}$ forms a semigroup on 
$\Ll(\XX,\sigma_1)$, and a contraction semigroup on $\L^1(\XX,\sigma_1)$.
\end{prop}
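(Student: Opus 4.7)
The plan is to establish three properties: $K_0 = \id$, the semigroup identity $K_{s+t} = K_s K_t$, and the $\L^1$-contraction bound $\|K_t\|_{\L^1\to\L^1}\leq 1$. The identity $K_0 = \id$ is immediate from \eqref{KTEXPLICIT} and \eqref{KT0EXPLDEF}, since the constraint $T_n \leq 0$ in \eqref{KTNEXPLDEF} defines a set of $\sigma_n$-measure zero for each $n \geq 1$. The contraction bound has in effect already been noted in the paragraph following \eqref{Kt}: for $f \in \L^1(\XX,\sigma_1)$, replacing $f$ by $|f|$ in \eqref{NUFNDEF} and iteratively integrating out $(\xi_{j+1},\vecv_{j+1})$ by means of \eqref{p-norm} and its analogue for $p_{\vecalf,\vecbeta_0}$ gives $\sum_n \nu_{|f|}^{(n+1)}(\scrX^{(n+1)}) = \|f\|_{\L^1}$, and hence $\|K_t f\|_{\L^1}\leq \|f\|_{\L^1}$ via the Jordan decomposition of the signed measure in \eqref{Kt}.

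The main content is the semigroup identity, which I propose to prove by first establishing the finer pointwise decomposition
\begin{equation} \label{PLANCOMB}
K_{s+t}^{(m)} = \sum_{k=0}^m K_s^{(m-k)} \circ K_t^{(k)}, \qquad m \geq 0,\: s,t\geq 0.
\end{equation}
This reflects the unique partition of ``exactly $m$ collisions during $[0, s+t]$'' as ``exactly $k$ collisions during $[0, t]$ followed by exactly $m-k$ during $(t, s+t]$''. To verify \eqref{PLANCOMB} I would substitute \eqref{KT0EXPLDEF} and \eqref{KTNEXPLDEF} into the right-hand side: the inner $K_t^{(k)}f$ integrates against dummies $(\vecv_0,\xi_1,\ldots,\xi_k,\vecv_k)$ with $T_k\leq t$, and the outer $K_s^{(m-k)}$ then introduces further dummies $(\xi_{k+1}',\vecv_{k+1}',\ldots,\xi_m',\vecv_m')$ with $T_m'-T_k'\leq s$, where the ``boundary'' variable $\vecv_{k+1}'$ is identified with the $\vecv_{k+1}$ slot of the inner expression. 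The substitution $\xi_{k+1}:=(t-T_k)+\xi_{k+1}'$ splices the two free-flight pieces on either side of time $t$ into a single segment of length $\xi_{k+1}$. Tracking the position shift $\vecQ - \sum_j \xi_j\vecv_{j-1} - (s+t-T_m)\vecv_m$ inside $f$ and the remaining-time argument $\xi + s + t - T_m$ of the final $p_{\bn,\vecbeta^+_{\vecv_{m-1}}}$ factor, one finds that the $k$th summand in \eqref{PLANCOMB} is exactly the contribution of $\{T_k\leq t<T_{k+1}\}$ to \eqref{KTNEXPLDEF} for $K_{s+t}^{(m)}f$; summing over $k$ recovers the full domain $\{T_m\leq s+t\}$. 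The boundary cases $k=0$ and $k=m$ use the formula \eqref{KT0EXPLDEF} for one factor.

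Summing \eqref{PLANCOMB} over $m$ and interchanging the order of summation yields $K_{s+t}=K_s K_t$. On $\L^1(\XX,\sigma_1)$ the interchange is justified by the bounds \eqref{KTNNORMBOUND}, which make the double series $\sum_n\sum_k \|K_s^{(n)} K_t^{(k)} f\|_{\L^1}$ convergent. The extension to $\Ll(\XX,\sigma_1)$ proceeds by localization: for any bounded Borel $\scrD\subset\XX$, only finitely many terms of the series for $K_{s+t}f\vert_\scrD$ (and of the double series for $K_s K_t f\vert_\scrD$) contribute appreciably, and after restricting $f$ to a sufficiently large bounded subset of $\XX$ containing the ``backward cone'' of $\scrD$ over the time interval $[0, s+t]$, one is reduced to the $\L^1$ case. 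The main obstacle is purely computational, namely the index bookkeeping in verifying \eqref{PLANCOMB}: matching the $f$-argument, the $p_{\bn,\vecbeta^+_{\vecv_{j-1}}}$ arguments, the integration measure and the integration domain on both sides after the splicing change of variable. This is best handled by first writing out the case $m=1$ explicitly to fix notation before treating the general case.
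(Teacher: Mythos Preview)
Your combinatorial identity \eqref{PLANCOMB} and its verification via the splicing substitution $\xi_{k+1}=(t-T_k)+\xi_{k+1}'$ is exactly the paper's argument: the paper writes out the identity $K_{t-s}^{(n-m)}K_s^{(m)}f$ as the restriction of the $K_t^{(n)}$-integral to the slab $\{T_m\leq s<T_{m+1}\}$ (eq.~\eqref{KNMMMFREL}) and then sums over $m$. Your treatment of $K_0=\id$, the contraction bound, and the localization to pass from $\L^1$ to $\Ll$ are all fine.

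There is, however, a genuine omission. In this paper ``contraction semigroup on $\L^1$'' means a \emph{strongly continuous} one-parameter semigroup of contractions (cf.\ the reference to Davies~\cite{davies}), and the paper's proof devotes its final paragraph precisely to establishing that $t\mapsto K_tf$ is continuous from $\R_{\geq 0}$ into $\L^1(\XX,\sigma_1)$ for each fixed $f$. Your three-item plan does not address this. The paper's argument is short but not automatic: by the uniform bound \eqref{KTNNORMBOUND} it suffices to show continuity of each $t\mapsto K_t^{(n)}f$; for $n\geq 1$ one splits the integral \eqref{KTNEXPLDEF} at time $s+h$ into the piece with $\xi_1<h$ (whose $\L^1$-norm is $\leq\int_{\{\xi<h\}}|f|\,d\sigma_1\to 0$) and the remainder, which one recognizes via \eqref{KNMMMFREL} with $m=0$ as $K_s^{(n)}[K_h^{(0)}f]$, so that
\[
\bigl\|K_{s+h}^{(n)}f-K_s^{(n)}f\bigr\|_{\L^1}\leq \int_{\XX\cap\{\xi<h\}}|f|\,d\sigma_1+\bigl\|K_h^{(0)}f-f\bigr\|_{\L^1}\to 0
\]
uniformly in $s\geq 0$. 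You should add this step.
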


\begin{proof}
Note that, for $f\in \Ll(\XX,\sigma_1)$, $0\leq s\leq t$, $0\leq m<n$,
\begin{align} \notag
[K_{t-s}^{(n-m)}K_{s}^{(m)} f](\vecQ,\vecv_n,\xi,\vecv_{n+1})  =
\int_{\substack{T_n\leq t \\ T_m\leq s <T_{m+1}}}  f\bigg(\vecQ-\bigg(\sum_{j=1}^n \xi_j\vecv_{j-1} + (t-T_n) \vecv_n\bigg),\vecv_0,\xi_1,\vecv_1\bigg) &
\\ \label{KNMMMFREL}
\times\prod_{j=1}^{n} p_{\vecnull,\vecbeta^+_{\vecv_{j-1}}}(\vecv_j,\xi_{j+1},\vecv_{j+1})\,
d\!\vol_{\S_1^{d-1}}(\vecv_0)\,d\xi_1\cdots d\!\vol_{\S_1^{d-1}}(\vecv_{n-1})\,d\xi_n  & ,
\end{align}
and for $m=n>0$,
\begin{multline}
[K_{t-s}^{(0)}K_{s}^{(n)} f](\vecQ,\vecv_n,\xi,\vecv_{n+1}) =
\int_{T_n\leq s}  f\bigg(\vecQ-\bigg(\sum_{j=1}^n \xi_j\vecv_{j-1} + (t-T_n) \vecv_n\bigg),\vecv_0,\xi_1,\vecv_1\bigg)\\ 
\times\prod_{j=1}^{n} p_{\vecnull,\vecbeta^+_{\vecv_{j-1}}}(\vecv_j,\xi_{j+1},\vecv_{j+1})\,
d\!\vol_{\S_1^{d-1}}(\vecv_0)\,d\xi_1\cdots d\!\vol_{\S_1^{d-1}}(\vecv_{n-1})\,d\xi_n  .
\end{multline}
Therefore
\begin{equation}
	\sum_{m=0}^n K_{t-s}^{(n-m)} K_s^{(m)} = K_t^{(n)}
\end{equation}
and thus
\begin{equation}
	K_{t-s}K_s = \sum_{m,n=0}^\infty K_{t-s}^{(m)} K_s^{(n)}
	= \sum_{n=0}^\infty \sum_{m=0}^n K_{t-s}^{(n-m)} K_s^{(m)} =K_t. 
\end{equation}
This proves the semigroup property.

We now consider the action restricted to $\L^1(\XX,\sigma_1)$.
Since we have already noted that $\|K_t\|_{\L^1}\leq 1$ for all $t$, 
it only remains to prove that 
for any given $f\in\L^1(\XX,\sigma_1)$ the map
$\R_{\geq 0}\ni t\mapsto K_t f\in \L^1(\XX,\sigma_1)$ is continuous.
In view of \eqref{KTEXPLICIT} and \eqref{KTNNORMBOUND} it suffices to prove
that the map $t\mapsto K_t^{(n)}f$ is continuous, for each $n\geq 0$.
This is clear for $n=0$, thus we now assume $n\geq 1$.
Given any $s\geq 0$ and $h>0$ we split the integral \eqref{KTNEXPLDEF}
for $t=s+h$ as
\begin{align}
K_{s+h}^{(n)}f=I_1+I_2,
\end{align}
where $I_1$ corresponds to $\xi_1<h$ and $I_2$ corresponds to $\xi_1>h$. 
Repeated use of Remark \ref{PNORMREMARK} gives
\begin{align}
\|I_1\|_{\L^1}
\leq\int_{\XX\cap\{\xi<h\}} \bigl|f(\vecQ,\vecV,\xi,\vecV_+)\bigr|
\, d\sigma_1(\vecQ,\vecV,\xi,\vecV_+).
\end{align}
Furthermore, \eqref{KNMMMFREL} with $m=0$ gives
$I_2=K_s^{(n)}[K_h^{(0)} f]$, and hence 
\begin{align}
\bigl\|K_{s+h}^{(n)}f-K_s^{(n)}f\bigr\|%
\leq \|I_1\| + \bigl\|K_s^{(n)}([K_{h}^{(0)}f]-f)\bigr\|%
\leq \|I_1\| + \bigl\|[K_{h}^{(0)}f]-f\bigr\|\to 0
\end{align}
as $h\to 0^+$, uniformly with respect to $s\geq 0$.
This proves the desired continuity.
\end{proof}

\subsection{The Fokker-Planck-Kolmogorov equation}

We will now derive the Fokker-Planck-Kolmogorov equation of the Markov process 
$\widehat\Xi(t)$.

We introduce convenient spaces of continuous functions on $\XX$
and $\R_{\geq 0}\times\XX$ as follows.
Recall the definition of $J(\vecv_1,\vecv_2)$
in Remark \ref{PALFBETEXPLICITREM};
we consider $J$ as a function $\XX\to\R_{>0}$ by letting
$J(\vecQ,\vecV,\xi,\vecV_+):=J(\vecV,\vecV_+)$. 
We also write $\vecQ=(Q_1,\ldots,Q_d)\in\R^d$.
Now set
\begin{align} \notag
& \|f\|_J:={\sup}_{\XX} |f|/J \hspace{50pt} 
(\text{for }\:f:\XX\to\R);
\\ \notag
& \C_J(\XX):=\bigl\{f\in\C(\XX)\col
\|f\|_J%
<\infty\bigr\};
\\
& \C_J^1(\XX):=\bigl\{f\in\C_J(\XX)\col
\partial_{Q_1}f,\partial_{Q_2}f,\ldots,\partial_{Q_d}f,
\partial_\xi f\in\C_J(\XX)
\bigr\};
\\ \notag
& \C_J(\R_{\geq 0}\times\XX):=
\bigl\{f\in\C(\R_{\geq 0}\times\XX)\col
{\sup}_{t\in[0,T]} \|f(t,\cdot)\|_J<\infty,\:\forall T>0\bigr\};
\\ \notag
& \C_J^1(\R_{\geq 0}\times\XX)
:=\bigl\{f\in\C_J(\R_{\geq 0}\times\XX)\col
\partial_t f,\partial_{Q_1}f,\partial_{Q_2}f,\ldots,\partial_{Q_d}f
,\partial_\xi f 
\\ \notag
& \hspace{300pt} 
\in\C_J(\R_{\geq 0}\times\XX) \bigr\}.
\end{align}

If $d=2$ then we impose from now on the following 
extra assumption on the scattering map:
\begin{align} \label{ASSUMPTIONSTAR}
\vartheta_1'(\varphi)\neq\vartheta_2'(\varphi)\:
\text{ for (Lebesgue-)almost every $\varphi\in(-\sfrac\pi 2,\sfrac\pi 2)$,}
\end{align}
where $\vartheta_1,\vartheta_2$ are defined via \eqref{TAUJDEF}.
This assumption is always fulfilled 
in the case when the flow $F_t$ preserves the Liouville measure 
(cf.\ Remark \ref{PRESERVELIOUVILLEREMARK}).

\begin{thm}\label{FPK}
For any $f_0\in\C_J^1(\XX)$, the function 
$f(t,\vecQ,\vecV,\xi,\vecV_+):=K_t f_0(\vecQ,\vecV,\xi,\vecV_+)$ belongs to
$\C_J^1(\R_{\geq 0}\times\XX)$ and is the unique solution 
in $\C_J^1(\R_{\geq 0}\times\XX)$ of the differential equation
\begin{multline} \label{FPKEQ}
	\bigg[ \partial_t + \vecV\cdot\nabla_\vecQ - \partial_\xi \bigg] f(t,\vecQ,\vecV,\xi,\vecV_+) \\
	= \int_{\S_1^{d-1}}  f(t,\vecQ,\vecv_0,0,\vecV)
p_{\vecnull,\vecbeta^+_{\vecv_{0}}}(\vecV,\xi,\vecV_+) \,
d\!\vol_{\S_1^{d-1}}(\vecv_0) 
\end{multline}
with $f(0,\vecQ,\vecV,\xi,\vecV_+)\equiv f_0(\vecQ,\vecV,\xi,\vecV_+)$.
\end{thm}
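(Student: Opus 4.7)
The plan is to compute the generator of the semigroup $\{K_t\}$ by differentiating the series $K_t f_0 = \sum_{n\geq 0} K_t^{(n)} f_0$ term by term, using the characteristics of the transport operator $L := \partial_t + \vecV\cdot\nabla_\vecQ - \partial_\xi$, and then to establish existence, regularity, and uniqueness of the classical solution within $\C_J^1(\R_{\geq 0}\times\XX)$. For each $n\geq 0$ set $h_n(s) := K_{t+s}^{(n)} f_0(\vecQ + s\vecV,\vecV,\xi - s,\vecV_+)$, so that $L K_t^{(n)} f_0 = h_n'(0)$. Inspection of \eqref{KTNEXPLDEF} shows that every explicit $s$-dependence inside the integrand of $h_n(s)$ cancels: the position shift $+s\vecV$ absorbs into $-(t+s-T_n)\vecV$ since $\vecv_n = \vecV$, and the last argument $\xi_{n+1} = (\xi - s)+(t+s)-T_n = \xi + t - T_n$ is already $s$-free. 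Consequently $h_n'(0)$ is purely the boundary contribution from the moving integration domain $\{T_n\leq t+s\}$. Setting $\xi_n = t - T_{n-1}$ on the boundary and factoring out the last $p$-factor $p_{\bn,\vecbeta_{\vecv_{n-1}}^+}(\vecV,\xi,\vecV_+)$, which no longer depends on any of the remaining integration variables, the inner integral is exactly $K_t^{(n-1)} f_0(\vecQ,\vecv_{n-1},0,\vecV)$; relabelling $\vecv_{n-1}\to\vecu$ yields
\begin{equation*}
L K_t^{(n)} f_0(\vecQ,\vecV,\xi,\vecV_+) = \int_{\S_1^{d-1}} [K_t^{(n-1)} f_0](\vecQ,\vecu,0,\vecV)\, p_{\bn,\vecbeta_\vecu^+}(\vecV,\xi,\vecV_+)\, d\!\vol_{\S_1^{d-1}}(\vecu),
\end{equation*}
valid for all $n\geq 1$, while $L K_t^{(0)} f_0 = 0$ is immediate from \eqref{KT0EXPLDEF}. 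Summing over $n$ and telescoping against the full series $\sum_m K_t^{(m)} f_0 = K_t f_0$ inside the integral produces the equation \eqref{FPKEQ}.

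To justify the termwise differentiation I need each $K_t^{(n)} f_0\in\C_J^1$ jointly in $(t,\vecQ,\xi)$, and uniform $\C_J$-convergence of the series together with its termwise derivatives on compact $t$-intervals. The $\vecQ$- and $\xi$-derivatives pass directly through the integral and act on $f_0$, giving integrands of the same form. The decisive bound $\|K_t^{(n)} f_0\|_J \ll (Ct)^{n-1}/(n-1)!\,\|f_0\|_J$ is obtained by isolating the last $p$-factor in \eqref{KTNEXPLDEF}, which supplies the outer $J(\vecV,\vecV_+)$ via \eqref{PALFBETEXPLICITREMFORMULA}; the remaining factors are bounded by $\Phi_\bn\leq 1$ and integrated over $\{T_{n-1}\leq t\}$ exactly as in the proof of \eqref{KTNNORMBOUND}. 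Analogous estimates apply to the $(\vecQ,\xi)$-derivatives and to the boundary term computed above, so Weierstrass $M$-test permits the termwise differentiation. The initial condition $f(0,\cdot) = f_0$ is immediate: $K_0^{(0)} = \id$ by \eqref{KT0EXPLDEF}, and $K_0^{(n)} f_0\equiv 0$ for $n\geq 1$ since the domain $\{T_n\leq 0\}$ has measure zero.

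For uniqueness, let $g\in\C_J^1(\R_{\geq 0}\times\XX)$ solve \eqref{FPKEQ} with $g(0,\cdot)\equiv 0$. Integrating along the characteristics of $L$ gives the Duhamel representation
\begin{equation*}
g(t,\vecQ,\vecV,\xi,\vecV_+) = \int_0^t \int_{\S_1^{d-1}} g(s,\vecQ-(t-s)\vecV,\vecu,0,\vecV)\, p_{\bn,\vecbeta_\vecu^+}(\vecV,\xi+t-s,\vecV_+)\, d\!\vol_{\S_1^{d-1}}(\vecu)\, ds.
\end{equation*}
Specialising to $\xi=0$, dividing through by $J(\vecV,\vecV_+)$ so that $p_{\bn,\vecbeta_\vecu^+}/J = \Phi_\bn\leq 1$, and using boundedness of $J(\vecu,\vecV)$, one deduces a Volterra estimate $\phi(t)\leq C\int_0^t\phi(s)\,ds$ for $\phi(t) := \sup_{\vecQ,\vecV,\vecV_+}\bigl|g(t,\vecQ,\vecV,0,\vecV_+)\bigr|/J(\vecV,\vecV_+)$. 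Gronwall's lemma forces $\phi\equiv 0$, and reinserting this in the Duhamel formula propagates $g\equiv 0$ to all of $\R_{\geq 0}\times\XX$.

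The principal technical obstacle is the regularity step. The density $p$ vanishes on the glancing set where $\varphi(\vecv_j,\vecv_{j+1})\to B_\Theta$, and in dimension $d=2$ it also admits the discontinuities described in Remark~\ref{PALFBETPLUSCONTREM}, so a naive sup-norm analysis of $K_t^{(n)} f_0$ and of the source term would fail. The $\C_J$ framework is designed precisely to absorb the boundary vanishing of $p$ into the denominator $J$, while the extra hypothesis \eqref{ASSUMPTIONSTAR} in dimension two ensures that the relevant Jacobian of the iterated scattering map is nondegenerate almost everywhere, so that continuity of the source term $\int f(t,\vecQ,\vecv_0,0,\vecV) p_{\bn,\vecbeta_{\vecv_0}^+}(\vecV,\xi,\vecV_+) d\!\vol(\vecv_0)$ in all of its variables, together with its membership in $\C_J$, can be established.
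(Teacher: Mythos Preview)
Your argument has one genuine gap. You write that ``the $\vecQ$- and $\xi$-derivatives pass directly through the integral and act on $f_0$''. This is correct for $\nabla_\vecQ$ but \emph{false} for $\partial_\xi$: in \eqref{KTNEXPLDEF} the variable $\xi$ enters only through $\xi_{n+1}=\xi+t-T_n$, which sits in the last collision kernel $p_{\bn,\vecbeta^+_{\vecv_{n-1}}}(\vecV,\xi_{n+1},\vecV_+)$, not in $f_0$. Your characteristic computation correctly delivers $Lf$, and together with $\nabla_\vecQ f\in\C_J$ (which is genuinely straightforward) this gives $(\partial_t-\partial_\xi)f\in\C_J$; but the theorem requires $\partial_t f$ and $\partial_\xi f$ \emph{separately} in $\C_J$, and you have not obtained either. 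Differentiating the kernel directly would need a priori control of $\partial_\xi\Phi_\bn$, which you have not established. The paper's fix (Lemma \ref{FTC1LEMMA}) is to move the $\xi_1$-integral innermost as $\int_0^{t-T_{2,n}}\cdots\, d\xi_1$ and differentiate that using the fundamental theorem of calculus; the resulting boundary and interior terms (see \eqref{KTNXIDER}) involve only $f_0$, $\partial_\xi f_0$, $\nabla_\vecQ f_0$, and the \emph{undifferentiated} $p$-factors, so the $\C_J$-bounds go through as before.

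Apart from this, your strategy is sound and in two respects differs interestingly from the paper's. To derive \eqref{FPKEQ} the paper expands $K_{t+h}=K_hK_t=(K_h^{(0)}+K_h^{(1)}+O(h^2))K_t$ and reads off the generator as $h\to 0^+$ (Lemma \ref{FTC1LEMMA2}); your direct computation of $h_n'(0)$ along characteristics is cleaner and does not invoke the semigroup law. For uniqueness the paper differentiates $s\mapsto K_{a-s}f(s,\cdot)$ and needs both a separate commutation lemma $K_tZ=ZK_t$ (Lemma \ref{COMMUTELEMMA}) and a pointwise-limit lemma (Lemma \ref{PTWLIMITLEM}); your Duhamel--Gronwall argument on the trace $\phi(t)=\sup|g(t,\cdot,\cdot,0,\cdot)|/J$ is more elementary. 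One small correction there: what you actually use is $\int_{\scrV_\vecV}J(\vecu,\vecV)\,d\!\vol_{\S_1^{d-1}}(\vecu)=\vol(\scrB_1^{d-1})$, not pointwise boundedness of $J$.
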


\begin{remark} \label{ZREMARK}
The equation \eqref{FPKEQ} can also be expressed as
\begin{align} \label{FPKEQ2}
\partial_t f(t,\vecQ,\vecV,\xi,\vecV_+)
=[Z f(t,\cdot)](\vecQ,\vecV,\xi,\vecV_+) 
\end{align}
where the operator $Z$ (acting on functions on $\XX$)
is defined by
\begin{align} \label{ZDEF}
[Zg](\vecQ,\vecV,\xi,\vecV_+)
= & \big[\partial_\xi-\vecV\cdot\nabla_\vecQ \big] g(\vecQ,\vecV,\xi,\vecV_+) 
\\ \notag
& +\int_{\S_1^{d-1}}  g(\vecQ,\vecv_0,0,\vecV)
p_{\vecnull,\vecbeta^+_{\vecv_{0}}}(\vecV,\xi,\vecV_+) \,
d\!\vol_{\S_1^{d-1}}(\vecv_0).
\end{align}
Thus on a formal level we have $K_t=e^{tZ}$.
\end{remark}

To prepare for the proof of Theorem \ref{FPK} we first establish
a series of lemmas.
For any $\vecv\in\S_1^{d-1}$ and $\delta>0$ we write $\scrN_\delta(\vecv)$ for
the closed $\delta$-neighborhood of $\vecv$ in $\S_1^{d-1}$, i.e.
\begin{align}
\scrN_\delta(\vecv)=\{\vecw\in\S_1^{d-1}\col\varphi(\vecw,\vecv)\leq\delta\}.
\end{align}
Given $n\geq 1$ and $\vecv_n\in\S_1^{d-1}$ we write
\begin{align} \label{VVNNDEF}
\scrV^{[n]}_{\vecv_n}:=\bigl\{(\vecv_0,\ldots,\vecv_{n-1})\in (\S_1^{d-1})^n 
\col\varphi(\vecv_j,\vecv_{j+1})>B_\Theta,\:j=0,\ldots,n-1\bigr\}.
\end{align}
The point of this is that the integrand in \eqref{KTNEXPLDEF} vanishes for 
all $(\vecv_0,\ldots,\vecv_{n-1})$ outside $\scrV^{[n]}_{\vecv_n}$.
\begin{lem} \label{GOODCOMPACTLEM}
Given $n\geq 1$, $\ve>0$ and $\vecv_n^0,\vecv_{n+1}^0\in\S_1^{d-1}$ with 
$\varphi(\vecv_n^0,\vecv_{n+1}^0)>B_\Theta$, there exist some 
$\delta>0$
and a compact subset $D\subset\cap_{\vecv_n\in \scrN_\delta(\vecv_n^0)}
\scrV_{\vecv_n}^{[n]}$ such that
for all $\vecv_n\in \scrN_\delta(\vecv_n^0)$, 
$\vecv_{n+1}\in \scrN_\delta(\vecv_{n+1}^0)$, the following holds:
\begin{enumerate}
\item[(i)] $\varphi(\vecv_n,\vecv_{n+1})>B_\Theta$;
\item[(ii)] %
$\displaystyle{\int_{\scrV_{\vecv_n}^{[n]}\setminus D} \prod_{j=0}^n 
J(\vecv_j,\vecv_{j+1}) \, d\!\vol_{\S_1^{d-1}}(\vecv_0) 
\cdots d\!\vol_{\S_1^{d-1}}(\vecv_{n-1})<\ve;}$
\item[(iii)] if $d=2$ then %
$\vecbeta_{\vece_1}^-(\vecv_{j+1}K(\vecv_{j}))_\perp
\neq(\vecbeta_{\vecv_{j-1}}^+(\vecv_{j})K(\vecv_{j}))_\perp$
for all $j\in\{1,\ldots,n\}$ and all $(\vecv_0,\ldots,\vecv_{n-1})\in D.$
\end{enumerate}
\end{lem}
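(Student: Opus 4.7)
\textbf{Proof plan for Lemma \ref{GOODCOMPACTLEM}.} I will address (i), (ii), (iii) in turn. Part (i) is immediate from the continuity of $\varphi$: since $\varphi(\vecv_n^0,\vecv_{n+1}^0)>B_\Theta$, I fix $\delta_1>0$ so small that $\varphi(\vecv_n,\vecv_{n+1})\geq B_\Theta+\delta_1$ on $\scrN_{\delta_1}(\vecv_n^0)\times\scrN_{\delta_1}(\vecv_{n+1}^0)$; in particular, by continuity of $J$ on $\{\varphi>B_\Theta\}$, $M:=\sup J(\vecv_n,\vecv_{n+1})$ on this set is finite.

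For part (ii), the key preliminary fact is the identity
\begin{equation} \label{JNORMID}
\int_{\scrV_{\vecv_0}} J(\vecv_0,\vecv_1)\,d\!\vol_{\S_1^{d-1}}(\vecv_1)=\vol_{\R^{d-1}}(\scrB_1^{d-1})=:V,
\end{equation}
which follows since, by \eqref{exactpos2-1hit-tpdef} and \eqref{PALFBETEXPLICITREMFORMULA}, $J(\vecv_0,\vecv_1)$ is precisely the Jacobian of the change of variables $\vecv_1\mapsto\vecw=-\vecbeta_{\vece_1}^-(\vecv_1K(\vecv_0))_\perp$ from $\scrV_{\vecv_0}$ onto $\{0\}\times\scrB_1^{d-1}$. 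Because $J(\vecv_0,\vecv_1)$ depends only on $\varphi(\vecv_0,\vecv_1)$ (spherical symmetry), \eqref{JNORMID} is symmetric in the two arguments. Integrating out $\vecv_0,\vecv_1,\ldots,\vecv_{n-1}$ one by one (innermost first), I obtain
$$\int_{\scrV_{\vecv_n}^{[n]}}\prod_{j=0}^n J(\vecv_j,\vecv_{j+1})\,d\!\vol(\vecv_0)\cdots d\!\vol(\vecv_{n-1})=J(\vecv_n,\vecv_{n+1})\,V^n<\infty.$$
I then define, for $\eta>0$,
$$D_\eta:=\bigl\{(\vecv_0,\ldots,\vecv_{n-1})\in(\S_1^{d-1})^n\col\varphi(\vecv_j,\vecv_{j+1})\geq B_\Theta+\eta \:\: (j=0,\ldots,n-2),\:\varphi(\vecv_{n-1},\vecv_n^0)\geq B_\Theta+2\eta\bigr\},$$
which is compact and, as soon as $\delta\leq\eta$, lies in $\cap_{\vecv_n\in\scrN_\delta(\vecv_n^0)}\scrV_{\vecv_n}^{[n]}$. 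For $\vecv_n\in\scrN_\delta(\vecv_n^0)$, the complement $\scrV_{\vecv_n}^{[n]}\setminus D_\eta$ is contained in a union of "strips" $\{\varphi(\vecv_j,\vecv_{j+1})<B_\Theta+3\eta\}$ (using the triangle inequality when $j=n-1$). Factoring $J(\vecv_n,\vecv_{n+1})$ outside and integrating out the $n-2$ variables $\vecv_k$, $k\neq j,j+1$, via \eqref{JNORMID}, each strip contributes at most
$$M\cdot V^{n-1}\cdot\int_{\scrV_{\vecu}\cap\{\varphi(\vecu,\cdot)<B_\Theta+3\eta\}} J(\vecu,\vecv)\,d\!\vol(\vecv),$$
which tends to $0$ as $\eta\to 0$ by absolute continuity, since $J(\vecu,\cdot)$ is $L^1(\scrV_\vecu)$ with $L^1$-norm independent of $\vecu$ by \eqref{JNORMID}. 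Hence I fix $\eta$ small, then $\delta:=\min(\delta_1,\eta)$, so (ii) holds.

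For part (iii), available only when $d=2$, for each $j\in\{1,\ldots,n\}$ the equality in (iii) is (by Remark \ref{PALFBETPLUSCONTREM}) equivalent to $\vecv_{j+1}=\Theta_1(\vecv_j,-\vecbeta_{\vecv_{j-1}}^+(\vecv_j)R_{\vecv_j})$, a single-valued continuous equation in $\vecv_{j+1}$ given $(\vecv_{j-1},\vecv_j)$. Hence each $N_j\subset(\S_1^1)^n$ (with $\vecv_n,\vecv_{n+1}$ held fixed where they appear) is a closed set of Lebesgue measure zero; so is $N:=\cup_{j=1}^n N_j$. Replacing $D_\eta$ by the compact set $D_\eta\setminus N^{\epsilon'}$, where $N^{\epsilon'}$ is an open $\epsilon'$-neighborhood of $N$, the bound in (ii) is preserved for $\epsilon'$ sufficiently small (by absolute continuity of $\prod J\,d\!\vol$ with respect to Lebesgue measure), while (iii) is achieved. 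The main obstacle throughout is the fact that $J$ is not bounded near $\varphi=B_\Theta$, so the "strip" bound in Step 2 rests on the $L^1$-integrability of $J(\vecu,\cdot)$ near the boundary of $\scrV_\vecu$ being uniform in $\vecu$—which is precisely what the clean identity \eqref{JNORMID} provides.
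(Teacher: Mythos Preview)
Your treatment of (i) and (ii) is correct and close in spirit to the paper's: you use the same normalization identity \eqref{JNORMID} and an explicit exhaustion by the sets $D_\eta$, whereas the paper fixes $D$ at the base point by monotone convergence and then transports it to nearby $\vecv_n$ via a rotation. Your strip estimate is clean; the phrase ``integrating out the $n-2$ variables $\vecv_k$, $k\neq j,j+1$'' is slightly misphrased (the chain structure forces you to integrate from the left, so you really integrate $\vecv_0,\ldots,\vecv_{j-1}$, then $\vecv_j$ with the constraint, then $\vecv_{j+1},\ldots,\vecv_{n-1}$), but your final bound $M\,V^{n-1}\alpha(\eta)$ is right.

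The genuine gap is in (iii). Your argument---``$\vecv_{j+1}=\Theta_1(\vecv_j,-\vecbeta_{\vecv_{j-1}}^+(\vecv_j)R_{\vecv_j})$ is a single-valued equation in $\vecv_{j+1}$, hence $N_j$ has Lebesgue measure zero''---is only valid when $\vecv_{j+1}$ is one of the integration variables $\vecv_0,\ldots,\vecv_{n-1}$, i.e.\ for $j\le n-2$. For $j=n$ the variable $\vecv_{j+1}=\vecv_{n+1}$ (and also $\vecv_j=\vecv_n$) is \emph{fixed}, so the bad condition constrains $\vecv_{n-1}$ alone: $N_n=(\S_1^1)^{n-1}\times L$ with $L=\{\vecv_{n-1}:\vecbeta_{\vecv_{n-1}}^+(\vecv_n)=\vecW\}$ for a specific $\vecW$. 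This is a level set of the map $\vecV\mapsto\vecbeta_\vecV^+(\vece_1)$, which by \eqref{BETANUFORMULA} has the form $\varphi\mapsto\nu(\varphi)=\vartheta_2(\vartheta_1^{-1}(\varphi))-\varphi$; its level sets have measure zero \emph{only} under the standing hypothesis \eqref{ASSUMPTIONSTAR} (which gives $\nu'\neq 0$ a.e.). You never invoke \eqref{ASSUMPTIONSTAR}, and without it the claim ``$N_n$ has Lebesgue measure zero'' can fail. The case $j=n-1$ has the same issue, since for fixed $\vecv_n$ and fixed $\vecv_{n-1}$ the bad set in $\vecv_{n-2}$ is again a level set of the same map.

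There is also a secondary gap: your compact set $D=D_\eta\setminus N^{\epsilon'}$ is built from $N$ defined at the single point $(\vecv_n^0,\vecv_{n+1}^0)$, but (iii) must hold for \emph{all} $(\vecv_n,\vecv_{n+1})\in\scrN_\delta(\vecv_n^0)\times\scrN_\delta(\vecv_{n+1}^0)$. Even once $N_n$ is known to be null, you must either shrink $\delta$ further (using compactness of $D$ and continuity of the defining equation) or, as the paper does, show directly that the union $\bigcup_{(\vecv_n,\vecv_{n+1})}S_{\vecv_n,\vecv_{n+1}}$ has small measure as $\delta\to 0$. Either way, the paper's use of \eqref{ASSUMPTIONSTAR} is essential here and is the missing ingredient in your proof.
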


\begin{proof}[Proof of Lemma \ref{GOODCOMPACTLEM} if $d\geq 3$]
Note $\int_{\scrV_\vecv} J(\vecv',\vecv)\,d\!\vol_{\S_1^{d-1}}(\vecv')
=\vol(\scrB_1^{d-1})$
(cf.\ \eqref{exactpos2-1hit-tpdef}--\eqref{JFORMULA});
thus the integral in (ii) with $D=\emptyset$
is absolutely convergent.
Note also that if $K\in\SO(d)$ is any rotation with $\vecv_nK=\vecv_n^0$,
the left hand side in (ii) can be rewritten as
\begin{align}
& J(\vecv_n,\vecv_{n+1})
\int_{\scrV_{\vecv_n^0}^{[n]}\setminus DK} \Bigl(\prod_{j=0}^{n-2} 
J(\vecv_j,\vecv_{j+1})\Bigr)
J(\vecv_{n-1},\vecv_n^0) \, d\!\vol_{\S_1^{d-1}}(\vecv_0) 
\cdots d\!\vol_{\S_1^{d-1}}(\vecv_{n-1}).
\end{align}
Now we may fix a compact subset $D\subset\scrV_{\vecv_n^0}^{[n]}$
such that (ii) holds when $\vecv_n=\vecv_n^0$ and
$\vecv_{n+1}=\vecv_{n+1}^0$, and then by continuity we may choose
$\delta>0$ so small that (i) and (ii) hold for all
$\vecv_n\in \scrN_\delta(\vecv_n^0)$, 
$\vecv_{n+1}\in \scrN_\delta(\vecv_{n+1}^0)$.
\end{proof}

\begin{proof}[Proof of Lemma \ref{GOODCOMPACTLEM} if $d=2$] 
First take $\delta$ and 
$D$   %
as in the previous proof; it then suffices to show that we can make
(iii) hold by removing an open subset
of arbitrarily small ($\vol_{\S_1^{1}}^n-$)volume from $D$ and possibly 
shrinking $\delta$. 

First consider $j=n$ in (iii);
to deal with this case it suffices to prove
\begin{align} \label{GOODCOMPACTLEMD2STEP0}
\lim_{\delta\to 0} 
\vol_{\S_1^{1}}\Bigl(
{\bigcup}_{(\vecv_n,\vecv_{n+1})\in \scrN_\delta(\vecv_{n}^0)
\times\scrN_\delta(\vecv_{n+1}^0)} S_{\vecv_n,\vecv_{n+1}}\Bigr)=0,
\end{align}
where
\begin{align}
S_{\vecv_n,\vecv_{n+1}}
:=\bigl\{\vecv_{n-1}\in\scrV_{\vecv_{n}}\col
\vecbeta_{\vece_1}^-(\vecv_{n+1}K(\vecv_{n}))_\perp
=(\vecbeta_{\vecv_{n-1}}^+(\vecv_{n})K(\vecv_{n}))_\perp\bigr\}.
\end{align}
Set $K_\theta:=\smatr{\cos\theta}{\sin\theta}{-\sin\theta}{\cos\theta}$;
then \eqref{GOODCOMPACTLEMD2STEP0} will follow if we can prove
\begin{align} \label{GOODCOMPACTLEMD2STEP0a}
\lim_{\delta\to 0} 
\vol_{\S_1^{1}}\Bigl({\bigcup}_{|\theta|\leq\delta}
\Bigl({\bigcup}_{\vecv_{n+1}\in \scrN_{2\delta}(\vecv_{n+1}^0)} 
S_{\vecv_n^0,\vecv_{n+1}}\Bigr)K_\theta\Bigr)=0.
\end{align}
Here the inner union equals (cf.\ Remark \ref{PALFBETPLUSCONTREM})
\begin{align} \label{UNIONFORMULA}
& \bigl\{\vecv_{n-1}\in\scrV_{\vecv_{n}^0}\col
\Theta_1(\vece_1,-\vecbeta_{\vecv_{n-1}K(\vecv_n^0)}^+(\vece_1)R_{\vece_1})
\in\scrN_{2\delta}(\vecv_{n+1}^0 K(\vecv_n^0))\bigr\},
\end{align}
where $R_{\vece_1}=\smatr 100{-1}$ is the reflection in the line $\R\vece_1$.
Since \eqref{UNIONFORMULA} is a closed subset of $\scrV_{\vecv_n^0}$ for 
every $\delta>0$, it suffices to prove that the volume
of (\ref{UNIONFORMULA}) tends to zero as $\delta\to 0$;
and since $\Theta_1(\vece_1,\cdot)$ is a $\C^1$ diffeomorphism,
this will follow if we can show
\begin{align} \label{GOODCOMPACTLEMD2STEP2}
\lim_{\delta\to 0} \vol_{\S_1^1}\bigl(\bigl\{\vecV\in\scrV_{\vece_1}\col
\vecbeta_{\vecV}^+(\vece_1)\in\scrN_{\delta}(\vecW)\bigr\}\bigr)=0,
\qquad\forall\vecW\in\S_1^{d-1}.
\end{align}
But for $\vecV=-(\cos\varphi)\vece_1+(\sin\varphi)\vece_2$
with $|\varphi|<\pi-B_\Theta$ we have, by a computation,
\begin{align} \label{BETANUFORMULA}
\vecbeta_{\vecV}^+(\vece_1)=(\cos\nu(\varphi))\vece_1
+(\sin\nu(\varphi))\vece_2
\quad\text{with}\quad
\nu(\varphi)=\vartheta_2(\vartheta_1^{-1}(\varphi))-\varphi,
\end{align}
where $\vartheta_1^{-1}$ and $\vartheta_2$ are as in 
Remark \ref{PRESERVELIOUVILLEREMARK} and Remark \ref{PALFBETEXPLICITREM}
(thus the function $\nu(\varphi)$ is $\C^1$ for all 
$|\varphi|<\pi-B_\Theta$).
Now, by (\ref{ASSUMPTIONSTAR}), the set
$\{\nu'(\varphi)=0\}$ has Lebesgue measure zero.
This implies \eqref{GOODCOMPACTLEMD2STEP2}, and hence also
\eqref{GOODCOMPACTLEMD2STEP0}.

Next, to deal with the case $j=n-1$ (thus $n\geq 2$)
in (iii), it suffices to prove
\begin{align} \label{GOODCOMPACTLEMD2STEP3}
\lim_{\delta\to 0} 
\vol_{\S_1^{1}}^2
\Bigl({\bigcup}_{\vecv_n\in \scrN_\delta(\vecv_{n}^0)} S_{\vecv_n}\Bigr)=0,
\end{align}
where
\begin{align} \label{GOODCOMPACTLEMD2STEP4}
S_{\vecv_n}:=\bigl\{(\vecv_{n-2},\vecv_{n-1})\in\scrV_{\vecv_{n}}^{[2]}\col
\vecbeta_{\vece_1}^-(\vecv_{n}K(\vecv_{n-1}))_\perp
=(\vecbeta_{\vecv_{n-2}}^+(\vecv_{n-1})K(\vecv_{n-1}))_\perp\bigr\}.
\end{align}
However, \eqref{GOODCOMPACTLEMD2STEP3} follows from the fact that 
$S_{\vecv_n^0}$ is a closed subset of $\scrV_{\vecv_{n}^0}^{[2]}$
with $\vol_{\S_1^{1}}^2(\scrV_{\vecv_{n}^0}^{[2]})=0$, 
using rotational invariance in a similar way as in 
\eqref{GOODCOMPACTLEMD2STEP0a}.

Finally, the case $j\leq n-2$ 
is easy, since 
\begin{align}
\vol_{\S_1^{1}}^3\bigl(\bigl\{(\vecv_{j-1},\vecv_j,\vecv_{j+1})
\in(\S_1^{1})^3\col
(\vecv_{j-1},\vecv_j)\in S_{\vecv_{j+1}}\bigr\}\bigr)=0,
\end{align}
with $S_{\vecv_{j+1}}$ defined 
in analogy with \eqref{GOODCOMPACTLEMD2STEP4}.
\end{proof}

\begin{lem} \label{FTCONTLEMMA}
If $f_0\in \C_J(\XX)$ then the function 
$f(t,\vecQ,\vecV,\xi,\vecV_+):=K_t f_0(\vecQ,\vecV,\xi,\vecV_+)$ 
belongs to $\C_J(\R_{\geq 0}\times\XX)$.
\end{lem}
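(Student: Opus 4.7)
The plan is to write $K_t = \sum_{n\geq 0}K_t^{(n)}$ via \eqref{KTEXPLICIT} and establish two complementary facts: (a) each term $K_t^{(n)}f_0$ is jointly continuous on $\R_{\geq 0}\times\XX$, and (b) a pointwise $J$-weighted bound $|K_t^{(n)}f_0|\leq c_n(t)\,\|f_0\|_J\,J(\vecv_n,\vecv_{n+1})$ with $\sum_n c_n(t)$ finite and locally uniform in $t$. Together these yield both the continuity of $f=K_tf_0$ on $\R_{\geq 0}\times\XX$ and the bound $\sup_{t\in[0,T]}\|f(t,\cdot)\|_J<\infty$ required for membership in $\C_J(\R_{\geq 0}\times\XX)$.

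For (b), I would substitute the factorisation \eqref{PALFBETEXPLICITREMFORMULA} into each of the $n$ kernel factors in \eqref{KTNEXPLDEF}, use $0\leq\Phi_\vecnull\leq 1$ and $|f_0(\vecp,\vecv_0,\xi_1,\vecv_1)|\leq\|f_0\|_J J(\vecv_0,\vecv_1)$ to dominate the integrand by $\|f_0\|_J\prod_{j=0}^n J(\vecv_j,\vecv_{j+1})$, and pull the factor $J(\vecv_n,\vecv_{n+1})$ out of the integral since it does not involve the integration variables. The $\xi$-integration over the simplex $\{T_n\leq t\}$ contributes $t^n/n!$, while iterated application of the normalisation identity $\int_{\scrV_\vecv} J(\vecv',\vecv)\,d\vol_{\S_1^{d-1}}(\vecv')=\vol(\scrB_1^{d-1})$ (which follows from $\lambda_\vecv$ being a probability measure of density $J(\vecv,\cdot)/\vol(\scrB_1^{d-1})$ on $\scrV_\vecv$, together with the angular symmetry of $J$) gives $\int_{\scrV^{[n]}_{\vecv_n}}\prod_{j=0}^{n-1}J(\vecv_j,\vecv_{j+1})\,d\vecv_0\cdots d\vecv_{n-1}=\vol(\scrB_1^{d-1})^n$. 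Hence
\[
|K_t^{(n)}f_0(\vecQ,\vecv_n,\xi,\vecv_{n+1})|\leq\|f_0\|_J\,J(\vecv_n,\vecv_{n+1})\,\frac{(\vol(\scrB_1^{d-1})\,t)^n}{n!},
\]
so that $\|K_tf_0(\cdot)\|_J\leq\|f_0\|_J\,\exp(\vol(\scrB_1^{d-1})\,t)$ and the series converges absolutely in the $J$-weighted sup-norm, uniformly for $t$ in compact subsets of $\R_{\geq 0}$.

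For (a), the case $n=0$ is immediate from \eqref{KT0EXPLDEF}. For $n\geq 1$, fix a base point $(t_0,\vecQ_0,\vecv_n^0,\xi_0,\vecv_{n+1}^0)\in\R_{\geq 0}\times\XX$ and let $\ve>0$. Apply Lemma \ref{GOODCOMPACTLEM} at $(\vecv_n^0,\vecv_{n+1}^0)$ to produce $\delta>0$ and a compact set $D\subset\bigcap_{\vecv_n\in\scrN_\delta(\vecv_n^0)}\scrV^{[n]}_{\vecv_n}$ such that the integral of $\prod_{j=0}^{n-1}J(\vecv_j,\vecv_{j+1})$ over $\scrV^{[n]}_{\vecv_n}\setminus D$ is less than $\ve$, uniformly for $\vecv_n\in\scrN_\delta(\vecv_n^0)$ and $\vecv_{n+1}\in\scrN_\delta(\vecv_{n+1}^0)$. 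Split the integral \eqref{KTNEXPLDEF} accordingly. The outer piece is bounded by $\|f_0\|_J\,J(\vecv_n,\vecv_{n+1})\,t^n\ve/n!$ by the same domination scheme as in (b). On the compact region $D\cap\{T_n\leq t\}$ the integrand is jointly continuous in all of the external parameters and the integration variables: $f_0$ is continuous on $\XX$; each density $p_{\vecnull,\vecbeta^+_{\vecv_{j-1}}}(\vecv_j,\xi_{j+1},\vecv_{j+1})$ is continuous on $D$ by Remark \ref{PALFBETPLUSCONTREM} together with condition (iii) of Lemma \ref{GOODCOMPACTLEM}; and the boundary slab $\{T_n=t\}$ in the $\xi$-simplex has Lebesgue measure zero, so the indicator $\mathbf 1_{\{T_n\leq t\}}$ varies continuously with $t$ in the $L^1$-sense. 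Dominated convergence on the compact region then gives continuity of the inner piece in $(t,\vecQ,\vecv_n,\xi,\vecv_{n+1})$, and letting $\ve\to 0$ yields continuity of $K_t^{(n)}f_0$ at the base point.

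The main technical obstacle is the $d=2$ case, where each factor $p_{\vecnull,\vecbeta^+_{\vecv_{j-1}}}$ can be genuinely discontinuous at the exceptional angular configurations listed in Remark \ref{PALFBETPLUSCONTREM}. Condition (iii) of Lemma \ref{GOODCOMPACTLEM}, together with the regularity assumption \eqref{ASSUMPTIONSTAR}, was arranged precisely to ensure that these configurations form a set of $(\vol_{\S_1^{d-1}})^n$-measure zero in the integration domain, so that $D$ can be chosen to avoid them while still absorbing all but an $\ve$-fraction of the $J$-weighted integral; once this has been dealt with the remainder of the argument is routine dominated convergence plus the summability from (b).
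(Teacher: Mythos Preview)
Your proof is correct and follows essentially the same approach as the paper: you establish the same pointwise bound $|K_t^{(n)}f_0|\leq\|f_0\|_J\,J(\vecv_n,\vecv_{n+1})\,(\vol(\scrB_1^{d-1})t)^n/n!$ via \eqref{PALFBETEXPLICITREMFORMULA} and the Jacobian identity, and then prove continuity of each $K_t^{(n)}f_0$ by the same splitting device, invoking Lemma~\ref{GOODCOMPACTLEM} to confine the angular integration to a compact set $D$ on which Remark~\ref{PALFBETPLUSCONTREM} (and condition (iii) in dimension two) guarantees joint continuity of the integrand. Your explicit remark that the boundary slab $\{T_n=t\}$ has measure zero is a small elaboration the paper leaves implicit; note also that Lemma~\ref{GOODCOMPACTLEM}(ii) is stated with the product $\prod_{j=0}^n J$ (including the external factor $J(\vecv_n,\vecv_{n+1})$), so your phrasing with $\prod_{j=0}^{n-1}$ amounts to dividing through by that factor, which is harmless since $J(\vecv_n^0,\vecv_{n+1}^0)>0$ is fixed and continuous near the base point.
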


\begin{proof}
From %
\eqref{KTNEXPLDEF} we obtain,
using Remarks \ref{PALFBETEXPLICITREM}, \ref{PALFBETPLUSEXPLICITREM}
and the fact that $0\leq\Phi_\bn\leq 1$,
\begin{align} \notag
\bigl|[K_t^{(n)}f_0](\vecQ,\vecv_n,\xi,\vecv_{n+1})\bigr|
& \leq \|f_0\|_J \frac{t^n}{n!}\int_{\scrV_{\vecv_n}^{[n]}} \prod_{j=0}^n
J(\vecv_j,\vecv_{j+1})\, d\!\vol_{\S_1^{d-1}}(\vecv_0) 
\cdots d\!\vol_{\S_1^{d-1}}(\vecv_{n-1}) 
\\ \label{FTCONTLEMMASTEP2}
& = \|f_0\|_J \frac{t^n}{n!}\vol(\scrB_1^{d-1})^n J(\vecv_n,\vecv_{n+1}),
\hspace{50pt} \forall n\geq 1.
\end{align}
From \eqref{KT0EXPLDEF} we see that the same bound is also true for $n=0$.
It follows that the sum $[K_tf_0](\vecQ,\vecv_n,\xi,\vecv_{n+1})
=\sum_{n=0}^\infty[K_t^{(n)}f_0](\vecQ,\vecv_n,\xi,\vecv_{n+1})$
is uniformly absolutely convergent for $t$ and $(\vecv_n,\vecv_{n+1})$ 
in compacta (and $\vecQ,\xi$ unrestricted), and we have
$\|K_tf_0\|_J\leq e^{t\vol(\scrB_1^{d-1})}\|f_0\|_J$ for each $t\geq 0$.
It now only remains to prove that $f$ is continuous on 
$\R_{\geq 0}\times\XX$, and for this it %
suffices to prove that each function $K_t^{(n)}f_0$ %
is continuous on $\R_{\geq 0}\times\XX$. %
The case $n=0$ is trivial, thus from now on we fix some $n\geq 1$.

Let $(t^0,\vecQ^0,\vecv_n^0,\xi^0,\vecv_{n+1}^0)\in
\R_{\geq 0}\times\XX$ and $\ve>0$ be given.
Let $\delta>0$ and $D\subset\scrV_{\vecv_n^0}^{[n]}$ be as in 
Lemma \ref{GOODCOMPACTLEM}, with 
$\|f_0\|_J^{-1} \frac{n!}{(t^0+1)^n}\ve$ in the place of $\ve$.
By \eqref{KTNEXPLDEF} and the bounds leading to \eqref{FTCONTLEMMASTEP2}
we then get that, for all $(t,\vecQ,\vecv_n,\xi,\vecv_{n+1})\in\R_{\geq 0}\times\XX$
with $t\leq t^0+1$, $\vecv_n\in\scrN_\delta(\vecv_n^0)$ and
$\vecv_{n+1}\in\scrN_\delta(\vecv_{n+1}^0)$,
the value of $[K_t^{(n)}f_0](\vecQ,\vecv_n,\xi,\vecv_{n+1})$ differs by at 
most $\ve$ from
\begin{align} \notag
K_{D,t}^{(n)}f_0(\vecQ,\vecv_n,\xi,\vecv_{n+1}):=
\int_{T_n\leq t}\int_D f_0\bigg(\vecQ-\big(\sum_{j=1}^n \xi_j\vecv_{j-1} + (t-T_n) \vecv_n\big),\vecv_0,\xi_1,\vecv_1\bigg) \hspace{30pt}&
\\ \label{FTCONTLEMMASTEP1}
\times
\prod_{j=1}^{n} p_{\vecnull,\vecbeta^+_{\vecv_{j-1}}}(\vecv_j,\xi_{j+1},\vecv_{j+1}) 
\, d\!\vol_{\S_1^{d-1}}(\vecv_0)\cdots d\!\vol_{\S_1^{d-1}}(\vecv_{n-1})
\,d\xi_1\cdots d\xi_n & . 
\end{align}
By Remark \ref{PALFBETPLUSCONTREM}
(using Lemma \ref{GOODCOMPACTLEM} (iii) if $d=2$),
the integrand in \eqref{FTCONTLEMMASTEP1} depends jointly continuously on
all the variables
$(\xi_1,\xi_2,\ldots,\xi_n)\in(\R_{\geq 0})^n$,
$(\vecv_0,\ldots,\vecv_{n-1})\in D$ and
$(t,\vecQ,\vecv_n,\xi,\vecv_{n+1})\in\R_{\geq 0}\times\XX$,
so long as $T_n\leq t$, 
$\vecv_n\in\scrN_\delta(\vecv_n^0)$ and
$\vecv_{n+1}\in\scrN_\delta(\vecv_{n+1}^0)$.
Hence since the domain of integration in \eqref{FTCONTLEMMASTEP1} is compact,
we have for all 
$(t,\vecQ,\vecv_n,\xi,\vecv_{n+1})\in\R_{\geq 0}\times\XX$
sufficiently near $( t^0,\vecQ^0,\vecv_n^0,\xi^0,\vecv_{n+1}^0)$:
\begin{align}
\bigl|K_{D,t}^{(n)}f_0(\vecQ,\vecv_n,\xi,\vecv_{n+1})-
K_{D,t^0}^{(n)}f_0(\vecQ^0,\vecv_n^0,\xi^0,\vecv_{n+1}^0)\bigr|<\ve,
\end{align}
and thus
\begin{align}
\Bigl| [K_t^{(n)}f_0](\vecQ,\vecv_n,\xi,\vecv_{n+1})
-[K_{t^0}^{(n)}f_0](\vecQ^0,\vecv_n^0,\xi^0,\vecv_{n+1}^0)\Bigr|
<3\ve.
\end{align}
Hence $[K_t^{(n)}f_0](\vecQ,\vecv_n,\xi,\vecv_{n+1})$ is indeed continuous,
since $\ve>0$ was arbitrary. 
\end{proof}

\begin{lem} \label{FTC1LEMMA}
If $f_0\in\C_J^1(\XX)$ and 
$f(t,\vecQ,\vecV,\xi,\vecV_+):=K_t f_0(\vecQ,\vecV,\xi,\vecV_+)$,
then the derivatives
$\partial_{Q_j} f$ ($j=1,\ldots,d$) and $\partial_\xi f$ all exist
and belong to $\C_J(\R_{\geq 0}\times\XX)$.
\end{lem}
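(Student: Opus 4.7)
The plan is to differentiate the series $K_tf_0=\sum_{n\geq 0}K_t^{(n)}f_0$ term by term, controlling the differentiated series uniformly on compacta by the same technique used to control the original series in Lemma \ref{FTCONTLEMMA}. For $n=0$, \eqref{KT0EXPLDEF} shows that $K_t^{(0)}f_0$ is $f_0$ composed with a shift, so $\partial_{Q_j}[K_t^{(0)}f_0]$ and $\partial_\xi[K_t^{(0)}f_0]$ exist and equal $(\partial_{Q_j}f_0)$ resp.\ $(\partial_\xi f_0)$ evaluated at the shifted argument; both lie in $\C_J(\R_{\geq 0}\times\XX)$ by the hypothesis $f_0\in\C_J^1(\XX)$.

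For $n\geq 1$ and $\partial_{Q_j}$, only the $f_0$-factor in \eqref{KTNEXPLDEF} depends on $\vecQ$, so differentiation under the integral sign is justified by the dominated-convergence bound \eqref{FTCONTLEMMASTEP2} applied to $\partial_{Q_j}f_0$. This yields $\partial_{Q_j}[K_t^{(n)}f_0]=K_t^{(n)}[\partial_{Q_j}f_0]$, which belongs to $\C_J(\R_{\geq 0}\times\XX)$ by Lemma \ref{FTCONTLEMMA}; summation over $n$ then gives $\partial_{Q_j}f\in\C_J(\R_{\geq 0}\times\XX)$.

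The main obstacle is $\partial_\xi$ for $n\geq 1$. In \eqref{KTNEXPLDEF} the variable $\xi$ enters only through $\xi_{n+1}=\xi+t-T_n$, and only inside the last factor $p_{\vecnull,\vecbeta^+_{\vecv_{n-1}}}(\vecv_n,\xi_{n+1},\vecv_{n+1})$; since $p$ is merely continuous (not $\C^1$) in this argument, one cannot differentiate directly under the integral. To bypass this, I will change the outermost integration variable from $\xi_n$ to $u:=\xi_{n+1}=\xi+t-T_{n-1}-\xi_n$. Then $u$ runs over $(\xi,\,\xi+t-T_{n-1})$, the last $p$-factor becomes $p_{\vecnull,\vecbeta^+_{\vecv_{n-1}}}(\vecv_n,u,\vecv_{n+1})$ (now independent of $\xi$), and the argument of $f_0$ takes the form $\vecQ-A-(u-\xi)(\vecv_n-\vecv_{n-1})$ where $A:=\sum_{j=1}^{n-1}\xi_j\vecv_{j-1}+(t-T_{n-1})\vecv_{n-1}$ is independent of both $\xi$ and $u$.

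After this substitution, $\partial_\xi[K_t^{(n)}f_0]$ decomposes as three pieces: two boundary contributions, from the $u=\xi$ and $u=\xi+t-T_{n-1}$ endpoints of the $u$-integration, and an interior term in which $\partial_\xi$ hits the shifted argument of $f_0$ to produce $(\vecv_n-\vecv_{n-1})\cdot\nabla_\vecQ f_0$. The interior term is majorized by \eqref{FTCONTLEMMASTEP2} applied to $\nabla_\vecQ f_0$, since $\|\vecv_n-\vecv_{n-1}\|\leq 2$. Each boundary term is an $(n-1)$-fold integral of the same structure as the expression in \eqref{KTNEXPLDEF} with $n-1$ in place of $n$, carrying an extra factor $p_{\vecnull,\vecbeta^+_{\vecv_{n-1}}}(\vecv_n,\xi,\vecv_{n+1})$ or $p_{\vecnull,\vecbeta^+_{\vecv_{n-1}}}(\vecv_n,\xi+t-T_{n-1},\vecv_{n+1})$; by the explicit formula in Remark \ref{PALFBETEXPLICITREM}, this extra factor contributes the required $J(\vecv_n,\vecv_{n+1})$ and the resulting quantity is $\C_J$-bounded with $\frac{t^{n-1}}{(n-1)!}$ in place of $\frac{t^n}{n!}$. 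Joint continuity of each of the three pieces in all variables follows as in the proof of Lemma \ref{FTCONTLEMMA}, using Remark \ref{PALFBETPLUSCONTREM} together with Lemma \ref{GOODCOMPACTLEM} to restrict the integration to compacta avoiding the possible discontinuities of $p$ when $d=2$. Summing over $n$ gives a uniformly convergent series, which validates the termwise differentiation and shows $\partial_\xi f\in\C_J(\R_{\geq 0}\times\XX)$, completing the proof.
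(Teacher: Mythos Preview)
Your treatment of $\partial_{Q_j}$ is fine and matches the paper exactly: $\partial_{Q_j}K_t^{(n)}f_0=K_t^{(n)}\partial_{Q_j}f_0$.

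For $\partial_\xi$, however, your substitution $\xi_n\mapsto u=\xi+t-T_n$ does not do what you claim when $n\geq 2$. In \eqref{KTNEXPLDEF} the product is $\prod_{j=1}^{n} p_{\vecnull,\vecbeta^+_{\vecv_{j-1}}}(\vecv_j,\xi_{j+1},\vecv_{j+1})$, so the variable $\xi_n$ appears \emph{explicitly} in the factor $j=n-1$, namely $p_{\vecnull,\vecbeta^+_{\vecv_{n-2}}}(\vecv_{n-1},\xi_n,\vecv_n)$. After your substitution this factor becomes $p_{\vecnull,\vecbeta^+_{\vecv_{n-2}}}(\vecv_{n-1},\,\xi+t-T_{n-1}-u,\,\vecv_n)$, which again depends on $\xi$ only through a $p$-argument. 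You have merely shifted the problem from the $n$th to the $(n-1)$th $p$-factor; the obstruction to differentiating under the integral is unchanged. (For $n=1$ there is only one $p$-factor, so your argument does go through in that case.)

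The paper avoids this by substituting in $\xi_1$ rather than $\xi_n$. The point is that among $\xi_1,\ldots,\xi_n$ the variable $\xi_1$ is the only one that does not occur explicitly in any $p$-factor: the indices $\xi_{j+1}$ for $j=1,\ldots,n$ are $\xi_2,\ldots,\xi_{n+1}$. Thus after writing $\xi_1=\xi+t-T_{2,n}-u$ (with $T_{2,n}=\xi_2+\ldots+\xi_n$), all the $\xi$-dependence lands in the limits of the $u$-integral and in the arguments of $f_0$, where $\C_J^1$-regularity is available. Differentiation then produces the three-term formula \eqref{KTNXIDER}: two boundary terms (at $\xi_1=0$ and $\xi_1=t-T_{2,n}$) and an interior term carrying $(\vecv_n-\vecv_0)\cdot\nabla_\vecQ f_0+\partial_\xi f_0$---note $(\vecv_n-\vecv_0)$, not $(\vecv_n-\vecv_{n-1})$. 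The rest of your outline (bounds of type \eqref{FTCONTLEMMASTEP2}, continuity via Lemma \ref{GOODCOMPACTLEM}, summation over $n$) then goes through as you describe.
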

\begin{proof}
We start by considering $\partial_\xi K_t^{(n)}f_0$.
First assume $n\geq 2$. 
To differentiate \eqref{KTNEXPLDEF} with respect to $\xi$ we first move
the integral over $\xi_1$ to the innermost position; this integral will
then appear as $\int_0^{t-T_{2,n}}\cdots d\xi_1$ where
$T_{2,n}:=\sum_{j=2}^n\xi_j$,
and it %
may be differentiated with respect to $\xi$
by using the differentiability assumption on $f_0$ and the fact that 
$p_{\vecnull,\vecbeta^+_{\vecv_{n-1}}}(\vecv_n,\cdot,\vecv_{n+1})$
depends continuously on its second argument (except when
$d=2$ and $\vecbeta_{\vece_1}^-(\vecv_{n+1}K(\vecv_{n}))_\perp
=(\vecbeta_{\vecv_{n-1}}^+(\vecv_{n})K(\vecv_{n}))_\perp$).
Hence we obtain, at least formally:
\begin{align} \notag
[\partial_\xi K_t^{(n)} f_0](\vecQ,\vecv_n,\xi,\vecv_{n+1}) \hspace{320pt} &
\\ \notag
= \int_{\scrV_{\vecv_n}^{[n]}}
\int_{\substack{\xi_2,\ldots,\xi_n\geq 0\\ T_{2,n}\leq t}}
f_0\biggl(\vecQ-\sum_{j=2}^n\xi_j\vecv_{j-1}-(t-T_{2,n})\vecv_n
,\vecv_0,0,\vecv_1 \biggr) 
\prod_{j=1}^{n-1}
p_{\vecnull,\vecbeta^+_{\vecv_{j-1}}}(\vecv_j,\xi_{j+1},\vecv_{j+1}) 
\hspace{25pt} &
\\ \notag
\times 
p_{\vecnull,\vecbeta^+_{\vecv_{n-1}}}(\vecv_n,\xi+t-T_{2,n},\vecv_{n+1})
\, d\xi_2\ldots d\xi_n 
\, d\!\vol_{\S_1^{d-1}}(\vecv_0)\cdots d\!\vol_{\S_1^{d-1}}(\vecv_{n-1}) &
\\ \notag
-\int_{\scrV_{\vecv_n}^{[n]}}
\int_{\substack{\xi_2,\ldots,\xi_n\geq 0\\ T_{2,n}\leq t}}
f_0\biggl(\vecQ-(t-T_{2,n})\vecv_0-\sum_{j=2}^n\xi_j\vecv_{j-1}
,\vecv_0,t-T_{2,n},\vecv_1 \biggr)
\prod_{j=1}^{n-1}
p_{\vecnull,\vecbeta^+_{\vecv_{j-1}}}(\vecv_j,\xi_{j+1},\vecv_{j+1}) &
\\ \label{KTNXIDER}
\times 
p_{\vecnull,\vecbeta^+_{\vecv_{n-1}}}(\vecv_n,\xi,\vecv_{n+1}) 
\, d\xi_2\ldots d\xi_n
\, d\!\vol_{\S_1^{d-1}}(\vecv_0)\cdots d\!\vol_{\S_1^{d-1}}(\vecv_{n-1}) &
\\ \notag
+\int_{\scrV_{\vecv_n}^{[n]}}
\int_{\substack{\xi_1,\ldots,\xi_n\geq 0\\ T_{n}\leq t}}
\Bigl[(\vecv_n-\vecv_0)\cdot \nabla_\vecQ f_0+\partial_\xi f_0\Bigr]
\bigg(\vecQ-\bigg(\sum_{j=1}^n \xi_j\vecv_{j-1} + (t-T_n) \vecv_n\bigg),\vecv_0,\xi_1,\vecv_1\bigg) &
\\ \notag
\times\prod_{j=1}^{n} p_{\vecnull,\vecbeta^+_{\vecv_{j-1}}}(\vecv_j,\xi_{j+1},\vecv_{j+1})
\, d\xi_1\ldots d\xi_n
\, d\!\vol_{\S_1^{d-1}}(\vecv_0)\cdots d\!\vol_{\S_1^{d-1}}(\vecv_{n-1}). &
\end{align}
Here $\partial_\xi f_0$ and all components of $\nabla_\vecQ f_0$ lie in
$\C_J(\XX)$, since $f_0\in\C_J^1(\XX)$; 
hence as in %
Lemma~\ref{FTCONTLEMMA} one shows that the right hand side of \eqref{KTNXIDER} 
is a continuous function of 
$(t,\vecQ,\vecv_n,\xi,\vecv_{n+1})\in\R_{\geq 0}\times\XX$.
Now to validate the formula \eqref{KTNXIDER} it suffices to
prove that \eqref{KTNXIDER} holds true after integration with respect to $\xi$
over an arbitrary finite interval $[a,b]\subset\R_{\geq 0}$, 
and this is easily verified using Fubini's theorem.
The case $n\geq 1$ is very similar, and the case $n=0$ is trivial.

From \eqref{KTNXIDER} and its analogues for $n=1,0$, one obtains 
a pointwise bound on $\partial_\xi K_t^{(n)} f$
similar to \eqref{FTCONTLEMMASTEP2}, involving 
$\|f\|_J$, $\|\partial_\xi f\|_J$ and
$\|\partial_{Q_j}f\|_J$ for $j=1,\ldots,d$.
This bound immediately implies that $\partial_\xi f
=\sum_{n=0}^\infty \partial_\xi K_t^{(n)} f_0$, with uniform absolute
convergence on compact subsets of $\scrX$, and
$\partial_\xi f\in\C_J(\R_{\geq 0}\times\XX)$, as desired.

The proof of $\partial_{Q_j} f\in\C_J(\R_{\geq 0}\times\XX)$ 
follows the same steps but with simpler formulas;
in fact $\partial_{Q_j} K_t^{(n)} f_0=K_t^{(n)} \partial_{Q_j} f_0$
holds for each $n\geq 0$.
\end{proof}

\begin{lem} \label{FTC1LEMMA2}
If $f_0\in\C_J^1(\XX)$ and 
$f(t,\vecQ,\vecV,\xi,\vecV_+):=K_t f_0(\vecQ,\vecV,\xi,\vecV_+)$, then also 
$\partial_t f$ exists and belongs to $\C_J(\R_{\geq 0}\times\XX)$,
and the equation \eqref{FPKEQ} holds throughout $\R_{\geq 0}\times\XX$.
\end{lem}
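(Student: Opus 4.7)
The plan is to compute $\partial_t^+ K_t f_0\big|_{t=0}$ directly from the series expansion $K_t = \sum_{n\geq 0} K_t^{(n)}$, and then transfer to every $t \geq 0$ via the semigroup property $K_{t+h} = K_h K_t$ (Prop.~\ref{SEMIGROUPLEM}), using the fact that Lemmas \ref{FTCONTLEMMA}--\ref{FTC1LEMMA} already give $K_t f_0 \in \C_J^1(\XX)$ for every fixed $t \geq 0$. This route shows $\partial_t^+ f = Zf$ with $Z$ as in Remark \ref{ZREMARK}; a short continuity argument then promotes this right derivative to a genuine derivative and yields \eqref{FPKEQ}.

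The first step is thus the generator identity at $t = 0$: for $f_0 \in \C_J^1(\XX)$,
\begin{align*}
\lim_{h\to 0^+}\frac{K_h f_0 - f_0}{h}(\vecQ,\vecV,\xi,\vecV_+) = [Zf_0](\vecQ,\vecV,\xi,\vecV_+)
\end{align*}
pointwise on $\XX$. The term $n = 0$ reads $K_h^{(0)} f_0(\vecQ,\vecV,\xi,\vecV_+) = f_0(\vecQ - h\vecV, \vecV, \xi + h, \vecV_+)$, whose difference quotient tends to $(\partial_\xi - \vecV\cdot\nabla_\vecQ)f_0$ by the $\C^1$ hypothesis. For $n = 1$, I would put the $\xi_1$-integration outermost and use the mean-value property $h^{-1}\int_0^h F_h(\xi_1)\,d\xi_1 \to F_0(0)$ for the continuous integrand $F_h(\xi_1) = f_0(\vecQ - \xi_1\vecv_0 - (h-\xi_1)\vecV,\vecv_0,\xi_1,\vecV)\, p_{\vecnull,\vecbeta^+_{\vecv_0}}(\vecV, \xi + h - \xi_1, \vecV_+)$; integrating over $\vecv_0$ and applying dominated convergence with the majorant $\|f_0\|_J J(\vecv_0,\vecV) J(\vecV,\vecV_+)$ (integrable in $\vecv_0$ by Remark \ref{PALFBETEXPLICITREM}) produces exactly the integral term in $Zf_0$. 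Finally the bound \eqref{FTCONTLEMMASTEP2} gives $|K_h^{(n)} f_0| \leq \|f_0\|_J (h\,\vol(\scrB_1^{d-1}))^n J(\vecV,\vecV_+)/n!$, so $\sum_{n \geq 2} K_h^{(n)} f_0/h = O(h)$ pointwise.

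Applying this identity with $K_t f_0 \in \C_J^1(\XX)$ in place of $f_0$ and using $K_{t+h} f_0 = K_h(K_t f_0)$ yields $\partial_t^+ f(t,\cdot) = Zf(t,\cdot)$ for all $t \geq 0$. To conclude that $\partial_t f$ exists and lies in $\C_J(\R_{\geq 0}\times\XX)$, I would check that $Zf$ is itself in $\C_J(\R_{\geq 0}\times\XX)$: the pieces $\partial_\xi f$ and $\vecV\cdot\nabla_\vecQ f$ are handled by Lemma \ref{FTC1LEMMA}, while the integral term $\int_{\S_1^{d-1}} f(t,\vecQ,\vecv_0,0,\vecV) p_{\vecnull,\vecbeta^+_{\vecv_0}}(\vecV,\xi,\vecV_+)\, d\vol_{\S_1^{d-1}}(\vecv_0)$ is jointly continuous by dominated convergence (majorant $\|f(t,\cdot)\|_J J(\vecv_0,\vecV) J(\vecV,\vecV_+)$), appealing to Lemma \ref{GOODCOMPACTLEM}(iii) exactly as in the proof of Lemma \ref{FTCONTLEMMA} to deal with the measure-zero locus in $\vecv_0$ where $p$ may fail to be continuous when $d = 2$. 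A continuous right derivative on $[0,\infty)$ forces genuine differentiability with $\partial_t f = Zf$, so \eqref{FPKEQ} holds throughout $\R_{\geq 0}\times\XX$.

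The main obstacle will be the careful treatment of the $n = 1$ limit and the parallel continuity statement for the integral term in $Zf$: both require commuting a limit through the $\vecv_0$-integration against the singular structure of $p_{\vecnull,\vecbeta^+_{\vecv_0}}$ when $d = 2$. The toolkit is already in place in Lemma \ref{GOODCOMPACTLEM}(iii) and the estimates used in Lemmas \ref{FTCONTLEMMA}--\ref{FTC1LEMMA}, but applying them cleanly is the one genuinely non-trivial piece.
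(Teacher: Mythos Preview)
Your proposal is correct and follows essentially the same route as the paper: use $K_{t+h}=K_hK_t$ (with $K_tf_0\in\C_J^1(\XX)$ by Lemma~\ref{FTC1LEMMA}), expand $K_h$ via \eqref{KTEXPLICIT}, read off the $n=0$ and $n=1$ contributions as the differential and integral parts of $Z$, bound $n\geq 2$ by $O(h^2)$ via \eqref{FTCONTLEMMASTEP2}, and then promote the right derivative to a two-sided one by checking $Zf\in\C_J(\R_{\geq 0}\times\XX)$. The only cosmetic difference is that you first isolate the generator identity at $t=0$ and then transfer, whereas the paper works directly at general $t$; the substance, including the handling of the $d=2$ discontinuity via Lemma~\ref{GOODCOMPACTLEM}, is the same.
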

\begin{proof}
Fix any point $(t,\vecQ,\vecV,\xi,\vecV_+)\in\R_{\geq 0}\times\XX$.
Using $K_{t+h}=K_h K_t$ and the bound 
\eqref{FTCONTLEMMASTEP2} (with $h$ in place of $t$) added over all $n\geq 2$,
we get as $h\to 0^+$:
\begin{align} 
& f(t+h,\vecQ,\vecV,\xi,\vecV_+) 
=\sum_{n=0}^\infty [K_h^{(n)}K_tf_0](\vecQ,\vecV,\xi,\vecV_+)
\\ \notag
& =f(t,\vecQ-h\vecV,\vecV,\xi+h,\vecV_+)+
\int_0^h\int_{\scrV_{\vecv_0}}  f\big(t,\vecQ-(\xi_1\vecv_{0} + (h-\xi_1) \vecV),\vecv_0,\xi_1,\vecV \big) 
\\ \notag
& \hspace{140pt}
\times p_{\vecnull,\vecbeta^+_{\vecv_{0}}}(\vecV,\xi+h-\xi_1,\vecV_+) 
\,d\!\vol_{\S_1^{d-1}}(\vecv_0)\,d\xi_1 
+O(h^2).
\end{align}
Using $f(t,\cdot)\in\C_J^1(\XX)$ (cf.\ Lemma \ref{FTC1LEMMA}),
and treating the integral term with a continuity argument as in the proof of 
Lemma \ref{FTCONTLEMMA}, we get (cf.\ \eqref{ZDEF})
\begin{align} \label{FTC1LEMMA2step1}
& =  f(t,\vecQ,\vecV,\xi,\vecV_+)+h[Zf(t,\cdot)](\vecQ,\vecV,\xi,\vecV_+)+o(h).
\end{align}
A similar argument shows that the function
$[Zf(t,\cdot)](\vecQ,\vecV,\xi,\vecV_+)$ lies in 
$\C_J(\R_{\geq 0}\times\XX)$.
Letting $h\to 0^+$ in \eqref{FTC1LEMMA2step1} we conclude that the right 
derivative $\partial_t^+ f(t,\vecQ,\vecV,\xi,\vecV_+)$ exists and equals
$[Zf(t,\cdot)](\vecQ,\vecV,\xi,\vecV_+)$.
Since the latter function is continuous, the relation now follows
for the two-sided derivative, i.e.\ \eqref{FPKEQ2}
($\Leftrightarrow$ \eqref{FPKEQ}) holds, and
$\partial_t f\in \C_J(\R_{\geq 0}\times\XX)$.
\end{proof}

\begin{lem} \label{PTWLIMITLEM}
Let $\{f_h\}_{h>0}$ be a family of functions 
in $\C_J(\XX)$ satisfying $\limsup_{h\to 0} \|f_h\|_J<\infty$
and $f_h(\vecQ,\vecV,\xi,\vecV_+)\to 0$ as $h\to 0$,
uniformly over $(\vecQ,\vecV,\xi,\vecV_+)$ in compact subsets of $\XX$.
Then for each $(\vecQ,\vecV,\xi,\vecV_+)\in\XX$ we have
$[K_tf_h](\vecQ,\vecV,\xi,\vecV_+)\to 0$ as $h\to 0$,
uniformly over $t$ in compact subsets of $\R_{\geq 0}$.
\end{lem}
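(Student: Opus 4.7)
The plan is to decompose $K_t=\sum_{n=0}^\infty K_t^{(n)}$ via \eqref{KTEXPLICIT} and analyze each $K_t^{(n)} f_h$ separately. Fix a compact interval $[0,T]\subset\R_{\geq 0}$. Since $\limsup_{h\to 0}\|f_h\|_J<\infty$, there is $B>0$ with $\|f_h\|_J\leq B$ for all sufficiently small $h$. The bound \eqref{FTCONTLEMMASTEP2} (which also holds for $n=0$ by \eqref{KT0EXPLDEF}) then gives
\begin{equation*}
\bigl|[K_t^{(n)}f_h](\vecQ,\vecV,\xi,\vecV_+)\bigr|\leq B\,\frac{T^n}{n!}\vol(\scrB_1^{d-1})^n J(\vecV,\vecV_+),
\qquad t\in[0,T].
\end{equation*}
Given $\ve>0$, I would choose $N$ so large that the sum of these majorants over $n>N$ is below $\ve/2$; this reduces the problem to showing $[K_t^{(n)}f_h](\vecQ,\vecV,\xi,\vecV_+)\to 0$ uniformly in $t\in[0,T]$, for each $n\in\{0,1,\ldots,N\}$.

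The case $n=0$ is immediate from \eqref{KT0EXPLDEF}: the argument $(\vecQ-t\vecV,\vecV,\xi+t,\vecV_+)$ stays in the compact set $\{(\vecQ-t\vecV,\vecV,\xi+t,\vecV_+):t\in[0,T]\}\subset\XX$, so the uniform-on-compacta convergence $f_h\to 0$ finishes this term. For $n\geq 1$ I would invoke Lemma~\ref{GOODCOMPACTLEM} with $\vecv_n^0=\vecV$, $\vecv_{n+1}^0=\vecV_+$ to produce a compact set $D\subset\scrV_\vecV^{[n]}$ whose complement in $\scrV_\vecV^{[n]}$ contributes at most $\ve/(2N+2)$ to $|[K_t^{(n)}f_h]|$ (uniformly in $t\in[0,T]$ and in small $h$), exactly as in the estimate \eqref{FTCONTLEMMASTEP2} but restricted to $\scrV_\vecV^{[n]}\setminus D$.

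The key observation for the integral on $D$ is that, by Lemma~\ref{GOODCOMPACTLEM}(iii) (which rules out the exceptional set in Remark~\ref{PALFBETPLUSCONTREM} when $d=2$), each factor $p_{\vecnull,\vecbeta^+_{\vecv_{j-1}}}(\vecv_j,\xi_{j+1},\vecv_{j+1})$ in the integrand of \eqref{KTNEXPLDEF} is jointly continuous, hence uniformly bounded, on the compact set $D\times[0,T]^n$ with $\vecv_n=\vecV$, $\vecv_{n+1}=\vecV_+$ and $\xi_{n+1}=\xi+t-T_n\in[\xi,\xi+T]$. Simultaneously, the first argument
\begin{equation*}
\Bigl(\vecQ-\sum_{j=1}^n\xi_j\vecv_{j-1}-(t-T_n)\vecv_n,\,\vecv_0,\xi_1,\vecv_1\Bigr)
\end{equation*}
of $f_h$ lies in a compact subset $\scrK\subset\XX$: its position component is bounded by $\|\vecQ\|+T$, its $(\vecv_0,\xi_1,\vecv_1)$ components lie in the projection of $D\times[0,T]$, and $\varphi(\vecv_0,\vecv_1)\geq B_\Theta+\delta$ holds for some $\delta>0$ by compactness of $D$ in the open set $\scrV_\vecV^{[n]}$. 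By the uniform convergence of $f_h\to 0$ on $\scrK$, the restricted integral is bounded by $(\sup_\scrK|f_h|)$ times a constant depending only on $n,T,D,\vecV,\vecV_+$, which tends to $0$ uniformly in $t\in[0,T]$ as $h\to 0$. Summing the $N+1$ contributions together with the tail estimate completes the proof.

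The main obstacle is technical rather than conceptual: one must simultaneously ensure that the integrand on $D$ is uniformly bounded and that the input to $f_h$ stays in a compact subset of $\XX$. Both issues are resolved by Lemma~\ref{GOODCOMPACTLEM}—part (ii) for the tail of the $\vecv$-integration, and part (iii) for avoiding the discontinuity locus of the densities in dimension two.
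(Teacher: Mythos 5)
Your proof is correct, and it rests on the same two pillars as the paper's: on the part of the integration region where the argument of $f_h$ stays away from the boundary of $\XX$ (i.e.\ $\varphi(\vecv_0,\vecv_1)$ bounded away from $B_\Theta$) you use the uniform-on-compacta convergence, and on the remaining part you use $\|f_h\|_J\leq B$ together with the integrability of $J$, exactly as in the derivation of \eqref{FTCONTLEMMASTEP2}. The bookkeeping, however, is different from the paper's. The paper does not truncate the sum over $n$ and does not invoke Lemma \ref{GOODCOMPACTLEM}: it splits each integral over $\scrV_{\vecV}^{[n]}$ only according to whether $\vecv_0\in\scrV_{\vecv_1}^\eta$ or not, notes that the smallness hypothesis applies on the compact set $\{\|\vecQ'-\vecQ\|\leq T,\ \xi'\leq\xi+T,\ \varphi(\vecV',\vecV_+')\geq B_\Theta+\eta\}$, and thereby obtains the single $n$-uniform bound \eqref{PTWLIMITLEMSTEP1}, whose bracket is made small by choosing $\ve$ and $\eta$ small; summing over all $n\geq 0$ then finishes the proof in one stroke. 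Your version instead fixes $N$ via the factorial tail bound and, for each $n\leq N$, exhausts $\scrV_\vecV^{[n]}$ by the compact set $D$ from Lemma \ref{GOODCOMPACTLEM}\,(ii); this is more modular (it reuses an existing lemma and a genuinely compact $\scrK$ for the argument of $f_h$) but slightly more roundabout. Two small remarks: your appeal to Lemma \ref{GOODCOMPACTLEM}\,(iii) is unnecessary here, since on $D$ you only need boundedness of the densities, which follows from $\Phi_\bn\leq 1$ and the bound $p_{\vecnull,\vecbeta^+_{\vecv_{j-1}}}(\vecv_j,\xi_{j+1},\vecv_{j+1})\leq J(\vecv_j,\vecv_{j+1})$ together with compactness of $D$ (continuity is irrelevant, so the $d=2$ exceptional set causes no trouble); and for $n=1$ one should read ``$(\vecv_0,\vecv_1)$ in the projection of $D$'' as $\vecv_0\in D\subset\scrV_\vecV$ with $\vecv_1=\vecV$ fixed, which is what your compactness argument needs.
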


\begin{proof}
Take $C,h_0>0$ so that $\|f_h\|_J\leq C$ for all $h\in(0,h_0]$.
Given some $(\vecQ,\vecV,\xi,\vecV_+)\in\XX$ and %
$T>0$, $\eta>0$, $\ve>0$, 
after possibly shrinking $h_0$ we may assume that
$\bigl|f_h(\vecQ',\vecV',\xi',\vecV_+')\bigr|<\ve J(\vecV',\vecV_+')$ for all
$h\in(0,h_0]$ and all $(\vecQ',\vecV',\xi',\vecV_+')\in\XX$ with 
$\|\vecQ'-\vecQ\|\leq T$, $\xi'\leq\xi+T$ and 
$\vecV'\in\overline{\scrV^\eta_{\vecV'}}_{\hspace{-5pt}_+}$ 
(cf.\ \eqref{WVPDEF}).
Then for each $n\geq 1$, $t\in[0,T]$ and $h\in(0,h_0]$,
by mimicking %
\eqref{FTCONTLEMMASTEP2}
but splitting the integral over $\scrV_{\vecV}^{[n]}$ according to the
two cases $\vecv_0\in\scrV_{\vecv_1}^\eta$ and
$\vecv_0\notin\scrV_{\vecv_1}^\eta$, we obtain 
(with $C_1:=\vol(\scrB_1^{d-1})$)
\begin{align} \label{PTWLIMITLEMSTEP1}
\bigl| [K_t^{(n)}f_h](\vecQ,\vecV,\xi,\vecV_+)\bigr|
\leq  
\frac{C_1^{n}t^n}{n!}\biggl\{\ve+
\frac{C}{C_1}
\int_{\scrV_{\vece_1}\setminus\scrV_{\vece_1}^{\eta}} J(\vecv_0,\vece_1)
\,d\!\vol_{\S_1^{d-1}}(\vecv_0)\biggr\} J(\vecV,\vecV_+).
\end{align}
This bound also holds for $n=0$ (without the 
$\int_{\scrV_{\vece_1}\setminus\scrV_{\vece_1}^{\eta}}$-term), so long as 
$\eta<\varphi(\vecV,\vecV_+)-B_\Theta$.
The expression within the brackets in \eqref{PTWLIMITLEMSTEP1}
can be made arbitrarily small
by taking $\ve$ and $\eta$ to be sufficiently small.
Now the desired conclusion follows by adding \eqref{PTWLIMITLEMSTEP1}
over $n\geq 0$.
\end{proof}

\begin{lem} \label{COMMUTELEMMA}
We have $K_tZf_0=ZK_tf_0$, %
for all $f_0\in\C_J^1(\XX)$ and $t\geq 0$.
\end{lem}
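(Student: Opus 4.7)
The idea is to exploit the fact established in Lemma \ref{FTC1LEMMA2} that $Z$ acts as the infinitesimal generator of the semigroup $\{K_t\}$, and then combine this with the semigroup property $K_{t+h}=K_tK_h=K_hK_t$. The required commutation identity will follow by passing to the $h\to 0^+$ limit in
\begin{equation}
\frac{K_{t+h} f_0 - K_t f_0}{h} \;=\; K_t\!\left(\frac{K_h f_0 - f_0}{h}\right) \;=\; \frac{K_h(K_t f_0) - K_t f_0}{h}.
\end{equation}
By Lemma \ref{FTC1LEMMA} we have $K_t f_0\in\C_J^1(\XX)$, so Lemma \ref{FTC1LEMMA2} applied at $t=0$ with initial data $K_t f_0$ shows that the rightmost expression converges pointwise to $Z(K_t f_0)$; and the leftmost expression tends to $\partial_t K_t f_0$, which of course equals $Z(K_t f_0)$ by Lemma \ref{FTC1LEMMA2} again. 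Thus everything reduces to showing that the middle expression converges pointwise to $K_t(Zf_0)$.

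\textbf{Application of Lemma \ref{PTWLIMITLEM}.} Set
\begin{equation}
f_h := \frac{1}{h}\bigl(K_h f_0 - f_0\bigr) - Zf_0 \;\in\; \C_J(\XX);
\end{equation}
by linearity it suffices to prove that $K_t f_h\to 0$ pointwise as $h\to 0^+$. For this I would invoke Lemma \ref{PTWLIMITLEM}, whose hypotheses are that $\limsup_{h\to 0}\|f_h\|_J<\infty$ and that $f_h\to 0$ uniformly on compact subsets of $\XX$. Both properties are essentially already contained in the proof of Lemma \ref{FTC1LEMMA2}, applied at $t=0$.

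\textbf{Verifying the hypotheses.} For the $J$-norm bound, expand $K_h f_0 = \sum_{n\geq 0} K_h^{(n)} f_0$. The $n=0$ summand yields $f_0(\vecQ-h\vecV,\vecV,\xi+h,\vecV_+)-f_0(\vecQ,\vecV,\xi,\vecV_+)$, which by the mean value theorem and $f_0\in\C_J^1(\XX)$ is bounded in absolute value by $Ch\, J(\vecV,\vecV_+)$ for $C$ depending only on $\|\partial_{Q_j} f_0\|_J$ and $\|\partial_\xi f_0\|_J$. For $n\geq 1$, the bound \eqref{FTCONTLEMMASTEP2} gives $\|K_h^{(n)}f_0\|_J \leq \|f_0\|_J (h\vol(\scrB_1^{d-1}))^n/n!$, which when summed over $n\geq 1$ is $O(h)$ uniformly for bounded $h$. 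Hence $\|K_h f_0 - f_0\|_J = O(h)$ and so $\|f_h\|_J = O(1)$. For the uniform convergence on compacta, I would re-examine the proof of Lemma \ref{FTC1LEMMA2}: the expansion
\begin{equation}
K_h f_0(\vecQ,\vecV,\xi,\vecV_+) = f_0(\vecQ,\vecV,\xi,\vecV_+)+h\,[Zf_0](\vecQ,\vecV,\xi,\vecV_+)+o(h)
\end{equation}
derived there is in fact uniform on compact subsets of $\XX$, because its three ingredients all are: the $O(h^2)$ tail from $n\geq 2$ via \eqref{FTCONTLEMMASTEP2}; the first-order Taylor expansion of the $n=0$ contribution, uniform on compacts since $\nabla_\vecQ f_0$ and $\partial_\xi f_0$ are continuous; and the $n=1$ integral term, whose analysis by a continuity argument as in Lemma \ref{FTCONTLEMMA} is uniform in compact parameter ranges.

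\textbf{Main obstacle.} The most delicate point is verifying that the $n=1$ contribution, namely
\begin{equation}
\frac{1}{h}\int_0^h\!\int_{\S_1^{d-1}} f_0\bigl(\vecQ-\xi_1\vecv_0-(h-\xi_1)\vecV,\,\vecv_0,\,\xi_1,\,\vecV\bigr)\,p_{\bn,\vecbeta^+_{\vecv_0}}(\vecV,\xi+h-\xi_1,\vecV_+)\,d\!\vol_{\S_1^{d-1}}(\vecv_0)\,d\xi_1,
\end{equation}
converges to $\int_{\S_1^{d-1}} f_0(\vecQ,\vecv_0,0,\vecV)\,p_{\bn,\vecbeta^+_{\vecv_0}}(\vecV,\xi,\vecV_+)\,d\!\vol_{\S_1^{d-1}}(\vecv_0)$ \emph{uniformly} on compact subsets of $\XX$. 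This requires the same continuity considerations on the integrand as in the proof of Lemma \ref{FTCONTLEMMA}, including when $d=2$ the exclusion of the measure-zero singular set described in Remark \ref{PALFBETPLUSCONTREM}, which is handled by approximating from below with the compact sets $D$ furnished by Lemma \ref{GOODCOMPACTLEM}. Once pointwise convergence is established uniformly in $(\vecQ,\vecV,\xi,\vecV_+)$ ranging over a compact subset of $\XX$, compactness upgrades this to uniform convergence on compacta, and Lemma \ref{PTWLIMITLEM} closes the argument.
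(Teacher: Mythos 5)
Your argument is correct, and its skeleton is the same as the paper's: reduce everything, via the semigroup identity $K_tK_h=K_{t+h}$ and $\partial_t^+K_tf_0=ZK_tf_0$ from Lemma \ref{FTC1LEMMA2}, to showing $h^{-1}K_t[K_hf_0-f_0]\to K_tZf_0$ pointwise, and obtain this from Lemma \ref{PTWLIMITLEM} applied to $f_h=h^{-1}(K_hf_0-f_0)-Zf_0$. Where you differ is in how the two hypotheses of Lemma \ref{PTWLIMITLEM} are checked. You return to the expansion $K_h=\sum_n K_h^{(n)}$ and re-derive a locally uniform first-order expansion of $K_hf_0$: mean value theorem for $n=0$, the bound \eqref{FTCONTLEMMASTEP2} for $n\geq 2$, and — your self-identified main obstacle — a fresh continuity/compactness analysis of the $n=1$ gain term, using Lemma \ref{GOODCOMPACTLEM} and, for $d=2$, the singular set of Remark \ref{PALFBETPLUSCONTREM}; this does go through (it is the argument of Lemmas \ref{FTCONTLEMMA} and \ref{FTC1LEMMA2} upgraded to local uniformity by covering the compact set with finitely many of the neighborhoods furnished by Lemma \ref{GOODCOMPACTLEM}), but it amounts to re-proving a uniform strengthening of the pointwise expansion already obtained in Lemma \ref{FTC1LEMMA2}. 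The paper sidesteps this entirely with one observation: since Lemma \ref{FTC1LEMMA2} gives $\partial_tK_tf_0=ZK_tf_0\in\C_J(\R_{\geq 0}\times\XX)$, one may write $h^{-1}(K_hf_0-f_0)=h^{-1}\int_0^h ZK_sf_0\,ds$, and the joint continuity and locally bounded $\|\cdot\|_J$-norm of $ZK_sf_0$ then yield at once both the uniform-on-compacta convergence to $Zf_0$ and the bound $\sup_{0<h\leq 1}\|h^{-1}(K_hf_0-f_0)\|_J\leq\sup_{0<s\leq 1}\|ZK_sf_0\|_J<\infty$; so the fundamental-theorem-of-calculus step converts regularity already proved into exactly the uniformity you need, at no extra cost. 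One small point you should make explicit in your write-up: you need $Zf_0\in\C_J(\XX)$ (continuity of the gain term by the argument of Lemma \ref{FTCONTLEMMA}, together with $\|Zf_0\|_J\leq\|\partial_\xi f_0\|_J+\sum_{j}\|\partial_{Q_j}f_0\|_J+\vol(\scrB_1^{d-1})\,\|f_0\|_J<\infty$), since this is used both for $f_h\in\C_J(\XX)$ and for your bound $\|f_h\|_J=O(1)$.
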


\begin{proof}
Let $f_0\in\C_J^1(\XX)$ be given.
Then by Lemma \ref{FTC1LEMMA2} (cf.\ \eqref{FPKEQ2}) we have 
$[\partial_t K_tf_0](\vecQ,\vecV,\xi,\vecV_+)\equiv
[ZK_t f_0](\vecQ,\vecV,\xi,\vecV_+)$, and this function belongs
to $\C_J(\R_{\geq 0}\times\XX)$. Hence
\begin{align} \label{COMMUTELEMMASTEP1}
h^{-1}[K_hf_0-f_0](\vecQ,\vecV,\xi,\vecV_+)=
h^{-1}\int_0^h [ZK_t f_0](\vecQ,\vecV,\xi,\vecV_+)\, dt
\end{align}
for all $h>0$ and all $(\vecQ,\vecV,\xi,\vecV_+)\in\XX$,
and this expression tends to $[Zf_0](\vecQ,\vecV,\xi,\vecV_+)$ as $h\to 0^+$,
uniformly over $(\vecQ,\vecV,\xi,\vecV_+)$ in compact subsets of $\XX$.
Using \eqref{COMMUTELEMMASTEP1} we also get
$\sup_{0<h\leq 1}\bigl\|h^{-1}[K_hf_0-f_0]\bigr\|_J
\leq \sup_{0<t\leq 1}\bigl\|ZK_t f_0\bigr\|_J<\infty$.
Hence by Lemma \ref{PTWLIMITLEM} we have
\begin{align}
\lim_{h\to 0^+} h^{-1}[K_t[K_hf_0-f_0]](\vecQ,\vecV,\xi,\vecV_+)=
[K_t Zf_0](\vecQ,\vecV,\xi,\vecV_+),
\end{align}
for every $(t,\vecQ,\vecV,\xi,\vecV_+)\in\R_{\geq 0}\times\XX$.
Using $K_tK_hf_0=K_{t+h}f_0$ and \eqref{FPKEQ2} this implies
\begin{align} 
[K_tZf_0](\vecQ,\vecV,\xi,\vecV_+)
=[\partial^+_t K_tf_0](\vecQ,\vecV,\xi,\vecV_+)
=[ZK_tf_0](\vecQ,\vecV,\xi,\vecV_+).
\end{align}
\end{proof}

\begin{proof}[Proof of Theorem \ref{FPK}]
By Lemma \ref{FTC1LEMMA} and Lemma \ref{FTC1LEMMA2}, 
it only remains to prove the uniqueness.
Thus assume that $f\in\C_J^1(\R_{\geq 0}\times\XX)$ satisfies \eqref{FPKEQ};
we then need to prove that
$f(t,\vecQ,\vecV,\xi,\vecV_+)=[K_t f(0,\cdot)](\vecQ,\vecV,\xi,\vecV_+)$ 
for all $(t,\vecQ,\vecV,\xi,\vecV_+)\in\R_{\geq 0}\times\XX$.

Fix an arbitrary point 
$(\vecQ,\vecV,\xi,\vecV_+)\in\XX$ and some $a>0$, and set
\begin{align*}
F(s)=[K_{a-s} f(s,\cdot)](\vecQ,\vecV,\xi,\vecV_+)\qquad\text{for }\:
s\in [0,a].
\end{align*}
Then $F(s)$ is continuous, by Lemma \ref{PTWLIMITLEM}.
We will prove that the right derivative $D^+F(s)$ vanishes for each 
$s\in (0,a)$.
This will conclude the proof, since it implies $F(0)=F(a)$, 
which is the desired relation at 
$(t,\vecQ,\vecV,\xi,\vecV_+)=(a,\vecQ,\vecV,\xi,\vecV_+)$.

Thus fix $s\in (0,a)$.
By Lemma \ref{FTC1LEMMA2} applied with $f_0=f(s,\cdot)$ we have
\begin{align} \label{FPKUNIQSTEP2}
\lim_{h\to 0^+} \frac{\bigl[K_{a-s-h}-K_{a-s}\bigr]\bigl[f(s,\cdot)\bigr]
(\vecQ,\vecV,\xi,\vecV_+)}h
=-\bigl[ZK_{a-s} f(s,\cdot)\bigr](\vecQ,\vecV,\xi,\vecV_+).
\end{align}
Next, using the identity
\begin{align}
\frac{f(s+h,\vecQ',\vecV',\xi',\vecV_+')-f(s,\vecQ',\vecV',\xi',\vecV_+')}h
=\frac 1h\int_s^{s+h}\partial_tf(t,\vecQ',\vecV',\xi',\vecV_+')\,dt
\end{align}
and $\partial_tf\in\C_J(\R_{\geq 0}\times\XX)$ we see that
$h^{-1}[f(s+h,\cdot)-f(s,\cdot)]$ has uniformly bounded $\|\cdot\|_J$-norm
for $h\in(0,a-s]$, and approaches
$\partial_sf(s,\cdot)$ uniformly on compact subsets of $\XX$ as $h\to 0^+$.
Hence, using Lemma \ref{PTWLIMITLEM} and the fact that the function
$\R_{\geq 0}\ni b\mapsto[K_b\partial_s f(s,\cdot)](\vecQ,\vecV,\xi,\vecV_+)$
is continuous (cf.\  Lemma \ref{FTCONTLEMMA}), we obtain, at our fixed point
$(\vecQ,\vecV,\xi,\vecV_+)\in\XX$:
\begin{align} \label{FPKUNIQSTEP1}
\lim_{h\to 0^+} \frac{K_{a-s-h}\bigl[f(s+h,\cdot)-f(s,\cdot)\bigr]
(\vecQ,\vecV,\xi,\vecV_+)}h
=\bigl[K_{a-s}\partial_sf(s,\cdot)\bigr]
(\vecQ,\vecV,\xi,\vecV_+).
\end{align}
But we are assuming that $f$ satisfies \eqref{FPKEQ2};
thus $[\partial_sf](s,\cdot)\equiv Zf(s,\cdot)$.
Hence, when adding \eqref{FPKUNIQSTEP2} and \eqref{FPKUNIQSTEP1}
and using Lemma \ref{COMMUTELEMMA}, we obtain $D^+F(s)=0$, as desired.
\end{proof}

\begin{remark} \label{PPPINTREM}
Proposition \ref{eigenlemma} and Theorem \ref{FPK} imply that
if the flow $F_t$ preserves the Liouville measure, then
\begin{equation} \label{PPPINT}
-\partial_\xi\, p(\vecV,\xi,\vecV_+) = \int_{\S_1^{d-1}} J(\vecv_0,\vecV)
\, p_{\vecnull,\vecbeta^+_{\vecv_{0}}}(\vecV,\xi,\vecV_+) 
\,d\!\vol_{\S_1^{d-1}}(\vecv_0) ,
\end{equation}
since $p(\vecv_0,0,\vecV)=J(\vecv_0,\vecV)$.
Using \eqref{PLBETACRITERION} and \eqref{PALFBETEXPLICITREMFORMULA}
this equation can be reformulated as
\begin{align}
-\partial_\xi\Phi_\vecalf(\xi,\vecw,\vecz)
=\int_{\{0\}\times\scrB_1^{d-1}}
\Phi_\bn\bigl(\xi,\vecw,\vecz')\,d\vecz'
\end{align}
for $\vecalf\notin\Q^d$. 
For an alternative proof of this equation working directly from the 
definition of $\Phi_\vecalf$, see \cite[(8.32), (8.37)]{partI}
and note %
$\Phi_\bn(\xi,\vecw,\vecz')=\Phi_\bn(\xi,\vecz',\vecw)$.
\end{remark}

\begin{remark} \label{ORIGINALPROPAGATORREM}
Given any $\overline f_0\in\L^1(\T^1(\R^d))$ we define %
$f_0\in\L^1(\XX,\sigma_1)$ by $f_0(\vecQ,\vecV,\xi,\vecV_+)
=\overline f_0(\vecQ,\vecV) p(\vecV,\xi,\vecV_+)$.
We then have the relation
\begin{equation}
[L_t \overline f_0](\vecQ,\vecV) = \int_0^\infty \int_{\S_1^{d-1}} 
[K_t f_0](\vecQ,\vecV,\xi,\vecV_+)\,
	 d\!\vol_{\S_1^{d-1}}(\vecV_+)\, d\xi,
\end{equation}
where $L_t$ is the propagator of the original stochastic process $\Xi(t)$,
cf.\ Section \ref{secMacroscopic}.
If we furthermore impose that $\overline f_0$ is bounded continuous,
with bounded continuous derivatives $\partial_{Q_j}\overline f_0$
for $j=1,\ldots,d$, then $f_0$ satisfies the assumption of 
Theorem \ref{FPK}, so that $K_tf_0$ satisfies the Fokker-Planck-Kolmogorov 
equation, \eqref{FPKEQ}.
\end{remark}

\end{document}